\newtheorem{theorem}{Theorem}[section]
\newtheorem{corollary}[theorem]{Corollary}
\newtheorem{proposition}[theorem]{Proposition}
\newtheorem{definition}[theorem]{Definition}
\newtheorem{lemma}[theorem]{Lemma}
\newtheorem{claim}[theorem]{Claim}
\newtheorem{construction}[theorem]{Construction}
\newtheorem*{theorem*}{Theorem}
\newtheorem*{proposition*}{Proposition}
\newtheorem*{definition*}{Definition}
\newtheorem*{lemma*}{Lemma}
\newtheorem*{claim*}{Claim}
\newtheorem*{corollary*}{Corollary}
\newtheorem*{convention*}{Convention}
\newtheorem{observation}[theorem]{Observation}
\theoremstyle{definition}
\newtheorem{example}[theorem]{Example}
\newtheorem{question}{Question}
\newtheorem*{notation}{Notation}
\theoremstyle{remark}
\newtheorem{rem}[theorem]{Remark}
\newtheorem*{rem*}{Remark}
\newtheorem*{acknowledgement}{Acknowledgement}
\newcommand{\wt}[1]{\widetilde{#1}}
\newcommand\bR{\mathbb R}
\newcommand\bZ{\mathbb Z}
\newcommand{\R}{\mathbb R}
\DeclareMathOperator{\id}{id}
\newcommand\orb{ \mathcal O }
\newcommand{\cC}{\mathcal{C}}
\newcommand{\cF}{\mathcal{F}}
\newcommand{\cL}{\mathcal{L}}
\newcommand{\cR}{\mathcal{R}}
\newcommand{\cW}{\mathcal{W}}
\newcommand{\Pbound}{\partial P}
\newcommand{\Fixbar}{\overline{\mathrm{Fix}}}
\newcommand{\Fix}{\mathrm{Fix}}
\newcommand{\lG}{\leq_G}
\newcounter{notes}%
\title{Nontransitive pseudo-Anosov flows}
\author[Thomas Barthelm\'e]{Thomas Barthelm\'e}
\address{Queen's University, Kingston, Ontario}
\email{thomas.barthelme@queensu.ca}
\urladdr{sites.google.com/site/thomasbarthelme}
\author[Christian Bonatti]{Christian Bonatti}
 \address{Universit\'e de Bourgogne, Dijon, France}
 \email{bonatti@u-bourgogne.fr}
\author[Kathryn Mann]{Kathryn Mann}
 \address{Cornell University, Ithaca, NY}
 \email{k.mann@cornell.edu}
\urladdr{https://e.math.cornell.edu/people/mann}
\begin{document}

 \begin{abstract}
This paper describes a structure theory for (topological) pseudo-Anosov flows from the perspective of the associated group actions on their orbit spaces and boundary at infinity.   We extend the definition of Anosov-like action from \cite{BFM} from the transitive to the general nontransitive context and show that one can recover many dynamical features such as the basic sets of a flow, and the Smale order on basic sets, from such general group actions.  

Using these tools, we prove that a pseudo-Anosov flow on a compact $3$ manifold is determined by the associated action of the fundamental group on the boundary at infinity of its orbit space, answering an open question.  We also give a number of other applications, including a proof that any topological pseudo-Anosov flow on an atoroidal 3-manifold is necessarily transitive, and that density of periodic orbits implies transitivity.  These latter results fill some gaps in the literature, where proofs have only been written in the smooth Anosov case.  
 \end{abstract}

 \maketitle

 \section{Introduction}
This paper is motivated by the structure theory and classification of  pseudo-Anosov flows in dimension 3.   In \cite{Mosher_homologynormI}, Mosher introduced {\em pseudo-Anosov flows} as flows ``locally modeled on the suspension of a pseudo-Anosov homeomorphism of a surface."
 Such flows are characterized by the presence of a pair of transverse 2-dimensional invariant foliations, generalizing the weak-stable and unstable foliations of an Anosov flow, which Mosher suggested should be considered their defining feature.  Since then, pseudo-Anosov flows have become increasingly recognized as important objects in hyperbolic dynamics and low-dimensional topology, and Mosher's definition has been formalized in a variety of ways, resulting in definitions of ``smooth pseudo-Anosov" or ``topological (pseudo) Anosov" flow.   However, there has not been widespread agreement on the definition, and the foundations remain incomplete.\footnote{See \cite[Remark 1.1.11.]{BM_book} for a discussion, and Example 1.2.15 of \cite{BM_book} for a cautionary example illustrating a critical gap in some existing definitions.  Remark \ref{rem_smooth_vs_topological} below also gives further references.}
  
In this paper we develop a formal, axiomatic approach to the topological study of pseudo-Anosov flows, using the framework of {\em Anosov-like actions} of discrete groups on the plane.  This perspective was introduced in \cite{BFM} to give an invariant towards the classification of transitive pseudo-Anosov flows.  The present paper develops further groundwork, applicable also to the nontransitive case.  Using this we prove a number of structural results.  For instance, we show that any (topological) pseudo-Anosov flow or topological Anosov flow on an atoroidal 3-manifold is necessarily transitive.  This fills an apparent gap in the literature, since the existing proofs for smooth Anosov or pseudo-Anosov flows (due to Brunella \cite{Brunella} and Mosher \cite{Mosher_homologynormI}) use Smale's spectral decomposition theorem, which is only fully proved in the smooth setting.  We also characterize transitive versus nontransitive flows (see Theorem \ref{thm_dense_orbits_is_transitive}), and give many examples illustrating the nuances that appear in the presence of prongs in the pseudo-Anosov setting.  

Finally, the main result of this article shows pseudo-Anosov flows are completely determined up to orbit equivalence (time-change and conjugacy) by an action of the fundamental group of the supporting manifold on $S^1$.  This improves a theorem of Barbot, who showed such flows are determined by an action on $\bR^2$, and fully settles what was an important question about the natural action on $S^1$ induced by pseudo-Anosov flow, which had been circulating since Fenley's construction of the ideal $S^1$ boundary in 2005. 

\subsection*{The orbit-space perspective} 
A pseudo-Anosov flow $\phi$ on a 3-manifold $M$ gives rise to an {\em orbit space} $\mathcal{O}_\phi$, a topological plane with two transverse (possibly singular) foliations which admits a natural action of $\pi_1(M)$ by homeomorphisms.  Following work of Barbot \cite{Bar_caracterisation}, this action of a group of the plane (up to conjugacy) uniquely determines the flow up to orbit equivalence.  Barbot's theorem indicates that all features of a pseudo-Anosov flow and its orbit equivalence class should, in theory, correspond to features of the orbit space with the action of $\pi_1(M)$.  Building such a ``dictionary" between flows and their orbit spaces is now regarded as an essential tool in the subject.  Early work of Barbot and Fenley used this idea towards a structure theory of pseudo-Anosov flows on graph manifolds \cite{Barbot96,BarbFen_pA_toroidal,BF_totally_per} among other results; more recently in \cite{BFM, BFeM} it was used to obtain a complete algebraic invariant of transitive pseudo-Anosov flows (on any given manifold $M$) via the dynamics of $\pi_1(M)$ on $\mathcal{O}_\phi$.

To develop a formal framework for pseudo-Anosov flows, we abstract the essential dynamical features of actions on orbit spaces, called an {\em Anosov-like action}.   This allows one to use 2-dimensional topological dynamics to prove results about discrete groups acting on the plane, then translate this back to the 3-manifold setting.   

\subsection*{Statement of results}  A {\em bifoliated plane} is a topological plane $P$ with two transverse topological foliations $\cF^\pm$, possibly with isolated prong singularities.  The following definition captures the dynamics of the orbit-space actions of flows. 
\begin{definition}[Anosov-like action] \label{def_action}
An action of a group $G$ on a bifoliated plane, preserving each foliation, is called \emph{Anosov-like} if it satisfies the following: 
	\begin{enumerate}[label = (A\arabic*)]
	\item\label{Axiom_A1} 
	If a nontrivial element of $G$ fixes a leaf $l \in \cF^\pm$, then it has a fixed point $x \in l$ and is topologically expanding on one leaf through $x$ and topologically contracting on the other. 
		\item\label{Axiom_dense} The union of leaves of $\cF^{+}$ that are fixed by some element of $G$ is dense in $P$, as is the union of leaves of $\cF^{-}$ that are fixed by some element of $G$.  
		\item \label{Axiom_prongs_are_fixed} Each singular point is fixed by some nontrivial element of $G$.
		\item \label{Axiom_non-separated} If $l$ is a leaf of $\cF^+$ or $\cF^-$ that is non-separated with some leaf $l'$ in the corresponding leaf space, then some nontrivial element $g\in G$ fixes $l$.
\item \label{Axiom_totallyideal} There are no \emph{totally ideal quadrilaterals} in $P$ (see Definition \ref{def:totally_ideal}).
	\end{enumerate}
\end{definition}   
We note that, for the results of this article, Axiom \ref{Axiom_totallyideal} could be weakened slightly, requiring that we only forbid such quadrilaterals that do not contain any fixed points of elements of $G$ (see Remark \ref{rem_wandering_totally_ideal}).  This weak version prevents examples of groups which split as free products (see Theorem \ref{thm:wandering_quadrilateral}), a behavior very different from the fundamental groups of manifolds supporting flows which are always aspherical. 
Proposition \ref{prop:Orbit_space_anosov_like} below shows that the orbit space action from a pseudo-Anosov flow satisfies these axioms; however, these are not the only examples.   

These axioms have surprisingly strong consequences, both for the topological structure of the bifoliated plane, and the global and local dynamics of the action of $G$.   For example, we show (following Barbot) that a bifoliated plane admitting an Anosov-like action either has prongs or nonseparated leaves, is trivial, or has a special form called {\em skew} (Proposition \ref{prop_trichotomy}).  We also show a number of relationships between dynamics and group structure:

\begin{restatable}[Point stabilizers]{theorem}{poinstabilizers}\label{thm:discrete_stabilizer} 
If $P$ is a bifoliated plane with either prongs or nonseparated leaves and $G$ has an Anosov-like action, then 
 point stabilizers for the action are trivial or virtually (index at most 4) isomorphic to $\bZ$.
 \end{restatable}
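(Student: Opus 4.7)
My plan is to reduce the statement to one about leaf stabilizers and then invoke Solodov's theorem, with the main difficulty being promoting the resulting abelian embedding into $(\mathbb{R},+)$ to a discrete one. Given a regular point $x \in P$ with nontrivial stabilizer, let $l^+, l^-$ denote the leaves of $\cF^+, \cF^-$ through $x$. Any $g \in \mathrm{Stab}(x)$ preserves each foliation and hence each of $l^\pm$. Conversely, by Remark~\ref{rem_single_fixed_point} applied to $\mathrm{Stab}(l^+)$, every nontrivial element of $\mathrm{Stab}(l^+)$ has a unique fixed point on $l^+$, common to all such elements; since $\mathrm{Stab}(x) \subseteq \mathrm{Stab}(l^+)$ is nontrivial, this common fixed point must be $x$. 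Combined with the analogous statement on $l^-$, I conclude
\[
\mathrm{Stab}(x) \;=\; \mathrm{Stab}(l^+) \;=\; \mathrm{Stab}(l^-).
\]
If instead $x$ is a prong singularity, $\mathrm{Stab}(x)$ additionally permutes the finite set of prongs at $x$, giving a finite quotient; the kernel fixes each prong and the analysis along a prong direction reduces to the regular case.

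Next I pass to the finite-index subgroup $\mathrm{Stab}^{++}(x) \subseteq \mathrm{Stab}(x)$ of elements preserving orientation on each of $l^+, l^-$. The quotient injects into $\mathbb{Z}/2 \times \mathbb{Z}/2$, accounting for the index at most $4$. Every nontrivial $g \in \mathrm{Stab}^{++}(x)$ acts on $l^+ \cong \mathbb{R}$ as an orientation-preserving homeomorphism whose only fixed point is $x$, by Axiom~\ref{Axiom_A1} together with Remark~\ref{rem_single_fixed_point}, and this action is faithful since Axiom~\ref{Axiom_A1} forbids pointwise fixing of a leaf. Solodov's theorem (as in Remark~\ref{rem_single_fixed_point}) then identifies the action topologically with a subgroup of $\{y \mapsto \lambda y : \lambda>0\}$, so $\mathrm{Stab}^{++}(x)$ is abelian, torsion-free, and comes with an injective homomorphism $\rho_+: \mathrm{Stab}^{++}(x) \hookrightarrow (\mathbb{R},+)$ recording the logarithm of the expansion factor. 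Applying the same analysis to $l^-$ produces a second injection $\rho_-$, and Axiom~\ref{Axiom_A1} forces $\rho_+(g)$ and $\rho_-(g)$ to have opposite signs for every nontrivial $g$. A short lattice argument in $\mathbb{R}^2$ (any rank-two subgroup of the joint image would contain lattice points in the forbidden first or third quadrants) shows that $(\rho_+,\rho_-)$ lies on a single line, whence $\rho_-$ is a negative scalar multiple of $\rho_+$.

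It remains to show that the image of $\rho_+$ is discrete in $\mathbb{R}$, which forces $\mathrm{Stab}^{++}(x)$ to be trivial or infinite cyclic and completes the proof. I would argue by contradiction: a dense image produces a sequence of distinct $g_n \in \mathrm{Stab}^{++}(x)$ converging, by the proportionality of $\rho_+$ and $\rho_-$, to the identity on a bifoliated product neighborhood of $x$. Here the prong or non-separated leaf hypothesis is essential: by Axioms \ref{Axiom_prongs_are_fixed}, \ref{Axiom_non-separated}, and \ref{Axiom_dense}, arbitrarily close to $x$ there exists either a singular point or a leaf of a non-separated pair, each fixed by some nontrivial element of $G$. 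Such configurations are isolated in the bifoliated plane, so the approximating $g_n$ must eventually fix them, placing $g_n$ in the stabilizer of an ``anchor'' distinct from $x$; the ensuing commutation relations, combined with Axiom~\ref{Axiom_totallyideal} ruling out totally ideal quadrilaterals, contradict the assumed density.

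The main obstacle is precisely this last discreteness step: Solodov's theorem is purely one-dimensional and provides no topological constraint strong enough to rule out a dense abelian subgroup of scalings. The two-dimensional bifoliated structure, and in particular the presence of a rigid nearby singular or non-separated reference object, is what forces the approximating sequence to actually terminate; without the prong or non-separated hypothesis no such rigid reference is available, which is consistent with the assumptions of the theorem.
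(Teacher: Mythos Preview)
Your reduction up through the proportionality of $\rho_+$ and $\rho_-$ is correct and matches the paper's Lemma~\ref{lem:cyclic_or_accumulation}: H\"older/Solodov plus the sign constraint from Axiom~\ref{Axiom_A1} forces the two translation-number homomorphisms to coincide up to scale. The gap is entirely in the discreteness step, and it is a real one.

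You assert that ``by Axioms~\ref{Axiom_prongs_are_fixed}, \ref{Axiom_non-separated}, and~\ref{Axiom_dense}, arbitrarily close to $x$ there exists either a singular point or a leaf of a non-separated pair.'' This does not follow. Axiom~\ref{Axiom_dense} gives density of \emph{fixed} leaves, not of singular or non-separated ones; Axioms~\ref{Axiom_prongs_are_fixed} and~\ref{Axiom_non-separated} only say that such leaves, \emph{when they exist}, are fixed by some group element. In fact Lemma~\ref{lem:adjacent_corners_discrete} shows that singular points and pivots form a closed discrete set, so in general there is no such anchor near $x$ at all. The hypothesis of the theorem only guarantees that a prong or non-separated leaf exists \emph{somewhere} in $P$. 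The concluding sentence (``the ensuing commutation relations, combined with Axiom~\ref{Axiom_totallyideal}\ldots'') is too vague to fill this gap.

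The paper's argument confronts exactly this difficulty. Rather than finding an anchor near $x$, it produces a point of $\mathrm{Sing}\cup\mathrm{Pivot}$ (or a lozenge-corner opposite to one) that is \emph{totally linked} with $x$. This is where the work lies: Claims~\ref{claim:TLx} and~\ref{claim_TL_with_pivot_or_Opp} carry it out, and in the non-transitive case one must invoke the existence of Smale chains from Section~\ref{sec:smale_chains} to get a pivot whose leaf crosses $\cF^\pm(x)$. Once such an anchor $y$ sits in a TL-subquadrant of $x$, the proportionality you proved says that a non-cyclic $H_x$ would make the orbit $H_x\cdot y$ accumulate (Lemma~\ref{lem:cyclic_or_accumulation}), contradicting the discreteness of $\mathrm{Sing}\cup\mathrm{Pivot}$. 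Your ``$g_n$ eventually fixes the anchor'' intuition is morally this argument, but the substance is in producing the anchor, and that requires the structure theory of lozenges and wandering regions rather than a local density claim near $x$.
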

 
\begin{theorem}[Atoroidal implies transitive]\label{thm:transitive_or_torus}
For any cocompact Anosov-like action, $G$ either contains a subgroup isomorphic to $\bZ^2$ or the action is topologically transitive. 
\end{theorem}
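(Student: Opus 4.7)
The plan is to prove the contrapositive: assuming the action is not topologically transitive, I will exhibit two commuting elements of infinite order in $G$ with no common power, generating a $\bZ^2$ subgroup. The first step is to apply the trichotomy of Proposition~\ref{prop_trichotomy}, which separates the discussion into the trivial, skew, and ``generic'' (prongs or non-separated leaves) cases. The two degenerate cases can be handled separately: in the trivial case the bifoliated structure of $P$ is so rigid that a cocompact Anosov-like action produces $\bZ^2$ by elementary means, and in the skew case the classical structure theorem for skew actions forces either topological transitivity (contradicting the assumption) or a $\bZ^2$ to appear directly. This reduces the problem to the main case where $P$ has at least one prong or a pair of non-separated leaves.

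In this main case, I would invoke the basic set decomposition for Anosov-like actions developed earlier in the paper. Non-transitivity forces the existence of a non-empty $G$-invariant open wandering region $U\subset P$; fix a connected component $V$ of $U$. The boundary $\partial V$ consists of leaves of $\cF^\pm$, and Axioms~\ref{Axiom_prongs_are_fixed} and~\ref{Axiom_non-separated} imply that every such boundary leaf has nontrivial stabilizer --- each is either singular or non-separated from another boundary leaf. Cocompactness then ensures that the combinatorial pattern of $\partial V$ is periodic under the $G$-action, so only finitely many local configurations occur modulo $G$. The technical heart of the proof is to extract from this periodic boundary data a \emph{lozenge}: a bounded rectangular region whose four sides lie on two $\cF^+$-leaves and two $\cF^-$-leaves, each with nontrivial stabilizer.

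Given such a lozenge, let $g, h \in G$ generate the stabilizers of an $\cF^+$-side and a transverse $\cF^-$-side, respectively. By Axiom~\ref{Axiom_A1} together with Remark~\ref{rem_single_fixed_point}, $g$ fixes a unique point on its $\cF^+$-leaf and acts topologically expandingly in the transverse direction, and similarly for $h$ with the roles of $\cF^+$ and $\cF^-$ reversed; both preserve the rectangle. A direct argument using the decoupling of $g$ and $h$ on transverse foliations then shows that $g$ and $h$ commute, and Theorem~\ref{thm:discrete_stabilizer} gives that each generates an infinite-order stabilizer, so $\langle g, h\rangle \cong \bZ^2$. The principal obstacle is the middle step --- extracting a lozenge from a wandering boundary using only the axioms, without appealing to pseudo-Anosov flow machinery such as separating tori. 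Here Axiom~\ref{Axiom_totallyideal} (no totally ideal quadrilaterals) is likely essential for ruling out degenerate boundary configurations, and care must be taken so that the fixed leaves guaranteed by Axiom~\ref{Axiom_dense} do not obstruct the rigid rectangular structure on $\partial V$.
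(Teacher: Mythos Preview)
Your final step has a genuine gap that breaks the argument. You propose to take $g$ generating the stabilizer of an $\cF^+$-side of the lozenge and $h$ generating the stabilizer of a transverse $\cF^-$-side, and then argue they commute and generate $\bZ^2$. But by Remark~\ref{rem_single_fixed_point} and Lemma~\ref{lem:two_fix_points}, the unique fixed point of $g$ on its $\cF^+$-leaf is precisely a \emph{corner} of the lozenge, and likewise for $h$; since adjacent sides share a corner, both $g$ and $h$ fix the same point $c\in P$ (in fact each preserves the whole lozenge and fixes both corners). Thus $\langle g,h\rangle$ is contained in the stabilizer of $c$, which by Theorem~\ref{thm:discrete_stabilizer} is virtually cyclic in the non-$\bR$-covered case. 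You therefore obtain at most a virtually-$\bZ$ group, never $\bZ^2$. The ``decoupling on transverse foliations'' you invoke does not exist here: $g$ and $h$ act hyperbolically from the \emph{same} fixed point, not independently along separate axes.

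The paper's mechanism for producing $\bZ^2$ is genuinely different and avoids this trap. Rather than looking at a single lozenge, one builds a bi-infinite \emph{Smale chain} $\cW$ of wandering lozenges (this is where the structural work on boundary leaves and wandering quadrants culminates, Theorem~\ref{thm:Smale_chains_separate}). Cocompactness is then used in a pigeonhole fashion: the lozenges of $\cW$ fall into finitely many $G$-orbits (Lemma~\ref{lem:finitely_many_orbits}), so some nontrivial $g\in G$ sends a lozenge of $\cW$ to another lozenge of $\cW$, and one arranges that $g$ acts as a free translation along a sub-chain. The second generator $h$ is the generator of the (virtually cyclic, by Theorem~\ref{thm:discrete_stabilizer}) pointwise stabilizer of all corners of the chain. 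These commute because $g$ permutes the set of corners that $h$ fixes, so $ghg^{-1}$ lies in the same cyclic stabilizer as $h$; one $\bZ$-factor fixes corners, the other moves them. Your proposal never produces the free, translating element --- that is the missing idea. As a minor point, your separate treatment of the trivial and skew cases is also off: in those cases there is a unique Smale class and the action is automatically transitive, so there is nothing to prove rather than a $\bZ^2$ to find.
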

(See Theorem \ref{thm_cocompact_Z2} for a stronger statement.)  As a consequence of this, we have: 

\begin{corollary}\label{cor_atoroidal_transitive}
Any topological pseudo-Anosov flow on an algebraically atoroidal 3-manifold is transitive.
\end{corollary}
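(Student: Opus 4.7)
The plan is to deduce the corollary directly from Theorem \ref{thm:transitive_or_torus} applied to the orbit space action, together with the standard dictionary between properties of the flow and properties of this action.

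First, let $\phi$ be a topological pseudo-Anosov flow on an algebraically atoroidal closed $3$-manifold $M$, and let $\mathcal{O}_\phi$ be its orbit space with the two singular foliations $\cF^\pm$. By Proposition \ref{prop:Orbit_space_anosov_like}, the induced $\pi_1(M)$-action on $\mathcal{O}_\phi$ is Anosov-like in the sense of Definition \ref{def_action}. This action is cocompact because $M$ is compact: a fundamental domain in the universal cover projects to a compact subset of $\mathcal{O}_\phi$ whose $\pi_1(M)$-translates cover $\mathcal{O}_\phi$. Since $M$ is algebraically atoroidal, $\pi_1(M)$ contains no subgroup isomorphic to $\bZ^2$. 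Applying Theorem \ref{thm:transitive_or_torus}, the $\pi_1(M)$-action on $\mathcal{O}_\phi$ must be topologically transitive.

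Second, I would translate topological transitivity of the action back to topological transitivity of the flow. Let $\pi_{\mathcal{O}} : \widetilde{M} \to \mathcal{O}_\phi$ be the quotient by the lifted flow, and let $p : \widetilde{M} \to M$ be the universal covering. The fibers of $\pi_{\mathcal{O}}$ are exactly the orbits of the lifted flow, and for any orbit $\gamma$ of $\phi$ in $M$, the set $\pi_{\mathcal{O}}(p^{-1}(\gamma))$ is a single $\pi_1(M)$-orbit in $\mathcal{O}_\phi$. Since $p$ and $\pi_{\mathcal{O}}$ are continuous and open, a $\pi_1(M)$-orbit is dense in $\mathcal{O}_\phi$ if and only if the corresponding flow orbit is dense in $M$. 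Thus the existence of a dense $\pi_1(M)$-orbit in $\mathcal{O}_\phi$ from the previous paragraph yields a dense orbit of $\phi$, i.e.\ topological transitivity of $\phi$.

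The only mildly subtle point is verifying the equivalence between topological transitivity of the $\pi_1(M)$-action on $\mathcal{O}_\phi$ (existence of a dense orbit) and topological transitivity of $\phi$; everything else is an immediate application of results already established in the excerpt. There is no essential obstacle: the argument is essentially a one-line reduction to Theorem \ref{thm:transitive_or_torus} once the orbit space dictionary is in place.
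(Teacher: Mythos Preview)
Your proposal is correct and matches the paper's approach: the paper states the corollary as an immediate consequence of Theorem~\ref{thm:transitive_or_torus} (via its strengthening Theorem~\ref{thm_cocompact_Z2}), and you have correctly filled in the straightforward dictionary step translating topological transitivity of the $\pi_1(M)$-action on $\mathcal{O}_\phi$ back to transitivity of the flow. The only thing to add is that the paper does not spell out this last translation explicitly, so your justification via openness of $\pi_{\mathcal{O}}$ and $p$ is a useful addition rather than a deviation.
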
 
The proof does not require any 3-manifold topology (beyond showing that the orbit space of a flow is Hausdorff), nor require any smoothness of the flow, so applies also to topological (pseudo)-Anosov flows in the sense of \cite[Def. 1.1.10]{BM_book} or \cite[Def. 5.9]{AgolTsang}.
We also show
\begin{theorem} \label{thm_dense_orbits_is_transitive}
If a pseudo-Anosov flow $\phi$ has a dense set of periodic orbits then it is transitive. 
\end{theorem}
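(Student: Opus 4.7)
The plan is to reduce to Theorem \ref{thm:transitive_or_torus} via the orbit space dictionary. First I would set up the standard translation: periodic orbits of $\phi$ in $M$ are in bijection with points of $\mathcal{O}_\phi$ whose $\pi_1(M)$-stabilizer is nontrivial, and topological transitivity of the flow is equivalent to topological transitivity of the $\pi_1(M)$-action on $\mathcal{O}_\phi$ (open $\phi$-invariant sets in $M$ descend to open $\pi_1(M)$-invariant sets in $\mathcal{O}_\phi$ and vice versa). Thus the hypothesis becomes ``the set of $x \in \mathcal{O}_\phi$ with nontrivial stabilizer is dense'' and the goal becomes ``the $\pi_1(M)$-action on $\mathcal{O}_\phi$ is topologically transitive''.

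By Proposition \ref{prop:Orbit_space_anosov_like} the action is Anosov-like, and it is cocompact since $M$ is compact. Theorem \ref{thm:transitive_or_torus} therefore immediately closes one case: either the action is transitive (and we are done), or $\pi_1(M)$ contains a subgroup isomorphic to $\bZ^2$. The remaining task is to show that the second alternative is inconsistent with the density hypothesis.

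To rule out the $\bZ^2$ case, I would use the finer structural conclusion of Theorem \ref{thm_cocompact_Z2} to extract, from any $H \cong \bZ^2 \le \pi_1(M)$, a specific $H$-invariant configuration in $\mathcal{O}_\phi$ (a chain of lozenges, or an analogous ``wandering'' region sitting between two basic sets in the Smale order built earlier in the paper). By Theorem \ref{thm:discrete_stabilizer}, no point of $\mathcal{O}_\phi$ can have $\bZ^2$ stabilizer in the presence of prongs or non-separated leaves, so the two commuting generators of $H$ have distinct corner fixed points. The essential point is then that the open interior of this configuration is disjoint from every fixed point of every nontrivial element of $\pi_1(M)$: points inside correspond to flow lines in $M$ whose forward and backward accumulation sets lie in strictly distinct basic sets, hence the flow line is wandering and in particular not periodic. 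This interior then descends to a nonempty open subset of $M$ containing no periodic orbit, contradicting density.

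The main obstacle is the last point: pinpointing a concrete open subset $U\subset \mathcal{O}_\phi$, attached to each $\bZ^2$ subgroup, which provably contains no point of nontrivial stabilizer. This is where I expect the dictionary for basic sets and the Smale order developed in the paper to do the real work, tying ``non-separated leaf behavior forced by $\bZ^2$'' to ``wandering region transverse to all closed orbits''. In the degenerate cases of Proposition \ref{prop_trichotomy} where $\mathcal{O}_\phi$ has no prongs and no non-separated leaves, the lozenge picture is unavailable, but in those cases ($\mathcal{O}_\phi$ trivial or skew) the flow is respectively a suspension or an $\bR$-covered Anosov flow and is known to be transitive outright, so the $\bZ^2$ alternative of Theorem \ref{thm:transitive_or_torus} either never occurs or is compatible with transitivity.
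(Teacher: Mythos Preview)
Your translation step is correct and is exactly what the paper uses: dense periodic orbits gives dense $\Fix_G$ in $\mathcal{O}_\phi$, and transitivity of the flow is equivalent to topological transitivity of the $\pi_1(M)$-action on $\mathcal{O}_\phi$. But the route you take after that has a genuine gap, and in fact misses a much shorter argument already available.

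The dichotomy of Theorem~\ref{thm:transitive_or_torus} is not exclusive. Plenty of transitive pseudo-Anosov flows live on manifolds whose fundamental group contains $\bZ^2$ (any $\bR$-covered Anosov flow on a graph manifold, for instance, or any flow with a periodic Seifert piece). So ``the second alternative is inconsistent with the density hypothesis'' is simply false, and the program of \emph{ruling out the $\bZ^2$ case} cannot succeed as stated. Relatedly, you have the direction of Theorem~\ref{thm_cocompact_Z2} backwards: it does not take an arbitrary $\bZ^2\le\pi_1(M)$ and manufacture a wandering region; it takes a \emph{Smale chain} as input and finds a $\bZ^2$ stabilizing it. An abstract $\bZ^2$ may well stabilize a chain of \emph{non-wandering} lozenges (or a scalloped region), whose interior is full of fixed points, so no contradiction with density arises.

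The paper's argument is the direct one through Theorem~\ref{thm:transitive}: dense $\Fix_G$ is exactly Axiom~\ref{Axiom_fixed_points_dense}, and the second sentence of Theorem~\ref{thm:transitive} says this is equivalent to there being a unique Smale class, in which case the action on $P$ is topologically transitive. Translating back gives transitivity of the flow. No $\bZ^2$ subgroups, no Smale chains, no case analysis on the trichotomy are needed. If you want to phrase it contrapositively in the spirit of your attempt: if the flow were not transitive there would be at least two Smale classes, hence (Theorem~\ref{thm:Smale_chains_separate}) a Smale chain, and the interior of any wandering lozenge in that chain is an open set disjoint from $\Fixbar_G$, contradicting density. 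But this is just unwinding the last paragraph of the proof of Theorem~\ref{thm:transitive}.
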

Again, this is well known for smooth Anosov flows, but to our knowledge has not been proved in the topological and pseudo-Anosov setting.  

\begin{rem}\label{rem_single_fixed_point}
As a consequence of the axioms, if a leaf $l$ has nontrivial stabilizer, then all nontrivial elements of the stabilizer have a common, unique fixed point in $l$.  For singular leaves, this is a consequence of Axioms \ref{Axiom_A1} and \ref{Axiom_prongs_are_fixed}. 
In the nonsingular case, this follows from Axiom \ref{Axiom_A1} together with Solodov's Theorem (see \cite[Lemma 2.6]{BFM}).  Thus, Theorem \ref{thm:discrete_stabilizer} has an equivalent statement in terms of stabilizers of {\em leaves}.  
\end{rem}

An important tool in Theorem \ref{thm:transitive_or_torus} and \ref{thm_dense_orbits_is_transitive} as well as the general structure theory is the definition and description of {\em Smale classes}.  

\smallskip
\noindent  \textbf{Smale classes and chains.}
Brunella \cite{Brunella} showed that the {\em basic sets} of an Anosov flow are separated by a collection of tori transverse to the flow.   In the nontransitive case any basic set contains ``boundary" orbits, which are periodic orbits such that (at least) one of their rays are contained in the wandering set.  A related theory is developed for Smale diffeomorphisms of surfaces in \cite{BJL}. 

Here we generalize this theory to pseudo-Anosov flows, where the situation is significantly more subtle.
One issue is that, in this setting, non-wandering components can be strictly finer than chain-recurrent components (see Proposition \ref{prop:weird_loops} for an example).  This gives rise to unusual dynamical behavior not seen in the Anosov setting.  To address this, we introduce a structural theory of {\em Smale classes}, which generalize the notion of a {\em basic set} for an Anosov flow.  These inherit a natural partial order, which we denote by $\leq_G$ (see Definition \ref{def:smale_class}).  We show:
\begin{restatable}{theorem}{transitiveonclasses}\label{thm:transitive} 
If $G$ has an Anosov-like action on a bifoliated plane, then $G$ acts topologically transitively on each Smale class.  Moreover, an action has a unique Smale class if and only if the set of points fixed by group elements is dense in $P$. 
\end{restatable}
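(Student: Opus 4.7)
The plan is to mimic, in this topological/combinatorial setting, the classical proof that basic sets in Smale's spectral decomposition support topologically transitive dynamics. The two main ingredients are Axiom \ref{Axiom_A1}, which provides local hyperbolic dynamics at each fixed point, and the chain structure encoded by the Smale order $\leq_G$.

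For topological transitivity on a Smale class $\Lambda$, the first step is to check that the $G$-fixed points lying in $\Lambda$ are dense in $\Lambda$; this should fall out of Definition \ref{def:smale_class}, since $\Lambda$ is built as an equivalence class of $\leq_G$, a relation that is generated by fixed points together with foliation-compatible moves. Given relatively open $U, V \subset \Lambda$, I would pick fixed points $p \in U$ and $q \in V$. Since $p$ and $q$ lie in the same Smale class, $p \leq_G q \leq_G p$ unwinds to a finite chain $p = p_0, p_1, \ldots, p_n = q$ of fixed points in which successive $p_i, p_{i+1}$ lie on a common stable or unstable leaf (after a $G$-translation). At each $p_i$, Axiom \ref{Axiom_A1} yields a stabilizer element $h_i$ expanding one of the two foliation leaves through $p_i$ and contracting the other. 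Iterating the $h_i$ and interleaving them with the $G$-elements linking the chain produces a single $g \in G$ whose action carries a small neighborhood of $p$ onto a set meeting a neighborhood of $q$, yielding $g(U) \cap V \neq \emptyset$.

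For the \emph{moreover} statement, suppose first that the $G$-fixed points are dense in $P$. Then any two fixed points $p, q$ can be connected by a finite sequence of arbitrarily nearby fixed points, each elementary step justified by applying Axiom \ref{Axiom_A1} at one of the intermediates; hence all fixed points lie in a single Smale class, forcing uniqueness. Conversely, if there is a unique Smale class $\Lambda$, then density of $G$-fixed leaves from Axiom \ref{Axiom_dense} together with the lack of a competing class should force $\Lambda$ to meet every nonempty open set in $P$; combined with the density of fixed points inside $\Lambda$ established in the first part, this gives density of the full set of $G$-fixed points throughout $P$.

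The main obstacle will be making the chain construction of $g$ precise: one must control how the successive applications of the local hyperbolic generators $h_i$ transform the neighborhood, ensuring that it keeps straddling the chain and does not degenerate along the wrong foliation direction. This is a topological analogue of the classical local product structure / shadowing argument, and will likely require selecting $h_i$ or $h_i^{-1}$ carefully so that the expansion at $p_i$ matches the leaf linking $p_i$ to $p_{i+1}$, and interleaving these carefully with the links provided by the definition of $\leq_G$.
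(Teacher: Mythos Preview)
Your broad intuition---use the hyperbolic dynamics of Axiom~\ref{Axiom_A1} to push a neighborhood from one fixed point to another---is correct and is exactly what the paper does. But your description of the Smale relation is a misreading of Definition~\ref{def:smale_class}, and this causes you to miss the one genuine technical ingredient.

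The relation $x\leq_G y$ says $\cF^+(x)\cap\cF^-(gy)\neq\emptyset$ for some $g$. It does \emph{not} unwind to a chain of fixed points sitting on common leaves: $p\sim_G q$ gives you directly a single intersection point $z_0=\cF^+(p)\cap\cF^-(gq)$, not a sequence $p_0,\ldots,p_n$ with $p_{i+1}\in\cF^\pm(p_i)$. The crucial issue is that $z_0$ is a priori just some point of the plane---it need not be a fixed point, nor even lie in $\Lambda$. What you need, and what is missing from your outline, is exactly Proposition~\ref{prop_basic_product_structure} (the local product structure on Smale classes): it says that arbitrarily close to $z_0$ there is a nonsingular fixed point $z\in\Lambda$. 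Only then can you invoke the stabilizer of $z$ to push $U$ toward $hV$. The paper's argument therefore needs just \emph{one} intermediate fixed point, not a long chain; once you have the product structure, the proof is two lines. Your ``chain construction'' with many $h_i$'s is both unnecessary and unavailable, since the definition does not hand you intermediate fixed points---Proposition~\ref{prop_basic_product_structure} is what manufactures them.

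The same gap appears in your treatment of the ``moreover'' statement. For the direction ``unique Smale class $\Rightarrow$ $\Fix_G$ dense,'' you write that Axiom~\ref{Axiom_dense} plus uniqueness ``should force $\Lambda$ to meet every open set.'' But Axiom~\ref{Axiom_dense} only gives dense fixed \emph{leaves}, not dense fixed \emph{points}; to pass from a crossing of two fixed leaves in an open set $U$ to an actual point of $\Lambda$ in $U$, you again need Proposition~\ref{prop_basic_product_structure}. For the converse, your sketch (``connect nearby fixed points'') is essentially the paper's argument that $\cR_G$ is dense and any product neighborhood meets at most one Smale class, so connectedness of $P$ minus the singular set gives a unique class; this part is fine once made precise.
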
 
This answers a problem from \cite[Remark 2.4]{BFM}, see Corollary \ref{cor:transitive}.

By \cite{Brunella}, when $M$ is orientable and compact, any two basic sets of an Anosov flow are separated by embedded tori in $M$ transverse to the flow.  
We describe the counterpart structure in the orbit space.  These are called {\em Smale chains}.  We show: 
\begin{theorem}\label{thm:smale_chain_vague}
Any Anosov-like action with at least two distinct Smale classes admits Smale chains; more precisely, if $x \leq_G y$ are in distinct Smale classes, then there is a Smale chain ``separating'' these classes. 
\end{theorem}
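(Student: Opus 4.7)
The plan is to construct the separating Smale chain from a minimal chain of alternating stable/unstable leaves connecting the two Smale classes, and then verify the separation property using the absence of totally ideal quadrilaterals.

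Given $x \leq_G y$ in distinct Smale classes, the order $\leq_G$ should unfold into a finite sequence of alternating leaves $\ell_0, \ell_1, \dots, \ell_n$ with $x \in \ell_0$, $y \in \ell_n$, and consecutive leaves meeting transversally (this is the natural ``chain of stable/unstable connections'' generalizing the Smale relation on basic sets). First I would take such a sequence of minimal length, and use Theorem \ref{thm:transitive} (transitivity on each Smale class) together with the density of fixed leaves (Axiom \ref{Axiom_dense}) to argue that each intermediate leaf $\ell_i$ must be fixed by some $g_i \in G$: otherwise, we could push $\ell_i$ slightly within its leaf space to land on a fixed leaf in the same Smale class as either $\ell_{i-1}$ or $\ell_{i+1}$, contradicting minimality and the fact that $x,y$ lie in distinct classes. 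By Axiom \ref{Axiom_A1} and Remark \ref{rem_single_fixed_point}, each such $\ell_i$ carries a canonical fixed point $p_i$, so one may further arrange the chain so that consecutive leaves meet at these fixed points. Moreover, a minimality argument should force each $\ell_i$ to be non-separated from a neighboring leaf in its leaf space, which is then also $G$-fixed by Axiom \ref{Axiom_non-separated}; this is what converts the abstract chain into a genuine Smale chain.

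The next step is to verify that this chain separates $P$ into two components whose closures contain the Smale classes of $x$ and $y$ respectively. I would argue by contradiction: if a continuous path joined a neighborhood of one class to a neighborhood of the other without crossing the chain, then together with arcs in the $\ell_i$ and the non-separated partners (which are also $G$-fixed), one would bound a region in $P$ whose ideal corners are limits of $G$-fixed leaves. Combining with the chain structure, this yields a configuration forbidden by Axiom \ref{Axiom_totallyideal}, namely a totally ideal quadrilateral (or something that can be reduced to one using Remark \ref{rem_wandering_totally_ideal}).

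The main obstacle, I expect, is executing the ``push and shorten'' step rigorously. Transitivity on a Smale class only allows motion within the class, so one must simultaneously track how perturbing an endpoint of a subchain forces changes in the rest, and verify that replacements genuinely reduce the length rather than move the chain sideways forever. Handling this cleanly almost certainly requires exploiting non-Hausdorff branching in the leaf spaces (i.e. Axiom \ref{Axiom_non-separated}) to locate ``canonical'' transition leaves, and leveraging the trichotomy of Proposition \ref{prop_trichotomy} to rule out pathological global behavior (the skew case cannot arise since it admits a unique Smale class, so we are in the prongs or non-separated-leaves regime, which is exactly what makes Axiom \ref{Axiom_non-separated} bite).
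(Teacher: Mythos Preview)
Your proposal has a fundamental gap: it misidentifies what a Smale chain is. A Smale chain (Definition~\ref{def_Smale_chain}) is a bi-infinite chain of \emph{wandering lozenges}, not a chain of alternating fixed leaves. Your sketch produces fixed leaves $\ell_i$ meeting at fixed points $p_i$, then asserts that non-separation ``converts the abstract chain into a genuine Smale chain,'' but this conversion is the entire content of the theorem and nothing in the sketch produces lozenges, let alone verifies that their interiors are disjoint from $\Fixbar_G$.

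The paper's argument is structurally quite different from yours. From $x \leq_G y$ one immediately has a single intersection point $p = \cF^+(x) \cap \cF^-(gy)$; there is no need for a long alternating chain of leaves. Since $x \nsim_G y$, the point $p$ is not in $\Fixbar_G$. The real work is the structure theorem for $\Fixbar_G$ on a leaf (Theorem~\ref{thm:complement_of_fixbar}): every connected component of $\cF^+(x) \setminus \Fixbar_G$ is either the intersection of $\cF^+(x)$ with a finite line of wandering lozenges, or is an infinite ray that is the shared side of two adjacent wandering lozenges. This is proved via a sequence of lemmas showing that wandering TL-substrips force perfect fits (Lemma~\ref{lem:substrip_perfect_fit}), that wandering TL-subquadrants of fixed points are lozenges (Corollary~\ref{cor:wandering_then_lozenge}), and that boundary points of $\Fixbar_G$ along a leaf lie on fixed transverse leaves (Lemma~\ref{lem:boundary_fixed}); Axiom~\ref{Axiom_totallyideal} enters only in this last lemma, to rule out one specific configuration. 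Once $p$ is located inside a finite line of wandering lozenges, a separate inductive argument (Proposition~\ref{prop:Smale_chain}, using Lemmas~\ref{lem:wand_subquadrant_share_sides} and~\ref{lem:singular_wandering_quadrants}) extends that finite line to a bi-infinite Smale chain by showing that every corner of a wandering lozenge has another wandering lozenge adjacent to it. The ``separation'' statement is then simply that $p$ lies inside a lozenge of the chain and one of the chain's corners lies in the closure of the Smale class of $x$; it is not a topological separation of $P$ into two halves, and no contradiction via ideal quadrilaterals is used for this.

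Your ``push and shorten'' mechanism, as you suspected, does not have a clear formulation: the relation $x \leq_G y$ is already witnessed by a single leaf intersection, so there is no length to minimize, and perturbing leaves within a Smale class does not by itself produce the wandering-lozenge structure that defines a Smale chain.
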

See Theorem \ref{thm:Smale_chains_separate} for a more precise statement, in which we clarify what is meant by separating.  
Together with Theorem \ref{thm_cocompact_Z2}, this shows that, provided an Anosov-like action is cocompact, distinct Smale classes can be separated by $\bZ^2$-invariant wandering chains. 
 
 Translating this to the language of pseudo-Anosov flows, we show that wandering chains separate distinct non-wandering components; whereas the (often coarser) chain-recurrent components are those separated by Lyapunov functions.

\subsection*{``Reduction of dimension":  classification by actions on $S^1$.}
A bifoliated plane comes equipped with a natural compactification by a boundary at infinity (see \cite{Bonatti_boundary,Fen_ideal_boundaries,Mather}) to which any foliation-preserving action by homeomorphisms extends.  This compactification has become an important tool for the study of group actions on such planes and, as a special case, the study of pseudo-Anosov flows.  It was shown in \cite{BFM} that for transitive Anosov-like actions, the induced action at infinity uniquely determines the action on the interior.  Here, using tools from \cite{prelaminations}, we show this is true generally, with an independent proof. 
\begin{restatable}{theorem}{boundarydeterminesaction}\label{thm:boundary_determines_action}
Let $G$ be a Smale-bounded Anosov-like action on a bifoliated plane $P$.  The induced action on $\partial P$ uniquely determines the action of $G$ up to conjugacy.  
\end{restatable}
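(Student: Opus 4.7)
The plan is to reconstruct, purely from the boundary action, the pair of transverse laminations on $\partial P$ formed by endpoints of leaves of $\cF^\pm$, and then invoke the reconstruction machinery of \cite{prelaminations} to rebuild $P$ together with its bifoliation and the $G$-action from this boundary data. Uniqueness of the reconstruction then yields the desired conjugacy: two Anosov-like actions with $G$-equivariantly conjugate boundary actions must produce $G$-equivariantly conjugate bifoliated planes.

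First I would identify a dense family of leaves via fixed-point data. For any nontrivial $g\in G$, every fixed point of $g$ on $\partial P$ should arise as an endpoint of a leaf fixed by $g$: by Axioms \ref{Axiom_A1} and \ref{Axiom_prongs_are_fixed}, $g$ acts as a topologically hyperbolic map at each of its fixed points in $P$, and the two fixed leaves through a regular fixed point (respectively the $2p$ prongs through a $p$-prong singularity) limit to fixed points of $g$ on $\partial P$. Using the cyclic order on $\partial P$ together with the local contracting/expanding dynamics of $g$ near $\partial P$, one can match these boundary fixed points into tuples that are exactly the endpoint sets of single fixed leaves. Since leaves in transverse foliations have linking endpoint pairs while leaves in the same foliation have unlinked endpoints, the resulting tuples split naturally into two disjoint $G$-invariant families, realizing $\cF^+$ and $\cF^-$ (up to the irrelevant swap).

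Next, taking the closure of these collections of tuples in $\partial P$, Axiom \ref{Axiom_dense} ensures that one recovers exactly the boundary laminations of $\cF^\pm$, while Axioms \ref{Axiom_prongs_are_fixed} and \ref{Axiom_non-separated} guarantee that singular leaves and non-separated leaves are also captured (their endpoints are detectable through the fixed-point structure of their stabilizers, whose existence is forced by those axioms). Axiom \ref{Axiom_totallyideal} translates into the compatibility condition on the pair of boundary prelaminations that rules out pathological ideal configurations. The output is a $G$-invariant pair of prelaminations on $\partial P$ of the kind required by the reconstruction framework of \cite{prelaminations}.

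The main obstacle I expect is the matching step: given only the set-theoretic fixed-point data on $\partial P$, one must canonically organize it into the correct tuples and the correct foliation assignment, doing so coherently across all of $G$. In particular, at prong singularities one must reassemble $2p$ boundary fixed points into $p$ leaf-endpoint pairs in the right pronged pattern, and one must verify that the closure of the prelaminations built from fixed leaves is exactly the full boundary lamination of $\cF^\pm$ rather than something strictly coarser (this is where density Axiom \ref{Axiom_dense} and the single-fixed-point feature noted in Remark \ref{rem_single_fixed_point} are crucial). Once this intrinsic reconstruction of the pair of boundary prelaminations is carried out, feeding it into the machinery of \cite{prelaminations} returns a bifoliated plane with a canonical $G$-action that is $G$-equivariantly homeomorphic to $(P,\cF^\pm)$, completing the proof.
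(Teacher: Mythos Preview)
Your approach has the same overall architecture as the paper's---extract from the boundary action a pair of $G$-invariant prelaminations and feed them into the reconstruction result of \cite{prelaminations}---but the step where you ``match boundary fixed points into tuples that are exactly the endpoint sets of single fixed leaves'' has a genuine gap.

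First, the assertion that every fixed point of a nontrivial $g$ on $\partial P$ is an endpoint of a leaf fixed by $g$ is false. By Proposition~\ref{prop:boundary_action_general}(3), there exist nontrivial $g$ acting \emph{freely} on $P$ (hence fixing no leaf, by Axiom~\ref{Axiom_A1}) that nonetheless have exactly four fixed points on $\partial P$, arising from an invariant scalloped region. More generally, when $g$ fixes the corners of a chain of lozenges, its boundary fixed set is the \emph{closure} of the set of side-endpoints (Proposition~\ref{prop:boundary_action_general}(2)) and can contain points that are not endpoints of any leaf.

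Second, even for elements with interior fixed points, if $g$ fixes a corner then by Lemma~\ref{lem:two_fix_points} it fixes all corners of an entire chain of lozenges, and the resulting boundary fixed set may be large and intricately arranged; the vague appeal to ``cyclic order plus local expanding/contracting dynamics'' does not single out which pairs of boundary points are the two endpoints of a single leaf. This is exactly the ambiguity that makes the matching step nontrivial.

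The paper sidesteps both problems by restricting to \emph{non-corner} fixed leaves. These are detectable purely from the boundary action: some power of $g$ fixes a non-corner point if and only if it has exactly four boundary fixed points, alternately attracting and repelling (Propositions~\ref{prop:boundary_action_general} and~\ref{prop:four_fixed_points}); the two attractors and the two repellers then give the endpoint pairs of $\cF^\pm(x)$ unambiguously. The price is that one must now prove the non-corner fixed leaves are \emph{dense}---this is Corollary~\ref{cor_simple_leaves_are_dense}, and it is where the real work lies, requiring the Smale-class machinery of Sections~\ref{sec:smale_class}--\ref{sec:density}. Your proposal invokes only Axiom~\ref{Axiom_dense}, which gives density of \emph{all} fixed leaves but not of the non-corner ones, and that is precisely the gap between the outline and a proof.
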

 The definition of Smale-bounded is given in Definition \ref{def_smale_bounded}. For the purpose of this introduction, it is enough to know that every Anosov-like action with finitely many Smale classes is Smale-bounded (Proposition \ref{prop_finite_or_cocmpact_implies_Smale_bounded}). 

Consequently, pseudo-Anosov flows on compact 3-manifolds are determined up to orbit equivalence by the action of $\pi_1(M)$ on $\partial P$: 

\begin{theorem}[Circle actions classify flows] \label{thm_action_infinity}
The action of $\pi_1(M)$ on $S^1$, obtained from the compactification of the orbit space by a circle at infinity, (up to conjugacy) completely determines a pseudo-Anosov flow up to orbit equivalence.  
\end{theorem} 

The proof of this theorem has two ingredients: One is the work of the same authors done in \cite{prelaminations}, which allows to uniquely recover the bifoliated plane (and, by uniqueness, also the group action) from relatively ``sparse'' data on the boundary (Corollary E of \cite{prelaminations}, restated as Theorem \ref{thm_prelamination_to_foliation} below). The second ingredient, obtained in this article, consists in proving that when the foliations are not $\bR$-covered, the \emph{non-corner fixed leaves} are dense in $P$. The endpoints of such leaves are sparse -- it is a countable set -- but sufficiently rich so that we can apply the result of \cite{prelaminations}. A non-corner fixed leaf is a leaf $l$ of $\cF^\pm$, fixed by some nontrivial element $g\in G$, and such that the fixed point $x$ of $g$ on $l$ is not the corner of any lozenge (see Definition \ref{def_lozenge}). Such non-corner fixed points and fixed leaves can be ``seen"  by the action of $G$ on the circle at infinity: an element $g$ fixes a non-corner leaf if and only if the subgroup generated by $g$ has exactly $4$ fixed points on $\partial P$ such that two are attracting and two are repelling (see Propositions \ref{prop:boundary_action_general} and \ref{prop:four_fixed_points}). Thus, having two actions that are conjugated on the circle at infinity forces the non-corner fixed leaves for both actions to be ``the same'', and the result of \cite{prelaminations} applies. 

Thus, the technical work toward the proof of Theorem \ref{thm:boundary_determines_action} is to show the density of non-corner fixed leaves.  This is obtained in Corollary \ref{cor_simple_leaves_are_dense}.  In the case of topologically transitive Anosov-like actions, this is comparatively easy, and was obtained in \cite{BFM}. In the nontransitive case however, it relies on the description of Smale classes and Smale chains that we obtain here. 

There is also a shortcut one can apply in the case of smooth Anosov flows by using the classical Smale Decomposition Theorem and the results we prove in Section \ref{sec:density}.  More concretely, the classical Smale Decomposition Theorem can be used to show that the set of periodic orbits of a flow that are ``dense enough'' in an attractor will lift to non-corner fixed points, which is sufficient to proceed with the proof (see Remark \ref{rem_density_Anosov_case}).

\smallskip
\noindent \textbf{The axioms for transitive Anosov-like actions.}
Theorem \ref{thm:transitive} answers the question posed in \cite[Remark 2.4]{BFM}, and in fact shows the axioms given there for transitive Anosov-like actions are redundant and can be simplified.  To explain this, we recall that \cite{BFM} gave
a list of strictly stronger axioms than those in Definition \ref{def_action} for groups acting on bifoliated planes, capturing the behavior of \emph{transitive} pseudo-Anosov flows.  
 In the transitive case, Axiom \ref{Axiom_dense} does not appear, and in its place we have the following two axioms.
{\it 
\begin{enumerate}[label=(A2\alph*)] 
\item \label{Axiom_fixed_points_dense} The set of points that are fixed by some nontrivial element of $G$ is dense in $P$.
\item \label{Axiom_topologically_transitive} The action is topologically transitive, i.e., has a dense orbit.
\end{enumerate}}
An immediate consequence of Theorem \ref{thm:transitive} is the following.  

\begin{corollary} \label{cor:transitive}
 For transitive Anosov-like actions, Axiom \ref{Axiom_fixed_points_dense} implies Axiom \ref{Axiom_topologically_transitive}. Thus the only difference between a transitive Anosov-like action and a general Anosov-like action is that the former satisfies Axiom \ref{Axiom_fixed_points_dense}. 
\end{corollary}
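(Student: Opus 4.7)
The proof plan is a direct application of Theorem~\ref{thm:transitive}. To establish the first sentence of the corollary, I assume we have an Anosov-like action of $G$ on a bifoliated plane $P$ satisfying Axiom~\ref{Axiom_fixed_points_dense}, namely that the set of points fixed by nontrivial elements of $G$ is dense in $P$. The ``moreover'' clause of Theorem~\ref{thm:transitive} then asserts that the action has a unique Smale class $X$, and its main conclusion supplies a point $x \in X$ whose $G$-orbit is dense in $X$.

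The only remaining task is to upgrade density in $X$ to density in $P$. For this, I would invoke two features of Smale classes from Definition~\ref{def:smale_class}: every point fixed by a nontrivial element of $G$ lies in some Smale class, and each Smale class is a closed subset of $P$. By the uniqueness just obtained, the dense set of fixed points guaranteed by Axiom~\ref{Axiom_fixed_points_dense} is entirely contained in $X$, and since $X$ is closed we conclude $X = P$. Thus the $G$-orbit of $x$ is dense in all of $P$, which is precisely Axiom~\ref{Axiom_topologically_transitive}.

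For the second sentence of the corollary, I would simply compare the two axiom lists. The axioms for a transitive Anosov-like action from \cite{BFM} are \ref{Axiom_A1}, \ref{Axiom_fixed_points_dense}, \ref{Axiom_topologically_transitive}, \ref{Axiom_prongs_are_fixed}, \ref{Axiom_non-separated}, \ref{Axiom_totallyideal}, while the general Anosov-like axioms replace \ref{Axiom_fixed_points_dense} and \ref{Axiom_topologically_transitive} with the weaker Axiom~\ref{Axiom_dense}. Since density of fixed points (\ref{Axiom_fixed_points_dense}) trivially implies density of fixed leaves (\ref{Axiom_dense}), and by the first part also implies \ref{Axiom_topologically_transitive}, the transitive axiom system is precisely the general one augmented by Axiom~\ref{Axiom_fixed_points_dense}.

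The only potential obstacle, such as it is, is entirely foundational: one must know that Smale classes are closed and that they contain the fixed point set of each nontrivial group element. Both facts should be immediate consequences of the formal definition of Smale class established earlier in the paper, so once Theorem~\ref{thm:transitive} is in hand the corollary is a genuine one-line deduction.
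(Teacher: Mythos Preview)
Your approach is essentially the paper's: the corollary is stated there as an immediate consequence of Theorem~\ref{thm:transitive}, with the substantive work (showing that a unique Smale class has closure equal to $P$, and that transitivity on the class then gives transitivity on $P$) already folded into the final paragraph of that theorem's proof.

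There is, however, a minor technical slip in the two ``foundational'' facts you invoke. By Definition~\ref{def:smale_class}, Smale classes live in the regular set $\cR_G$, which excludes all singular points and their leaves. Hence a singular fixed point does \emph{not} lie in any Smale class, and a Smale class need not be closed in $P$ (only in the complement of the singular leaves). The fix is easy and is exactly what the paper does: since $\mathrm{Sing}$ is closed and discrete (Lemma~\ref{lem:adjacent_corners_discrete}), density of $\Fix_G$ implies density of $\Fix_G \setminus \mathrm{Sing} \subset \cR_G$, so the unique Smale class $\Lambda$ is dense in $P$; then the open-set formulation of transitivity on $\Lambda$ (any open $U,V$ meeting $\Lambda$ satisfy $gU \cap V \cap \Lambda \neq \emptyset$ for some $g$) immediately yields transitivity on $P$, since every nonempty open set now meets $\Lambda$.
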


We point out that we also weakened slightly Axiom \ref{Axiom_non-separated} compared to \cite{BFM}. We show in Proposition \ref{prop_axiom4weak_implies_4strong} that the weaker axiom of this article imply the stronger one of \cite{BFM}.

\smallskip
Throughout the course of this paper, we illustrate many of the nuances of possible dynamical and topological behavior for Anosov-like actions with examples.   Since many of these examples are built using a similar toolkit of previously introduced techniques and constructions, we defer their discussion to a separate section (Section \ref{sec:examples}) referring the main text there when needed.   

\subsection*{Outline} 
Section \ref{sec:preliminaries} contains background on bifoliated planes and Anosov-like actions.  We extend some results from \cite{BFM} to the nontransitive case, as well as introduce definitions and a toolkit used later in this work. 

Section \ref{sec:smale_class} introduces Smale classes and the Smale order, shows that Smale classes have a product structure, and contains the proof of Theorem \ref{thm:transitive} and consequently Theorem \ref{thm_dense_orbits_is_transitive}.  In Section \ref{sec:smale_chains} we show these always exist and behave like separating tori, proving Theorem \ref{thm:smale_chain_vague}. 

Section \ref{sec:Z2invariant} gives the proof of Theorems \ref{thm:discrete_stabilizer} and \ref{thm:transitive_or_torus} and improves Theorem \ref{thm:smale_chain_vague} in the case of cocompact actions.  

Section \ref{sec:density} shows that extremal Smale classes behave like attractors and repellers, proving the property which in the flow setting translates to having a dense set of weak stable or unstable leaves containing periodic orbits unique in their free homotopy class, which is a key ingredient in the next section.  

Section \ref{sec:boundary} discusses the induced action on the boundary of the bifoliated plane, shows this is minimal for Smale-bounded actions (Theorem \ref{thm_minimal_action_circle}), and proves Theorem \ref{thm:boundary_determines_action}. 

Finally, in Section \ref{sec:examples} we give several examples to illustrate nuances in the definitions and unusual behaviors which can occur, especially in the presences of prong singularities.

\begin{acknowledgement}
TB was partially supported by the NSERC (Funding reference number RGPIN-2017-04592).  CB thanks the Banach center Warsaw and Bedlevo, where some of the ideas for this article were
developed, and the support of the Simons Foundation
Award No.~63281 granted to the Institute of Mathematics of the Polish Academy
of Sciences for the years 2021-2023. KM was partially supported by NSF CAREER grant DMS-1933598, a Sloan fellowship, and a Simons sabbatical fellowship, and thanks the Institut Henri Poincar\'e (UAR 839 CNRS-Sorbonne Universit\'e), and LabEx CARMIN (ANR-10-LABX-59-01).
\end{acknowledgement}

 \section{Preliminaries: Anosov-like actions and bifoliated planes} \label{sec:preliminaries}
 
As mentioned in the introduction, our definition of Anosov-like actions on bifoliated planes is motivated by the dynamics of the action of the fundamental group of a 3-manifold on the orbit space of a pseudo-Anosov flow.  If $\phi$ is a flow on a manifold $M$, the {\em orbit space} $\orb_\phi$ of $\phi$ is the quotient of $\wt{M}$ by orbits of the lift of $\phi$ to $\wt{M}$. Fenley \cite{Fen_Anosov_flow_3_manifolds} and Barbot \cite{Bar_caracterisation} showed that, if $\phi$ is a (pseudo)-Anosov flow\footnote{The case of pseudo-Anosov flow was treated by Fenley--Mosher in \cite{FenMosher}.} on a 3-manifold, then $\orb_\phi$ is a topological plane.  The weak stable and unstable foliations descend to transverse 1-dimensional singular foliations $\cF^\pm$ on $\orb_\phi$ preserved by the action of $\pi_1(M)$.
 
\begin{rem} \label{rem_smooth_vs_topological}
There are two common notions of Anosov and pseudo-Anosov flows in the literature: \emph{smooth} and \emph{topological} (pseudo)-Anosov flows (see, e.g., \cite{AgolTsang}, although even these definitions have some variation within them in different texts). These notions are only known to coincide, up to orbit equivalence, for \emph{transitive} pseudo-Anosov flows (\cite{Shannon,AgolTsang}). The following proposition, and thus all the results of this article, applies even for the more general topological version of the definition (equivalent to the notion of an {\em expansive flow} -- see \cite[Section 1.1]{BM_book}.)
\end{rem} 

\begin{proposition} \label{prop:Orbit_space_anosov_like}
Let $\phi$ be a pseudo-Anosov flow on a compact 3-manifold $M$.  Then the action of $\pi_1(M)$ on the orbit space $\mathcal{O}_\phi$ satisfies 
\ref{Axiom_A1}--\ref{Axiom_totallyideal}.  
\end{proposition}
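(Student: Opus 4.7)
The plan is to verify each of the five axioms (A1)--(A5) in turn, relying on well-established structural results on pseudo-Anosov flows on compact 3-manifolds due to Fenley, Fenley--Mosher, Barbot, and Mosher, adapted as needed to the topological setting.

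\textbf{Axioms \ref{Axiom_A1} and \ref{Axiom_prongs_are_fixed}.} I would handle these together, since both are about dynamics near fixed points. For \ref{Axiom_A1}, suppose a nontrivial $g \in \pi_1(M)$ fixes a leaf $l \in \cF^+$ of $\orb_\phi$. Then (a representative of) $g$ preserves the corresponding weak stable leaf $\widetilde W^s$ in $\widetilde M$. A standard argument, using that $g$ acts cocompactly on an annular region of $\widetilde W^s$ between lifts of a closed loop representing $g$, shows that $g$ fixes a lift of an orbit of $\widetilde\phi$ in $\widetilde W^s$, giving a fixed point $x \in l$ projecting to a periodic orbit of $\phi$ freely homotopic to $g$. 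The contraction/expansion dichotomy then follows from the local product structure of $\phi$ near a periodic orbit together with the pseudo-Anosov expansion/contraction of the return map. For \ref{Axiom_prongs_are_fixed}, the singular points of $\orb_\phi$ project to the singular orbits of $\phi$, which by definition of a (topological) pseudo-Anosov flow are periodic, hence fixed by a nontrivial deck transformation.

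\textbf{Axiom \ref{Axiom_dense}.} Here I would appeal to density of periodic orbits of a pseudo-Anosov flow on a compact 3-manifold. In the smooth case this is classical, via the Anosov closing lemma and its pseudo-Anosov refinement in \cite{Mosher_homologynormI}; in the topological setting it is part of (or a direct consequence of) the local product structure and expansivity hypotheses in the definitions of \cite{BFP23,AgolTsang}. Since each periodic orbit of $\phi$ lies on a weak stable leaf which is preserved (up to choice of lift) by the deck transformation representing it, the leaves of $\cF^+$ fixed by some nontrivial element of $\pi_1(M)$ project to a dense set in $M$, hence are dense in $\orb_\phi$. The symmetric argument gives density for $\cF^-$.

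\textbf{Axioms \ref{Axiom_non-separated} and \ref{Axiom_totallyideal}.} These are the two deeper structural inputs. For \ref{Axiom_non-separated}, non-separated leaves in $\orb_\phi$ correspond to non-separated leaves in the weak stable/unstable foliations of $\widetilde\phi$ in $\widetilde M$; a classical theorem of Fenley (extended to pseudo-Anosov flows by Fenley--Mosher \cite{FenMosher}) says that any such leaf is \emph{periodic}, i.e.\ preserved by a nontrivial deck transformation. The argument combines compactness of $M$ with the pseudo-Anosov dynamics to produce a lozenge or similar configuration whose corners are fixed. For \ref{Axiom_totallyideal}, the non-existence of totally ideal quadrilaterals in the orbit space of a pseudo-Anosov flow is also due to Fenley; such a quadrilateral would produce four foliation leaves meeting only at the circle at infinity, with no interior corner, which can be ruled out using the same compactness plus product structure arguments.

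\textbf{Main obstacle.} The substantive content is in \ref{Axiom_non-separated} and \ref{Axiom_totallyideal}, both of which rest on nontrivial machinery from Fenley's program; the work is in locating the precise statements in the literature and checking that they go through in the full topological pseudo-Anosov generality (rather than only the smooth Anosov case), and in particular that the proofs do not implicitly use transitivity. Axioms \ref{Axiom_A1}, \ref{Axiom_prongs_are_fixed}, and \ref{Axiom_dense} are comparatively routine, once one commits to a version of the definition of topological pseudo-Anosov flow that includes (or easily implies) density of periodic orbits.
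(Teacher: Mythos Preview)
Your approach is essentially the same as the paper's for Axioms \ref{Axiom_A1}, \ref{Axiom_prongs_are_fixed}, \ref{Axiom_non-separated}, and \ref{Axiom_totallyideal}: the first two are direct from the definition and hyperbolicity, and the last two are citations to Fenley's work (the paper points to Theorem~D of \cite{Fenley_structure_branching} for \ref{Axiom_non-separated} and Proposition~4.4 of \cite{Fenley16} for \ref{Axiom_totallyideal}).

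There is, however, a genuine gap in your treatment of Axiom \ref{Axiom_dense}. You appeal to ``density of periodic orbits of a pseudo-Anosov flow on a compact 3-manifold,'' but this is \emph{false} for non-transitive flows---and the whole point of this paper is the non-transitive setting. Indeed, the paper later proves (Theorem~\ref{thm_dense_orbits_is_transitive}) that density of periodic orbits \emph{implies} transitivity, so invoking it here would beg the question. What Axiom~\ref{Axiom_dense} actually asks is that the \emph{stable and unstable leaves} of periodic orbits are dense, which is strictly weaker and holds even when the periodic orbits themselves are confined to a nowhere-dense non-wandering set. The paper's argument makes this distinction explicit: given an arbitrary $x\in M$, one follows the forward orbit until it accumulates at some point $y$ (using only compactness, no recurrence), then applies the closing lemma in a flow box near $y$ (or uses $y$ directly if it is singular) to produce a periodic orbit whose \emph{stable leaf} passes arbitrarily close to $x$. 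The periodic orbit itself may be far from $x$; only its leaf need be nearby. You should rework your argument for \ref{Axiom_dense} along these lines.
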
 

\begin{proof} 
Axiom \ref{Axiom_A1} follows directly from the hyperbolicity of Anosov flows, and \ref{Axiom_prongs_are_fixed} holds by definition of pseudo-Anosov flow.  

Axiom \ref{Axiom_dense} is classical for Anosov flows, we briefly recall the argument to indicate that it holds also in the pseudo-Anosov case.   To establish this property, one needs to show that leaves containing lifts of periodic orbits are dense in both the weak stable and weak unstable foliations on $\wt{M}$. Let $\tilde x$ be any point in $\wt M$ and let $x$ be its projection to $M$. Since $M$ is compact, the forward orbit $\{\phi^t(x), t>0\}$ has accumulation points. Let $y$ be such an accumulation point. If $y$ is on a singular orbit $\gamma$, then the local unstable leaf of some $\phi^{t_n}(x)$, $t_n\to \infty$, intersects the local stable leaf of $y$ arbitrarily close to $y$. Hence, one can find lifts of $\gamma$ whose stable leaves intersects any neighborhood of the unstable leaf of $\wt x$. If $y$ is not on a singular orbit, then we can find $t_2>>t_1>>1$ such that $\phi_{t_1}(x)$ and $\phi_{t_2}(x)$ are as close as we want and inside a flow box not intersecting any singular orbits. Then, as in the proof of the Anosov closing lemma, one obtains that there exists a periodic orbit $\gamma$ through the flow box containing $\phi_{t_1}(x)$ and $\phi_{t_2}(x)$. Since $t_1,t_2$ can be chosen arbitrarily large, we deduce as above that the stable leaf of a lift of $\gamma$ will be as close as one wants to $\wt x$. This shows that stable leaves of periodic orbits are dense in $\wt M$. Flipping the direction of the flow yields the result for unstable leaves.
   
Finally, Axiom \ref{Axiom_non-separated} is Theorem D of \cite{Fenley_structure_branching} (that result is stated there for Anosov flows, but the same proof holds in the pseudo-Anosov case), and Axiom \ref{Axiom_totallyideal} is Proposition 4.4 of \cite{Fenley16}.  
 \end{proof}

\subsection{Structure of the bifoliated planes}
Following work of Barbot and Fenley, there is a well developed structure theory for the orbit spaces of Anosov flows.  
Several of these foundational results were established for bifoliated planes admitting (transitive) Anosov-like actions in \cite{BFM}.  Below, we recall some of this structure theory and establish necessary extensions to our setting.  

Going forward, $(P, \cF^\pm)$ always denotes a bifoliated plane equipped with an Anosov-like action of a group $G$.  

A bifoliated plane is called {\em trivial} if each leaf of $\cF^+$ meets each leaf of $\cF^-$; equivalently, if it is homeomorphic to $\R^2$ with the standard product foliation.  If $(P, \cF^\pm)$ is not trivial, then either the foliations are singular, or there exist pairs of leaves that do not intersect but rather ``just miss" each other, called a {\em perfect fit}.  Precisely: 
 
 \begin{definition}\label{def_perfect_fit}
 Leaves $l^\pm$ in $\cF^\pm$ make a \emph{perfect fit} if there are arcs $\tau^\pm$ starting at a point of $l^\pm$ and transverse to $\cF^\mp$ such that every leaf $k^+$ of $\cF^+$ that intersects the interior of ${\tau}^+$ intersects $l^-$, and every leaf $k^-$ of $\cF^-$ that intersects the interior of ${\tau}^-$ intersects $l^+$.
\end{definition}

We use the following terminology
\begin{definition} 
A {\em ray} of $\cF^\pm(x)$ is a properly embedded copy of $[0, \infty)$ in $\cF^\pm(x)$ based at $x$.\footnote{Elsewhere these are also called {\em half-leaves}, we prefer the term ray because it is more accurate in the case of singular points.} 

For a singular leaf $l$, we call a properly embedded copy of $\R$ bounding a connected component of $P \setminus l$ a {\em face} of $l$.  
\end{definition} 

As for leaves, we say that two rays make a \emph{perfect fit} if they satisfy Definition \ref{def_perfect_fit} given above, with ``leaves'' replaced by ``rays''.
Two perfect fits can be arranged to bound a region called a {\em lozenge} 

\begin{definition}[Lozenge]\label{def_lozenge}
Suppose $x,y \in P$ are distinct points such that there exist rays $r_x^{+}$ and $r_y^{-}$ of $\cF^+(x)$ and $\cF^-(y)$, respectively that make a perfect fit, as does a second pair of rays $r_x^{-}$ and $r_y^{+}$ (in $\cF^-(x)$ and $\cF^+(y)$).  Then
\begin{equation*}
 L := \lbrace p \in P \mid \cF^u(p) \cap r_x^s \neq \emptyset \text{ and } \cF^s(p) \cap r_x^u \neq \emptyset \rbrace
\end{equation*}
is called a {\em lozenge} with corners $x$ and $y$, and
the rays $r_x^{\pm}$ and $r_y^{\pm}$ are called its \emph{sides}. 
A \emph{closed lozenge} is the union of a lozenge with its sides and corners.
\end{definition} 
In other words, a closed lozenge is homeomorphic to a rectangle with two opposite corners removed, with the segments of $\cF^+$-leaves (resp.~$\cF^-$) sent to horizontal (resp.~vertical) segments.  A basic and elementary fact is as follows: 

\begin{lemma}[Lemma 2.23 \cite{BFM}] \label{lem_pf_gives_lozenge}
If $x$ is fixed by some nontrivial $g \in G$ and some ray through $x$ makes a perfect fit with a leaf $l$, then this perfect fit is part of a lozenge with corner $x$, and opposite corner on $l$. 
\end{lemma} 

A point that is not the corner of any lozenge is called a {\em non-corner point}.   If this point is fixed by some element $g \in G$, we call this a {\em non-corner fixed point}.  These will play an important role later on. For now, we note for future reference (without proof) the following easy result: 

\begin{lemma}[Lemma 2.29 \cite{BFM}] \label{noncorner_criterion}
Suppose $r^+$ and $r^-$ are two rays bounding a quadrant $Q$ of $x$.  If each of $r^+$ and $r^-$ 
intersect one of a pair of leaves making a perfect fit, or if they each intersect leaves of a singular point, then there are no lozenges in $Q$ with $x$ as a corner.
\end{lemma} 
The statement in \cite{BFM} is for transitive Anosov-like actions, but the only axioms used are in our definition here and so the proof applies verbatim.  

Lozenges may share corners, leading to what is called a \emph{chain of lozenges}.
\begin{definition}[Chain of lozenges]
A \emph{chain of lozenges} is a union of closed lozenges that satisfies the following connectedness property: for any two lozenges $L,L'$ in the chain, there exist lozenges $L_0, \dots,L_n$ in the chain such that $L=L_0$, $L'=L_n$, and, for all $i$, $L_i$ and $L_{i+1}$ share a corner (and may or may not share a side).

A chain of lozenges is called \emph{maximal} if it is not properly contained in any other chain of lozenges.
\end{definition} 

At this point we can also define {\em totally ideal quadrilateral}, a pathological structure that does not occur in the orbit spaces of pseudo-Anosov flows:

\begin{definition} \label{def:totally_ideal}
A {\em totally ideal quadrilateral} is an open set $Q \subset P$ bounded by four leaves, or faces, $l_0, l_2 \in \cF^+$ and $l_1, l_3 \in \cF^-$ such that each $l_i$ makes a perfect fit with $l_{i+1}$ (with indices taken modulo 4), and any leaf of $\cF^\pm$ intersects $l_i$ if and only if it intersects $l_{i+2}$. 
\end{definition} 
The condition on leaves intersecting the bounding leaves means that the interior of $Q$ is trivially foliated.

The following global structure result is proved for transitive Anosov-like actions in \cite[Theorem 2.16]{BFM}, following the outline of \cite[Th\'eor\`eme 4.1]{Bar_caracterisation}.  It also holds for general Anosov-like actions, with minor modifications to the proof, which we include below.  

 \begin{proposition}[Dynamical trichotomy for Anosov-like actions] \label{prop_trichotomy}
 Let $(P,\cF^+, \cF^-)$ be a bifoliated plane with a Anosov-like action of a group $G$. Then exactly one of the following holds:
  \begin{enumerate}[label=(\roman*)]
   \item $(P,\cF^+, \cF^-)$ is trivial.
   \item $(P,\cF^+, \cF^-)$ is homeomorphic to the 
diagonal strip bounded by $y=x-1$ and $y=x+1$ in $\bR^2$ with the horizontal and vertical foliations.  Such planes are called {\em skewed}.
   \item There is either a singular point in $P$, or the leaf spaces of $\cF^+$ and $\cF^-$ are both non-Hausdorff.
  \end{enumerate}
 \end{proposition}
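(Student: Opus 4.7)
The plan is to follow the argument of Barbot's Th\'eor\`eme 4.1 in \cite{Bar_caracterisation} and its transitive adaptation \cite[Theorem 2.16]{BFM}, with modifications to accommodate the weaker density Axiom \ref{Axiom_dense}. Assume (iii) fails, so $P$ has no singular points and at least one leaf space -- say that of $\cF^+$ -- is Hausdorff. The aim is to show $P$ is either trivial or skew.

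First, I would show that non-triviality forces a perfect fit. If every leaf of $\cF^+$ meets every leaf of $\cF^-$, then because the $\cF^+$ leaf space is Hausdorff and there are no singularities, the bifoliation is a product and case (i) holds. Otherwise, pick disjoint leaves $l^+$ and $l^-$ and run a standard limit argument along a transversal joining them: Hausdorffness provides a well-defined limit and the no-singularity hypothesis rules out branching, producing a pair $m^+\in\cF^+$, $m^-\in\cF^-$ forming a perfect fit.

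Next, I would upgrade this to a perfect fit between fixed leaves and extract a lozenge. By Axiom \ref{Axiom_dense}, pick fixed leaves $k^\pm\in\cF^\pm$ close to $m^\pm$; continuity of perfect-fit configurations in the absence of prongs, combined with the hyperbolic dynamics at each fixed point on $k^\pm$ guaranteed by Axiom \ref{Axiom_A1} and Remark \ref{rem_single_fixed_point}, lets one arrange that $k^+$ and $k^-$ themselves form a perfect fit. A suitable product of powers of their stabilizers then fixes a common point which becomes the second corner of a lozenge $L_0$ having the relevant rays of $k^\pm$ as two of its sides. Iterating the perfect-fit/lozenge construction at each corner produces a maximal chain $\{L_n\}_{n\in\bZ}$: Hausdorffness of $\cF^+$ forbids $\cF^+$-branching, and together with Axiom \ref{Axiom_non-separated} this forces Hausdorff behavior of $\cF^-$ along the chain, so the chain is a bi-infinite ladder whose union is a topologically embedded strip $S\subseteq P$ with the product bifoliation of the skew model.

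The final step is to verify $S=P$. The set $S$ is open, and if its complement were nonempty, Axiom \ref{Axiom_dense} would supply a fixed leaf disjoint from $S$, from which the preceding construction would produce a second disjoint chain of lozenges; combining the corners of the two chains via the group action contradicts Hausdorffness of $\cF^+$ by forcing non-separated $\cF^+$ leaves. Hence $S=P$, giving the skew model of case (ii). The main obstacle is the second step: in the transitive setting of \cite{BFM} the stronger Axiom \ref{Axiom_fixed_points_dense} immediately produces a fixed corner of a lozenge near any perfect fit from a single group element, while under the weaker Axiom \ref{Axiom_dense} one only has dense fixed \emph{leaves}; arranging a compatible pair of fixed leaves that simultaneously form a perfect fit, and combining their a priori distinct stabilizers into a single element fixing a corner of a lozenge, is the technically delicate new ingredient.
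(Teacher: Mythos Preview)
Your overall shape---assume no singularities, $\cL^+$ Hausdorff, and show the plane is trivial or skew---matches the paper, but your proposed mechanism for reaching the skew structure is different from the paper's and contains a genuine gap.

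The paper does not build lozenges directly from a perfect fit. Instead it follows Barbot more closely: for each $l\in\cF^-$ it defines $\alpha(l),\omega(l)\in\cL^+\cong\bR$ as the infimum and supremum of the $\cF^+$-leaves meeting $l$, and reduces everything to the claim that $l$ separates $\alpha(l)$ from $\omega(l)$. The key point is that if $l'$ is a \emph{single} fixed leaf (supplied by Axiom~\ref{Axiom_dense} inside the ``bad'' set $C$ where separation fails), then the element $g$ stabilizing $l'$ automatically fixes $\alpha(l')$ and $\omega(l')$ too, and the lozenge structure at the fixed point $x\in l'$ falls out from Axiom~\ref{Axiom_A1}. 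No combination of distinct stabilizers is ever needed. The only new wrinkle in the non-transitive setting is showing that a second fixed leaf $l''\subset D(l)$ with fixed point $x''$ has $\cF^+(x'')$ meeting $l'$; the paper handles this by a short non-separation argument on the interval of $\cF^+$-leaves crossing both $l'$ and $l''$.

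Your Step~2 is where the trouble lies. You approximate the leaves $m^\pm$ of a perfect fit by fixed leaves $k^\pm$, but a nearby fixed leaf need not itself make a perfect fit with anything---perfect fits are not open in the leaf space before the skew structure is known. Worse, even granting a perfect fit between fixed leaves $k^+$ and $k^-$ with stabilizers $g^+,g^-$, a ``suitable product of powers'' of $g^+$ and $g^-$ has no reason to fix either $k^+$ or $k^-$, so you cannot invoke Lemma~\ref{lem:two_fix_points} to produce a lozenge. (That lemma needs a single element fixing both leaves of the perfect fit.) The iteration to a bi-infinite ladder and the final $S=P$ argument inherit this problem and are in any case too vague to evaluate. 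The paper's $\alpha/\omega$ framework sidesteps all of this by arranging from the outset that one element fixes all the relevant leaves simultaneously.
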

 
 We describe the outline of the proof, for completeness, and describe the required modification from the proof given in \cite[Theorem 2.16]{BFM}.
 
\begin{proof}
Let $\cL^-$ and $\cL^+$ denote the leaf spaces of $\cF^-$, $\cF^+$ respectively.  
Suppose that $P$ has no singular points and $\cL^+$ is Hausdorff.  Since it is a simply connected 1-manifold it is thus homeomorphic to $\mathbb{R}$.   We need to show in this case that $\cL^- \cong \R$ and the plane is either skewed or trivial.    
Following the argument from \cite[Theorem 2.16]{BFM} verbatim, one shows that, either both leaf spaces are $\R$ and the plane is trivial, or each leaf $l \in \cL^-$ meets a {\em bounded} subset of leaves of $\cL^+$.  Fix an identification of $\cL^+$ with $\R$ and for $l \in L^-$, let $\alpha(l)$ and $\omega(l)$ denote the infimum, and supremum, respectively, of the sets of leaves of $\cF^+$ intersected by $l$.

We claim any leaf $l$ of $\cF^-$ separates $\alpha(l)$ and $\omega(l)$ in $P$.  This claim implies that $\cL^-$ is Hausdorff, hence homeomorphic to $\bR$, and thus $P$ can be seen as a subset of the plane obtained as the product of the leaf spaces $\cL^+\times\cL^- \simeq  \bR^2$ bounded by the graphs of the function $\alpha$ and $\omega$, as in  \cite[Theorem 3.4]{Fen_Anosov_flow_3_manifolds}, this is easily seen to be homeomorphic to the skewed plane.  

Thus, it remains only to prove the claim.  
Let $C\subset \cL^-$ be the set of leaves $l$ such that $l$ does \emph{not} separate $\alpha(l)$ and $\omega(l)$.  We assume $C$ is nonempty for a contradiction. 
 For any $l \in C$, denote by $D(l)$ the connected component of $\cL^- \smallsetminus l$ that does not contain $\alpha(l)$ and $\omega(l)$. Since $\cL^+\simeq \bR$, one shows easily as in \cite[Th\'eor\`eme 4.1]{Bar_caracterisation} that $D(l) \subset C$.  Hence $C$ has non-empty interior. Thus, by Axiom \ref{Axiom_dense}, it contains some leaf $l'$ fixed by some element $g$ (in fact a dense set of such leaves). Let $x \in l'$ be the point fixed by $g$ on $l'$. Up to replacing $g$ with $g^2$, we have that $\alpha(l')$ and $\omega(l')$ are both fixed by $g$.  Thus, they contain (unique) fixed points, say $x_\alpha$, and $x_\omega$, respectively, for $g$, and by considering the ($g$-invariant) sets of leaves simultaneously intersecting $\cF^+(x)$ and $\alpha(l')$ and $\omega(l')$, one can show that $x_\alpha$ and $x_\omega$ are corners of two adjacent lozenges, sharing a side in $\cF^+(x)$ and whose other (shared) corner is $x$.

Here is where a little extra care is needed compared to the case of transitive Anosov-like actions. 
The assertion above is true for any $l''$ in $D(l)$ fixed by some non trivial element $h \in G$. So, thanks to Axiom \ref{Axiom_dense}, we can find such a leaf $l''$ that intersects $\cF^+(x)$. Calling $x''$ the fixed point of $h$ on $l''$, we  will first show that $\cF^+(x'')$ intersects $l'$\footnote{This is the only argument that needs to be added compared to \cite{Bar_caracterisation} or \cite{BFM}, since in the transitive case, one can choose $x''$ close to $x$ so that property is automatically satisfied.}. Consider the set $S$ of $\cF^+$ leaves intersecting both $L''$ and $l'$. If $\cF^+(x'')$ is not in this set, then it means that there is a leaf $l_1^+$ in $\partial S$ that separates $\cF^+(x'')$ from $S$. Since $\alpha(l')$ and $\omega(l')$ are not in $S$ either, there are also leaves $l_\alpha^+,l_\omega^+\subset \partial S$ that separates $S$ from $\alpha(l')$ and $\omega(l')$ respectively (we may a priori have $\alpha(l')\subset \partial S$ in which case $\alpha(l')=l_\alpha^+$ and same for $\omega(l')$). Then $l_1^+$ is not separated with either $l_\alpha^+$ or $l_\omega^+$, contradicting the fact that $\cL^+$ is homeomorphic to $\bR$.
Hence, $\cF^+(x'')$ intersects $l'$ and one finishes the proof as in \cite{Bar_caracterisation} or \cite{BFM}. 
 \end{proof} 

Anosov-like actions on trivial bifoliated planes are well-understood:

 \begin{proposition}[\cite{BFM}, Prop. 2.7]
 Any Anosov-like action on a trivial bifoliated plane is conjugate to an action by affine transformations. 
 \end{proposition}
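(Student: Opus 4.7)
My plan is to reduce this to a classical theorem about subgroups of $\mathrm{Homeo}(\bR)$. By the definition of ``trivial bifoliated plane,'' I can identify $P$ with $\bR^2$ carrying the standard product foliation, so that the leaves of $\cF^-$ are the vertical lines and the leaves of $\cF^+$ are the horizontal lines. Because the $G$-action preserves each foliation, it descends to homomorphisms $\rho_-, \rho_+ \colon G \to \mathrm{Homeo}(\bR)$ on the two leaf spaces (each identified with $\bR$), and the $G$-action on $P$ takes the product form
\[
g \cdot (x,y) = \bigl(\rho_-(g)(x),\, \rho_+(g)(y)\bigr).
\]
It therefore suffices to conjugate each of $\rho_-(G)$ and $\rho_+(G)$ into $\mathrm{Aff}(\bR)$: the product of the two conjugating homeomorphisms of $\bR$ will then conjugate the $G$-action on $\bR^2$ to an action by (diagonal) affine maps.

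The core of the argument is the following claim: \emph{every nontrivial element of $\rho_+(G)$ (and, by symmetry, of $\rho_-(G)$) has at most one fixed point in $\bR$.} To prove this, suppose for contradiction that $\rho_+(g)$ fixes two distinct points $c_1, c_2$. Then $g$ preserves both horizontal leaves $l_i := \{y = c_i\}$, so Axiom~\ref{Axiom_A1} provides fixed points $(a_i, c_i) \in l_i$ of $g$; in particular $\rho_-(g)$ fixes both $a_1$ and $a_2$. The product form of the action then gives $g \cdot (a_1, c_2) = (a_1, c_2)$, producing a second fixed point of $g$ on $l_2$. By the uniqueness recorded in Remark~\ref{rem_single_fixed_point}, this forces $(a_1, c_2) = (a_2, c_2)$, so $a_1 = a_2$. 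Applying the same uniqueness to the fixed vertical leaf $\{x = a_1\}$, which now contains the two fixed points $(a_1, c_1)$ and $(a_1, c_2)$, forces $c_1 = c_2$, a contradiction.

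Granting the claim, I would invoke the classical theorem of Solodov: a subgroup of $\mathrm{Homeo}(\bR)$ in which every nontrivial element has at most one fixed point is topologically conjugate to a subgroup of $\mathrm{Aff}(\bR)$. Applying this separately to $\rho_-(G)$ and $\rho_+(G)$ yields two conjugating homeomorphisms of $\bR$, and taking their product gives the desired conjugation of the $G$-action on $P$ to an action by affine maps of $\bR^2$.

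The only real obstacle is the ``at most one fixed point'' claim; its proof rests solely on Axiom~\ref{Axiom_A1} together with the uniqueness statement in Remark~\ref{rem_single_fixed_point}, and does not seem to need Axioms~\ref{Axiom_dense}--\ref{Axiom_totallyideal}. Once the claim is established, the Solodov-type classification theorem is a black box and the product structure handles the rest.
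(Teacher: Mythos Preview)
Your approach is correct and almost certainly matches the argument in \cite{BFM} that the paper simply cites. The one point you gloss over is the precise form of Solodov's theorem. As usually stated, it applies to subgroups of $\mathrm{Homeo}_+(\bR)$ with no global fixed point, and its conclusion is a priori only a \emph{semi}-conjugacy into $\mathrm{Aff}(\bR)$; upgrading to a genuine conjugacy requires minimality. Your ``at most one fixed point'' claim by itself does not suffice: for instance, $\bZ^2$ acting on $\bR$ by $(m,n)\cdot x = 2^m 3^n x$ for $x\ge 0$ and $(m,n)\cdot x = 2^m 5^n x$ for $x\le 0$ has every nontrivial element fixing only $0$, yet is not conjugate into $\mathrm{Aff}(\bR)$.

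What closes this gap is Axiom~\ref{Axiom_dense}: density of fixed $\cF^+$-leaves means the union of fixed points of nontrivial elements of $\rho_+(G)$ is dense in $\bR$, and together with your claim this forces minimality (a complementary interval of a proper closed invariant set would contain such a fixed point, and the element fixing it would then also fix an endpoint of the interval, giving two fixed points). Minimality then both rules out a global fixed point and upgrades the semi-conjugacy to a conjugacy. So your final remark that the proof ``does not seem to need Axioms~\ref{Axiom_dense}--\ref{Axiom_totallyideal}'' is right for the fixed-point claim itself, but Axiom~\ref{Axiom_dense} is genuinely needed at the Solodov step.
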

 This was proved for flows by Barbot \cite[Th\'eor\`eme 2.7]{Bar_caracterisation}, and for transitive Anosov-like actions in \cite[Proposition 2.7]{BFM}, the proof there does not use the full strength of transitivity and carries over in our setting without modification.   In the case of flows, this is equivalent to the fact that any (pseudo)-Anosov flow with trivial orbit space is the suspension flow of a hyperbolic linear map of the torus.

 \subsection{Axiom \ref{Axiom_non-separated} and non-separated leaves} 
 Axiom \ref{Axiom_non-separated} states that any non-separated leaf is fixed by some nontrivial element of $G$. Our next lemma improves this to show that for any maximal set of pairwise non-separated leaves, there exists a single (nontrivial) element $g\in G$ fixing all the leaves in that set. In particular this shows that Axiom \ref{Axiom_non-separated} implies the (a priori stronger) corresponding axiom in \cite{BFM}.
 
\begin{proposition}\label{prop_axiom4weak_implies_4strong}
Let $S$ be a set of pairwise non-separated leaves in $\cF^+$ or $\cF^-$. Given any leaf $l\in S$ and any $g\in G$ such that $gl = l$, then there exists $n\geq 1$ such that $g^n$ fixes every leaf in $S$.

In particular, Axiom \ref{Axiom_non-separated} implies that there always exists a nontrivial element fixing every leaf of a set of pairwise non-separated leaves.
\end{proposition}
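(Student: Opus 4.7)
The plan is to first verify that $g$ preserves $S$ setwise, then to analyze the action of $\langle g \rangle$ on $S$ through the induced action on a canonical collection of marked fixed points, and show this action has uniformly bounded orbits.

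First, since $g$ is a foliation-preserving homeomorphism of $P$, it descends to a homeomorphism of the leaf space of $\cF^+$ (or $\cF^-$, whichever contains $S$) preserving its topology, and in particular preserving the non-separation relation. Enlarging $S$ to a maximal set of pairwise non-separated leaves containing $l$ is harmless (the conclusion for the larger set implies it for the smaller one); with this done, $g(S)$ is also maximal among such sets containing $gl = l$, so by maximality $g(S) = S$.

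Second, I would use Axiom~\ref{Axiom_non-separated} together with Remark~\ref{rem_single_fixed_point} to associate to each $l' \in S$ a canonical marked point $x_{l'} \in l'$: the unique common fixed point of the nontrivial stabilizer of $l'$. Note that this is well-defined because by Axiom~\ref{Axiom_non-separated} (applied to $l'$ paired with any other leaf of $S$) the stabilizer is nontrivial. The assignment $l' \mapsto x_{l'}$ is $g$-equivariant: $g \cdot x_{l'} = x_{gl'}$ since $g$ conjugates stabilizers. The main technical step would be to establish a structural lemma, paralleling the argument embedded in the proof of Proposition~\ref{prop_trichotomy}: for any two leaves $l_1, l_2 \in S$, the canonical points $x_{l_1}, x_{l_2}$ are opposite corners of a chain of two adjacent lozenges sharing a middle corner $z$. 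The middle corner $z$ lies on an $\cF^\mp$-leaf that is itself non-separated from the $\cF^\mp$-leaves through $x_{l_1}$ and $x_{l_2}$, so by a second application of Axiom~\ref{Axiom_non-separated} is fixed by some nontrivial element of $G$ with unique fixed point $z$.

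Given this structure, the middle corners $\{z_{l'}\}_{l' \in S}$ lie on $\cF^\mp$-leaves making perfect fits with specific rays of $l$ emanating from $x$. Axiom~\ref{Axiom_A1} asserts that $g$ acts on each such ray of $l$ as a topological expansion or contraction with unique fixed point $x$. Consequently, $g$ can permute only finitely many perfect-fit $\cF^\mp$-leaves at any given ray without producing an accumulation near the ideal endpoint; any such accumulation would yield an infinite family of nested perfect fits, from which one could extract a totally ideal quadrilateral, in violation of Axiom~\ref{Axiom_totallyideal}. This bounds the $g$-orbit sizes of the middle corners by a uniform constant depending only on $g$, and by equivariance bounds the $g$-orbit sizes on $S$ itself. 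Taking $n$ to be the least common multiple of these orbit sizes gives the desired exponent; nontriviality of $g^n$ (needed for the ``in particular'' statement) is automatic since $g$ has infinite order by Axiom~\ref{Axiom_A1}.

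The main obstacle is the structural lemma producing the two-lozenge chain between arbitrary pairs in $S$ in the general, possibly non-transitive, setting; this is the natural adaptation of the argument already used in Proposition~\ref{prop_trichotomy}, but requires care in invoking Axiom~\ref{Axiom_dense} to locate auxiliary fixed leaves near each pair and in handling possible singularities via Axiom~\ref{Axiom_prongs_are_fixed}. Once the structural lemma is in hand, the remainder is a direct combinatorial-dynamical argument using Axioms~\ref{Axiom_A1} and~\ref{Axiom_totallyideal}.
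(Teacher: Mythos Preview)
Your overall shape---show $g(S)=S$, then bound $g$-orbits on $S$---is reasonable, but the core technical steps do not hold as written.

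The structural lemma is misstated. For \emph{arbitrary} $l_1,l_2\in S$ the canonical points $x_{l_1},x_{l_2}$ are \emph{not} in general the two outer corners of a chain of exactly two lozenges; there may be many leaves of $S$ strictly between $l_1$ and $l_2$, and the chain connecting $x_{l_1}$ to $x_{l_2}$ has two lozenges \emph{per} intermediate leaf. Showing that only finitely many leaves of $S$ lie between any pair is precisely the content of the proposition, so you cannot assume it. Relatedly, your description of the middle corner $z$ is geometrically off: $\cF^\mp(z)$ is the leaf \emph{separating} $l_1$ from $l_2$ and making perfect fits with them; it is not non-separated from $\cF^\mp(x_{l_1})$ and $\cF^\mp(x_{l_2})$.

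The finiteness argument via Axiom~\ref{Axiom_totallyideal} also does not work. A given ray of $l$ makes a perfect fit with at most one $\cF^\mp$-leaf on each side, so there is no ``infinite family of nested perfect fits'' at a single ray to begin with; and even if there were, nested perfect fits do not by themselves produce a totally ideal quadrilateral. The actual obstruction to accumulation is Axiom~\ref{Axiom_A1}: if infinitely many $g$-fixed $\cF^\mp$-leaves $f_i$ accumulated onto some leaf $f_\infty$ (necessarily also $g$-fixed), then the $\cF^\pm$-leaf through the fixed point of $g$ on $f_\infty$ would cross infinitely many of the $f_i$, forcing multiple fixed points on a single leaf.

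The paper's proof proceeds differently and more directly. Starting from $l_0=l$ and any $l_0'\in S$, it uses Observation~\ref{obs:nonseparated_implies_pf} to find a $\cF^\mp$-leaf $f_0$ making a perfect fit with $l_0$ and separating it from $l_0'$; since $g$ fixes the rays at $x_0$, it fixes $f_0$, giving a fixed point $y_0\in f_0$. One then iterates (Claims~\ref{claim:switch_role} and~\ref{claim:find_next_leaf}): either the $\cF^\mp$-saturation of the relevant ray through $y_0$ already covers the remaining leaves of $S$ on that side, or one finds the \emph{next} leaf $l_1\in S$ fixed by $g$, with nothing of $S$ between $l_0$ and $l_1$. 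Termination is exactly the Axiom~\ref{Axiom_A1} accumulation argument sketched above. This simultaneously shows that every leaf of $S$ between $l_0$ and $l_0'$ is fixed by $g$ and that there are only finitely many of them---both conclusions are needed, and your plan only attempts the second.
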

 Note that this result and its proof are essentially the same as the proof of Theorem 4.3 in \cite{Fenley_structure_branching}, with some simplifications added and some details omitted.
 
 For the proof we use an easy observation 
 \begin{observation} \label{obs:nonseparated_implies_pf}
Suppose $l$ and $l'$ are nonseparated in $\cF^+$.  Consider the set $D$ of leaves of $\cF^-$ that separate $l$ from $l'$.  
If $l_n$ is a sequence of leaves converging to a union of leaves with both $l$ and $l'$ in the limit, then each leaf of $D$ will intersect $l_n$ for all sufficiently large $l_n$, hence $D$ inherits a linear order.  The supremum and infimum of $D$ are leaves of $D$ that make a perfect fit with $l$ and $l'$ (respectively, up to reversing the order) and each separates $l$ from $l'$. 
 \end{observation}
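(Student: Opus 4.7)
My plan is to establish the three assertions of the observation in order: (i) every $d\in D$ meets $l_n$ for large $n$; (ii) $D$ inherits a linear order; and (iii) the supremum and infimum of $D$ lie in $D$ and make perfect fits with $l$ and $l'$ respectively. For (i), I fix $d\in D$ and pick $p\in l$, $p'\in l'$, which by definition of $D$ lie in opposite components of $P\setminus d$. The convergence hypothesis produces $q_n,q'_n\in l_n$ with $q_n\to p$ and $q'_n\to p'$, and since lying in a given component of $P\setminus d$ is an open condition, $q_n$ and $q'_n$ lie in opposite components for all large $n$; connectedness of $l_n$ then forces $l_n\cap d\neq\emptyset$. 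For (ii), since distinct $\cF^-$-leaves are disjoint, for any $d_1,d_2\in D$ one takes $n$ large enough that both meet $l_n$, and the transverse intersections $l_n\cap d_i$ are distinct points of $l_n\cong\bR$ which inherit the order from $\bR$. Independence from the choice of $n$ follows from checking this agrees with the intrinsic geometric relation $d_1\prec d_2\iff d_2$ lies in the $l'$-component of $P\setminus d_1$, which by a short case analysis (using that $l$ and $l'$ lie on opposite sides of both $d_1$ and $d_2$) is a total order.

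For (iii), fix $n$ large and parametrize $l_n\cong\bR$ so that $q_n$ and $q'_n$ sit at opposite ends of a compact subsegment $[q_n,q'_n]$. By (i), every intersection $l_n\cap d$ for $d\in D$ meeting $l_n$ lies in $[q_n,q'_n]$, so these intersections form a bounded subset of $l_n$. Let $x_\infty$ be their supremum and let $d_\infty:=\cF^-(x_\infty)$ denote the $\cF^-$-leaf through $x_\infty$. Picking $d_k\in D$ with $l_n\cap d_k\to x_\infty$, I show that $d_\infty\in D$: transversality of $\cF^+$ and $\cF^-$ guarantees that any transverse intersection of $d_\infty$ with $l$ or $l'$ would persist to nearby $d_k$, contradicting $d_k\cap(l\cup l')=\emptyset$; and passing to the limit in the separation relation preserves the fact that $p$ and $p'$ lie in opposite components of $P\setminus d_\infty$. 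Maximality of $d_\infty$ then yields the perfect fit with $l'$: every $\cF^-$-leaf slightly on the $l'$-side of $d_\infty$ lies outside $D$, and since $l$ stays on the $l$-side of any such leaf, failure to separate forces it to intersect $l'$. This directly supplies the two transversal arcs required by Definition~\ref{def_perfect_fit}, one at a suitable point of $d_\infty$ and the other at a point of $l'$. The infimum case is treated symmetrically, yielding a perfect fit with $l$.

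The main obstacle is step (iii), specifically verifying that $d_\infty\in D$. Two subtleties arise: ensuring that intermediate leaves possibly appearing in the Hausdorff limit of $l_n$ (beyond $l$ and $l'$) do not force $d_\infty$ to meet $l$ or $l'$ spuriously at infinity, and checking that the separation property survives the limit. Transversality of the two foliations, together with the compactness of the subsegment $[q_n,q'_n]$ and the standard fact that a pointwise limit of leaves of a planar foliation is a union of leaves, provides the necessary control.
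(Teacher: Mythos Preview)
The paper records this as a bare observation and supplies no argument, so there is nothing to compare against; your outline has the right shape, but two steps in part (iii) need more than what you wrote.

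First, you fix a single $n$ and take the supremum $x_\infty$ over the intersection points $\{l_n\cap d:d\in D,\ d\text{ meets }l_n\}$. Your maximality argument for the perfect fit (``every $\cF^-$-leaf slightly on the $l'$-side of $d_\infty$ lies outside $D$'') requires $d_\infty=\sup D$, not merely the supremum of those $d$ that happen to meet this particular $l_n$; part (i) only produces, for each $d$ separately, an $N(d)$ beyond which $l_n\cap d\neq\emptyset$. The fix is short: choose product-foliated neighbourhoods $U_p,U_{p'}$ of $p$ and $p'$. Any $\cF^-$-leaf through $U_p$ meets $l$, so no leaf of $D$ enters $U_p$, and likewise for $U_{p'}$; once $q_n\in U_p$ and $q'_n\in U_{p'}$, every $d\in D$ has $q_n$ and $q'_n$ on opposite sides and therefore meets $l_n$ in $(q_n,q'_n)$.

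Second, your verification of the perfect fit is one-sided. You show that each $\cF^-$-leaf just beyond $d_\infty$ on the $l'$-side meets $l'$; this supplies the transversal based at $d_\infty$. Definition~\ref{def_perfect_fit} also requires a transversal based at $l'$ with the property that every $\cF^+$-leaf through it meets $d_\infty$, and the sentence ``this directly supplies the two transversal arcs'' does not cover it: the sequence $l_n$ witnesses that \emph{some} $\cF^+$-leaves near $l'$ meet $d_\infty$, not all of them. The missing ingredient is precisely nonseparatedness: in the leaf space of $\cF^+$, the one-sided neighbourhoods of $l$ and of $l'$ on the $l_n$-side coincide, so any $\cF^+$-leaf $m$ close to $l'$ on that side is equally close to $l$. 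Hence $m$ passes through both $U_p$ and $U_{p'}$, and the same argument you gave for $l_n$ in step~(i) (applied now with the uniform product neighbourhoods above) shows $m$ meets every $d\in D$, in particular $d_\infty$. With this, both transversal conditions are established and the argument is complete.
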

 
 \begin{proof}[Proof of Proposition \ref{prop_axiom4weak_implies_4strong}]
 To fix notation, we do the proof for $S\subset \cF^+$. Notice that $S$ comes with a natural order coming from a choice of coherent orientation on leaves of $\cF^+$ that accumulates to $S$.
 
 Take $l_0\in S$ and $g\in G$ fixing $l_0$.  If $g$ is trivial there is nothing to show; so we assume  $g$ nontrivial.  By Axiom \ref{Axiom_A1}, there exists a unique $x_0\in l_0$ fixed by $g$. Up to taking a power, we further assume that that $g$ fixes all the rays of $\cF^\pm(x_0)\smallsetminus \{x_0\}$.
 
 From now on, if $S$ contains singular leaves, we instead consider only the {\em faces} of these singular leaves that are pairwise nonseparated with other non-singular leaves or faces.  
We abuse notation and still denote by $S$ this set of leaves and faces, and call an element of $S$ a {\em leaf}, as this simplifies terminology and does not affect the proof.  
 
 Pick $l_0'\in S$ another leaf.  By Observation \ref{obs:nonseparated_implies_pf}, there exists $f_0\in \cF^-$ that makes a perfect fit with $l_0$ and separates $l_0$ from $l'_0$.
 As $g$ fixes every ray of $x_0$, it also fixes $f_0$ and Axiom \ref{Axiom_A1} again gives the existence of a fixed point $y_0\in f_0$. 
 Let $r_0$ be the ray of $\cF^+(y_0)\smallsetminus \{y_0\}$ that is contained in the side of $f_0$ that contains $l'_0$. Call $S_0$ the subset of $S$ on that same side of $f_0$. So in particular, $l'_0\in S_0$.
 
Similarly, reversing the roles of $l_0$ and $l'_0$, there exists $f'_0 \in \cF^-$ making a perfect fit with $l_0'$, separating it from $l_0$, fixed by some $g' \in G$ which also fixes $l_0'$.  Let $y'$ be the fixed point of $g'$ on $f_0'$ and let $r'_0$ be the ray of $\cF^+(y')\smallsetminus \{y'\}$ that is contained in the side of $f'_0$ that contains $l_0$, and $S_0'$ the subset of $S$ on the same side of $f_0'$ as $l_0$.  
 
 Now consider $\cF^-(r_0)$ the saturation of $r_0$ by leaves of $\cF^-$. Then, either: 
 \begin{enumerate} 
 \item $S_0 \subset \cF^-(r_0)$, or 
 \item there exists some point $z$ on a leaf of $S_0$ that is in a different connected component of $P\smallsetminus \cF^-(r_0)$ from $l_0$. 
 \end{enumerate}
 We first make a short argument to reduce to the latter case, up to reversing the roles of $l_0$ and $l_0'$: 
 
 \begin{claim}  \label{claim:switch_role}
 Either there exists $z$ on a leaf of $S_0$ that is in a different connected component of $P\smallsetminus \cF^-(r_0)$ from $l_0$, or there exists $z'$ on a leaf of $S_0'$ that is in a different connected component of $P\smallsetminus \cF^-(r_0')$ from $l'_0$.  
  \end{claim}

 \begin{proof}  
Suppose that $S_0 \subset \cF^-(r_0)$ holds.  Recall that $y'$ denotes the fixed point of $g'$ on $f'_0$. Let $x'$ be the fixed point of $g'$ on $l'_0$.  
Since $g'$ has a unique fixed point on $\cF^+(y)$, by Axiom \ref{Axiom_A1}, the leaves $\cF^-(x')$ and  $\cF^+(y')$ cannot intersect.  Thus, $\cF^+(y_0)$ separates $\cF^+(y')$ from $l'_0$ and $l_0$.   
Similarly, $\cF^-(x_0)$ cannot intersect $\cF^+(y_0)$, and therefore won't intersect $\cF^+(y')$.  It follows that the $\cF^-$ saturation of $r'_0$ does not contain $l_0$, and so after reversing the roles of $l_0$ and $l'_0$ in the dichotomy above, we have arrived in case (2), which is the statement in the claim.  
See Figure \ref{fig:4weak4strong}.
\end{proof} 

\begin{figure}[h]
   \labellist 
  \small\hair 2pt
     \pinlabel $y'$ at 60 60 
     \pinlabel $y_0$ at 90 94 
   \pinlabel $l_0$ at 152 145
    \pinlabel $f_0$ at 152 90
    \pinlabel $f'_0$ at 152 70
  \pinlabel $l'_0$ at 152 15  
 \endlabellist
     \centerline{ \mbox{
\includegraphics[width=5cm]{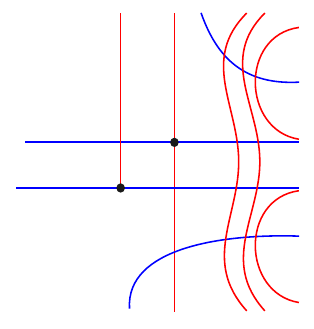} }}
\caption{}
\label{fig:4weak4strong}
\end{figure}
We now assume we are in case (2) and use the following claim.  
 
\begin{claim} \label{claim:find_next_leaf}
Suppose there exists some point $z$ on a leaf of $S_0$ that is in a different connected component of $P\smallsetminus \cF^-(r_0)$ from $l_0$ (i.e., case 2 holds). 
Then there exists $l_1\in S_0$, fixed by $g$, and such that no leaves of $S$ are in between $l_0$ and $l_1$ for the natural linear order of $S$.
\end{claim}

\begin{proof}
Let $z$ be as above.  
There exists a unique leaf $f_1 \in \cF^-$ such that it, or one of its face in the singular case, is in $\partial ( P\smallsetminus \cF^-(r_0))$ and separates $z$ from $x_0$.
 We claim that $f_1 \cap r_0 = \emptyset$. This is automatic if $f_1$ is non-singular, but assume not and call $t_0 = f_1\cap r_0$. Then, by construction, for all $t>t_0$ in $r_0$ (where $>$ designate the positive direction of the ray), we have $\cF^-(t)\cap S_0 = \emptyset$. However, since $S$ consists of pairwise non-separated leaves, there exists $l^+$ that intersects both $f_0$ and $\cF^-(z)$. But (up to replacing $g$ by $g^{-1}$) the sequence $g^n l^+$ contains $\cF^+(y_0)$ in its limit as $n\to +\infty$. In particular, we would get $t>t_0$ in $r_0$ such that $\cF^-(t)\cap S_0 \neq \emptyset$, a contradiction.

So $f_1  \cap r_0 = \emptyset$. Moreover, $f_1$ must be fixed by $g$ (since $S$ is invariant under $g$, as well as $\cF^-(r_0)$). In fact, one can notice that $f_1$ makes a perfect fit with $r_0$, but we do not need that.
Thus, there exists $x_1\in f_1$ fixed by $g$. It is then easy to see that $l_1 = \cF^+(x_1)$ is in $S$ and no other leaf of $S$ can be between $l_0$ and $l_1$.
\end{proof}
  
 Iteratively applying Claim \ref{claim:switch_role} and Claim \ref{claim:find_next_leaf}, we can produce two sequences of leaves $l_0, l_1, \ldots l_k$ and $l_0', l_1', \ldots l_j'$ (possibly with $k=1$ and/or $j=1$) such that each $l_i$ is fixed by $g$, each $l_i'$ is fixed by $g'$ and such that no leaves of $S$ are in between $l_i$ and $l_{i+1}$ for the natural linear order of $S$ nor between $l_i'$ and $l_{i+1}'$. 

If at some stage $l_k$ and $l_j'$ share a common leaf that makes a perfect fit with each of them (i.e. $f_k = f_j'$ in the notations used in Claim \ref{claim:switch_role}, with indices shifted appropriately), then the process terminates, so there are only finitely many orbits in $S$ between $l_0$ and $l_0'$ and they are all fixed by $g$ (and $g'$).

Thus, it remains only to argue that the process does indeed terminate for any choice of $l_0'$.  If not, we would obtain at least one infinite sequence of the form $l_0, l_1, \ldots$ or $l_0', l_1', \ldots$.  For concreteness, suppose the sequence $\{ l_i \}$ is infinite.  These leaves are all distinct, fixed by $g$, and with distinct leaves $f_i$ of $\cF^-$ making perfect with them, which are also fixed by $g$.  
The leaves $f_i$ lie in a bounded region of the leaf space (the closure of the interval separating all $l_i$ from all $l_i'$), so must accumulate somewhere, i.e., limits onto a leaf or union of leaves. A unique leaf in that limit, call it $f_\infty$, separates the $l_i$ from the $l'_j$,  so $f_\infty$ is also fixed by $g$.  
This gives a contradiction with Axiom \ref{Axiom_A1} since the leaf of $\cF^+$ through the fixed point of $g$ on $f_\infty$ would intersect (infinitely many) of the fixed leaves $f_i$.  This contradiction concludes the proof.  
 \end{proof}
 
\begin{rem}
We note that one could be slightly more precise in the proof of Proposition \ref{prop_axiom4weak_implies_4strong} to deduce directly that the leaves in the set $S$ are arranged as a \emph{line of lozenges} (see Definition \ref{def_line_lozenges}), but this will follow from Lemma \ref{lem:two_fix_points} below.
\end{rem} 
 
 \subsection{Product and scalloped regions} 
 
 \begin{definition}
 An {\em infinite product region} is a subset of a bifoliated plane properly homeomorphic to $I \times [0, \infty) \subset \bR^2$ with its product foliations; here $I$ may be any closed interval, finite or infinite.    We say the interval $I$ {\em bounds} the region. 
 \end{definition}
 
 \begin{proposition}\label{prop:no_product}
 $P$ has an infinite product region if and only if $P$ is trivial 
 \end{proposition}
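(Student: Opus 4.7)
The forward direction is immediate: if $P$ is trivial then $P$ is homeomorphic to $\mathbb{R}^2$ with the standard product foliation, and any half-plane such as $\mathbb{R} \times [0, \infty)$ sits inside $P$ as an infinite product region.

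For the converse, suppose $P$ contains an infinite product region $R \cong I \times [0,\infty)$. The plan is to invoke the trichotomy (Proposition \ref{prop_trichotomy}) and rule out the two non-trivial cases. The skewed case can be ruled out by a direct geometric argument: in the skewed model $\{|x-y|<1\}$ with horizontal and vertical foliations, each leaf of $\cF^+$ at height $y_0$ meets exactly the $\cF^-$-leaves parametrized by $x \in (y_0 - 1, y_0 + 1)$, a bounded window that shifts with $y_0$. For $R$ to be properly embedded, as $s \to \infty$ the $\cF^+$-leaves of $R$, realized in $P$ at heights $\phi(s)$, must escape every compact set of $P$; however each of these leaves must meet the same family of $\cF^-$-leaves (those indexed by $t \in I$), forcing the fixed image $\psi(I)$ to lie inside the shifting window $(\phi(s)-1, \phi(s)+1)$ for every $s$. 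This keeps $\phi(s)$ bounded, contradicting properness.

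The main case is when $P$ has prongs or non-separated leaves. Here the strategy is to use the dynamics to propagate the trivial product structure of $R$ across $P$, forcing $P$ to be trivial and contradicting the case assumption. By Axiom \ref{Axiom_dense}, $R$ contains a leaf $l^+ \in \cF^+$ fixed by some nontrivial $g \in G$; by Axiom \ref{Axiom_A1}, $g$ has a unique hyperbolic fixed point $x \in l^+$, which we may place in the interior of $R$, WLOG contracting on $\cF^+(x)$ and expanding on $\cF^-(x)$. Each iterate $g^n(R)$ is again an infinite product region containing $x$, trivially foliated with no prongs or non-separated leaves in its interior. I would then show that the union $U = \bigcup_{n \in \mathbb{Z}} g^n(R)$ equals $P$. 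If not, the boundary of $U$ would consist of leaves (or faces of singular leaves); the density of fixed leaves (Axiom \ref{Axiom_dense}), combined with Axiom \ref{Axiom_non-separated} for any non-separated limits on the boundary, together with the hyperbolic dynamics of $g$, would force perfect fits between pairs of these boundary leaves, ultimately producing a totally ideal quadrilateral and contradicting Axiom \ref{Axiom_totallyideal}. Since $U$ is a union of trivially-foliated product regions and equals $P$, the plane $P$ has no prongs or non-separated leaves, contradicting the case hypothesis.

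The main obstacle is precisely this last step: rigorously showing $U = P$. The delicate point is controlling the boundary of $U$. One must show that the expansion of $g^{-1}$ on $\cF^+(x)$ forces the ``horizontal extent'' of the iterates $g^{-n}(R)$ to sweep out the entirety of the $\cF^+(x)$-direction, while the vertical direction remains properly controlled; and more importantly, that any leaf in $\partial U$ would produce, via the Anosov-like axioms, a forbidden perfect-fit configuration. Careful analysis is required in the presence of singular leaves near $\partial U$, since the naive extensions may pass through prong points and need to be replaced by faces of singular leaves. This case-analysis at the boundary is the technical heart of the proof.
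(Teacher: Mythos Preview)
Your approach differs substantially from the paper's, and the crucial step you yourself flag as ``the main obstacle'' is indeed a genuine gap.

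The paper proves this proposition directly, without invoking the trichotomy (Proposition~\ref{prop_trichotomy}) and without using Axiom~\ref{Axiom_totallyideal} at all. The argument runs as follows. Given an infinite product region $U$ bounded by an interval $I$ in a leaf of $\cF^+$, find (via Axiom~\ref{Axiom_dense}) a leaf $l^-$ of $\cF^-$ intersecting $U$ and fixed by some nontrivial $g$; let $x$ be its fixed point. Then $\bigcup_k g^k(U)$ contains an infinite product region bounded by $\cF^+(x)$. The key claim is that the $\cF^-$-rays $r_i$ based at points $y_i \to \infty$ along $\cF^+(x)$ do not accumulate onto any leaf: a single limit leaf would be $g$-fixed and give a second fixed point for $g$ whose $\cF^+$-leaf contradicts the product structure, while a union of nonseparated limit leaves yields (via Proposition~\ref{prop_axiom4weak_implies_4strong}) an element $h$ fixing $\cF^+(x)$ and one of the limit leaves simultaneously, forcing two fixed points for $h$ on a single $\cF^-$-leaf and contradicting Axiom~\ref{Axiom_A1}. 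This non-accumulation forces any two such product regions that intersect to be nested; taking the union $V$ of all of them, one checks $V$ is $\cF^+$-saturated, then uses the same non-accumulation argument with $\cF^-$-fixed leaves to show $V$ is $\cF^-$-saturated, hence $V=P$ is trivial.

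Your sketch, by contrast, hinges on the assertion that if $\bigcup_n g^n(R)$ were a proper subset of $P$, its boundary would produce a totally ideal quadrilateral. You give no mechanism for this, and I do not see one: the boundary leaves of such a union need not be arranged as four leaves in perfect fit bounding a trivially foliated region. The actual obstruction the paper exploits is not Axiom~\ref{Axiom_totallyideal} but the uniqueness of fixed points on leaves (Axiom~\ref{Axiom_A1}), applied after passing through Proposition~\ref{prop_axiom4weak_implies_4strong} in the nonseparated case. A secondary issue: you assume the fixed point $x$ of $g$ can be placed in the interior of $R$, but the fixed leaf $l^+$ only \emph{meets} $R$; the fixed point may lie outside. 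The paper handles this by working with the $\cF^-$-leaf instead and iterating $g$ to slide the base past $x$.

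Your skewed-case argument is fine but unnecessary, since the paper's direct argument shows triviality without any case split.
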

 
 \begin{proof} 
 Suppose $U$ is an infinite product region in $P$, bounded by an interval $I$. 
 Up to switching the labels $+$ and $-$, we assume $I$ lies in a leaf of $\cF^+$.  
 By axiom \ref{Axiom_dense} there is a leaf $l^-$ of $\cF^-$ intersecting $U$ and fixed by some $g \in G$.    Let $x$ be the unique fixed point of $g$ on $l^-$.  Then $\bigcup_{k \in \mathbb{Z}} g^k(U)$ contains an infinite product region bounded by $\cF^+(x)$.  Let $y_i$ be points along  $\cF^+(x)$ such that the sequence $y_i$ eventually leaves every compact set, and for each $i$ let $r_i$ be a  ray of $\cF^-(y_i)$ contained in $U$. 
  
 We claim that the sequence $\{r_i\}$ does not accumulate onto any leaf: if a subsequence had a limit, then it would either limit onto a single leaf,
 or limit onto a union of nonseparated leaves.  
 In the first case, this leaf is fixed by $g$, so has some point $z$ fixed by $g$. Hence, $\cF^+(z)$ cannot intersect $\cF^-(x)$, contradicting the product region structure. In the second case by Proposition \ref{prop_axiom4weak_implies_4strong} there is some $h$ fixing all the limit leaves. Then we deduce that $h$ also fixes $\cF^+(x)$ (because $\cF^+(x)$ is in the boundary of the $\cF^+$-saturation of the union of limit leaves), and thus admit a fixed point $x'$ on $\cF^+(x)$.   But then $\cF^-(x')$ intersects a leaf of $\cF^+$ fixed by $h$, giving two fixed points for $h$ on $\cF^-(x')$, contradicting \ref{Axiom_A1}.  

As a consequence, if $U'$ is another infinite product region bounded by some leaf of $\cF^+$ fixed by some $g'$ and that intersects $U$, then either $U \subset U'$ or $U' \subset U$.  
Consider the union of all such regions $U'$.  Denote this set by $V$.  Then, by the above, $V$ is $\cF^+$ saturated, and we claim that it is all of $P$.  To show this, consider any leaf $l_0$ of $\cF^-$ fixed by an element $h \in G$ and such that $l_0$ intersects $V$.   Such leaves form a dense subset of $V$ by Axiom \ref{Axiom_dense}.  
Let $x_0$ be the point fixed by $h$ in $l_0$.  By the argument above, we have $x_0 \in V$.  
But we also have $h^N(V) \subset V$, and so we conclude that $l_0 \subset V$.   This implies that $V$ is $\cF^-$ saturated as well, and so $V = P$.    Since $V=P$ is an increasing union of infinite product regions, it has a global product structure, so is trivial.  
 \end{proof} 

The following result is an important statement about the structure of fixed points of group elements.  
\begin{lemma}[Lemma 2.23 and Prop. 2.24 in \cite{BFM}, see also \cite{Fenley_QGAF}] \label{lem:two_fix_points}
If $g \in G$ is a nontrivial element fixing more than one point, then the fixed points of $g$ are corners of a $g$-invariant chain of lozenges.  
In particular: 
\begin{enumerate} 
\item if $g$ preserves two leaves making a perfect fit, then it preserves a lozenge whose corners lie in these leaves. 
\item if $g$ fixes a non-corner point, then this point is its unique fixed point in $P$.
\end{enumerate} 
\end{lemma}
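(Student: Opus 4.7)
The plan is to first establish (1) as the key building block, then iterate it to construct a chain of lozenges between any two fixed points of $g$, and finally deduce (2) as an immediate corollary. The main tools are Axiom \ref{Axiom_A1} with Remark \ref{rem_single_fixed_point} (uniqueness of the fixed point on each fixed leaf), Proposition \ref{prop_axiom4weak_implies_4strong} (a power of $g$ fixes all leaves in a non-separated family), and Proposition \ref{prop:no_product} (no infinite product regions in a nontrivial bifoliated plane). Note that if $P$ is trivial, the action is affine and a nontrivial affine map cannot have two isolated fixed points without fixing a whole line --- contradicting Remark \ref{rem_single_fixed_point} --- while no perfect fits exist, so both claims are vacuous. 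I assume $P$ is nontrivial throughout.

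For (1), let $l^+ \in \cF^+$ and $l^- \in \cF^-$ be $g$-invariant leaves making a perfect fit, with unique fixed points $x \in l^+$, $y \in l^-$. The perfect fit distinguishes $g$-invariant rays $r^+_x \subset l^+$ from $x$ and $r^-_y \subset l^-$ from $y$ pointing into it. By Axiom \ref{Axiom_A1}, after replacing $g$ by $g^{-1}$ if needed, I arrange that $g$ expands both $l^\pm$ at their fixed points, hence contracts the transverse leaves $\cF^-(x), \cF^+(y)$. Let $r^-_x$ and $r^+_y$ denote the rays of $\cF^-(x) \smallsetminus \{x\}$ and $\cF^+(y) \smallsetminus \{y\}$ on the ``interior'' side of the putative lozenge (the side whose nearby transverse leaves thread through the perfect-fit region). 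I verify that $r^-_x$ and $r^+_y$ make a perfect fit: the transverse arcs witnessing the perfect fit of $l^\pm$ at its end, pulled back by successive powers of $g^{-1}$, contract along $\cF^-(x)$ and $\cF^+(y)$ and supply transverse arcs witnessing the perfect fit at every interior point of $r^-_x$ and $r^+_y$. The region bounded by these four rays is the required $g$-invariant lozenge with corners $x, y$.

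For the main statement, let $x \neq y$ be fixed by $g$. By Remark \ref{rem_single_fixed_point}, all four leaves $\cF^\pm(x), \cF^\pm(y)$ are $g$-fixed with unique fixed points, forcing $\cF^+(x) \cap \cF^-(y) = \cF^-(x) \cap \cF^+(y) = \emptyset$. Up to swapping $+,-$, one of the rays $r^+_x \subset \cF^+(x)$ from $x$ has the property that the $\cF^-$-leaves meeting it sweep out an interval of $\cL^-$ whose far boundary lies on the ``$y$-side''. By Observation \ref{obs:nonseparated_implies_pf}, this boundary consists of a single leaf in perfect fit with $\cF^+(x)$ or a family of pairwise non-separated leaves; by Axiom \ref{Axiom_non-separated} and Proposition \ref{prop_axiom4weak_implies_4strong}, some boundary leaf $l^-_1$ is fixed by a common power $g^{N_1}$. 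Applying (1) to $(\cF^+(x), l^-_1, g^{N_1})$ yields a lozenge $L_1$ with $x$ as one corner and $x_1 \in l^-_1$ as the opposite corner; since $l^-_1$ is canonically produced from the $g$-invariant ray $r^+_x$, it is itself $g$-invariant, forcing $x_1$ to be $g$-fixed (by uniqueness of the fixed point on $l^-_1$) and $L_1$ to be $g$-invariant. Iterating from $x_1$ produces successive $g$-invariant lozenges $L_2, L_3, \dots$ whose corners progress toward $y$; termination in finitely many steps is forced because an infinite chain would either produce an infinite product region (excluded by Proposition \ref{prop:no_product}) or produce infinitely many fixed leaves of a common power of $g$ accumulating on a single fixed leaf, contradicting Axiom \ref{Axiom_A1} for that leaf. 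The resulting finite $g$-invariant chain has $x, y$ as corners, and (2) follows at once: a non-corner fixed point $x$ together with any second fixed point $y$ would force $x$ to be a corner of $L_1$, a contradiction.

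The main obstacle is the termination argument combined with verifying global $g$-invariance (rather than merely $g^{N_i}$-invariance of individual lozenges). Termination is handled by the combined use of Proposition \ref{prop:no_product} and Axiom \ref{Axiom_A1} to exclude the two possible bad limit behaviors; global $g$-invariance comes from the canonical nature of the construction, since each corner, leaf, and direction used to define $L_i$ is determined by $g$-invariant data from the previous step, so uniqueness of fixed points on fixed leaves forces $g$ itself to preserve the entire chain.
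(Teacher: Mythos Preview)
Your overall strategy matches the approach of \cite{BFM} that the paper simply cites (the paper's own proof is just the observation that the \cite{BFM} argument goes through once Propositions~\ref{prop:no_product} and~\ref{prop_axiom4weak_implies_4strong} are available). However, there are two places where your execution has genuine gaps.

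In your proof of (1), the mechanism you describe for the second perfect fit is not correct: pulling the transversal arcs $\tau^\pm$ back by $g^{-1}$ moves their basepoints along $l^+$ and $l^-$ (toward $x$ and $y$ respectively), not along $\cF^-(x)$ and $\cF^+(y)$, so they never become transversals witnessing a perfect fit of $r_x^-$ with $r_y^+$. The correct use of $g$-invariance is different: take $m \in \cF^-$ just to the $l^-$-side of $\cF^-(x)$; by expansion on $l^+$ and the given perfect fit one has $g^n m \to l^-$, so $g^n m$ eventually meets $\cF^+(y)$, and since $\cF^+(y)$ is $g$-invariant, $m$ itself meets $\cF^+(y)$. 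The symmetric statement then gives the required perfect fit.

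More seriously, your termination argument for the chain fails. An infinite chain of lozenges does not yield an infinite product region --- compare Lemma~\ref{lem_infinite_line_lozenges}, where infinite lines of lozenges produce scalloped regions, perfectly compatible with a nontrivial plane. What is really needed, and what you never establish, is that each step makes definite progress toward $y$: you must argue that the new side-leaf produced at stage $i$ separates $x_i$ from $y$ (equivalently, that $y$ always lies in the quadrant of $x_i$ into which you step). Once that is in place, a fixed path from $x$ to $y$ crosses every such leaf, and finiteness follows either by compactness of the path or by your accumulation-of-fixed-leaves argument (which is then legitimate, since the corners are trapped between $\cF^\pm(x)$ and $\cF^\pm(y)$). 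Without the separation step, nothing prevents the chain from wandering indefinitely without reaching $y$.
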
 

\begin{proof}
The proof of Lemma 2.23 and Prop. 2.24 in \cite{BFM} applies directly in this setting, given Proposition \ref{prop:no_product} and Proposition \ref{prop_axiom4weak_implies_4strong}.
\end{proof}

We introduce another definition (from \cite{BFM}).
\begin{definition}\label{def_line_lozenges}
A {\em line of lozenges} is a chain $L_i$, indexed by some subset of consecutive integers $i \in \bZ$, such that $L_i$ shares opposite sides with $L_{i-1}$ and $L_{i+1}$.  Thus, there is a common leaf of either $\cF^+$ or $\cF^-$ intersecting all $L_i$.
\end{definition}

As a consequence of Proposition \ref{prop_axiom4weak_implies_4strong} and Lemma \ref{lem:two_fix_points}, we have the following 
\begin{corollary}[See, e.g., \cite{BFM} Lemma 2.26] \label{cor:nonseparated_leaves}
Any collection of pairwise nonseparated leaves forms the sides of a line of lozenges.  
\end{corollary}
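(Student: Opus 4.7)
The plan is to combine Proposition~\ref{prop_axiom4weak_implies_4strong}, which produces a single group element fixing every leaf in the collection, with Lemma~\ref{lem:two_fix_points}, which arranges the fixed points of that element at corners of a chain of lozenges, and then to promote the resulting chain to a line using the natural linear order on the family.

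In detail, let $S$ be a collection of pairwise nonseparated leaves, which without loss of generality I assume lies in $\cF^+$. The case $|S|\le 1$ is trivial, so assume $|S|\ge 2$. Proposition~\ref{prop_axiom4weak_implies_4strong} supplies a nontrivial $g\in G$ fixing every leaf of $S$, and after replacing $g$ by a sufficiently high power Axiom~\ref{Axiom_A1} provides a unique $g$-fixed point $x_l\in l$ for each $l\in S$. Since $|S|\ge 2$, the element $g$ has at least two fixed points in $P$, so Lemma~\ref{lem:two_fix_points} immediately places the collection $\{x_l\}$ at corners of a $g$-invariant chain of lozenges.

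To see that this chain is in fact a line, I would use the natural linear order on $S$ coming from a coherent orientation on $\cF^+$-leaves accumulating onto $S$, exactly as in the proof of Proposition~\ref{prop_axiom4weak_implies_4strong}. For consecutive leaves $l,l'\in S$ in this order, Observation~\ref{obs:nonseparated_implies_pf} produces two $\cF^-$-leaves, each separating $l$ from $l'$ and making a perfect fit with $l$ and $l'$ respectively; these are forced to be $g$-invariant, so Lemma~\ref{lem:two_fix_points}(1) delivers a lozenge whose corners, by the uniqueness in Axiom~\ref{Axiom_A1}, are exactly $x_l$ and $x_{l'}$. Two such consecutive lozenges share only the corner $x_{l'}$ and sit on opposite sides of the leaf $\cF^-(x_{l'})$, whose two rays from $x_{l'}$ play the role of opposite shared sides; this is the line-of-lozenges configuration, with the leaves of $S$ themselves playing the role of common $\cF^+$-sides. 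I do not anticipate any serious obstacle beyond this bookkeeping: the real content has been done by Proposition~\ref{prop_axiom4weak_implies_4strong} and Lemma~\ref{lem:two_fix_points}, and Axiom~\ref{Axiom_A1} removes any ambiguity in identifying the corners.
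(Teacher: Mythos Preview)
Your overall strategy is the paper's: invoke Proposition~\ref{prop_axiom4weak_implies_4strong} to obtain a single nontrivial $g$ fixing every leaf of $S$, then use Lemma~\ref{lem:two_fix_points} to organise the fixed points into a chain of lozenges.  The paper gives no further details beyond citing these two ingredients.

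However, your promotion of the chain to a line has a genuine error.  For consecutive $l,l'\in S$ you assert that Lemma~\ref{lem:two_fix_points}(1), applied to the perfect fits produced by Observation~\ref{obs:nonseparated_implies_pf}, yields a single lozenge with corners $x_l$ and $x_{l'}$.  It does not.  The $\cF^-$-leaf $f$ from the Observation makes a perfect fit with $l$ and \emph{separates} $l$ from $l'$, so $f$ contains neither $x_l$ nor $x_{l'}$; applying Lemma~\ref{lem:two_fix_points}(1) to the pair $(l,f)$ gives a lozenge with corners $x_l$ and the fixed point $y$ of $g$ on $f$, and $y\neq x_{l'}$.  In the actual configuration (compare Figure~\ref{fig:4weak4strong}) there are \emph{two} lozenges between $x_l$ and $x_{l'}$, sharing a $\cF^-$-side along a ray of $f$ and the intermediate corner $y$; the leaves $l$ and $l'$ are the two outer $\cF^+$-sides of this pair, and $\cF^+(y)$ is not in $S$.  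Thus the leaves of $S$ sit at alternate corners of the line, not at every corner, and consecutive lozenges of the line share sides at these intermediate points $y$, not at the $x_{l'}$ as you wrote.  Once you track the fixed points on the separating $\cF^-$-leaves and apply Lemma~\ref{lem:two_fix_points}(1) twice per consecutive pair in $S$, the line structure falls out; but as written the corners are misidentified.
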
 

It turns out that \emph{infinite} lines of lozenges are special: They are part of what is called a \emph{scalloped region}. 
A scalloped region is a trivially foliated open set $U \subset P$ whose boundary $\partial U$ consists in four infinite families of pairwise non-separated leaves. In particular, a scalloped  region can be realized as two distinct bi-infinite lines of lozenges: one sharing sides along leaves of $\cF^+$ and the other sharing $\cF^-$-sides. See Figure \ref{fig_scalloped_region}.  A formal definition can be found in e.g., \cite[Definition 2.31]{BFM}.  

\begin{lemma}\label{lem_infinite_line_lozenges}
Let $\{L_i\}_i$ be an infinite line of lozenges, then there exists a (unique) \emph{bi-infinite} line of lozenges $\cL$ containing $\{L_i\}_i$ and forming a scalloped region.  
Furthermore, the stabilizer of a bi-infinite line of lozenge is a subgroup $H$ that is virtually isomorphic to $\bZ^2$, with one $\bZ$-factor fixing every corners of $\cL$ and every other disjoint $\bZ$-factor acting freely.
\end{lemma}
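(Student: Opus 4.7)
The proof splits into (i) extending the infinite line to a bi-infinite one contained in a scalloped region, and (ii) analyzing the stabilizer $H$.

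For (i), without loss of generality assume the lozenges $L_i$ share $\cF^+$-sides $\alpha_i$, with common $\cF^-$-leaf $\gamma$ intersecting all of them, and that $I \supseteq \{0,1,2,\dots\}$; the other infinite end is handled symmetrically. Consecutive sides $\alpha_i,\alpha_{i+1}$ are non-separated, being the two opposite $\cF^+$-sides of the trivially foliated lozenge $L_i$. The first key step is to show that the sequence $(\alpha_i)_{i\geq 0}$ accumulates in $P$ onto a nonempty set $\mathcal{B}^+$ of pairwise non-separated $\cF^+$-leaves; the fact that no infinite subfamily of $(\alpha_i)$ can be everywhere separated from itself follows from Proposition \ref{prop:no_product} (which rules out an infinite product region inside the union of the $L_i$). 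Then Proposition \ref{prop_axiom4weak_implies_4strong} gives an element $h\in G$ fixing every leaf of $\mathcal{B}^+$, and by analyzing its contracting/expanding behavior (Axiom \ref{Axiom_A1}) at the fixed points on $\mathcal{B}^+$, I would show that $h$ shifts the indexing of the line: $h\cdot \alpha_i = \alpha_{i+k}$ for some $k\neq 0$. Applying $h^{-1}$ iteratively extends the line to negative indices, yielding a bi-infinite line $\cL$. Running the same argument on the non-shared $\cF^-$-sides of the lozenges (which also accumulate to pairwise non-separated limit leaves by the same Prop.~\ref{prop:no_product} argument) produces two further families of non-separated $\cF^-$-leaves, so the trivially foliated region enclosed by $\cL$ is bounded by four infinite families of pairwise non-separated leaves—precisely a scalloped region.

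For (ii), the element $h$ constructed above already provides one freely-acting $\bZ$-factor of $H$. For the corner-fixing factor, apply Proposition \ref{prop_axiom4weak_implies_4strong} to the adjacent pair $(\alpha_0,\alpha_1)$ to obtain $g\in G$ fixing both leaves (and hence the corner $x_1$, by Axiom \ref{Axiom_A1} and Remark \ref{rem_single_fixed_point}). The conjugate $h g h^{-1}$ is a corresponding fixer of $(\alpha_k,\alpha_{k+1})$, and by Remark \ref{rem_single_fixed_point} the point stabilizer of any corner is virtually cyclic; a common nontrivial power $f$ of $g$ and its $h$-iterates therefore commutes with $h$ and fixes every corner of $\cL$. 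Then $\langle f,h\rangle \cong \bZ^2$. For the virtual equality with the full stabilizer, any element of $H$ either translates the indexing (hence, modulo a power of $f$, differs from a power of $h$ by a bounded finite factor since the shift map $H\to\bZ$ has cyclic image) or preserves every corner of $\cL$ (hence lies in the virtually cyclic corner-fixing subgroup containing $f$), giving $H$ virtually $\bZ^2$ with the claimed structure.

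The principal obstacle—and technical heart of the proof—is the dynamical argument that $h$ must act nontrivially on the indexing of the $\alpha_i$ rather than fixing each $\alpha_i$ individually. If $h$ fixed every $\alpha_i$, then by Axiom \ref{Axiom_A1} it would also fix every corner $x_i$, and, combined with its fixed points on $\mathcal{B}^+$-leaves, the resulting fixed point set would form (by Lemma \ref{lem:two_fix_points}) a chain of lozenges whose topological expansion/contraction data at the accumulation leaves of $\mathcal{B}^+$ would be inconsistent with the data at the interior fixed points $x_i$. A symmetric care is needed for $f$, to ensure it fixes every corner of $\cL$ rather than a proper sublattice of them; this uses the commutativity of $f$ and $h$ combined with the shift $h\cdot\alpha_i=\alpha_{i+k}$ being compatible with the cyclic structure of the corner stabilizers.
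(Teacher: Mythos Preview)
The paper's own proof is a two-line citation: both assertions are Lemmas~2.32 and~2.33 of \cite{BFM}, and the only thing checked here is that those proofs did not use the transitivity axioms. Your proposal instead attempts to reconstruct the argument from scratch, which is a reasonable goal, but the reconstruction has a basic geometric error that breaks both halves of the argument.

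The error is in the sentence ``Consecutive sides $\alpha_i,\alpha_{i+1}$ are non-separated, being the two opposite $\cF^+$-sides of the trivially foliated lozenge $L_i$.'' Opposite sides of a single lozenge are \emph{not} non-separated: any $\cF^+$-leaf inside $L_i$ approaches $\alpha_i$ when pushed one way and $\alpha_{i+1}$ when pushed the other, never both at once. In a line sharing $\cF^+$-sides it is the \emph{non-shared} $\cF^-$-sides that are pairwise non-separated: pushing the common transversal $\gamma$ sideways out of the strip, it limits simultaneously onto the entire family of $\cF^-$-sides on that flank. This swap of roles invalidates two of your key steps. First, you cannot apply Proposition~\ref{prop_axiom4weak_implies_4strong} to the pair $(\alpha_0,\alpha_1)$ to produce the corner-fixing element $g$, since that proposition requires non-separated leaves. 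Second, once one knows (from the correct non-separated family) that some $g$ fixes every $\alpha_i$, the argument at the end of the proof of Proposition~\ref{prop_axiom4weak_implies_4strong} shows precisely that the $\alpha_i$ \emph{cannot} accumulate in $P$: they escape to infinity along $\gamma$. So your set $\mathcal{B}^+$ is empty, and the element $h$ you build from it does not exist.

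The route that actually works (and is essentially what \cite{BFM} does) starts from the infinite family of pairwise non-separated $\cF^-$-sides. Proposition~\ref{prop_axiom4weak_implies_4strong} gives a single $g$ fixing all of them, hence fixing every corner; this is your corner-fixing $\bZ$-factor. Corollary~\ref{cor:nonseparated_leaves} then says this infinite non-separated family is itself the set of sides of a (transverse) line of lozenges, which is where the scalloped structure and the bi-infinite extension come from. The freely-acting element arises as the corner-fixer of that \emph{transverse} line, which shifts the lozenges of the original line; it is not obtained by looking at accumulation points of the $\alpha_i$.
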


\begin{figure}[h]
     \centerline{ \mbox{
\includegraphics[width=8cm]{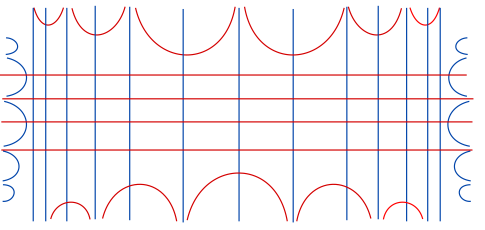} }}
\caption{A scalloped region, with its two infinite lines of lozenges}
\label{fig_scalloped_region}
\end{figure}

\begin{proof}
The fact that infinite lines are contained in bi-infinite lines is given by Lemma 2.32 of \cite{BFM}, the second assertion is given by Lemma 2.33 of \cite{BFM}. Note that neither of these proofs use the transitivity axioms, hence apply directly in our more general setting.
\end{proof}

\subsection{Finite branching} 
The next lemma is a generalization of Fenley's finite branching result (see \cite[Theorem F]{Fenley_structure_branching}). Fenley's result states that for a pseudo-Anosov flow on a compact $3$-manifold $M$, only finitely many leaves in $M$ lift to \emph{branching leaves} in $\tilde M$, i.e., leaves that project to non-Hausdorff points in the leaf space. 

In our general setting, Axiom \ref{Axiom_non-separated} implies that any branching leaf $l \in \cF^{\pm}$ is fixed by some nontrivial element $g\in G$. In particular, it contains (by Axiom \ref{Axiom_A1}) a unique fixed point $x\in l$ by Remark \ref{rem_single_fixed_point}. 

Moreover, the fixed point $x$ in $l$ is, by Lemma \ref{lem:two_fix_points}, a corner of a lozenge $L$. Since $F$ is branching, there exists a lozenge $L'$ which shares a side with $L$ and such that the common corner $p$ of $L$ and $L'$ is opposite to $x$ in $L$.
Following the terminology introduced in \cite{Fenley_structure_branching}, we call such shared corners of lozenges sharing a side a \emph{pivots}.
Our generalization of Fenley's finiteness result is the following. 

\begin{lemma}\label{lem:adjacent_corners_discrete}
The sets 
\begin{align*}
&\mathrm{Sing}:=\{p \in P \mid p \text{ is a singular point}\}, \text{ and} \\
&\mathrm{Pivot}:= \{p\in P \mid \text{a half leaf of } p \text{ is the side of two lozenges}\}
 \end{align*}
are closed and discrete subsets of $P$.
\end{lemma}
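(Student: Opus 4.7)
My plan is to handle $\mathrm{Sing}$ and $\mathrm{Pivot}$ in parallel, first showing both sets consist of fixed points of nontrivial elements of $G$, then using the local foliation structure at any purported accumulation point to derive a contradiction. The $\mathrm{Sing}$ case is immediate from the definition of a bifoliated plane: singular points are by hypothesis isolated prong singularities, and regular points have product-foliated neighborhoods containing no singular points, so a sequence of distinct singular points cannot converge in $P$. For $\mathrm{Pivot}$, I first observe that every pivot is fixed by a nontrivial element of $G$: if $p$ is a pivot with shared half-leaf $r\subset \cF^+(p)$ (say) being a side of two lozenges $L,L'$ with opposite corners $q,q'$, then the leaves $\cF^-(q)$ and $\cF^-(q')$ are non-separated, since for any $y_n\in r$ tending to infinity along $r$ the two perfect fits force $\cF^-(y_n)$ to accumulate simultaneously onto both $\cF^-(q)$ and $\cF^-(q')$. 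By Proposition \ref{prop_axiom4weak_implies_4strong} there is then a single nontrivial $g\in G$ fixing both of these non-separated leaves, and by Lemma \ref{lem:two_fix_points} this $g$ also fixes $p$, since $p$ is a corner of the chain of lozenges $L\cup L'$ joining $q$ and $q'$.

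Now suppose for contradiction that $(p_n)$ is a sequence of distinct pivots with $p_n\to p\in P$. After passing to a subsequence we may assume all common sides $r_n\subset\cF^+(p_n)$ point in the same local direction in a product (or prong) chart at $p$; let $L_n,L_n'$ be the associated lozenges and $q_n,q_n'$ their opposite corners. If the sequences $(q_n),(q_n')$ remain bounded in $P$, then after further extraction $L_n\to L$ and $L_n'\to L'$ converge (in the Hausdorff topology on closed sets) to two lozenges at $p$, making $p$ itself a pivot with shared side $r=\lim r_n$. But then each nearby pivot $p_n$ would contribute a second non-separated pair $\cF^-(q_n),\cF^-(q_n')$ close to the limit pair $\cF^-(q),\cF^-(q')$, producing a chain of lozenges through $p_n$ that cuts across $L\cup L'$ transversely and is incompatible with the perfect-fit configurations at the corners of $L$ and $L'$. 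If instead one of $q_n,q_n'$ escapes to infinity in $P$, then the perfect fits along $r_n$ accumulate from both sides of $\cF^+(p)$ onto additional $\cF^-$-leaves ``at infinity'', producing in the limit four leaves (two of $\cF^+$ and two of $\cF^-$) whose pairwise perfect fits bound a totally ideal quadrilateral at $p$, contradicting Axiom \ref{Axiom_totallyideal}.

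The main obstacle will be making the two case analyses of the $\mathrm{Pivot}$ argument fully rigorous, in particular verifying the claimed structural contradictions: the existence of a ``crossing'' chain of lozenges in the bounded case, and the construction of the totally ideal quadrilateral in the unbounded case. Both rest on a common technical principle — non-separated families in the $\cF^\pm$ leaf spaces cannot accumulate locally in $P$ without yielding one of the forbidden configurations — which is essentially the content of the lemma itself, and which must be extracted from the combination of Axioms \ref{Axiom_A1}, \ref{Axiom_non-separated}, \ref{Axiom_totallyideal}, together with the perfect-fit structure of chains of lozenges and Lemma \ref{lem:two_fix_points}.
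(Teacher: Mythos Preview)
Your treatment of $\mathrm{Sing}$ is fine and matches the paper.

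Your approach to $\mathrm{Pivot}$, however, has a genuine gap that you yourself identify in the final paragraph: the two structural contradictions you want (crossing chains in the bounded case, totally ideal quadrilateral in the unbounded case) are not established, and you acknowledge that the underlying principle ``is essentially the content of the lemma itself.'' That is circular. Concretely: in the bounded case, lozenges \emph{can} intersect one another in many ways, and there is no general incompatibility between a pair of adjacent lozenges at $p$ and a nearby pair at $p_n$; you would need a precise obstruction, and you have not supplied one. In the unbounded case, the escape of $q_n$ or $q_n'$ to infinity does not by itself force the limiting leaves to close up into a totally ideal quadrilateral --- you need all four perfect fits to match up, and no mechanism is given for this.

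The paper's argument is entirely different and avoids these difficulties. It works locally near a fixed pivot $x$ and uses the \emph{non-corner criterion} (\cite[Lemma~2.29]{BFM}): a point lying inside a lozenge $L$ can be the corner of a lozenge only in the quadrant opposite to the corner of $L$ it sees. This immediately excludes pivots from the interiors of the two lozenges at $x$. Then, if some other pivot $z_1$ lies in a product neighborhood of $x$, one orients so that the lozenges of $x$ lie above and $z_1$ is in (say) the lower-right quadrant; the non-corner criterion forces $z_1$'s lozenges into a specific configuration, which in turn carves out a rectangle containing $x$ with no pivots except possibly one more point $z_2$ in the lower-left quadrant. Repeating the argument with $z_2$ yields a definite product rectangle around $x$ containing no pivot other than $x$. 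This proves closedness and discreteness simultaneously, with no convergence-of-lozenges or limit-configuration analysis required.
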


For the proof we use the following basic terminology
\begin{definition}
For $x\in P$, a \emph{quadrant} of $x$ is a connected component of $P \setminus (\cF^+(x) \cup \cF^-(x))$.  
\end{definition}

\begin{proof}[Proof of Lemma \ref{lem:adjacent_corners_discrete}]
By definition the set of singular points is discrete and it is also closed since any nonsingular point has a product foliated neighborhood.  Thus, we need only treat the case of pivots. 

Suppose for contradiction that $x_n$ is a sequence of pivot points converging to some point $x$.   After passing to a subsequence, we can assume that $x_n$ lie in a single quadrant $Q$ of $x$, and passing to the tail end we assume that all are in a trivially foliated subset of this quadrant, in particular they are nonsingular. Up to taking a further subsequence (and potentially relabelling the foliations) we may assume that the leaves $\cF^+(x_n)$ are always the shared sides of two lozenges. 

For concreteness, fix a local orientation on $Q$ in a product-foliated neighborhood of $x$ so that $Q$ is the upper-left quadrant of $x$ and the local foliations are represented with $\cF^+$ as vertical and $\cF^-$ horizontal.  After passing to a further subsequence, we can assume that the pivots all have the same orientation, that is to say that the 
perfect fits of all the $\cF^+(x_n)$ leaves are either all above $\cF^-(x)$ or all below $\cF^-(x)$.
We will show that each case contradicts Lemma \ref{noncorner_criterion} 

Suppose as a first case that the perfect fits are above.   Note that the fact that all $x_i$ are in a trivially foliated neighborhood of $x$ in $Q$ implies that each of $\cF^\pm(x_i)$ intersects $\cF^\mp(x)$ (respectively). 
Take $n$ large enough so that $x_n$ lies in the rectangle bounded by $\cF^\pm(x_1)$ and $\cF^\pm(x)$.  Then the perfect fit formed by $\cF^+(x_1)$ and the side of the rightmost lozenge with corner $x_1$ is in the top left quadrant of $x_n$, so, by \cite[Lemma 2.29]{BFM} $x_n$ cannot have a lozenge there, a contradiction.  Similarly, if the perfect fits are all below, $x_n$ cannot have a lozenge in its lower left quadrant, again a contradiction. \qedhere

\end{proof}

 \section{Smale classes and topological transitivity} \label{sec:smale_class}

In this section, we generalize the Smale order and basic sets for Anosov flows to the context of an Anosov-like action of a group $G$ on bifoliated plane $(P, \cF^+,\cF^-)$.   In particular, we show how the classical Smale order and basic sets (see \cite{Smale}) can be explicitly detected in the orbit space, that Smale classes have a product structure, and we prove Theorems \ref{thm_dense_orbits_is_transitive} and \ref{thm:transitive}.  
Later, in Section \ref{sec:smale_chains}, we will also recover the analogue of tori separating basic sets from the orbit space. 

As in the previous section $(P, \cF^+,\cF^-)$ denotes a bifoliated plane and $G$ a group acting on $P$ with an Anosov-like action.  
 
\begin{notation}
Define $\Fix_G := \{x \in P  \mid \exists g\neq \id \in G \text{ with } gx = x\}$. Let $\Fixbar_G \subset P$ denote the closure of $\Fix_G$ in $P$.  
\end{notation}

\begin{definition} 
The {\em regular set}  $\cR_G$ is the subset of $\Fixbar_G$ consisting of points $x\in \Fixbar_G$ such that no leaf of $\cF^{\pm}(x)$ is a singular leaf. Equivalently, $\cR_G$ is obtained from $\Fixbar_G$ by removing all prongs and their leaves.
\end{definition}

\begin{rem}
When $P$ is the orbit space of an Anosov flow on a 3-manifold $M$, and  $G =\pi_1(M)$ with the induced action, then 
 $\Fixbar_G = \cR_G$ and it is equal to the non-wandering set of the action of $G$. It is also equal to the (projection to the orbit space of the) non-wandering set of the flow, which is the chain-recurrent set.

However, even for actions coming from orbit spaces of pseudo-Anosov flows on compact manifolds, there may be differences between chain-recurrent and non-wandering sets.  In this case $\Fixbar_G$ corresponds to the projection of the non-wandering set.  The non-wandering set can be strictly smaller than the chain recurrent set, as shown by the examples given in Propositions \ref{prop:weird_loops} and \ref{prop_chain-recurrent_non_transitive}.
\end{rem}

For Anosov-like actions, the interesting dynamics happens in the set $\Fixbar_G$.  Working in the regular set $\cR_G$ is necessary to deal with the unusual behavior that arise in the presence of prongs, but it may happen that the closure of $\cR_G$ misses parts (or even all, see Proposition \ref{ex_only_singular_Smale}) of $\Fixbar_G$, and therefore misses parts of the dynamics.  Thus, we introduce some terminology for points of $\Fixbar_G\smallsetminus \bar{\cR}_G$ and we will keep track of them separately.  

\begin{definition}
A point $p\in P$ is called an \emph{isolated prong singularity} if $p$ is a singularity of the foliations and an isolated point of $\Fixbar_G$. Equivalently, isolated prong singularities are the points in $\Fixbar_G\smallsetminus \overline{\cR}_G$.
\end{definition}

\begin{definition}[Smale order, Smale class] \label{def:smale_class}
For $x,y\in \cR_G$, we write $x\lG y$ if there exists $g\in G$ such that $\cF^+(x)\cap \cF^-(gy)\neq\emptyset$.
If $x\lG y$ and $y\lG x$, we say that $x,y\in \cR_G$ are in the same {\em Smale class}\footnote{Our next lemma will show that $\sim_G$ is an equivalence relation, justifying this terminology.}, and write $x\sim_G y$. 

We extend this relation to isolated prong singularities, by saying $x \sim_G y$ if $x = gy$ for some $g \in G$.  
\end{definition}

If $P$ is trivial, then there is a unique Smale class, so the results of this section are only interesting in the nontrivial case.  
Note also that, by definition, if $x \lG y$, then $x \lG gy$ for all $g \in G$.  In particular, Smale classes are $G$-invariant sets.  

In the case of the orbit space of an Anosov flow, the relation $\lG$ is defined and a transitive relation on all of $\Fixbar_G$, giving the classical Smale order, and the Smale classes are basic sets (\cite{Smale}, or see e.g., \cite[Section 5.3]{FH_book}).   Transitivity of the relation $\lG$ on $\Fixbar_G$ also holds whenever $\cF^{\pm}$ are true foliations, without singularities, by Lemma \ref{lem_lG_reflexive_transitive} below.   However, in the presence of prongs, the property of having nonempty intersection is not open in the space of pairs of leaves in $\cF^+$ and $\cF^-$ and transitivity may fail for points in $\Fixbar_G$ -- see Proposition \ref{ex_prongs_in_distinct_classes}. The regular set $\cR_G$ is the set where $\lG$ always induces a partial order on Smale classes, as we show in the next lemma.  

\begin{lemma}\label{lem_lG_reflexive_transitive}
The relation $\lG$ is reflexive and transitive on $\cR_G$.  Consequently, the relation $\sim_G$ is an equivalence relation, and $\lG$ induces a partial order on the Smale classes that do not consist of the orbit of an isolated prong singularity.
\end{lemma}
\begin{proof}
Reflexivity is trivial, taking $g = id$.   For transitivity, suppose $x,y,z\in \cR_G$ satisfy $\cF^+(x)\cap \cF^-(gy)\neq\emptyset$ and $\cF^+(y)\cap \cF^-(hz)\neq\emptyset$, so also $\cF^+(gy)\cap \cF^-(ghz)\neq\emptyset$.  Let $y' \in \cR_G$ be a point fixed by some nontrivial element $k \in G$ and sufficiently close to $gy$ so that $\cF^+(x)\cap \cF^-(y') \neq \emptyset$ and $\cF^+(y')\cap \cF^-(ghz)\neq\emptyset$.  Then (up to replacing $k$ with $k^{-1}$) we will have
that $\cF^-(k^n gh z) \cap \cF^+(x) \neq \emptyset$ as desired. 
\end{proof}

 In order to more succinctly differentiate between Smale classes whose elements are in $\cR_G$ from the ones containing only isolated prong singularities, we introduce the following terminology:
\begin{definition}
A Smale class $\Lambda$ is called \emph{regular} if $\Lambda\subset \cR_G$. It is called \emph{singular} if $\Lambda$ is the orbit of an isolated prong singularity.
\end{definition}

Since we have an order on regular Smale classes, we introduce the following terminology, which will be important in Section \ref{sec:density}.
\begin{definition}[Extremal classes]\label{def_extremal_class}
We call a Smale class \emph{extremal} if it is  regular and either minimal or maximal for the Smale order. 
\end{definition}

Our next goal is to give an equivalent characterization of Smale classes that is easier to work with.  For this, we use the following definition from \cite{BFM}
\begin{definition}
Two points $a,b\in P$ are \emph{totally linked}, abbreviated {\em (TL)} if
\[
\cF^+(a)\cap \cF^-(b) \neq \emptyset \text{ and }  \cF^-(a)\cap \cF^+(b) \neq \emptyset.
\] 
\end{definition}

As in the classical case of hyperbolic basic sets, we will now show that the Smale classes of an Anosov-like action have a {\em product structure}, in the same sense as hyperbolic sets -- see e.g., \cite[Definition 6.2.6]{FH_book}. 

\begin{proposition}[Product structure on Smale classes] \label{prop_basic_product_structure}
 Let $a,b\in \cR_G$ be in the same Smale class $\Lambda$. Assume that $\cF^+(a)\cap \cF^-(b) \neq \emptyset$. 
Then, for any neighborhood $U$ of $\cF^+(a)\cap \cF^-(b)$, we have $U\cap \Lambda \neq \emptyset$. 
In particular, the intersection point $x= \cF^+(a)\cap \cF^-(b)$ is in $\cR_G$ and in the same Smale class as $a$ and $b$.
\end{proposition}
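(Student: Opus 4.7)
I would first observe that the ``in particular'' part of the proposition follows once one knows $x \in \cR_G$: the trivial choice $g = \id$ in the definition of $\lG$ gives $\cF^+(a) \cap \cF^-(x) = \cF^+(a) \cap \cF^-(b) = \{x\}$, hence $a \lG x$, and symmetrically $x \lG a$, so $x \in \Lambda$. The plan therefore reduces to producing, in any neighborhood $U$ of $x$, a point of $\Lambda$. In fact I would build a sequence of fixed points $y_n \in \Fix_G \cap \Lambda$ converging to $x$: this forces $x \in \Fixbar_G$ (since $\Lambda \subset \Fixbar_G$ and the latter is closed), and combined with the nonsingularity hypothesis for $\cF^\pm(x)$ yields $x \in \cR_G$.

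\textbf{Construction.} Since $a, b \in \Fixbar_G$ and singular points are discrete (Lemma~\ref{lem:adjacent_corners_discrete}), I would pick approximations $a_n \to a$ and $b_n \to b$ in $\Fix_G \cap \cR_G$, fixed by elements $g_n$ and $h_n$; up to replacing by inverses, arrange that $g_n$ is topologically expanding on $\cF^+(a_n)$ and $h_n$ on $\cF^+(b_n)$. In a small product-foliated neighborhood $V$ of $x$, the leaves $\cF^+(a_n)$ and $\cF^-(b_n)$ meet $V$ and intersect at $x_n \to x$ for $n$ large. The key claim is that, for $n, N, M$ large, the element $k_n := g_n^N h_n^M \in G$ fixes a leaf $\ell_n^+$ of $\cF^+$ close to $\cF^+(a_n)$ and a leaf $\ell_n^-$ of $\cF^-$ close to $\cF^-(b_n)$; their unique intersection point $y_n \in V$ is then fixed by $k_n$ and converges to $x$. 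Membership $y_n \in \Lambda$ follows exactly as in the reduction: $\cF^+(a) \cap \cF^-(y_n)$ and $\cF^+(y_n) \cap \cF^-(b)$ are nonempty near $x$ inside $V$, giving $a \lG y_n$ and $y_n \lG b$, and combining with $b \sim_G a$ and transitivity (Lemma~\ref{lem_lG_reflexive_transitive}) yields $y_n \sim_G a$.

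\textbf{The hard part.} The main difficulty will be producing the $k_n$-fixed leaf $\ell_n^+$ via a topological contraction argument on $\cL^+$ (the argument for $\ell_n^-$ is symmetric, applied to $(k_n)^{-1}$ on $\cL^-$). Near the nonsingular leaf $\cF^+(a_n)$, $\cL^+$ is locally homeomorphic to $\R$; Axiom~\ref{Axiom_A1} --- through the contraction of $g_n$ along $\cF^-(a_n)$ --- identifies $g_n$ with a topological contraction of this local chart toward $\cF^+(a_n)$, and similarly $h_n$ is a topological contraction toward $\cF^+(b_n)$. I would choose a small closed interval $J \subset \cL^+$ with $\cF^+(a_n)$ in its interior and $g_n(J) \subset J^\circ$. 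For $M$ large, $h_n^M(J)$ lies close to $\cF^+(b_n)$ with small diameter, and for $N$ large, $g_n^N$ contracts this image down to a small neighborhood of $\cF^+(a_n)$ contained in $J$. Hence $k_n(J) \subset J$, and Brouwer's theorem produces $\ell_n^+ \in J$, which lies arbitrarily close to $\cF^+(a_n)$ for $N, M$ large. The delicate point is that the dynamics of $g_n$ and $h_n$ are only topologically controlled: producing a single chart on which both are simultaneously well-behaved, and setting up the interval $J$ so that $k_n(J) \subset J$ holds in spite of the possibly non-Hausdorff global structure of $\cL^+$, will require careful choice of the product-foliated region and of the approximations $a_n, b_n$.
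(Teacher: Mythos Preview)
Your contraction argument in the ``hard part'' has a genuine gap: it silently uses a hypothesis you never established, namely that $\cF^+(b_n)$ (or leaves near it) intersect $\cF^-(a_n)$. Concretely, after $h_n^M$ takes $J$ close to $\cF^+(b_n)$ in $\cL^+$ (this step is fine, since $\cF^+(a_n)\cap\cF^-(b_n)\neq\emptyset$), you then apply $g_n^N$ and claim the result lands back in $J$. But $g_n$ only contracts (in $\cL^+$) those leaves that meet $\cF^-(a_n)$; there is no reason for $\cF^+(b_n)$ to meet $\cF^-(a_n)$. The hypothesis $\cF^+(a)\cap\cF^-(b)\neq\emptyset$ is the relation $a\lG b$, not $b\lG a$, and your fixed-point construction never invokes the latter. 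If your argument worked as written it would prove $x\in\Fixbar_G$ using only $a\lG b$ and $a,b\in\Fixbar_G$, which is false in any non-transitive example (the intersection $\cF^+(a)\cap\cF^-(b)$ can be wandering when $a$ and $b$ lie in distinct Smale classes). The symmetric argument on $\cL^-$ has the same defect at the step where $h_n^{-M}$ is applied.

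The paper fixes exactly this: from $b\lG a$ one obtains an element $g\in G$ with $g\cF^+(b)\cap\cF^-(a)\neq\emptyset$, and the correct word to consider is $\alpha^m\, g\, \beta^n$ (with $\alpha,\beta$ fixing $a,b$) rather than $\alpha^m\beta^n$. Here $\beta^n$ sends the interval $I\subset\cL^+$ around $\cF^+(a)$ close to $\cF^+(b)$, then $g$ transports this near a leaf meeting $\cF^-(a)$, and finally $\alpha^m$ contracts back into $I$. Your reduction and your verification that $y_n\in\Lambda$ are fine; only the construction of the fixed point needs this extra transporting element.
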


To show this, we will in fact prove something stronger (Proposition \ref{prop:product_structure_general} below) which applies also to singular points.  
This generalized version will be useful later in this work.  

\begin{proposition} \label{prop:product_structure_general}
Let $a, b \in \Fixbar_G$ satisfy $x = \cF^+(a)\cap \cF^-(b) \neq \emptyset$. Assume that $x$ is not singular.
Let $r_a$ and $r_b$ be the rays of $\cF^+(a)$ and $\cF^-(b)$ (respectively) containing $x$, and let $f_a, f_b$ be the faces of $\cF^-(a)$ and $\cF^+(b)$ bounding quadrants with common boundary $r_a$, $r_b$ (respectively). 

If there exists $g\in G$ such that $g (f_b) \cap f_a \neq \emptyset$, then for any neighborhood $U$ of $x$, there exists a point $y \in \mathcal{R}_G \cap U$ totally linked with $a$ and $b$.  
In particular, $x\in \Fixbar_G$.\footnote{If $x$ was singular, then we automatically would have that $x\in \Fixbar_G$ and is totally linked with both $a$ and $b$, but may not have any regular fixed points totally linked with both $a$ and $b$.}
\end{proposition}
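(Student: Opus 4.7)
The plan is a topological Smale-type closing argument: the hypothesis $g(f_b)\cap f_a\neq\emptyset$ yields a heteroclinic-cycle configuration between $a$ and $gb$, from which I will extract a fixed point near $x$ via a topological horseshoe.  The conclusion $x \in \Fixbar_G$ then follows automatically, since the horseshoe produces fixed points arbitrarily close to $x$.

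\textbf{Reduction to actual fixed points.}  Since $a,b\in\Fixbar_G$, pick sequences $a_n\to a$, $b_n\to b$ in $\Fix_G$, with $g_n$ fixing $a_n$ and $h_n$ fixing $b_n$.  By Axiom \ref{Axiom_A1}, after passing to powers and/or inverses, one may assume $g_n$ contracts $\cF^+(a_n)$ (so expands $\cF^-(a_n)$) and $h_n$ contracts $\cF^+(b_n)$ (so expands $\cF^-(b_n)$).  For $n$ large, working in a product-foliated neighborhood of $x$ (and treating the finitely many singular leaves near $x$ separately using Lemma \ref{lem:adjacent_corners_discrete}), one checks that $\cF^+(a_n)\cap\cF^-(b_n)\neq\emptyset$ at a point $x_n$ near $x$ and that the hypothesis transfers to $g(f_{b_n})\cap f_{a_n}\neq\emptyset$ at some $p_n$ near $p$.

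\textbf{Horseshoe construction.}  With $a_n,b_n\in\Fix_G$, the point $gb_n$ is fixed by the conjugate $gh_ng^{-1}$, which has the same saddle type as $h_n$: contracting $\cF^+(gb_n)$ and expanding $\cF^-(gb_n)$.  Treating $\cF^+$ as a ``stable'' direction and $\cF^-$ as ``unstable'' for these fixed points, the points $x_n\in\cF^+(a_n)\cap\cF^-(b_n)$ and $p_n\in\cF^-(a_n)\cap\cF^+(gb_n)$ give heteroclinic intersections from $b_n$ to $a_n$ and from $a_n$ to $gb_n$, respectively, completing a heteroclinic cycle between the fixed points $a_n$ and $gb_n$.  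After showing that the fourth corner $\cF^+(a_n)\cap\cF^-(gb_n)$ also exists (by a further use of the saddle dynamics or by a direct leaf-configuration argument), we obtain a closed topological rectangle $R_n$ near $x$ bounded by segments of $\cF^\pm(a_n)$ and $\cF^\pm(gb_n)$.  For large $N$, the composition $k_n = g_n^{-N}\cdot(gh_ng^{-1})^N$ first compresses $R_n$ along $\cF^+(gb_n)$ and stretches it along $\cF^-(gb_n)$, then compresses along $\cF^-(a_n)$ and stretches along $\cF^+(a_n)$, producing a horseshoe-type crossing of a sub-rectangle of $R_n$ back inside $R_n$.  A Brouwer-type fixed point argument on this crossing then yields a fixed point $y_n\in R_n$ of $k_n$ close to $x$.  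Choosing $a_n,b_n$ generically to avoid the discrete set of singular leaves near $x$ (Lemma \ref{lem:adjacent_corners_discrete}) arranges $y_n\in\cR_G$; and by construction $y_n\in R_n$ is totally linked with $a_n$ and with $gb_n$, hence with $a$ and $b$ in the limit.

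\textbf{Main obstacle.}  The principal technical difficulty lies in the horseshoe step: (i) showing that the rectangle $R_n$ is actually closed, i.e., that $\cF^+(a_n)\cap\cF^-(gb_n)\neq\emptyset$---this does not follow directly from the hypothesis and requires a careful leaf-configuration analysis, possibly together with a further iteration of the saddle dynamics to ``close up'' the cycle; and (ii) making rigorous the topological horseshoe / Brouwer fixed-point argument for $k_n$ on $R_n$ in the purely topological setting of a bifoliated plane, without any appeal to smoothness or to a background metric.
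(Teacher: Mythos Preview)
Your reduction to actual fixed points is fine and matches the paper.  The gap is in the horseshoe step: the rectangle $R_n$ you build has corners $a_n$, $gb_n$, $p_n=\cF^-(a_n)\cap\cF^+(gb_n)$, and the putative fourth corner $\cF^+(a_n)\cap\cF^-(gb_n)$.  But the target point $x=\cF^+(a)\cap\cF^-(b)$ lies on $\cF^-(b_n)$, not on $\cF^-(gb_n)$, so $x$ is not a corner of $R_n$ and need not even lie in $R_n$.  A horseshoe built from the saddles at $a_n$ and $gb_n$ with your composition $k_n=g_n^{-N}(gh_ng^{-1})^N$ will produce fixed points near the heteroclinic corners of \emph{that} rectangle (essentially near $p_n$ or near the fourth corner), not near $x$.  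So even if you overcome obstacle (i), the conclusion ``$y_n$ close to $x$'' does not follow.  Obstacle (i) is also genuine: nothing in the hypotheses forces $\cF^+(a_n)\cap\cF^-(gb_n)\neq\emptyset$.

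The paper avoids both problems by working one leaf space at a time instead of in a two-dimensional rectangle.  With $\alpha$ fixing $a$ and $\beta$ fixing $b$ (chosen expanding on $\cF^+$), set $h=\alpha^m g\beta^n$.  On $\cL^+$: a small interval $I$ around $\cF^+(a)$ is carried by $\beta^n$ close to $\cF^+(b)$ (using $\cF^+(a)\cap\cF^-(b)\neq\emptyset$), then by $g$ close to $\cF^+(gb)$, then by $\alpha^m$ back inside $I$ (using $g(f_b)\cap f_a\neq\emptyset$).  So $h$ fixes some $\cF^+$-leaf in $I$.  The same argument with $h^{-1}$ on $\cL^-$ gives a fixed $\cF^-$-leaf in a small interval $J$ around $\cF^-(b)$.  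Taking $I,J$ inside the $\cF^\pm$-saturations of a product neighborhood $U$ of $x$, these two fixed leaves intersect in $U$, giving the fixed point there.  This uses only the two intersections actually given in the hypotheses and replaces your Brouwer argument by two one-dimensional contraction-mapping arguments.
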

Note that in the case where $a, b$ are on nonsingular leaves, the ``faces" are simply the leaves through $a$ and $b$, and so the assumptions are satisfied whenever $a$ and $b$ are in the same Smale class, giving the statement of Proposition \ref{prop_basic_product_structure}.  

\begin{proof}[Proof of Proposition \ref{prop:product_structure_general}]
We adopt the notation and assumptions of 
the proposition.  
Let $U$ be a small product foliated neighborhood of $\cF^+(a)\cap \cF^-(b)$.  
We will find a point of $\Fix_G$ in $U$; for $U$ small enough, this point will automatically be totally linked with $a$ and $b$.  Moreover, since $U$ is product foliated, this point cannot be a prong singularity, and since it is a fixed point, it cannot be a nonsingular point on a prong leaf.  Thus, this point will be in $\mathcal{R}_G$, which is what we wanted to show.  

If $a$ (and/or $b$) is singular, it is (or they are) in $\Fix_G$.  If either is not in $\Fix_G$ (and hence nonsingular), we can replace them by arbitrarily nearby (nonsingular) points which are in $\Fix_G$, while preserving the fact that $\cF^+(a)\cap \cF^-(b)\neq \emptyset$ and $\cF^-(a)\cap g\cF^+(b)\neq\emptyset$. Thus, going forward we make the assumption that $a, b \in \Fix_G$.  
 
 Let $\alpha$ and $\beta$ be elements fixing $a$ and $b$, respectively.  Up to replacing with powers and inverses, we assume they fix all rays of $\cF^\pm(a)$ and $\cF^\pm(b)$ (respectively) and are expanding on $\cF^+(a)$ and $\cF^+(b)$. 
 Let $I$ be a small interval of leaves of $\cL^+$ containing, in its interior, the face $f_a$ of $\cF^+(a)$ such that $g(f_b) \cap f_a \neq \emptyset$ as in the hypotheses of the Proposition.  
 Choose this interval small enough so that 
 $I$ lies inside the saturation of $U$ by $\cF^+$.  
 Then, for any large enough $n,m>0$, we will have that $g \beta^n(I)$ consists of the leaves passing through a small interval of $\cF^-(a)$, or of $f_a$ in the case where $a$ is singular.  Thus $\alpha^m g \beta^n(I) \subset I$.  We conclude that  $h= \alpha^m g \beta^n$ fixes a leaf of $\cF^-$ in $I$. 
 
Similarly, if we consider an interval $J$ in $\cL^-$ that contains $\cF^-(b')$ in its interior and is inside the $\cF^-$-saturation of $U$, we see that (up to increasing $n,m$ if necessary) $h^{-1}(J) \subset J$. Therefore $h$ has a fixed point in $U$, which is what we needed to show. 
\end{proof}

\begin{figure}
   \labellist 
  \small\hair 2pt
 \pinlabel $a$ at 80 150
 \pinlabel $gb$ at 90 35
 \pinlabel $b$ at 175 105
 \pinlabel $I$ at 245 138
 \pinlabel $\beta^n(I)$ at 140 110
 \pinlabel $g\beta^n(I)$ at 135 20
 \endlabellist
     \centerline{ \mbox{
\includegraphics[width=8cm]{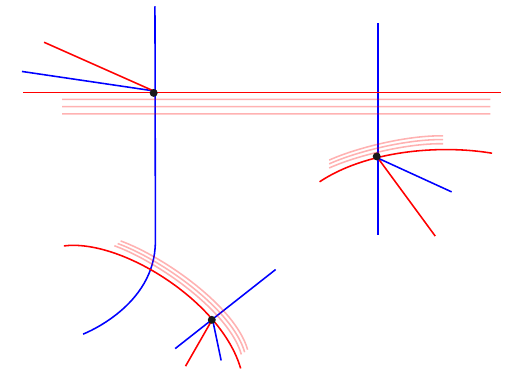} }}
\caption{Proof of local product structure}
\label{fig:product_structure}
\end{figure}

We note a special case of the argument above that will be useful later on.  

\begin{corollary} \label{cor:TL_product}
Suppose $a,b\in \Fix_G$ are totally linked. Let $z_1$ and $z_2$ be the intersections of $\cF^{\pm}(a) \cap \cF^{\mp}(b)$, and let $U_i$ be a neighborhood of $z_i$ in the quadrilateral bounded by $\cF^{\pm}(a)$ and $\cF^{\mp}(b)$.
 Then there exists $g \in G$ such that $g(U_1) \cap U_2 \cap  \Fix_G \neq \emptyset$, and preserving the local orientation of the foliations in the rectangle bounded by $\cF^{\pm}(a)$ and $\cF^{\pm}(b)$.
\end{corollary}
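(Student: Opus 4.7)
The plan is to use Proposition~\ref{prop:product_structure_general} to produce a fixed point $p_1$ near $z_1$, and then exhibit the desired $g$ as a suitable negative power of $\alpha$ that carries $p_1$ into $U_2$.

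First, pick nontrivial $\alpha, \beta \in G$ fixing $a, b$ respectively, and replace each by a power so that both fix every ray of $\cF^\pm$ at their respective fixed points, with $\alpha$ topologically expanding $\cF^+(a)$ and contracting $\cF^-(a)$, and similarly for $\beta$ at $b$. Any product of these elements preserves the local orientations of both foliations in the rectangle bounded by $\cF^\pm(a), \cF^\pm(b)$. Apply Proposition~\ref{prop:product_structure_general} to $(a,b)$ with $x=z_1$, taking the auxiliary element of that proposition to be the identity; this is legitimate because total linkage forces $f_a \cap f_b$ to contain $z_2$. The proof of that proposition then produces, for all sufficiently large $m, n \geq 1$, a fixed point $p_1 = p_1(m,n)$ of $h_{m,n} := \alpha^m \beta^n$ that sits on a fixed $\cF^+$-leaf close to $\cF^+(a)$ and a fixed $\cF^-$-leaf close to $\cF^-(b)$, so $p_1 \to z_1$ as $m, n \to \infty$. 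Choose $m, n$ large enough that $p_1 \in U_1 \cap \cR_G$.

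The key step is then to set $g := \alpha^{-m}$ and observe that the fixed-point identity $h_{m,n}(p_1) = p_1$ can be rewritten as
\[
 g(p_1) \;=\; \alpha^{-m}(p_1) \;=\; \beta^n(p_1).
\]
To locate this point, I would track its two leaves separately. The leaf $\cF^+(p_1)$ meets $\cF^-(b)$ at a point close to $z_1$; since $\beta$ fixes $\cF^-(b)$ and acts on it as a contraction toward $b$, the intersection $\beta^n(\cF^+(p_1)) \cap \cF^-(b) = \beta^n\bigl(\cF^+(p_1) \cap \cF^-(b)\bigr)$ converges to $b$, so $\beta^n(\cF^+(p_1)) \to \cF^+(b)$ in $\cL^+$. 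Symmetrically, $\cF^-(p_1)$ meets $\cF^+(a)$ near $z_1$; since $\alpha^{-1}$ fixes $\cF^+(a)$ and contracts it toward $a$, the intersection $\alpha^{-m}(\cF^-(p_1)) \cap \cF^+(a)$ converges to $a$, so $\alpha^{-m}(\cF^-(p_1)) \to \cF^-(a)$ in $\cL^-$. Consequently $g(p_1)$ sits on $\cF^\pm$-leaves arbitrarily close to $\cF^+(b)$ and $\cF^-(a)$, forcing $g(p_1) \to z_2 = \cF^+(b) \cap \cF^-(a)$, and hence $g(p_1) \in U_2$ for $m, n$ chosen sufficiently large. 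Since $g(p_1)$ is fixed by the nontrivial conjugate $\alpha^{-m} h_{m,n} \alpha^m \in G$, it lies in $\Fix_G$, so $g(p_1) \in g(U_1) \cap U_2 \cap \Fix_G$; and $g$ preserves local orientations as a power of $\alpha$.

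The main obstacle is making the leaf-space convergence rigorous when $\cL^+$ or $\cL^-$ is non-Hausdorff, and checking that the intersections $\cF^+(p_1) \cap \cF^-(b)$ and $\cF^-(p_1) \cap \cF^+(a)$ really do exist for $p_1$ close to, but not equal to, $z_1$. Both issues are handled by working with the actual intersection points on the transverse leaves $\cF^-(b)$ and $\cF^+(a)$ rather than in the possibly non-Hausdorff leaf spaces, invoking total linkage together with the openness of transverse intersections to guarantee the perturbed leaves still meet the relevant transversals.
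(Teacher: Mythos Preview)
Your proof is correct and follows essentially the same approach as the paper. The paper also chooses $\alpha,\beta$ with the stated dynamics, invokes Proposition~\ref{prop:product_structure_general} with the auxiliary element equal to the identity to obtain a fixed point $z$ of $\alpha^m\beta^n$ near $z_1$, and then exhibits its image near $z_2$; the only cosmetic difference is that the paper invokes Proposition~\ref{prop:product_structure_general} a second time (with the roles of $a,b$ swapped) to locate a fixed point $z'$ of $\beta^n\alpha^m$ near $z_2$, and then uses the conjugacy $\beta^n\alpha^m=\beta^n(\alpha^m\beta^n)\beta^{-n}$ to conclude $z'=\beta^n(z)$, taking $g=\beta^n$ rather than your $g=\alpha^{-m}$---but since $\alpha^{-m}(p_1)=\beta^n(p_1)$, both choices send $p_1$ to the very same point, and your direct leaf-tracking argument is exactly what underlies that second invocation of the proposition.
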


\begin{proof} 
As above, consider $\alpha$, $\beta$ fixing all half leaves through $a$ and $b$ that are expanding on $\cF^+(a)$ and $\cF^+(b)$, respectively; and hence preserving local orientation in the rectangle bounded by $\cF^{\pm}(a)$ and $\cF^{\pm}(b)$.   Proposition \ref{prop:product_structure_general}, taking $g$ to be the identity, shows that for $n, m$ sufficiently large $\alpha^n \beta^m$ has a fixed point $z$ in any given neighborhood $U$ of $\cF^+(a) \cap \cF^-(b)$, and the proof (considering the images of $I$ and $J$) easily shows that this necessarily lies in the quadrilateral bounded by $\cF^{\pm}(a)$ and $\cF^{\mp}(b)$.  Similarly, given a neighborhood $U'$ of $\cF^+(b) \cap \cF^-(a)$ in the quadrilateral bounded by the leaves of $a, b$, for sufficiently large $m, n$ the map $\beta^m \alpha^n$ has a fixed point $z'$ in $U'$.   Since $\beta^m \alpha^n = \beta^m( \alpha^n \beta^m) \beta^{-m}$, we have that $z' = \beta^n(z)$, as desired. 
\end{proof}

As an immediate consequence, we also obtain a local product structure on $\Fixbar_G$. 
\begin{corollary}\label{cor_product_means_product}
  $\Fixbar_G$ has a local product structure: in a trivially foliated neighborhood $I \times J$ of some nonsingular point $x \in \Fix_G$, the set $\Fixbar_G$ has the form $C_1 \times  C_2$, where $C_1, C_2$ are closed subsets of the intervals $I, J$ respectively.  
\end{corollary}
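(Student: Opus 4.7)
The plan is to deduce the corollary as a direct consequence of Proposition \ref{prop:product_structure_general}. Starting from a nonsingular point $x \in \Fix_G$, I would take a compact, trivially foliated neighborhood $V = I \times J$ of $x$ lying entirely in the nonsingular set, with $I$ a closed subinterval of $\cF^+(x)$ and $J$ a closed subinterval of $\cF^-(x)$. Let $\pi_1, \pi_2$ denote the continuous projections of $V$ onto $I$ and $J$ along the foliations, and set
\[
C_1 := \pi_1(\Fixbar_G \cap V), \qquad C_2 := \pi_2(\Fixbar_G \cap V).
\]
Since $\Fixbar_G$ is closed in $P$ and $V$ is compact, $\Fixbar_G \cap V$ is compact, so $C_1$ and $C_2$ are compact, hence closed, subsets of $I$ and $J$. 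The inclusion $\Fixbar_G \cap V \subseteq C_1 \times C_2$ is immediate from the definitions of the $C_i$.

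The content is the reverse inclusion. Given $(a,b) \in C_1 \times C_2$, I would choose witnesses $p, q \in \Fixbar_G \cap V$ with $\pi_1(p) = a$ and $\pi_2(q) = b$. The point $y \in V$ with coordinates $(a,b)$ is then, by construction, the intersection $\cF^+(p) \cap \cF^-(q)$ inside $V$. I would then apply Proposition \ref{prop:product_structure_general} to the pair $(p,q)$ with $g = \id$: since $V$ is trivially foliated and contains no singular points, $p$ and $q$ are nonsingular, the ``faces'' $f_p$ of $\cF^-(p)$ and $f_q$ of $\cF^+(q)$ appearing in the proposition are simply the half-leaves bounding the quadrant of $V$ containing $y$, and the product structure on $V$ guarantees that $\cF^-(p) \cap \cF^+(q)$ meets $V$ as well. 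Thus $f_p \cap \id(f_q) \neq \emptyset$, the hypothesis of the proposition is satisfied, and the proposition's conclusion yields $y \in \Fixbar_G$, as desired.

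There is no real obstacle here: the argument is essentially a bookkeeping reduction to Proposition \ref{prop:product_structure_general}. The only point requiring a brief check is the verification of the ``face'' hypothesis with $g = \id$, and that is immediate from the fact that $V$ is a product box lying in the nonsingular set, so that the two leaves $\cF^\pm(p)$ and $\cF^\mp(q)$ meet at two points of $V$ lying in a common quadrant of each of $p$ and $q$.
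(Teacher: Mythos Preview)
Your proof is correct and follows exactly the approach the paper intends: the paper states the corollary as ``an immediate consequence'' of Proposition~\ref{prop:product_structure_general} (and Corollary~\ref{cor:TL_product}) without spelling out details, and you have simply made explicit the bookkeeping---defining $C_i$ as the projections, and applying the proposition with $g=\id$ to get the nontrivial inclusion $C_1\times C_2 \subseteq \Fixbar_G$. One small remark: since $x\in\Fix_G$ is nonsingular, Remark~\ref{rem_single_fixed_point} together with Axiom~\ref{Axiom_prongs_are_fixed} forces $\cF^\pm(x)$ to be nonsingular leaves as well, so your assumption that $V$ lies in the nonsingular set can be arranged; and even if some other singular leaf passes through $V$, the face hypothesis of the proposition is still satisfied because the relevant intersection $\cF^+(q)\cap\cF^-(p)$ occurs inside the product box $V$, hence on the correct faces.
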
 

\begin{rem}
One easily shows that, up to shrinking the product neighborhood in the above corollary, the closed sets $C_i$ are either a closed interval, a Cantor set or an isolated point. The work in the next few sections will show that, as in the case of Anosov flows, only four of the six (up to symmetry) possible topological types of product can happen: One may have a product $\mathrm{Interval} \times \mathrm{Interval}$ (which happens only if the action is transitive), a product $\mathrm{Interval}\times \mathrm{Cantor}$ (which happens in extremal Smale classes, see Observation \ref{obs:maximal_are_F+_saturated}), a product $\mathrm{Cantor}\times \mathrm{Cantor}$ (which happens for any non-isolated point in a non-extremal Smale class), and finally isolated fixed points, i.e., of the form $\mathrm{Isolated}\times \mathrm{Isolated}$.
See Proposition \ref{prop:isolated_fixed_points} and Corollary \ref{cor_boundary_leaves_and_boundary_points} for a proof. 
\end{rem}

The product structure on Smale classes also gives us the following characterization:
\begin{corollary}\label{cor:char_smale_class}
For any $x,y\in \cR_G$, the following are equivalent
\begin{enumerate}
\item $x\sim_G y $ 
\item there exists $g\in G$ and $z\in \cR_G$ such that $x$ is TL with $z$, and $z$ is TL with $gy$.
\item there exists $g\in G$ and $z_1,\dots z_k\in \cR_G$ such that $x$ is TL with $z_1$, $z_i$ is TL with $z_{i+1}$, and  $z_k$ is TL with $gy$.  
\end{enumerate} 
\end{corollary}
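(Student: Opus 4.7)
The plan is to prove the cycle (1) $\Rightarrow$ (2) $\Rightarrow$ (3) $\Rightarrow$ (1). The implication (2) $\Rightarrow$ (3) is immediate by taking $k=1$, so only the other two need real work, and even those should follow quickly from tools already assembled.

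For (3) $\Rightarrow$ (1), I would first note that the TL relation is symmetric and that, directly from the definitions, if $a$ and $b$ are TL then both $a \leq_G b$ and $b \leq_G a$ hold (take $g = \id$ in the definition of $\leq_G$). Chaining these through the hypothesis $x \,\mathrm{TL}\, z_1 \,\mathrm{TL}\, z_2 \cdots \,\mathrm{TL}\, z_k \,\mathrm{TL}\, gy$ and applying transitivity of $\leq_G$ on $\cR_G$ (Lemma \ref{lem_lG_reflexive_transitive}) yields $x \leq_G gy$ and $gy \leq_G x$, so $x \sim_G gy$. Since Smale classes are $G$-invariant (an immediate consequence of the definition of $\leq_G$), this gives $x \sim_G y$.

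For (1) $\Rightarrow$ (2), I would invoke the product structure on Smale classes (Proposition \ref{prop_basic_product_structure}). From $x \sim_G y$, in particular $x \leq_G y$, so there is $g \in G$ with $\cF^+(x) \cap \cF^-(gy) \neq \emptyset$; let $z$ denote this intersection point. Because $gy$ lies in the same Smale class as $y$, hence as $x$, Proposition \ref{prop_basic_product_structure} applies and guarantees $z \in \cR_G$ (and in fact $z$ belongs to the same Smale class). Since $z \in \cF^+(x) \cap \cF^-(gy)$, we have $\cF^+(z) = \cF^+(x)$ and $\cF^-(z) = \cF^-(gy)$, so $\cF^+(z) \cap \cF^-(x) = \{x\}$ and $\cF^-(z) \cap \cF^+(x) = \{z\}$; thus $z$ is TL with $x$, and symmetrically $z$ is TL with $gy$, yielding (2).

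The entire argument is really just unpacking the definitions once the product structure proposition is available. I do not anticipate a significant obstacle: the only point requiring any care is ensuring that the candidate point $z$ in the (1) $\Rightarrow$ (2) step lies in $\cR_G$ rather than on a singular leaf, and this is precisely what Proposition \ref{prop_basic_product_structure} provides.
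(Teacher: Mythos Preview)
Your proof is correct and follows essentially the same route as the paper's: (2)$\Rightarrow$(3) trivially, (3)$\Rightarrow$(1) via transitivity and $G$-invariance of $\leq_G$, and (1)$\Rightarrow$(2) via the product structure on Smale classes. The only cosmetic difference is that in (1)$\Rightarrow$(2) the paper takes $z$ to be a point of $\cR_G$ \emph{near} the intersection $\cF^+(x)\cap\cF^-(gy)$, whereas you take $z$ to be the intersection point itself; since Proposition~\ref{prop_basic_product_structure} explicitly states that this intersection lies in $\cR_G$, your choice is equally valid (and arguably cleaner, since a point on the same $\cF^+$-leaf as $x$ is automatically TL with $x$).
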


\begin{proof}
The fact that (1) implies (2) is a direct consequence of Proposition \ref{prop_basic_product_structure}: If $x\sim_G y $, then given $g\in G$ such that $\cF^+(x)\cap \cF^-(gy)\neq \emptyset$, we can choose $z\in \cR_G$ close enough to that intersection, and thus $x$ is totally linked with $z$ and $z$ is totally linked with $gy$.

Condition (2) trivially implies (3), and transitivity of the relation $\sim_G$ (and its invariance under the group) show that (3) implies (1).
\end{proof}

Using this, we will be able to prove Theorem \ref{thm:transitive}.  Recall the statement: 

\transitiveonclasses*

While the fact that $G$ acts topologically transitively on each singular Smale class is tautologically true, in order to deduce the ``moreover..." part of the above result, one needs to deal separately with the case of Anosov-like actions such that all Smale classes are singular. We use the following result, which will also play a role in Proposition \ref{prop_finite_or_cocmpact_implies_Smale_bounded}. 

\begin{lemma}  \label{lem_only_prongs}
If $\Fixbar_G$ consists only of isolated prong singularities, then these must lie in infinitely many distinct $G$-orbits.  Consequently, there are infinitely many Smale classes. 
\end{lemma} 

We remark that there do exist examples of such Anosov-like actions with only isolated prong singularities as fixed points, see Proposition \ref{ex_only_singular_Smale}. To streamline the proof of this lemma, we use one more fact whose proof we postpone to Section \ref{sec:smale_chains}.  There is no circularity here since Lemma \ref{lem_only_prongs} and its consequence are not used in Section \ref{sec:smale_chains}.
\begin{lemma}
\label{lem_isolated_prong_implies_lozenges}
If $p$ is an isolated prong singularity, then each of its quadrants admits a lozenge with corner $p$.
\end{lemma}
This is proved in Proposition \ref{prop:isolated_fixed_points}.  

\begin{proof} [Proof of Lemma \ref{lem_only_prongs}]
Suppose that $\Fixbar_G$ consists only of isolated prong singularities.

By Lemma \ref{lem_isolated_prong_implies_lozenges}, each fixed point is a corner of a lozenge in each of its quadrants, thus each lozenge is included in a scalloped region in $P$.  Fix such a lozenge $L_1$ with corner $a_1$, in scalloped region $S$ and enumerate the quadrants of $a_1$ by $Q^1_1, Q^1_2, \ldots Q^1_k$, so that $L_1 \subset Q^1_1$.   For concreteness, fix an orientation on this scalloped region $S$ so that $a_1$ is the upper right corner of $L$.  Since $S$ can be realized in two ways as a line of lozenges, there exists $L_2 \neq L_1$ in $S$ with $L_1 \cap L_2 \neq \emptyset$, and also with a corner $a_2$ in the upper right.  (In fact, there are infinitely many such choices of $L_2$, we fix one).   See Figure \ref{fig_singular_point_orbit}.

\begin{figure}[h]
   \labellist 
  \small\hair 2pt
 \pinlabel $L_1$ at 85 30
  \pinlabel $a_1$ at 100 80
  \pinlabel $a_2$ at 158 58
  \pinlabel $L_2$ at 120 58
    \pinlabel $a_3$ at 195 52
 \pinlabel $L_3$ at 180 51
 \endlabellist
      \centerline{ \mbox{
\includegraphics[width=10cm]{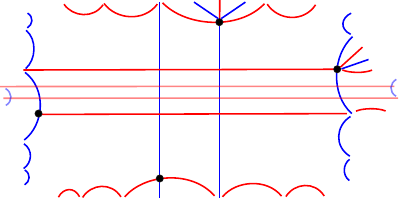}}}
\caption{If $g_1(L_1) = L_2$ then $g$ has a fixed point in $L_1 \cap L_2$}
\label{fig_singular_point_orbit}
\end{figure}

If $a_2 = g_1(a_1)$ for some $g_1 \in G$, and $g_1(L_1) =L_2$ then we already have a contradiction: $g_1(L_1) \cap L_1$ would contain a (necessarily nonsingular) fixed point for $g_1$.
Thus, either $a_2$ is in a distinct $G$-orbit from $a_1$, or we have $a_2 = g_1(a_1)$ but $L_2$ is contained in $g_1(Q^1_j)$ for some $j \neq 1$. 
We now iterate this process: $L_2$ is contained in a scalloped region $S_2$ and we may find a lozenge $L_3$ in $S_2$, with $L_3 \cap L_2 \cap L_1 \neq \emptyset$, and with corner $a_3$ on the upper right.   If we have $a_3 = g_2(a_1)$ (or $a_3 = g_2(a_2)$) for some $g_2 \in G$, and $g_2(L_1) =L_3$ (or $g_2(L_2) =L_3$), then we arrive at the same contradiction.
 
 Iterating this process, we get a sequence $a_k$ of singular points that are corners of lozenges $L_n$ such that $L_1\cap \dots \cap L_n \neq\emptyset$ for any $n$.  
 If there are only finitely many  distinct $G$-orbits of singular points, there is a subsequence $a_{k_i}$ that are all in the same $G$-orbit .  Since $a_{k_i}$ has only finitely many quadrants, 
 we will eventually find some $g = g_{k_j}g_{k_i}^{-1}$ which maps some $L_{k_i}$ to $L_{k_j}$, and so has a nonsingular fixed point, a contradiction.  
\end{proof} 

\begin{proof}[Proof of Theorem \ref{thm:transitive}]
Consider $\Lambda$ a regular Smale class.
We will show that for any open sets $U, V$ such that $U\cap \Lambda \neq \emptyset$ and $V\cap \Lambda \neq \emptyset$, there exists $g\in G$ such that $gU \cap V \cap \Lambda \neq \emptyset$. This implies the existence of a dense orbit (the classical proof for continuous maps applies here, see e.g.~\cite[Lemma 1.4.2]{KH_book}, since regular Smale classes are closed in the complement of the singular leaves in $P$).

Let $a\in U \cap \Lambda$ and $b\in V\cap\Lambda$. Since $a\sim_G b$, there exists $h \in G$ such that $\cF^{+}(a) \cap \cF^-(hb) \neq \emptyset$. If $a = hb$, then we are done. Otherwise, Proposition \ref{prop_basic_product_structure} gives us the existence of a nonsingular point $z$, fixed by some element of $G$ in any arbitrarily small neighborhood of $\cF^{+}(a) \cap \cF^-(hb)$.  In particular, we can assume that $\cF^+(z) \cap U \neq \emptyset$ and that $z$ is chosen close enough to $\cF^{-}(hb)$ so that $\cF^-(z) \cap hV \neq\emptyset $.   If $g$ is the element fixing $z$, then for some $n$ sufficiently large (either positive or negative), we will have $g^n U \cap hV \neq\emptyset$.

Without loss of generality, we can assume that both $U$ and $V$ are chosen small enough so that they are both product foliated. Therefore $g^n U \cap hV$ contains the intersection of $\cF^-(g^n a)$ with $\cF^+(hb)$.  Proposition \ref{prop_basic_product_structure} now implies that $g^n U \cap hV$ must contain points of $\Lambda$.  This shows $G$ acts topologically transitively.

In the special case where there is a unique Smale class $\Lambda$, then $\Lambda$ is a regular Smale class by Lemma \ref{lem_only_prongs}, so Axiom \ref{Axiom_dense} together with Proposition \ref{prop_basic_product_structure} implies that $\bar\Lambda = P$. Hence, $G$ is topologically transitive on $P$ and $P$ contains a dense subset of points fixed by nontrivial elements, i.e., Axioms \ref{Axiom_fixed_points_dense} and \ref{Axiom_topologically_transitive} are both satisfied. 
Conversely, if $\Fix_G$ is dense, then $\cR_G$ is dense (since $\cR_G = \Fix_G\smallsetminus \mathrm{Sing}$, and $\mathrm{Sing}$ is a closed discrete subset of $P$ by Lemma \ref{lem:adjacent_corners_discrete}), moreover any product neighborhood intersects at most one Smale class, so there is a unique Smale class by connectedness of $P\smallsetminus \mathrm{Sing}$. 
\end{proof}

Theorem \ref{thm_dense_orbits_is_transitive} is now an immediate consequence, since lifts of periodic orbits to $\wt M$ project to fixed points for the action of $\pi_1(M)$ on $\orb_\phi$.

\section{Smale chains and the structure of wandering sets} \label{sec:smale_chains}

As described at the start of Section \ref{sec:smale_class}, for Anosov flows on compact manifolds, Smale classes correspond to basic sets.  By Brunella \cite{Brunella} these sets are separated by a collection of embedded tori transverse to the flow.  Moreover, in the nontransitive case any basic set contains ``boundary'' orbits, which are periodic orbits such that (at least) one of their rays are contained in the wandering set (these are the \emph{free separatrices} appearing in \cite[Lemme 1.6]{BB_flots_Smale}).   See also  \cite{BJL} for a closely related theory for Smale diffeomorphisms of surfaces.  

In this and the following sections, we recover this picture for flows from the perspective of the orbit space and generalize it to the setting of Anosov-like actions.
We show that ``boundary leaves'' of Smale classes are fixed by some nontrivial elements of $G$ (Theorem \ref{thm:complement_of_fixbar}), hence recovering the boundary orbits in the Anosov flow setting. We also define {\em Smale chains}, which are the natural generalizations of the projections to the orbit space of the separating transverse tori.  In fact, in the flow setting, any \emph{$\bZ^2$-invariant} Smale chain corresponds to such a torus -- see Corollary \ref{cor:Z2_Smale_chains_in_pA_flows} and Remark \ref{rem:weakly_embedded_to_transverse}.

We will also show that one can always find such  chains ``between'' distinct Smale classes (Theorem \ref{thm:Smale_chains_separate}), i.e., meeting $\cF^+$ leaves of one and $\cF^-$ leaves of another; as these foliations play the roles of the stable/unstable leaves in our setting.  This requires some preliminary work, where we describe in more detail the product structure on Smale classes.  
In Section \ref{sec:Z2invariant}, we prove the existence of Smale chains invariant by a $\bZ^2$-subgroup. 

Note that, by comparison, the first step of the standard proof of the existence of separating transverse tori is the existence of a Lyapunov function for Anosov flows.   Furthermore, as mentioned in the introduction, in the case of pseudo-Anosov flows, here we detect \emph{more} of the tori in the non-wandering set than what the proof using Lyapunov functions would, since Lyapunov functions only allow to separate chain transitive components, and not the components of the non-wandering set. See Propositions  \ref{prop:weird_loops} and \ref{prop_chain-recurrent_non_transitive}.

\subsection{Structure of the regular and ``wandering" sets} 

\begin{definition}
A lozenge $L$ is called \emph{regular} if its interior contains a point of the regular set, i.e. $\mathring{L} \cap \cR_G \neq \emptyset$. 
\end{definition}

\begin{definition}
Recall a \emph{quadrant} of $x \in P$ is a connected component of $P \setminus (\cF^+(x) \cup \cF^-(x))$.  A \emph{TL subquadrant} of $x$ is the subset of a quadrant consisting of the points that are totally linked with $x$. 
 A quadrant, or TL subquadrant, is called {\em regular} if it contains points of $\cR_G$.  
 \end{definition} 
 One description of a lozenge is that it is a TL subquadrant of each of its corners.  Thus, TL subquadrants can be viewed as a generalization of lozenges.
Quadrants, subquadrants, or lozenges are called {\em adjacent} if they share a common leaf segment in their boundary.  

Motivated by the fact that, for Anosov flows, lozenges that are not regular lie in the wandering set, we introduce further terminology: 
 \begin{definition}
We call a lozenge or subquadrant {\em wandering} if it is not regular.
\end{definition} 

\begin{rem}
In this setting, ``wandering" is in fact not an abuse of terminology. Combining
Corollary \ref{cor_boundary_leaves_and_boundary_points}  and Theorem \ref{thm:discrete_stabilizer} 
shows that each point in a wandering lozenge $L$ is in fact wandering in the usual dynamical sense that it has a neighborhood $U$ such that $gU \cap U = \emptyset$ for all nontrivial $g \in G$.  Theorem \ref{thm:discrete_stabilizer} reduces this to considering $g$ that do not stabilize the lozenge $L$; the structure of lozenges implies that if $g(L) \cap L \neq \emptyset$, then either $g(L) = L$, $g$ sends a corner of $L$ into $L$ (which is impossible for wandering lozenges by Corollary \ref{cor_boundary_leaves_and_boundary_points}), or $g$ and $g^{-1}$ contract the intervals of $\cF^\pm$ leaves (respectively) through $L$, giving a fixed point inside $L$.  
\end{rem} 

\begin{lemma}\label{lem:wand_subquadrant_share_sides}
Let $x \in \Fix_G$ be a nonsingular point.  If two non-adjacent TL subquadrants of $x$ are regular, then all TL subquadrants of $x$ are regular.   Equivalently, any wandering TL subquadrant of $x$ has an adjacent TL subquadrant that is also wandering.  
\end{lemma}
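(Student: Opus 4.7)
The plan is to prove the equivalent contrapositive: assume $Q_1$ is a non-empty wandering TL subquadrant of $x$, and that the two adjacent TL subquadrants $Q_2$ and $Q_4$ are both regular; I will derive a contradiction by producing a point of $\cR_G$ inside $Q_1$.

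First, since $Q_1$ is a non-empty TL subquadrant and the TL condition is open, combined with the discreteness of singular points (Lemma \ref{lem:adjacent_corners_discrete}), I can pick $y \in Q_1$ TL with $x$, close enough to $x$ that the closed rectangle $R \subset \overline{Q_1}$ bounded by segments of $\cF^\pm(x)$ and $\cF^\pm(y)$ contains no singularity, hence is trivially product-foliated. Next, using density of $\Fix_G$ in $\cR_G$ together with openness of TL, I choose $a_2 \in Q_2 \cap \Fix_G$ and $a_4 \in Q_4 \cap \Fix_G$ each TL with $x$; since TL implies $\sim_G$, we have $a_2 \sim_G x \sim_G a_4$. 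Since $x \in \Fix_G$ is nonsingular, Axiom \ref{Axiom_A1} produces a nontrivial $g \in G$ fixing $x$; after replacing $g$ by $g^{\pm 2}$, I may assume $g$ preserves each of the four rays of $\cF^\pm(x)$, is expanding on $\cF^+(x)$, and contracting on $\cF^-(x)$.

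Set $q_2 := \cF^+(a_2) \cap \cF^-(x)$ and $p_4 := \cF^-(a_4) \cap \cF^+(x)$; these lie on the two rays of $\cF^\mp(x)$ bordering $Q_1$. Under iteration, $g^n(q_2) \to x$ and $g^{-n}(p_4) \to x$ along their respective rays. Hence, for $n$ large enough, $g^n(q_2)$ lies in the boundary segment of $R$ along $\cF^-(x)$ and $g^{-n}(p_4)$ lies in the boundary segment along $\cF^+(x)$. Because $R$ is product-foliated, the leaf $\cF^+(g^n(a_2))$ enters $R$ through the $\cF^-(x)$-side and exits through the opposite side contained in $\cF^-(y)$; similarly, $\cF^-(g^{-n}(a_4))$ crosses $R$ from the $\cF^+(x)$-side to the $\cF^+(y)$-side. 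The two arcs connect opposite pairs of sides of the rectangle, hence must intersect at some point $z \in Q_1$.

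Finally, $g^n(a_2)$ and $g^{-n}(a_4)$ are translates of elements of $\Fix_G$ on nonsingular leaves, so they lie in $\cR_G$, and they remain in the same Smale class by $G$-invariance. Proposition \ref{prop_basic_product_structure} then shows $z = \cF^+(g^n(a_2)) \cap \cF^-(g^{-n}(a_4))$ lies in $\cR_G$, contradicting the hypothesis that $Q_1$ is wandering. The main subtlety lies in ensuring the rectangle $R$ is trivially foliated so that the two arcs truly must cross; this is precisely where I need $Q_1$ to be a non-empty TL subquadrant (giving arbitrarily small product rectangles inside $Q_1$) together with the discreteness of singularities to avoid prongs inside $R$.
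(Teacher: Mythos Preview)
Your proof is correct and follows essentially the same approach as the paper's. The paper argues the direct form (assuming the two opposite subquadrants $Q_1, Q_3$ are regular and producing points of $\Fix_G$ in $Q_2, Q_4$), while you argue the contrapositive (assuming adjacent $Q_2, Q_4$ are regular and producing a point of $\cR_G$ in $Q_1$); in both cases the mechanism is identical: iterate points under the element $g$ fixing $x$ so that their $\cF^\pm$-leaves are pushed toward $\cF^\pm(x)$, then invoke the local product structure (Proposition~\ref{prop_basic_product_structure}/Corollary~\ref{cor:TL_product}) to obtain a fixed point at the resulting crossing. A minor point: you should note explicitly that $a_2, a_4$ can be chosen nonsingular (hence in $\cR_G$), which is immediate since singular points are isolated and a nonsingular fixed point cannot lie on a singular leaf.
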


\begin{proof}
Suppose $x$ is fixed by $g \neq \id$ and $x$ is nonsingular i.e., it has exactly four quadrants.  Denote the TL subquadrants of $x$ by $Q_1, \ldots, Q_4$ in cyclic order, and suppose without loss of generality $Q_1$ and $Q_3$ are regular.  Let $x_i \in Q_i \cap \Fix(G)$ for $i \in \{1,3\}$ be totally linked with $x$.  (So, in particular, $x_1 \sim_G x_3$). 
Up to replacing $g$ with $g^{-1}$, we will have $\cF^-(g^n(x_1)) \to \cF^-(x)$ as $n \to \infty$ and $\cF^+(g^{n}(x_1)) \to \cF^+(x)$.  Thus, for $N$ large we have 
$\cF^-(g^N(x_1)) \cap \cF^+(x_3) \neq \emptyset$ and $\cF^+(g^{-N}(x_1)) \cap \cF^+(x_3) \neq \emptyset$, and these points will lie in the TL-subquadrants $Q_2$ and $Q_4$ respectively.  Thus, applying Corollary \ref{cor:TL_product}, we conclude $\Fix_G \cap Q_2 \neq \emptyset$ and $\Fix_G \cap Q_4 \neq \emptyset$.  
\end{proof}

Lemma \ref{lem:wand_subquadrant_share_sides} does not hold as stated for singular points, instead, the correct generalization is the following:
\begin{lemma} \label{lem:singular_wandering_quadrants}
Let $c$ be a singular point. If alternating $TL$ subquadrants of $c$ are regular, then \emph{all} its $TL$ subquadrants are regular.

Equivalently, if $Q_0,\dots, Q_{2p-1}$ are the $TL$ subquadrants, cyclically ordered, and $Q_0$ is wandering, then there exists $i$, $0\leq i\leq 2p-1$, such that both $Q_i$ and $Q_{i+1}$ are wandering. 
\end{lemma}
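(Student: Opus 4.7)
The two formulations of the lemma are equivalent: the cyclic graph on the $2p$ subquadrants is bipartite, so a subset of subquadrants is independent (no two adjacent) if and only if it lies in one color class. Hence ``no two adjacent wandering'' is equivalent to ``the wandering set is contained in a single color class'', which is equivalent to ``the other color class consists entirely of regular subquadrants''. Thus the first formulation (alternating regular $\Rightarrow$ all regular) and the contrapositive of the second express the same statement, and my plan is to prove the contrapositive of the second formulation directly.

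Assume for contradiction that $Q_0$ is wandering while no two adjacent subquadrants are both wandering; then $Q_{-1}$ and $Q_1$ are regular. By Axiom \ref{Axiom_prongs_are_fixed}, fix a nontrivial $g \in G$ stabilizing $c$. Replacing $g$ by a suitable power, I may assume $g$ fixes each of the $2p$ rays at $c$ (hence each sector $Q_i$), and by Axiom \ref{Axiom_A1} applied to the singular leaves $\cF^\pm(c)$, I may further assume that $g$ uniformly expands every $\cF^+$-ray and contracts every $\cF^-$-ray of $c$. Choose $x_1 \in Q_1 \cap \Fix_G$ and $x_{-1} \in Q_{-1} \cap \Fix_G$ each totally linked with $c$. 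Under iteration, the $\cF^+$-leaf through $g^n x_1$ meets the $\cF^-$-ray of $c$ that is the common boundary of $Q_0$ and $Q_1$ at a point converging to $c$; symmetrically, the $\cF^-$-leaf through $g^{-n} x_{-1}$ meets the $\cF^+$-ray of $c$ bounding $Q_0$ and $Q_{-1}$ at a point converging to $c$. The local product structure of the foliations in the sector $Q_0$ near its vertex $c$ then forces these two leaves to intersect at a point $y_n \in Q_0$ arbitrarily close to $c$ for all $n$ large.

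To derive the contradiction, I would apply Proposition \ref{prop:product_structure_general} to $a = g^n x_1$, $b = g^{-n} x_{-1}$ and intersection point $x = y_n$, choosing the faces $f_a \subset \cF^-(a)$ and $f_b \subset \cF^+(b)$ so that together with $r_a, r_b$ they bound the quadrants of $a, b$ pointing toward the singular vertex $c$. The element $g' \in G$ realizing the face condition $g'(f_b) \cap f_a \neq \emptyset$ would be built by combining iterates of $g$ (which contract and expand rays of $c$, hence bring faces arbitrarily close to $c$) with the density Axiom \ref{Axiom_dense} to connect portions of the faces lying on opposite sides of $c$. Proposition \ref{prop:product_structure_general} would then produce a nontrivially fixed point of $G$ inside $Q_0$ arbitrarily close to $y_n$, contradicting the wandering hypothesis. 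The principal technical obstacle is precisely this construction of $g'$: in the non-singular case of Lemma \ref{lem:wand_subquadrant_share_sides} the analogues of $a$ and $b$ sit in opposite quadrants of a regular fixed point and the required faces meet automatically via total linking, but here $a$ and $b$ lie in sectors of $c$ separated by the wandering sector $Q_0$, so verifying the face condition requires combining the hyperbolicity of $g$ at $c$ with the density of fixed leaves in a delicate way.
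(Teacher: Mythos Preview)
Your proposal has a genuine gap, and you have in fact identified it yourself: the construction of the element $g'$ realizing the face condition $g'(f_b)\cap f_a\neq\emptyset$ is not just a technical detail but the entire content of the argument, and the tools you invoke (iterates of $g$ and Axiom~\ref{Axiom_dense}) do not suffice. Iterates of $g$ preserve every sector of $c$, so they cannot transport a leaf from the sector containing $f_b$ to the sector containing $f_a$; and density of fixed leaves gives no control over which leaves intersect which.

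More fundamentally, your argument uses only that $Q_{-1}$ and $Q_1$ are regular. If it worked, it would prove the stronger statement ``$Q_{-1}$ and $Q_1$ regular $\Rightarrow$ $Q_0$ regular'', which would make the singular case identical to the nonsingular one and render the actual hypothesis (that \emph{all} alternating subquadrants are regular) superfluous. The paper's proof shows why the full hypothesis is needed: one picks fixed points $x_i$ in \emph{every} odd sector $Q_{2i-1}$, close enough to $c$ that $\cF^+(x_i)\cap\cF^-(x_{i+1})\neq\emptyset$ in the intervening even sector. Then, using the hyperbolic elements $h_i$ fixing each $x_i$, one composes $h_{p-1}^{n_{p-1}}\cdots h_2^{n_2}$ to push $\cF^+(x_1)$ step by step all the way around the prong until it meets $\cF^-(x_p)$. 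This chaining through all the odd sectors is exactly what furnishes the group element you were missing, and it places $x_1$ and $x_p$ in the same Smale class so that Proposition~\ref{prop_basic_product_structure} yields a fixed point in $Q_0$.
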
 

\begin{proof}
Up to renaming the $TL$ subquadrants, we assume that  $Q_0,\dots, Q_{2p-1}$ are ordered so that $Q_0$ shares a $\cF^+$-side with $Q_1$ and a $\cF^-$-side with $Q_{2p-1}$. If the $Q_{2i-1}$,$i=1,\dots, p$ are all regular, then there exist points $y_i \in \Fix(G) \cap Q_{2i-1}$.   As in the proof of Lemma \ref{lem:wand_subquadrant_share_sides}, after applying a high positive or negative power of the element of $G$ fixing $c$ (which exists by axiom \ref{Axiom_prongs_are_fixed}), we can replace $y_i$ with points $y_i'$ with $\cF^\pm{y_i'} \cap \cF^\mp(c)$ as close as we like to $c$.  By then applying  Corollary \ref{cor:TL_product} using $y_i'$ and $c$, we can obtain points $x_i \in \Fix(G) \cap Q_{2i-1}$ as close as we like to $c$.  In particular, we can take these sufficiently close to $c$ so that $\cF^+(x_i)\cap \cF^-(x_{i+1})\neq \emptyset$ and $\cF^+(x_p)\cap \cF^-(x_1)\neq \emptyset$.   
Let $h_i \neq id$ be an element of $G$ that fixes $x_i$.  Then for appropriate powers $n_i$, we have that $h_{p-1}^{n_{p-1}}\dots h_{2}^{n_{2}} \cF^+(x_1) \cap \cF^-(x_{p})\neq \emptyset$. That is, we are in the setting of Proposition \ref{prop_basic_product_structure}, and therefore there is a point fixed by some nontrivial element of $G$ arbitrarily close to $\cF^+(x_p)\cap \cF^-(x_1) \in Q_0$, contradicting the fact that $Q_0$ is wandering.
\end{proof}

For the next arguments, it will be useful to define certain subsets of TL-subquadrants, that we call TL-substrips:

\begin{definition}\label{def_TLsubstrip}
Let $r^+$ be a ray starting at a point $x$. A \emph{TL-substrip based on $r^+$} is the subset of a TL-subquadrant of $x$ given by its intersection with an infinite strip bounded by $r^+$, a segment of $\cF^-(x)$ and another ray $r'$ of a leaf of $\cF^+$, as in Figure \ref{fig:TLsubstrip}. 
A TL-substrip is called \emph{wandering}, if its interior does not contain any point of $\Fixbar$.
\end{definition}

\begin{figure}[h]
     \centerline{ \mbox{
\includegraphics[width=11cm]{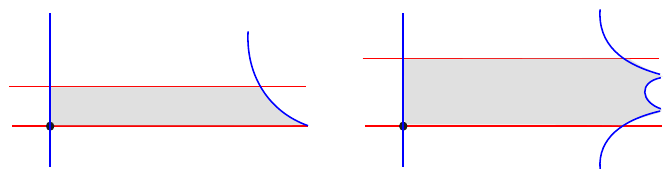} }}
\caption{Two TL substrips}
\label{fig:TLsubstrip}
\end{figure}

Notice that if $Q$ is a wandering TL-subquadrant, then any intersection of $Q$ with an infinite strip is a wandering TL-substrip. However, one a priori may have a wandering TL-substrip based at a ray starting at a point $x$ and such that the corresponding TL-subquadrant of $x$ is \emph{not} wandering.

The next lemma, which is an easy consequence of the fact that there are no infinite product regions, says that up to shrinking the width of a TL-substrip, one can assume that its boundary consists of two rays of leaves $\cF^\pm$, a compact interval in a leaf of $\cF^\mp$ and a unique ray of $\cF^\mp$ making a perfect fit with one of the two rays of $\cF^\pm$:
\begin{lemma} \label{lem:bound_strip}
Let $r_0, r_1$ be rays of $\cF^\pm$ based at a leaf $l$ of $\cF^\mp$ and on the same side of $l$.  
There exists a ray $r_1'$ (possibly equal to $r_1$), between $r_1$ and $r_0$, such that $r_1'$ is on a leaf fixed by some nontrivial element of $G$, and the set of leaves of $\cF^\mp$ intersecting both $r_0$ and $r_1'$ is bounded on one side by $l$, and on the other by a single leaf, which either has a prong singularity between $r_0$ and $r_1'$ 
 or makes a perfect fit with one of $r_0$ and $r_1'$ and intersects the other.
\end{lemma}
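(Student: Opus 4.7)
I study the set $\Omega$ of $\cF^\mp$-leaves intersecting both $r_0$ and $r_1$, which is a connected initial interval in the leaf space of $\cF^\mp$ starting at $l$. The first step is to show $\Omega$ is bounded: if not, the strip bounded by $r_0$, $r_1$ and the segment of $l$ between their base points would contain an infinite product region, contradicting Proposition \ref{prop:no_product} (we may assume $P$ is nontrivial, as otherwise the stated conclusion cannot hold). Hence $\Omega$ has a nonempty ``top'' $L_\infty$ consisting of pairwise non-separated $\cF^\mp$-leaves, by the standard structure of non-Hausdorff leaf spaces applied to limits of leaves in $\Omega$ from below.

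I then carry out a case analysis on $L_\infty$. Case (a): $|L_\infty|=1$ and the unique leaf $m$ meets both rays with a prong strictly between them; in fact, transversality forces the presence of such a prong whenever the top leaf meets both rays, since otherwise nearby leaves would also meet both and contradict the supremum. Case (b): $|L_\infty|=1$ and $m$ meets exactly one ray while making a perfect fit with the other. Case (c): $|L_\infty|\geq 2$. In cases (a) and (c), Axiom \ref{Axiom_prongs_are_fixed} (respectively Axiom \ref{Axiom_non-separated} combined with Proposition \ref{prop_axiom4weak_implies_4strong}) produces a nontrivial $g\in G$ fixing every leaf of $L_\infty$.

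For the construction of $r_1'$, in cases (a) and (c) I take $m_0\in L_\infty$ meeting $r_0$ and let $p\in m_0$ be the unique fixed point of $g$ on $m_0$ (Remark \ref{rem_single_fixed_point}). The leaf $\cF^\pm(p)$ is then fixed, and since it cannot cross the same-foliation rays $r_0$ and $r_1$, at least one of its faces through $p$ must exit the strip through $l$, giving a fixed $\cF^\pm$-ray $r_*$ strictly between $r_0$ and $r_1$. Using Axiom \ref{Axiom_dense}, I perturb $r_*$ slightly toward $r_1$ to obtain a fixed ray $r_1'$ such that the prong (case (a)) or the prong/perfect-fit structure arising from the associated line of lozenges (case (c), via Corollary \ref{cor:nonseparated_leaves}) lies strictly between $r_0$ and $r_1'$. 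In case (b), I apply Axiom \ref{Axiom_dense} directly to pick $r_1'$ on a fixed leaf very close to $r_1$ and between $r_0$ and $r_1$; the perfect fit of $m$ with $r_1$ ensures that $m$ meets $r_1'$ once $r_1'$ is close enough, giving the perfect-fit structure on the other ray.

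The main obstacle is verifying that the new strip bounded by $r_0$ and $r_1'$ still has a top of the desired single-leaf form: shrinking $r_1$ to $r_1'$ enlarges $\Omega$, so new $\cF^\mp$-leaves may appear at the top and the new top may differ from $L_\infty$. Controlling this requires a transversality and continuity argument, taking $r_1'$ close enough to the appropriate cutting leaf (namely $\cF^\pm(p)$ in cases (a), (c), or $r_1$ in case (b)) so that, using the product-foliated structure in a neighborhood of the cutting leaf and the discreteness of pivot/singular points (Lemma \ref{lem:adjacent_corners_discrete}), no new non-separation phenomenon can emerge at the top, and the single-leaf prong or perfect-fit structure is retained.
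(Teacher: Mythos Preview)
Your overall strategy---bound $\Omega$ using the absence of infinite product regions, then case-split on the structure of the top $L_\infty$---matches the paper's. However, the step you yourself flag as the ``main obstacle'' is a genuine gap, and your proposed resolution does not close it.

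The concrete failure is visible already in case (b). Suppose $m$ makes a perfect fit with $r_1$ and intersects $r_0$. Choosing $r_1'$ close to $r_1$ forces $m$ to meet $r_1'$ as well, so $m$ now meets \emph{both} $r_0$ and $r_1'$ and is no longer a boundary leaf of the required form (it neither has a prong between the rays nor makes a perfect fit with either). The new top of $\Omega'$ is some other leaf or union of leaves about which you know nothing; your appeal to ``transversality, continuity, and discreteness of pivot/singular points'' does not pin it down. The same difficulty recurs in cases (a) and (c): perturbing $r_*$ toward $r_1$ enlarges $\Omega$ in an uncontrolled way. In case (c) there is an additional loose end: the fixed point $p$ of $g$ on $m_0$ need not lie on the segment of $m_0$ between $r_0$ and $r_1$, so it is not clear that any face of $\cF^\pm(p)$ yields a ray $r_*$ between $r_0$ and $r_1$ reaching $l$.

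The paper sidesteps the obstacle entirely by choosing $r_1'$ more carefully, rather than perturbing and then trying to control the change. It first disposes of the case where $r_0$ itself makes a perfect fit on the relevant side (your subcase of (b) with the perfect fit at $r_0$). Otherwise it immediately replaces $r_1$ by some fixed ray $r_1'$ strictly closer to $r_0$, using Axiom~\ref{Axiom_dense}, and only then analyzes the top of the new $\Omega'$. Crucially, in the nonseparated case it does not perturb: the line-of-lozenges structure (Corollary~\ref{cor:nonseparated_leaves}) provides an \emph{explicit} fixed ray $r$ between $r_0$ and $r_1'$, namely the shared $\cF^\pm$-side of two adjacent lozenges, with which $l_0$ (the boundary leaf meeting $r_0$) makes a perfect fit by construction. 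Replacing $r_1'$ by this $r$ gives the conclusion directly, with no further case analysis needed.
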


\begin{proof}
To fix notation, we suppose  $r_0$ and $r_1$ are rays of $\cF^+$.
If $r_0$ makes a perfect fit with a leaf $l^-$ in the quadrant bounded by $l$ and $r_0$ containing $r_1$, then any ray $r_1'$ sufficiently close to $r_0$ in this quadrant will intersect $l$ and $l^-$, and by Axiom \ref{Axiom_dense}, we can choose this to be a ray of a fixed leaf.  

So we suppose now that $r_0$ does not make a perfect fit in this quadrant.  By Axiom \ref{Axiom_dense}, we may replace $r_1$ with a ray $r_1'$ strictly closer to $r_0$ and so that it lies on a fixed leaf.  

Let $S$ be the set of $\cF^-$ leaves that intersect both $r_0$ and $r_1'$.  Then $S$ is bounded on one side by $l$, and since there are no infinite product regions (Proposition \ref{prop:no_product}), it is bounded on the other side by some leaf or union of leaves.   
We assumed above that there is no perfect fit at $r_0$, and as there cannot be infinitely many leaves between $r_0$ and $r_1'$ (because of Lemma \ref{lem_infinite_line_lozenges}), one of these leaves intersects $r_0$ -- call it $l_0$.  If $l_0$ makes a perfect fit with $r_1'$, we are done.  If $l_0$ intersects $r_1'$, then it is a singular leaf, and some ray of this prong (possibly $r_1'$) intersects $l$; we can replace $r_1'$ with this ray if needed, and again are done.   Finally, the third case is that there is a nontrivial union of leaves in the boundary.  This gives a family of nonseparated leaves, so by Corollary \ref{cor:nonseparated_leaves} they are sides of a line of lozenges, and in particular fixed by some $g \in G$.  Thus, $l_0$ makes a perfect fit with a ray $r$, which is the side of two adjacent lozenges. So $r$ is fixed by $g$ and in between $r_0$ and $r_1'$, as desired.  
\end{proof}

\begin{lemma} \label{lem:substrip_perfect_fit}
Suppose $x \in \Fixbar_G$ has a wandering TL-substrip $S$ based on a ray $r_0$ of a leaf of $\cF^+$.
Then $r_0$ makes a perfect fit with a leaf of $\cF^-$ on the side of the wandering substrip.  
\end{lemma}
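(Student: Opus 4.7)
The plan is to invoke Lemma~\ref{lem:bound_strip} on the bounding rays $r_0, r_1$ of the substrip, obtaining a narrower substrip $S' \subseteq S$ bounded by $r_0$, a new ray $r_1'$ lying on some $G$-fixed leaf $k$, a segment of $\cF^-(x)$, and a single leaf $l^- \in \cF^-$. Lemma~\ref{lem:bound_strip} gives three possibilities for $l^-$; the second is exactly the desired conclusion, so I will use the wandering hypothesis, which passes to $\mathring{S'} \subseteq \mathring{S}$, to rule out the other two.

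If $l^-$ carries a prong singularity strictly between $r_0$ and $r_1'$, that point lies in $\mathring{S}$ and is fixed by Axiom~\ref{Axiom_prongs_are_fixed}, contradicting $\mathring{S} \cap \Fixbar_G = \emptyset$. So suppose instead $l^-$ makes a perfect fit with $r_1'$ (rather than $r_0$) and crosses $r_0$ at a point $W$. The element $g \in G$ fixing $k$ must then also fix $l^-$, by uniqueness of the perfect-fit partner of $r_1'$ on the given side together with $g$-invariance of the perfect-fit structure, so Lemma~\ref{lem:two_fix_points}(1) applied to $(k, l^-)$ yields a $g$-invariant lozenge with corners $B \in k$ and $B' \in l^-$, both fixed points of $g$. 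The wandering hypothesis prevents these corners from lying strictly inside $S'$; in the principal configuration $B$ lies on the interior of $r_1'$, and this forces $\cF^-(B)$ into the interval of $\cF^-$-leaves between $l := \cF^-(x)$ and $l^-$, hence to meet $r_0$ at some point $Z$.

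I would then apply Proposition~\ref{prop:product_structure_general} to the pair $(x, B) \in \Fixbar_G \times \Fix_G$ with intersection $Z$. The faces $f_x$ (the downward ray of $l$ from $x$) and $f_B$ (the leftward ray of $k$ from $B$) both contain the point $p := k \cap l$, so the hypothesis of the proposition is satisfied with the identity element. The proposition produces a point of $\cR_G \subset \Fixbar_G$ arbitrarily close to $Z$; following the construction in its proof, in which the fixed point is obtained by contracting intervals of $\cF^\pm$-leaves passing through segments of $f_x$ and $f_B$, this point lies in the open rectangle bounded by $\cF^\pm(x)$ and $\cF^\pm(B)$. Since that rectangle is contained in $\overline{S}$ with interior in $\mathring{S}$, we obtain the desired contradiction. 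The most delicate point, and likely the main obstacle, is verifying that the point produced by Proposition~\ref{prop:product_structure_general} lies on the correct side of $r_0$ (inside $\mathring{S}$) rather than across $r_0$; this is controlled by the choice of the faces $f_x, f_B$ pointing into the rectangle inside $\overline{S}$.
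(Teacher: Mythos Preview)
Your setup through Lemma~\ref{lem:bound_strip} and the lozenge $L_1$ matches the paper's, and the prong case is handled identically. The gap is in the third case. You assert that ``in the principal configuration $B$ lies on the interior of $r_1'$,'' but nothing forces this: the corner $B$ is the fixed point of $g$ on the leaf $k \supset r_1'$, and it may lie on the \emph{other} ray of $k$ (on the far side of $\cF^-(x)$ from the substrip), or at the base $p=k\cap\cF^-(x)$. The wandering hypothesis only says $B,B'\notin\mathring S'$; it does not pin $B$ to $r_1'$. When $B$ lies on the opposite ray of $k$, your intersection point $Z=\cF^-(B)\cap r_0$ need not exist, and the rectangle you describe is not inside $S'$ at all. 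This is exactly the configuration the paper's second alternative covers: one checks that then $x$ lies in (the closure of) $L_1$, and since $g$ preserves $L_1$ with hyperbolic dynamics, one of $g^{\pm 1}(x)$ lands strictly inside the substrip; as $x\in\Fixbar_G$ so does $g^{\pm 1}(x)$, contradicting wandering.

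Even in your principal configuration, the detour through Proposition~\ref{prop:product_structure_general} is unnecessary and, as you acknowledge, leaves unresolved whether the produced point lands on the correct side of $r_0$. The paper avoids this entirely: since Lemma~\ref{lem:bound_strip} produces $r_1'$ \emph{strictly} between $r_0$ and the original bounding ray $r_1$, the point $B\in r_1'$ (when it does lie there) already sits in the interior of the \emph{original} wandering substrip $S$, and $B\in\Fix_G$ gives the contradiction directly. So the paper's dichotomy ``a corner of $L_1$ lies in $S$, or $x\in L_1$'' yields a two-line finish in both branches, with no product-structure argument needed.
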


\begin{proof}
By Lemma \ref{lem:bound_strip}, we may assume that the boundary of $S$ consists of $r_0$, a ray $r'_1$ of a leaf fixed by some element $g\in G$, a compact interval of a leaf of $\cF^-$ between the two initial points of the rays, and a part of a leaf $l^-\in \cF^-$.  Moreover, Lemma \ref{lem:bound_strip} gives three possibilities for $l^-$. One of the three options is that $l^-$ makes a perfect fit with $r^+$, which is what we want to show. Hence, we have to eliminate the other two.

First, suppose that $l^-$ is a singular leaf with a prong singularity $p$ between $r_0$ and $r'_1$. Then $p$ is TL with $x$, so is in the interior of the TL-substrip $S$, but $p\in \Fix$, contradicting the fact that $S$ is wandering.

Second, suppose that $l^-$ makes a perfect fit with $r_1'$ and intersects $r_0$. Since $r_1'$ is fixed by $g$, this implies that $r_1'$ and a ray of $l^-$ are part of a lozenge $L_1$. Then either one of the corners of $L_1$ is in the TL-substrip $S$, contradicting again the fact that it is wandering, or $x$ is contained in $L_1$. In that latter case, we have $gx\in S$ or $g^{-1}x\in S$, that once again contradicts the fact that $S$ is wandering, since $x\in \Fixbar$.

Thus we eliminated the other cases, and must have that $l^-$ makes a perfect fit with $r_0$, as desired. 
\end{proof} 

Lemma \ref{lem:substrip_perfect_fit} has an important consequence:
\begin{corollary}\label{cor:wandering_then_lozenge}
If $Q$ is a wandering TL subquadrant of a point $x \in \Fix_G$, then $Q$ is a wandering lozenge. 
\end{corollary}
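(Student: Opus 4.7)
The plan is to apply Lemma~\ref{lem:substrip_perfect_fit} to both rays $r_x^+,r_x^-$ bounding $Q$ at $x$ (in the second case using the symmetric version for $\cF^-$-rays). Since every substrip contained in $Q$ is wandering, this yields a leaf $l^-\in\cF^-$ making a perfect fit with $r_x^+$ on the $Q$-side, and a leaf $l^+\in\cF^+$ making a perfect fit with $r_x^-$ on the $Q$-side. The key step is to show that $l^+$ and $l^-$ share a common point $y$, which will then serve as the opposite corner of a lozenge with $x$; once this is established, the equality of $Q$ with the interior of that lozenge is immediate from the definition of TL subquadrant.

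To locate $y$, let $y_1$ denote the vertex on $l^-$ identified by the perfect fit with $r_x^+$ (the starting point of the ray of $l^-$ that participates in the perfect fit), and consider the ray $r_{y_1}^+$ of $\cF^+(y_1)$ pointing into $Q$. I claim that $r_{y_1}^+$ makes a perfect fit with $r_x^-$, from which $\cF^+(y_1)=l^+$ and $y=y_1=l^+\cap l^-$ will follow. The claim is proved by ruling out the other two possibilities. If $r_{y_1}^+$ meets $r_x^-$ at a point $z$, then the region bounded by $r_x^+$, the perfect-fit ray of $l^-$ from $y_1$, $r_{y_1}^+$, and the segment $[x,z]\subset r_x^-$ is contained in $Q$; the perfect fit between $r_x^+$ and $l^-$ forces every $\cF^+$-leaf crossing $[x,z]$ to extend upward all the way to $l^-$, exhibiting an infinite product region inside $P$ and contradicting Proposition~\ref{prop:no_product}. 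If $r_{y_1}^+$ neither meets nor makes a perfect fit with $r_x^-$, then Lemma~\ref{lem:bound_strip} applied to the strip between $r_{y_1}^+$ and $r_x^-$ forces a boundary of pairwise non-separated leaves; by Axiom~\ref{Axiom_non-separated} and Lemma~\ref{lem:two_fix_points} these are sides of a line of lozenges, whose corners provide a fixed point of some $g\in G$ lying inside $Q$, contradicting that $Q$ is wandering.

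The main obstacle is the intersecting case: one must carefully verify that the candidate region is a genuine infinite product region. This requires checking that every $\cF^+$-leaf through the segment $[x,z]$ extends upward to meet $l^-$, a property that follows from the perfect fit structure between $r_x^+$ and $l^-$, combined with the absence inside $Q$ of any new lozenge corner (which by Lemma~\ref{lem:two_fix_points} would be a fixed point of $G$, hence forbidden since $Q$ is wandering).
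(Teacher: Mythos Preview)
Your approach is more laborious than necessary and contains genuine gaps. The paper's proof is a two-line argument: one application of Lemma~\ref{lem:substrip_perfect_fit} gives a single leaf $l^-$ making a perfect fit with (say) $r_x^+$ on the $Q$-side; then, since $x\in\Fix_G$, the element $g$ fixing $x$ also fixes $l^-$ (up to a power), and Lemma~\ref{lem:two_fix_points}(1) immediately yields a $g$-invariant lozenge with corner $x$ in that quadrant, which is therefore $Q$. You never need the second perfect fit, nor any analysis of how the two fit together.

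Your argument, by contrast, has several problems. First, the point ``$y_1$, the vertex on $l^-$ identified by the perfect fit'' is ill-defined: a perfect fit is an asymptotic relation between rays and does not single out any particular point on $l^-$. Second, the ``infinite product region'' in the intersecting case is not one: the region you describe is bounded by the segment $[z,y_1]\subset\cF^+(y_1)$, not a full ray, so the $\cF^+$-leaf through $z$ exits the region at $y_1$ rather than running to infinity; moreover $y_1\notin Q$ since $\cF^-(y_1)=l^-$ does not meet $r_x^+$, so the region is not even contained in $Q$. Third, in the remaining case you invoke Lemma~\ref{lem:bound_strip} on ``the strip between $r_{y_1}^+$ and $r_x^-$'', but that lemma concerns two rays of the \emph{same} foliation, whereas these lie in $\cF^+$ and $\cF^-$ respectively. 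The missing idea is simply to exploit $x\in\Fix_G$ together with Lemma~\ref{lem:two_fix_points}(1) at the outset, rather than trying to locate the opposite corner by hand.
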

\begin{proof}
A wandering TL subquadrant contains a wandering TL-substrip.  By Lemma \ref{lem:substrip_perfect_fit}, one ray bounding the quadrant makes a perfect fit.   Since $x \in \Fix_G$, Lemma \ref{lem:two_fix_points} item (1) says that these leaves are sides of a lozenge, and thus $x$ is the corner, and $Q$ a lozenge.  
\end{proof} 

\subsection{Boundary leaves} 
Recall that $\Fixbar_G$ has a local product structure (Corollary \ref{cor_product_means_product}), so it makes sense to introduce the following terminology.
\begin{definition}
A \emph{boundary leaf} is a leaf $l\in \cF^\pm$ such that there exists $a\in l\cap \Fixbar_G$ and $a$ is a boundary point of $\Fixbar_G\cap \cF^\mp(a)$. Analogously, one can also define {\em boundary faces} for singular leaves. 
\end{definition}

As noted at the beginning of this section, in the Anosov flow setting, boundary leaves are the stable or unstable leaves of ``boundary orbits'' which are special periodic orbits in each basic set.
Our next goal will be to recover this property in our general setting and understand the structure of $\Fixbar_G$ near such boundary leaves.
We start with the easiest case: that of ``isolated'' boundary leaves, as given by the following proposition. 

\begin{proposition}\label{prop:isolated_fixed_points}
Let $x\in \cR_G$ be such that $\cF^\pm(x)$ is an isolated boundary leaf, i.e., for some small enough product neighborhood $U$ of $x$, we have $\Fixbar_G\cap U \subset \cF^\pm(x)$. Then $x\in \Fix_G$ and it is the corner of four wandering lozenges. In particular, the Smale class of $x$ consists only of $x$ and its images under $G$.

 Similarly, if $p$ is an isolated prong singularity, then all quadrants of $p$ are wandering lozenges.
\end{proposition}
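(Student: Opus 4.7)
The plan is to establish four claims in turn: that $x \in \Fix_G$, that $x$ is the corner of four lozenges, that these lozenges are wandering, and that the Smale class of $x$ equals its $G$-orbit. The isolation hypothesis (applied on both sides) gives $\Fixbar_G \cap U = \{x\}$; since $x \in \cR_G \subset \Fixbar_G = \overline{\Fix_G}$, this forces $x \in \Fix_G$. As $x \in \cR_G$ it is nonsingular, hence has four quadrants. The isolation implies that every sufficiently thin TL-substrip of $x$ in each quadrant is wandering, so Lemma~\ref{lem:substrip_perfect_fit} applied to each of the four rays of $\cF^{\pm}(x)$, on each of its sides, yields that each such ray makes a perfect fit with a leaf of the transverse foliation in the adjacent quadrant. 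Any element of $G$ fixing $x$ is hyperbolic at $x$ by Axiom~\ref{Axiom_A1}; a suitable power fixes each of the four rays and therefore each perfect-fit leaf, so Lemma~\ref{lem:two_fix_points}(1) produces a lozenge with corner $x$ in each quadrant, four in total.

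Next I show these lozenges are wandering. Suppose a lozenge $L$ at $x$ contains a point of $\cR_G$ in its interior; by density of $\Fix_G$ in $\Fixbar_G$, I may take $y' \in \mathring{L} \cap \Fix_G$, which is then totally linked with $x$. Proposition~\ref{prop:product_structure_general} applied with group element $\id$ gives that $w := \cF^+(y') \cap \cF^-(x)$ belongs to $\Fixbar_G$, and $w \in \cF^-(x) \setminus \{x\}$. Choose $g$ fixing $x$ and contracting $\cF^-(x)$ at $x$; by $G$-invariance $g^n(w) \in \Fixbar_G \cap (\cF^-(x) \setminus \{x\})$ and $g^n(w) \to x$. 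For $n$ large this yields a point of $\Fixbar_G \cap U$ distinct from $x$, contradicting the isolation hypothesis. Hence all four lozenges are wandering.

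Finally, for the Smale class, suppose $y \in \cR_G$ with $y \sim_G x$. Corollary~\ref{cor:char_smale_class} provides $z_1, \dots, z_k \in \cR_G$ and $h \in G$ with $x$ TL $z_1$, $z_i$ TL $z_{i+1}$, and $z_k$ TL $hy$. Since each TL-subquadrant of $x$ is precisely one of the four wandering lozenges and so contains no point of $\cR_G$, the point $z_1$ must lie on $\cF^+(x) \cup \cF^-(x)$. If $z_1 \neq x$, a suitable power of the fixing element (or its inverse) contracts $z_1$ along its leaf into $U$, and $G$-invariance of $\Fixbar_G$ forces this image to equal $x$, so $z_1 \in Gx$. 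By $G$-equivariance the same isolation property holds at every point of $Gx$, so the argument iterates: $z_2 \in Gz_1 = Gx$, and so on, and finally $hy \in Gz_k = Gx$, giving $y \in Gx$. The main delicate step is the wandering claim: the key is that the local product structure forces the existence of a hidden fixed-point limit on $\cF^-(x)$, which the hyperbolic dynamics at $x$ then pushes into the forbidden neighborhood $U$.
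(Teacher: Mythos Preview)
Your first step contains a genuine gap. The hypothesis is that \emph{one} of $\cF^+(x)$ or $\cF^-(x)$ is isolated, say $\Fixbar_G \cap U \subset \cF^+(x)$; the $\pm$ in the statement means ``either $+$ or $-$'', not both. So you cannot immediately conclude $\Fixbar_G \cap U = \{x\}$: a priori there could be other points of $\Fixbar_G$ on $\cF^+(x)$ inside $U$. The paper's argument fills this in: since $x \in \cR_G \subset \Fixbar_G$, it is accumulated by points of $\Fix_G$, and eventually these lie in $U$, hence on $\cF^+(x)$. But a single leaf contains at most one point of $\Fix_G$ (Remark~\ref{rem_single_fixed_point}), so this accumulating sequence must be constantly equal to $x$, giving $x \in \Fix_G$. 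Only then does $\Fixbar_G \cap U = \{x\}$ follow. This is the missing idea.

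Once $\Fixbar_G \cap U = \{x\}$ is established, the paper's route is shorter than yours: the product structure immediately gives that all four TL-subquadrants are wandering, and Corollary~\ref{cor:wandering_then_lozenge} then says each is a wandering lozenge. Your path---producing perfect fits via Lemma~\ref{lem:substrip_perfect_fit}, then lozenges via Lemma~\ref{lem:two_fix_points}, then separately arguing wandering---is essentially unpacking that corollary and adding a redundant step (once you know $\Fixbar_G \cap U = \{x\}$, wandering is immediate; you do not need the detour through Proposition~\ref{prop:product_structure_general}). Your explicit argument for the Smale class claim is fine, though note that ``$z_1 \in Gx$'' should really be ``$z_1 = x$'': if $g^n(z_1) = x$ and $g$ fixes $x$, then $z_1 = x$, so the iteration is even simpler than you wrote.
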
 

\begin{proof}
Let $U$ be a product neighborhood of $x$ such that $\Fixbar_G\cap U \subset \cF^+(x)$. (The proof for $\cF^-(x)$ being isolated is obviously symmetric). 
Since $x\in \cR_G$, it is accumulated by elements of $\Fix_G$, but any such element in $U$ must be in $\cF^+(x)$ by assumption. Since any leaf contains at most one element of $\Fix_G$ (see Remark \ref{rem_single_fixed_point}), we deduce that $x\in \Fix_G$. Hence $\Fixbar_G\cap U = \{x\}$, so (by the product structure of $\Fixbar_G$) every TL subquadrant of $x$ are wandering and the conclusion follows from Corollary \ref{cor:wandering_then_lozenge}.

Similarly, if $p$ is an isolated prong singularity, Corollary \ref{cor:TL_product} implies that each of its TL subquadrant must be wandering and the conclusion follows again from  Corollary \ref{cor:wandering_then_lozenge}. 
\end{proof}

The remainder of this section is devoted to the proof of the following theorem, which describes the structure of $\Fixbar_G$ near boundary leaves in the general setting. 

\begin{theorem}[Global structure of $\Fixbar_G$] \label{thm:complement_of_fixbar}
Let $l$ be a leaf of $\cF^\pm$ containing some point of $\Fixbar_G$.  Then, each connected component of $l \setminus \Fixbar_G$ is either bounded and the intersection of $l$ with a maximal line (necessarily finite) of wandering lozenges, or is the (infinite) shared side of two wandering lozenges.
\end{theorem}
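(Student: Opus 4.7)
The plan is to fix $l \in \cF^+$ (the case of $\cF^-$ being symmetric) and show that each component $I$ of $l \setminus \Fixbar_G$ is produced by a maximal line of wandering lozenges along $l$. For the first step, I would show that every $y \in I$ is adjacent to wandering lozenges on both sides of $l$. Since $y \notin \Fixbar_G$ and $\Fixbar_G$ is closed, a small product neighborhood of $y$ is disjoint from $\Fixbar_G$; the $\cF^+$-saturations of small transverse arcs at $y$ form wandering TL substrips based at rays of nearby fixed leaves guaranteed by Axiom \ref{Axiom_dense}. Lemma \ref{lem:substrip_perfect_fit} then yields perfect fits at the bounding rays, and Corollary \ref{cor:wandering_then_lozenge}, applied to approximating fixed points, promotes the resulting wandering TL subquadrants to wandering lozenges $L^u$ and $L^d$ that meet $l$ on $\cF^+$ sides through $y$.

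Second, I would globalize along $I$. As $y$ varies, the lozenges $L^u, L^d$ either persist or transition to new ones across shared sides; the resulting chain organizes into a line of lozenges via Lemma \ref{lem:two_fix_points} together with the non-corner criterion. The key rigidity comes from Remark \ref{rem_single_fixed_point}: each $\cF^+$ leaf carries at most one $\Fix_G$ point. Thus at most one interior point of $I$ can be an actual pivot on $l$, while all other boundary points of $I$ must arise as limits in $\Fixbar_G \setminus \Fix_G$, namely corners of adjacent regular lozenges accumulating $\Fix_G$ onto the outer $\cF^-$ sides of the extreme wandering lozenges. Extending maximally in both directions, I obtain a maximal line $\{L_i\}$ of wandering lozenges whose intersection with $l$ covers $I$.

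Third, I would distinguish cases by the length of $\{L_i\}$. If the line is finite, then $I = l \cap \bigcup_i L_i$ is bounded, with endpoints on the outer $\cF^-$ sides of the extreme lozenges and belonging to $\Fixbar_G$ by the accumulation of $\Fix_G$ from adjacent regular regions; this is the bounded case. If instead $\{L_i\}$ is infinite on one side, then by Lemma \ref{lem_infinite_line_lozenges} it embeds in a bi-infinite line bounding a scalloped region; combining this with Remark \ref{rem_single_fixed_point} forces the unique $\Fix_G$ point on $l$ to be a pivot, and the infinite extension collapses to two wandering lozenges sharing their $\cF^+$ side at this pivot. Then $I$ is exactly this infinite shared side, which is the second alternative of the theorem.

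The main obstacle I anticipate is the second phase: one must verify that the wandering lozenges genuinely organize into a \emph{line} (consecutive lozenges sharing \emph{opposite} sides), rather than a looser chain or disconnected collection, and one must correctly identify boundary points of $I$ lying in $\Fixbar_G \setminus \Fix_G$ as limits of corners of adjacent regular lozenges rather than as corners themselves. This hinges on using the non-corner criterion of Lemma \ref{lem:two_fix_points}(2) to prevent incompatible overlap of wandering lozenges, combined with the strong rigidity coming from Remark \ref{rem_single_fixed_point} that restricts actual fixed points on $l$.
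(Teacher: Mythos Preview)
Your Phase 1 conclusion is wrong, and this propagates through the rest of the argument. You claim that for each $y \in I$ there are wandering lozenges $L^u, L^d$ meeting $l$ along $\cF^+$-sides through $y$. But a lozenge with a $\cF^+$-side on $l$ must have a corner on $l$, and corners lie in $\Fix_G$; since at most one point of $l$ is in $\Fix_G$ (Remark~\ref{rem_single_fixed_point}), such lozenges cannot exist through a generic $y\in I$. In the bounded case the wandering lozenges produced by the theorem do \emph{not} have sides on $l$: they cross $l$ transversally, with $\cF^-$-sides intersecting $l$ and corners lying off $l$ (see Figure~\ref{fig:a_fixed}). Your use of Corollary~\ref{cor:wandering_then_lozenge} at ``approximating fixed points'' near $y$ produces lozenges with corners at those approximating points, not with sides on $l$.

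The actual proof works from the endpoint $a\in\Fixbar_G$ of $I$, not from interior points. The key step you are missing is Lemma~\ref{lem:boundary_fixed}: one must show that $\cF^-(a)$ is fixed by some nontrivial $g\in G$. This is the hardest part, and it is where Axiom~\ref{Axiom_totallyideal} is used (see Remark~\ref{rem_wandering_totally_ideal}); nothing in your outline invokes that axiom. Once $\cF^-(a)$ is $g$-fixed, Lemma~\ref{lem:a_fixed} builds the line of $g$-invariant wandering lozenges from the fixed point $a_1\in\cF^-(a)$ (which is \emph{not} $a$ in the bounded case), stepping across $\cF^-$-sides until reaching $\cF^-(b)$. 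Your Phase~3 is also inverted: the scalloped-region argument (Lemma~\ref{lem_infinite_line_lozenges}) is used in the paper to derive a \emph{contradiction} showing this line is finite when $I$ is bounded, not to ``collapse'' an infinite line in the unbounded case. The unbounded case is handled directly by the second part of Lemma~\ref{lem:get_wandering_substrip}.
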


 \begin{figure}[h]
   \labellist 
  \small\hair 2pt
 \pinlabel $l$ at 5 75
  \pinlabel $l'$ at 25 100
  \pinlabel $a$ at 40 60
  \pinlabel $l$ at 195 68
    \pinlabel $l'$ at 210 100
    \pinlabel $a$ at 230 50
      \pinlabel $b$ at 320 50
 \endlabellist
     \centerline{ \mbox{
\includegraphics[width=8cm]{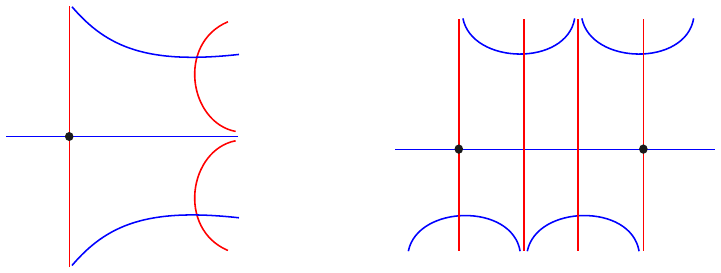} }}
\caption{Two possibilities for boundary leaves $l'$}
\label{fig:newpicture}
\end{figure}

Thanks to Theorem \ref{thm:complement_of_fixbar}, we obtain, as claimed, the generalization of the existence of ``boundary orbits'' to Anosov-like actions.  For convenience, we state this immediate consequence as the following corollary: 

\begin{corollary}\label{cor_boundary_leaves_and_boundary_points}
Any boundary leaf $l'$ is fixed by some nontrivial element $g\in G$ and the fixed point $x$ of $g$ on $l'$ is the corner of (at least) two wandering lozenges. 
\end{corollary}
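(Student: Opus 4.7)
The plan is to derive this corollary directly from Theorem~\ref{thm:complement_of_fixbar} together with the structural information about lozenges developed earlier in the paper. Fix a boundary leaf $l'$; up to swapping $\cF^+$ and $\cF^-$, assume $l' \in \cF^+$. By definition of a boundary leaf, there exists $a \in l' \cap \Fixbar_G$ such that $a$ is a boundary point of $\Fixbar_G \cap \cF^-(a)$; equivalently, some connected component $C$ of $\cF^-(a) \setminus \Fixbar_G$ has $a$ as one of its endpoints.

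The first step is to apply Theorem~\ref{thm:complement_of_fixbar} to the leaf $\cF^-(a)$, which contains the point $a \in \Fixbar_G$. The theorem tells us that $C$ is one of two types: either (i) a bounded interval equal to the intersection of $\cF^-(a)$ with a maximal (finite) line of wandering lozenges, or (ii) the infinite shared side of two wandering lozenges. In case (ii), the finite endpoint $a$ of the shared side is, by definition, the common corner of these two wandering lozenges. In case (i), unpacking Definition~\ref{def_line_lozenges} shows that the endpoint $a$ of the intersection interval must be a corner shared by two consecutive wandering lozenges of the line, namely those whose common side is a ray of $l' = \cF^+(a)$. Thus in either case $a$ is the corner of at least two wandering lozenges.

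The second step is to upgrade this to the statement that $l'$ is fixed by a nontrivial element of $G$. Since $a$ is the corner of a wandering lozenge $L$, the leaf $l'$ contains a ray making a perfect fit with a ray of $\cF^-$ issued from the opposite corner of $L$, and similarly for $\cF^-(a)$. In the proof of Theorem~\ref{thm:complement_of_fixbar} the boundary lozenges are produced together with a stabilizing element of $G$ (either via the branching structure of Axiom~\ref{Axiom_non-separated} combined with Proposition~\ref{prop_axiom4weak_implies_4strong}, or via perfect fits coming from $\Fixbar_G$-boundary points), so there is a nontrivial $g \in G$ preserving the pair of leaves making a perfect fit at $L$. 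Lemma~\ref{lem:two_fix_points}(1) then forces $g$ to preserve the lozenge $L$ and fix its two corners; in particular $ga = a$, so $g$ fixes $l' = \cF^+(a)$, and by Remark~\ref{rem_single_fixed_point} the point $a$ is the unique fixed point of $g$ on $l'$. Setting $x := a$ concludes the argument.

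The only step that requires genuine care — and the one I would check in full detail against the proof of Theorem~\ref{thm:complement_of_fixbar} — is the assertion that the wandering lozenges produced by that theorem come equipped with a nontrivial stabilizer fixing their corners, rather than being lozenges merely in the purely geometric sense of Definition~\ref{def_lozenge}. Everything else is essentially bookkeeping about endpoints of components in lines of lozenges.
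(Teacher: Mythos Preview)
Your overall strategy matches the paper's (the corollary is stated as an immediate consequence of Theorem~\ref{thm:complement_of_fixbar}, via Lemmas~\ref{lem:a_fixed} and~\ref{lem:boundary_fixed}), but your case~(i) contains a real error. When $C=(a,b)\subset\cF^-(a)$ is bounded, the point $a$ is \emph{not} in general a corner of any lozenge: in the $\cF^\pm$-swapped version of Lemma~\ref{lem:a_fixed} this is exactly case~(2), where $a$ is \emph{not} fixed by $g$. Geometrically, $l'=\cF^+(a)$ is the \emph{extremal} $\cF^+$-side of the finite line of wandering lozenges, so only one lozenge of the line has a side in $l'$, and its corner $x$ on $l'$ satisfies $\cF^-(x)\neq\cF^-(a)$ (the corners of the line sit to one side of the transversal $\cF^-(a)$; compare Figure~\ref{fig:a_fixed}). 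Consequently there are no ``two consecutive lozenges of the line whose common side is a ray of $l'$'', and your conclusion $x:=a$ in Step~2 is wrong in this case.

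The fix is short: Lemma~\ref{lem:boundary_fixed} (with $\cF^\pm$ swapped) gives a nontrivial $g$ fixing $l'$; then the opening paragraph of the proof of Lemma~\ref{lem:a_fixed} (also swapped) shows that the fixed point $x$ of $g$ on $l'$ is the common corner of two adjacent wandering lozenges --- only one of which belongs to the line containing $(a,b)$, the other lying on the opposite side of $\cF^-(x)$. Your case~(ii) is correct as written.
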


\begin{rem} 
The situation of Corollary \ref{cor_boundary_leaves_and_boundary_points}, and more generally, much of the work in this section, strongly parallels the detailed structure theory for Smale diffeomorphisms of surfaces developed by Bonatti, Jeandenans and Langevin \cite[Ch.~2]{BJL}.  Our boundary leaves play the role of their boundary points of basic pieces (\emph{points bords d'une pi\`ece basique}), common sides of two wandering lozenges correspond to \emph{free separatrices} (as in \cite{BB_flots_Smale}) 
and extremal sides of maximal lines of wandering lozenges give {\em separatrices coupl\'ees}.   See \cite[Sec.~2.5]{BJL} for further details. 
\end{rem} 

To prove Theorem \ref{thm:complement_of_fixbar}, we prove a sequence of shorter lemmas on the structure of $\Fixbar_G$.  

\begin{lemma} \label{lem:get_wandering_substrip}
Suppose $x \in \Fixbar_G$ and some segment $(x, y)$ of $\cF^+(x)$ lies in the complement of $\Fixbar_G$.  Then 
each ray of the face of $\cF^-(x)$ that bounds the half-plane containing the segment $(x, y)$ is the base of a wandering TL-substrip bounded by $(x,y)$ and a ray of $\cF^-(y)$.  Furthermore, each ray of $\cF^-(x)$ adjacent to the segment $(x, y)$ makes a perfect fit with a $\cF^+$-leaf.

In the case where $y = \infty$, i.e., a full ray of $\cF^+(x)$ lies in the complement of $\Fixbar_G$, then $x \in \Fix_G$ and this ray is the common side of two adjacent wandering lozenges.  
\end{lemma}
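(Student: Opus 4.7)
The plan is to proceed in three stages, matching the three assertions of the lemma.

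First, I would show that the TL-substrip $S$ claimed in the lemma -- bounded by $(x,y)$, a ray $r$ of $\cF^-(x)$ adjacent to $(x,y)$, and a ray of $\cF^-(y)$ -- is wandering. Suppose for contradiction $z \in \Fixbar_G$ lies in the interior of $S$. Then $z$ is totally linked with $x$, and the intersection $\cF^+(x) \cap \cF^-(z)$ lies in $(x,y)$. Since $x, z \in \Fixbar_G$, each can be approximated by points in $\Fix_G$; for large indices the approximating pairs remain totally linked, so Corollary \ref{cor:TL_product} yields fixed points in arbitrarily small neighborhoods of $\cF^+(x) \cap \cF^-(z)$. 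As $\Fixbar_G$ is closed, this places a point of $\Fixbar_G$ on $(x,y)$, contradicting $(x,y) \cap \Fixbar_G = \emptyset$. Applying Lemma \ref{lem:substrip_perfect_fit} (with the roles of $\cF^+$ and $\cF^-$ exchanged, by the symmetry of the axioms) to the wandering substrip $S$ based on $r$ then yields that $r$ makes a perfect fit with a leaf of $\cF^+$ on the side of $S$, giving the second assertion.

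For the case $y=\infty$, taking $y_k \in r^+$ with $y_k \to \infty$ and applying the first two assertions to each segment $(x,y_k)$, both rays $r_1^-, r_2^-$ of $\cF^-(x)$ adjacent to $r^+$ make perfect fits with $\cF^+$-leaves $l_1^+, l_2^+$; in particular the two TL-subquadrants $Q_1, Q_2$ of $x$ adjacent to $r^+$ are wandering. Running the same substrip argument with the roles of the foliations swapped -- using wandering TL-substrips based on $r^+$ inside each $Q_i$ -- yields perfect fits of $r^+$ with $\cF^-$-leaves $l_1^-, l_2^-$ on both sides. To conclude $x \in \Fix_G$, pick $x_n \in \Fix_G$ with $x_n \to x$; since $Q_1, Q_2$ contain no interior fixed points and $r^+ \cap \Fixbar_G = \emptyset$, each $x_n$ lies on $\cF^+(x) \cup \cF^-(x)$ or in the half-plane of $\cF^-(x)$ opposite to $r^+$. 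If some $x_n$ lies on $\cF^{\pm}(x)$, that leaf is fixed by the element fixing $x_n$; by Axiom \ref{Axiom_A1} and Remark \ref{rem_single_fixed_point} all nontrivial elements fixing that leaf share a unique fixed point on it, so $x_n$ must equal this common point for every such index, forcing $x \in \Fix_G$ in the limit. Otherwise, every $x_n$ for large $n$ lies strictly in the opposite half-plane; applying Corollary \ref{cor:wandering_then_lozenge} to each $x_n$ yields wandering lozenges adjacent to $x_n$ in the direction of $r^+$, and tracking how their corners degenerate as $x_n \to x$ (either converging to the perfect-fit ideal endpoints or escaping) contradicts the infinite extent of $r^+$ in $\Fixbar_G^c$. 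Once $x \in \Fix_G$ is established, Corollary \ref{cor:wandering_then_lozenge} applied to $Q_1$ and $Q_2$ gives that they are wandering lozenges sharing $r^+$ as a common side.

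The main obstacle will be this last subcase: ruling out the configuration in which all $x_n \to x$ lie off $\cF^+(x) \cup \cF^-(x)$. Here one must combine the four perfect fits at $x$ (giving $x$ a pivot-like structure, in the sense of Lemma \ref{lem:adjacent_corners_discrete}) with the infinite extent of $r^+$ in $\Fixbar_G^c$ to derive a contradiction with the wandering-lozenge structure inherited from nearby $x_n$.
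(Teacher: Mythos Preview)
Your first two stages are essentially the paper's argument: show the substrip is wandering via the product structure (the paper invokes Proposition~\ref{prop:product_structure_general} directly with a point $a\in\Fix_G\cap S$, which is slightly cleaner than your approximation-by-pairs, but the content is the same), then apply Lemma~\ref{lem:substrip_perfect_fit}.

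The gap is in your treatment of $y=\infty$, specifically the step where you try to prove $x\in\Fix_G$ by approximating with $x_n\in\Fix_G$ and doing a case analysis. You correctly identify the problematic subcase yourself, and the sketched resolution (``tracking how their corners degenerate'') is not convincing: there is no reason the $x_n$ should be corners of wandering lozenges at all, so Corollary~\ref{cor:wandering_then_lozenge} does not apply to them.

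The paper avoids this entirely with an observation you set up but do not use. You correctly obtain $\cF^-$-leaves $l_1^-,l_2^-$ making perfect fits with $r^+$ on either side. The point is that $l_1^-$ and $l_2^-$ are \emph{nonseparated} in the leaf space of $\cF^-$: a sequence $\cF^-(y_k)$ with $y_k\to\infty$ along $r^+$ limits onto both. By Axiom~\ref{Axiom_non-separated} (via Proposition~\ref{prop_axiom4weak_implies_4strong}) there is a nontrivial $g$ fixing both, and by Corollary~\ref{cor:nonseparated_leaves} they are sides of two adjacent $g$-invariant lozenges sharing a side on $\cF^+(x)$. Their common corner lies on $\cF^+(x)$ and is in $\Fix_G$. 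Since both lozenges are wandering, this corner is the unique point of $\Fixbar_G$ on the ray, namely $x$. So $x\in\Fix_G$ directly, and the two lozenges are the ones you want. No approximation argument is needed.
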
 

\begin{proof} 
Let $S$ be the TL-substrip based on a ray $r_x$ of $\cF^-(x)$ and bounded also by ray $r_y$ of $\cF^-(y)$.  
Suppose for contradiction there exists some $a \in \Fix_G \cap S$, so $\cF^-(a)$ intersects the segment $(x,y)$ and is totally linked with $x$. Since $x \in \Fixbar_G$, Proposition \ref{prop:product_structure_general} implies that $\cF^-(a)\cap (x,y) \in \Fixbar_G$, a contradiction. 
This, together with Lemma \ref{lem:substrip_perfect_fit}, finishes the proof of the lemma in the case where $y$ is finite, and when $y = \infty$, it shows that the two TL subquadrants of $x$ along the ray $(x,y)$ are wandering.

Assuming $y = \infty$, by the above, both TL subquadrants of $x$ along the ray $(x,y)$ are wandering. In particular, the ray $(x,y)$ is the base of wandering TL-substrips on each sides.
By Lemma \ref{lem:substrip_perfect_fit} there exist leaves $l_1, l_2$ of $\cF^-$ making perfect fits on each side.  These are nonseparated in the leaf space of $\cF^+$, so are fixed by some nontrivial element $g \in G$ (Prop. \ref{prop_axiom4weak_implies_4strong}), forming the sides of adjacent $g$-invariant lozenges (Corollary \ref{cor:nonseparated_leaves}), with common shared side on $\cF^+(x)$. Moreover, these lozenges are wandering, so the common corner on $\cF^+(x)$ is the element in $\Fixbar_G\cap \cF^+(x)$ closest to the perfect fit, i.e., it is $x$.
\end{proof}

\begin{lemma} \label{lem:a_fixed} 
 Let $a\in \Fixbar_G$ be such that $\cF^-(a)$ is fixed by some nontrivial $g \in G$ and $a$ is one end of a connected component $I$ of $\cF^+(a) \setminus \Fixbar_G$. Then, either
\begin{enumerate}
\item $I$ is unbounded, $a$ is fixed by $g$, and $a$ is the corner of two wandering lozenges with $I$ as their shared sides,
\item or $I=(a,b)$ is bounded, $a$ is not fixed by $g$, and both $\cF^-(a)$ and $\cF^-(b)$ contain sides of lozenges in a line of $g$-invariant wandering lozenges which contains the segment $(a,b)$. 
\end{enumerate} 
See Figure \ref{fig:newpicture} for these two configurations.  
\end{lemma}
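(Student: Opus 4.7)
The plan is to split on whether $a$ itself is fixed by $g$, since this distinguishes the two conclusions of the lemma.

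\textbf{Case 1: $a$ is fixed by $g$.} In this case $g$ stabilizes $\cF^+(a)$ as well, and by Remark \ref{rem_single_fixed_point} the point $a$ is the unique fixed point of $g$ on $\cF^+(a)$. By Axiom \ref{Axiom_A1}, after replacing $g$ with $g^2$ if needed, $g$ acts as a topological expansion or contraction on each ray of $\cF^+(a)$ from $a$, and in particular setwise preserves the two sides of $a$. If $I=(a,b)$ were bounded, then $g(I)$ would be a connected component of $\cF^+(a)\setminus \Fixbar_G$ having $a$ as an endpoint and lying on the same side of $a$ as $I$; by uniqueness of such a component, $g(I)=I$, forcing $g(b)=b$, contradicting uniqueness of the fixed point. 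Hence $I$ must be unbounded, and the second paragraph of Lemma \ref{lem:get_wandering_substrip} gives that $I$ is the common side of two wandering lozenges at $a$. These lozenges are $g$-invariant because $g$ fixes $a$ and preserves each of its quadrants, yielding conclusion (1).

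\textbf{Case 2: $a$ not fixed by $g$.} I would first verify that $I$ must be bounded. Suppose otherwise: the second paragraph of Lemma \ref{lem:get_wandering_substrip} then puts $a \in \Fix_G$, fixed by some nontrivial $h \in G$. Both $g$ and $h$ stabilize $\cF^-(a)$, so by Remark \ref{rem_single_fixed_point} they share a unique fixed point on this leaf. But $h$ fixes $a$ while $g$ fixes $x\neq a$, a contradiction. Thus $I=(a,b)$ for some $b\in\Fixbar_G$.

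To produce the line of $g$-invariant wandering lozenges, I would apply Lemma \ref{lem:get_wandering_substrip} to get that each of the two rays of $\cF^-(a)$ adjacent to $(a,b)$ (one on each side of $\cF^+(a)$) makes a perfect fit with a $\cF^+$-leaf. Focusing on the upward side, call this perfect-fit leaf $l_a^+$. After passing to $g^2$ if necessary so $g$ is orientation-preserving on $\cF^-(a)$, the upward end at infinity of $\cF^-(a)$ is $g$-invariant, and uniqueness of the perfect fit at this end forces $g(l_a^+)=l_a^+$. Lemma \ref{lem:two_fix_points}(1) applied to the $g$-invariant pair $(\cF^-(a),l_a^+)$ produces a $g$-invariant lozenge $L_a$ with corners $x\in\cF^-(a)$ and $x'\in l_a^+$. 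I would then verify that $L_a$ is wandering, that $\cF^-(a)$ contains the $\cF^-$-side of $L_a$ at $x$, and that the opposite $\cF^-$-side lies on $\cF^-(b)$ (so that in fact $\cF^-(b)=\cF^-(x')$ is also $g$-fixed). An entirely symmetric argument below $\cF^+(a)$ produces a second $g$-invariant wandering lozenge $L_a'$ with sides on $\cF^-(a)$ and $\cF^-(b)$. Together $L_a$ and $L_a'$ are adjacent along $\cF^+(a)$ and form the initial portion of a $g$-invariant line of wandering lozenges with transverse leaf $\cF^+(a)$ containing $(a,b)$, giving conclusion (2).

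The main obstacle will be the last claim: showing that $L_a$ is wandering and that its $\cF^-$-side opposite $\cF^-(a)$ lies on $\cF^-(b)$. I expect to handle this by identifying the wandering TL-substrip above $(a,b)$ from Lemma \ref{lem:get_wandering_substrip} (bounded below by $(a,b)$, on the sides by $r_a^-$ and $r_b^-$, and above by the perfect-fit leaf $l_a^+$) with the portion of $L_a$ above $\cF^+(a)$. The argument uses uniqueness of perfect fits at the ends of $\cF^-$-rays together with the characterization of $b$ as the boundary point of the wandering substrip along $\cF^+(a)$; the wandering character of $L_a$ then follows because any $\Fixbar_G$-point in its interior would (via the wandering substrips on both sides and the product structure in Corollary \ref{cor_product_means_product}) force a $\Fixbar_G$-point on $\cF^+(a)$ inside $(a,b)$.
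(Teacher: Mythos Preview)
Your Case 1 and the reduction at the start of Case 2 (showing $I$ must be bounded when $a$ is not fixed by $g$) are correct and essentially match the paper.

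The gap is in the heart of Case 2. You construct one $g$-invariant lozenge $L_a$ (and its partner $L_a'$) with a side on $\cF^-(a)$ and then assert that the opposite $\cF^-$-side of $L_a$ lies on $\cF^-(b)$. This is false in general: the line of wandering lozenges joining $\cF^-(a)$ to $\cF^-(b)$ can have arbitrary finite length, not just one lozenge. Your justification---identifying the wandering TL-substrip over $(a,b)$ with ``the portion of $L_a$ above $\cF^+(a)$'' bounded above by $l_a^+$---presupposes that $l_a^+$ crosses $\cF^-(b)$ and that the right $\cF^-$-side of $L_a$ is $\cF^-(b)$, which is exactly what has to be shown. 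There is also a geometric slip: $L_a$ and $L_a'$ share their side along $\cF^+(x)$, where $x$ is the fixed point of $g$ on $\cF^-(a)$, not along $\cF^+(a)$; since $x\neq a$, only \emph{one} of $L_a,L_a'$ actually meets $\cF^+(a)$, namely the one containing $a$ in its $\cF^-$-side.

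The paper's proof starts with exactly that lozenge (called $L_1$, corners $a_1=x$ and $a_2$) and observes that $\cF^-(a_2)$ meets $(a,b]$ at some $b_2$. If $b_2=b$ one is done; otherwise one shows the next TL-subquadrant of $a_2$ toward $b$ is again wandering (using the product structure and $g$-invariance), producing an adjacent wandering lozenge $L_2$, and iterates. Termination is the nontrivial point: if the process did not stop one would obtain an infinite line of lozenges between $\cF^-(a)$ and $\cF^-(b)$, which by Lemma~\ref{lem_infinite_line_lozenges} sits inside a scalloped region. A transverse $h$-invariant lozenge of that scalloped region then contains $a\in\Fixbar_G$ together with a segment of $(a,b)$, forcing points of $\Fixbar_G$ into $(a,b)$ and giving a contradiction. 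Your outline is missing both this iteration and the finiteness argument.
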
 

\begin{proof}
By Lemma \ref{lem:get_wandering_substrip}, $\cF^-(a)$ makes perfect fits with two leaves $l_1^+, l_2^+$ on the side containing $I$. Moreover, since $\cF^-(a)$ is invariant by $g$, each of the leaves $l_i^+$ are invariant under a power of $g$, and we thus deduce that $\cF^-(a)$ contains the sides of two adjacent lozenges. Moreover, each lozenge is wandering as they contain a wandering TL-substrip.
From there we easily prove the first alternative: If $I$ is unbounded, the conclusion is just Lemma \ref{lem:get_wandering_substrip}.  Additionally, without any assumptions on $I$, if $a$ is fixed by $g$, then $a$ is the corner of the two adjacent wandering lozenges, and hence $I$ is the shared side, thus infinite.

So we now have to prove that if $a$ is not fixed by $g$, or equivalently if $I=(a,b)$ is bounded, we have that 
both $\cF^-(a)$ and $\cF^-(b)$ contain sides of lozenges in a line of $g$-invariant wandering lozenges containing the segment $(a,b)$.

Let $a_1$ be the fixed point of $g$ on $\cF^-(a)$ (so $a_1\neq a$ by assumption).  Then $a_1$ is the corner of two adjacent lozenges (sharing a side along $\cF^+(a_1)$, and each having a ray of $\cF^-(a_1)$ as another side).  Let $L_1$ denote the one of these lozenges which has a side containing $a$.  
Thus, $L_1$ intersects $I$. Call $a_2$ the other corner of $L_1$.  This configuration is illustrated in Figure \ref{fig:a_fixed}.

 \begin{figure}[h]
   \labellist 
  \small\hair 2pt
 \pinlabel $a_1$ at 34 85
 \pinlabel $a$ at 35 58
 \pinlabel $a_2$ at 78 22
 \pinlabel $b_2$ at 95 57
 \pinlabel $b$ at 172 58
 \endlabellist
     \centerline{ \mbox{
\includegraphics[width=7cm]{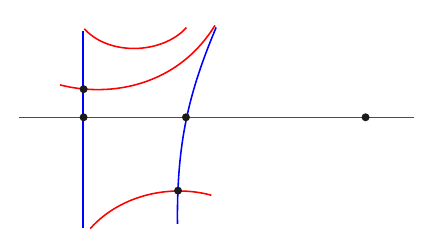} }}
\caption{The lozenge $L_1$ intersecting $I =(a,b)$ with corners $a_1, a_2$.}
\label{fig:a_fixed}
\end{figure}

Since $L_1$ is wandering, $\cF^-(a_2)$ intersects $(a,b]$ between $a$ and $b$ (inclusive of $b$).  If it intersects at $b$, we are done. So we assume that the intersection is at a point $b_2\in (a,b)$. Then we deduce that the TL-subquadrant of $a_2$ intersecting $(b_2,b)$ is wandering: Suppose not, so there exists a point $z\in \Fix_G$ that is totally linked with $a_2$. Up to replacing $z$ by $g^nz$, we may then assume that $\cF^-(z)$ intersects $(b_2,b) \subset \cF^+(a)$. But then the product structure of $\Fixbar_G$ (Proposition \ref{prop:product_structure_general}) implies that this intersection point is in $\Fixbar_G$, contradicting the fact that $(b_2,b)$ is disjoint from $\Fixbar_G$. So $a_2$ is the corner of a wandering lozenge $L_2$ adjacent to $L_1$ along the side $\cF^-(a_2)$.  Denote by $a_3$ the opposite corner of $L_2$; we may repeat this process again and find another adjacent lozenge.  It remains only to show this terminates in finitely many steps.

Suppose for contradiction that the process does not terminate.  Then one obtains an infinite line of wandering lozenges between $a$ and $b$.  By Lemma \ref{lem_infinite_line_lozenges}, this infinite line is contained in a bi-infinite line that defines a scalloped region.  Using the fact that a scalloped region can be realized as lines of lozenges in two different ``transverse" ways,  there exists a lozenge $L$, invariant by some element $h\in G$, intersecting all the lozenges $L_i$ and containing in its interior $a$ as well as all the intersections $\cF^+(a)\cap \cF^-(a_i)$. Since $a\in \Fixbar_G$, $L$ is not wandering, and since it is $h$-invariant, we deduce that the segment $(a,b)\cap L$ contains elements of $\Fixbar_G$, a contradiction.
\end{proof} 

Our next lemma reduces the main setting of Theorem  \ref{thm:complement_of_fixbar} to the case treated in Lemma \ref{lem:a_fixed} above.  

\begin{lemma}\label{lem:boundary_fixed}
Suppose $a \in \Fixbar_G$ is a boundary point of $\Fixbar_G \cap \cF^+(a)$ (i.e., on at least one side it is not accumulated along $\cF^+(a)$ by points of $\Fixbar_G$).  Then $\cF^-(a)$ is fixed by some nontrivial element of $G$.   
\end{lemma}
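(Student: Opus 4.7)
The goal is to produce a nontrivial $g\in G$ preserving $\cF^-(a)$. Let $I$ denote the connected component of $\cF^+(a)\setminus \Fixbar_G$ having $a$ as an endpoint; by hypothesis $I$ is nonempty. If $a$ is a prong singularity, Axiom \ref{Axiom_prongs_are_fixed} immediately provides an element fixing $a$, hence $\cF^-(a)$. If $I$ is unbounded, Lemma \ref{lem:get_wandering_substrip} forces $a\in\Fix_G$, and again $\cF^-(a)$ is fixed. So the substantive case is the bounded one $I=(a,b)$ with $\cF^-(a)$ non-singular.

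In that case I pick a sequence $a_n\in\Fix_G$ with $a_n\to a$, which exists because $a\in\Fixbar_G$. Since each non-singular leaf contains at most one point of $\Fix_G$ (Remark \ref{rem_single_fixed_point}), at most one $a_n$ lies on $\cF^-(a)$; after discarding finitely many terms I may assume none does. Likewise no $a_n$ lies in $I$, since $I\cap \Fixbar_G=\emptyset$. First I would handle the easy subcase in which infinitely many $a_n$ lie on $\cF^+(a)\setminus I$. Each corresponding stabilizer element $g_n$ then preserves the leaf $\cF^+(a)$, and Remark \ref{rem_single_fixed_point} (i.e., Solodov's theorem applied to the stabilizer of the non-singular leaf $\cF^+(a)$) forces all the $g_n$ to share a single common unique fixed point on $\cF^+(a)$. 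Since each $g_n$ fixes its own $a_n$, all these $a_n$ coincide, and as $a_n\to a$ I conclude $a\in\Fix_G$, from which $\cF^-(a)$ is fixed.

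The remaining, substantive subcase is when, after extraction, all $a_n$ lie off both $\cF^+(a)$ and $\cF^-(a)$, hence in one of the two non-$I$ open quadrants of $a$. My plan is to exploit the perfect-fit structure given by Lemma \ref{lem:get_wandering_substrip}: the two rays of $\cF^-(a)$ adjacent to $I$ make perfect fits with $\cF^+$-leaves $l_1^+, l_2^+$, and the two corresponding TL-substrips over $I$ are wandering. I would then combine the sequence $\cF^-(a_n)\to \cF^-(a)$, which approaches from the non-$I$ side, with the sequence of $\cF^-$-leaves through points of $I$ close to $a$, which approaches from the $I$ side, and run the limit-of-leaves argument used in the proof of Proposition \ref{prop:no_product}: any subsequential limit of $\cF^-(a_n)$ in the leaf space of $\cF^-$ is either Hausdorff (a single leaf) or contains a union of pairwise non-separated leaves. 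In the first case, together with the perfect fits at $l_1^+, l_2^+$, one would obtain an infinite product region on the non-$I$ side of $\cF^-(a)$, contradicting Proposition \ref{prop:no_product}. The remaining alternative is that the limit contains a leaf non-separated from $\cF^-(a)$, and then Axiom \ref{Axiom_non-separated} (strengthened by Proposition \ref{prop_axiom4weak_implies_4strong}) supplies a nontrivial $g\in G$ fixing $\cF^-(a)$.

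The main obstacle is this last limit argument: organising the two converging families of $\cF^-$-leaves together with the perfect-fit leaves $l_1^+, l_2^+$ so as to rule out a Hausdorff limit and genuinely produce non-separation. The rest of the structure is forced; once non-separation is in hand, Axiom \ref{Axiom_non-separated} closes the proof, and $\cF^-(a)$ is fixed by some nontrivial element of $G$.
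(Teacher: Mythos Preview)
Your reductions (prong case, unbounded $I$, and the subcase where infinitely many $a_n$ lie on $\cF^+(a)$) are fine, and your observation that for large $n$ the $a_n$ cannot sit in the $I$-adjacent quadrants is correct, via the product structure argument you sketch. The genuine gap is the final limit step, and it is not just a matter of organisation: the mechanism you propose does not exist.

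The leaves $l_1^+, l_2^+$ making perfect fits with $\cF^-(a)$ lie on the \emph{$I$-side} of $\cF^-(a)$; the leaves $\cF^-(a_n)$ lie on the \emph{non-$I$ side}. These two families never interact, so no product region is forced between them. More to the point, the dichotomy you set up is not a dichotomy at all: if $a_n\to a$ in $P$ then $\cF^-(a_n)\to\cF^-(a)$ always, and whether additional non-separated leaves appear in the limit is simply a property of $\cF^-(a)$ itself. There is no reason to expect $\cF^-(a)$ to be non-separated from anything --- in a non-transitive Anosov flow the boundary leaves of basic sets are typically ordinary periodic leaves with Hausdorff neighbourhoods in the leaf space. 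So the ``Hausdorff limit $\Rightarrow$ infinite product region'' implication fails, and the argument cannot close along these lines.

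The paper's proof works on the $I$-side rather than trying to extract information from the approaching $a_n$. It analyses the strip between $l_1^+$ and $l_2^+$, bounded by $\cF^-(a)$, and studies what bounds its far end (using the no-infinite-product-region principle there, not on the non-$I$ side). The other endpoint $b$ of $I$ and Axiom~\ref{Axiom_totallyideal} enter crucially to control the configuration; one eventually finds either a lozenge with $\cF^-(a)$ as a side, or reaches a contradiction with $(a,b)\cap\Fixbar_G=\emptyset$ via the product structure. The element fixing $\cF^-(a)$ is produced from this lozenge structure, not from non-separation of $\cF^-(a)$.
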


\begin{proof}
Let $I = (a,b)$ be a connected component of the complement of $\Fixbar_G$ in $\cF^+(a)$.  If $b = \infty$, the conclusion follows from the second statement of Lemma \ref{lem:get_wandering_substrip}.  So, we assume $b$ finite.  

By Lemma \ref{lem:get_wandering_substrip} there are leaves $l_1, l_2$ of $\cF^+$ making perfect fits with $\cF^-(a)$ on each side, in the direction of $b$.   
We argue first that (up to switching labels, i.e., changing $a$ to $b$ and the $l_i$ to the corresponding ones for $b$) we have $l_1 \cap \cF^-(b) = \emptyset$.  To see this, 
 suppose first that $l_1$ intersects $\cF^-(b)$. Then we can apply the same argument with the roles of $a$ and $b$ reversed (since $b \in \Fixbar_G$ also), and find leaves $l_3, l_4$ making perfect fits in the direction of $a$, and the fact that $l_1 \cap \cF^-(b) \neq \emptyset$ prevents at least one of $l_3, l_4$ from intersecting $\cF^-(a)$.  Thus, up to switching the roles of $a$ and $b$ we can assume that $l_1$ does not intersect $\cF^-(b)$. Moreover, since there are not totally ideal quadrilaterals (Axiom \ref{Axiom_totallyideal}), we can further assume that $l_1$ does not make a perfect fit with $\cF^-(b)$ either.  See Figure \ref{fig:ideal_quad}.
 
 \begin{figure}[h]
   \labellist 
  \small\hair 2pt
 \pinlabel $\cF^-(a)$ at -10 40
 \pinlabel $\cF^-(b)$ at 40 40
 \pinlabel $l_1$ at 30 70
 \pinlabel $l_2$ at 30 10
 \pinlabel $l_1$ at 130 80
 \pinlabel $l_2$ at 137 10
 \endlabellist
     \centerline{ \mbox{
\includegraphics[width=5.5cm]{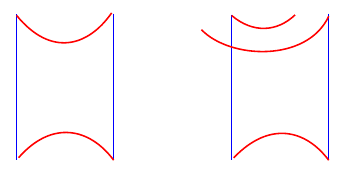} }}
\caption{If $l_1$ makes a perfect fit with $\cF^-(b)$, we obtain an ideal quadrilateral (left).  The allowed configuration is shown to the right.}
\label{fig:ideal_quad}
\end{figure}

Consider the set $S$ of leaves intersecting both $l_1$ and $l_2$.  
As in the proof Lemma \ref{lem:bound_strip}, $S$ is bounded on one side by $\cF^-(a)$ and, since there are no infinite product regions, it is bounded on the other side by a leaf or union of leaves.   
Suppose first that this boundary is a nontrivial union of leaves or a pronged leaf, then they are fixed by some nontrivial element $h\in G$. Consider the fixed points of $g$ in these leaves. Either there is one $x$ fixed by $h$ that is in between $l_1$ and $l_2$ (this is automatic in the single singular leaf case), we get some point $x$ fixed by some nontrivial $h \in G$ such that $\cF^+(x) \cap \cF^-(a) \neq \emptyset$, and $\cF^-(x) \cap (a,b) \neq \emptyset$. If not, consider $y$ the fixed point of $h$ on the leaf intersecting, say $l_1$. It is the corner of a lozenge whose other corner $x$ is in between $l_1$ and $l_2$. Again we deduce that $\cF^+(x) \cap \cF^-(a) \neq \emptyset$, and $\cF^-(x) \cap (a,b) \neq \emptyset$.    Then the product structure of $\Fixbar_G$ (Proposition \ref{prop:product_structure_general}) implies that $\cF^-(x) \cap (a,b) \in \Fixbar_G$, contradicting the hypotheses.   

Thus, it remains to treat the case where $S$ is bounded by a single leaf $l^-$, making a perfect fit with either $l_1$ or $l_2$. (This latter case can happen only when $l_2$ does not intersect nor make a perfect fit with $\cF^-(b)$). Then, up to relabelling, we may assume that $l^-$ makes a perfect fit with $l_1$.  Then $l^- = \cF^-(c)$ for some $c \in (a, b)$\footnote{Axiom \ref{Axiom_totallyideal} is used only here in this article, to ensure that $c\neq b$.}  

This configuration will allow us either to conclude the proof directly, or to find a leaf $l^+$ making a perfect fit with $\cF^-(c)$ and nonseparated with $l_1$ (which will then give a contradiction).  To this end, let $r_0$ denote the ray of $\cF^-(c)$ making a perfect fit with $l_1$.  By Lemma \ref{lem:bound_strip}, there exists $d \in (c, b)$ on a leaf fixed by some nontrivial $g \in G$ such that the set of leaves intersecting both $\cF^-(d)$  and $r_0$ is bounded by a leaf making a perfect fit with $r_0$ (as we desired), or a leaf with a prong singularity between $\cF^-(c)$ and $\cF^-(d)$, or a leaf making a perfect fit with $\cF^-(d)$ and intersecting $r_0$.  
The prong case cannot occur because the product structure of $\Fixbar_G$ (Proposition \ref{prop:product_structure_general}) and the fact that $a \in \Fixbar_G$ would imply that some point of $(c,d)$ lies in $\Fixbar_G$, contradicting our hypothesis.  

It remains to treat the case of a leaf $l$ making a perfect fit with $\cF^-(d)$, as illustrated in Figure \ref{fig:414end}.  Since $\cF^-(d)$ is fixed by $g$, the existence of such a perfect fit implies that $\cF^-(d)$ is the side of a lozenge.  We have $l \cap \cF^-(a) \neq \emptyset$, so 
either there is a corner of the lozenge on $l$ between $\cF^-(a)$ and $\cF^-(d)$ (which would be totally linked with $a$ and hence by the product structure give a point of $\Fixbar_G$ between $a$ and $d$, impossible), or $a$ lies on the side of the lozenge, or $a$ lies inside the lozenge.  

 \begin{figure}[h]
   \labellist 
  \small\hair 2pt
 \pinlabel $a$ at 25 55
 \pinlabel $c$ at 75 55
 \pinlabel $d$ at 115 56
 \pinlabel $r_0$ at 88 78
 \pinlabel $l$ at 95 101
 \pinlabel $b$ at 170 56
\pinlabel $l_1$ at 50 105
\pinlabel $l_2$ at 50 10
 \endlabellist
     \centerline{ \mbox{
\includegraphics[width=7cm]{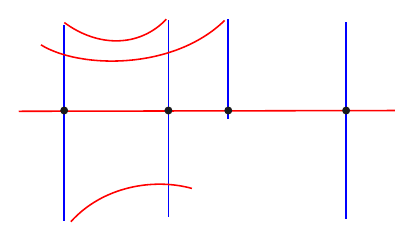} }}
\caption{Configuration of leaves in the proof of Lemma \ref{lem:boundary_fixed}}
\label{fig:414end}
\end{figure}

 We can easily eliminate these later two cases as follows: 

If $a$ lies on the side of the lozenge, we have finished the proof, since this implies that $\cF^-(a)$ is fixed by (a power of) $g$.  If instead $a$ lies inside the lozenge, since $a \in \Fixbar_G$, the product structure implies that $d \in \Fixbar_G$, which is again a contradiction.  Thus, we have either finished the proof,  or there is a leaf $l^+$ making a perfect fit with $\cF^-(c)$ and nonseparated with $l_1$.  But this latter possibility leads to a contradiction:  if $l_1$ is nonseparated with $l^+$, then $l_1$ is fixed by some nontrivial $h \in G$ so contains a fixed point for $h$; and since $a \in \Fixbar_G$, we can again use the product structure to find a point of $(a,c)$ in $\Fixbar_G$, a contradiction. 

That final contradiction shows that $\cF^-(a)$ is fixed by $g$. However, recall that at the beginning of the argument, we may have renamed $a$ and $b$. So, to finish the proof of the lemma as stated, we need to justify that $b$ is also such that $\cF^-(b)$ is fixed by $g$. This is now given by Lemma \ref{lem:a_fixed}.
\end{proof} 

\begin{rem}\label{rem_wandering_totally_ideal}
Notice that for the above argument to hold, we do not need the full strength of Axiom \ref{Axiom_totallyideal}, but instead, we only need the fact that there are no \emph{wandering} totally ideal quadrilaterals, i.e., a totally ideal quadrilateral that does not contain any element of $\Fixbar_G$. We will see in Theorem \ref{thm:wandering_quadrilateral} that one can indeed build examples of actions satisfying Axioms \ref{Axiom_A1}--\ref{Axiom_non-separated}, but that admit wandering totally ideal quadrilaterals. 
\end{rem}

Finally, we can simply assemble the work done in these technical lemmas to prove Theorem \ref{thm:complement_of_fixbar}.
\begin{proof}[Proof of Theorem \ref{thm:complement_of_fixbar}]
Let $l$ be a leaf of $\cF^\pm$ containing some point $x \in \Fixbar_G$, and 
let $I$ be a connected component of $l \setminus \Fixbar_G$. 
If $I$ is unbounded, Lemma \ref{lem:get_wandering_substrip} shows that $x \in \Fix_G$ and $I$ is the side of a wandering lozenge.  

If $I$ is bounded, say equal to $(a,b)$ (with one of $a, b$ equal to $x$) Lemma \ref{lem:boundary_fixed} shows that one endpoint of $I$ lies on a leaf fixed by some nontrivial element of $G$.  Applying Lemma \ref{lem:a_fixed} (reversing the roles of $\cF^+$ and $\cF^-$ if needed) then shows that 
both $\cF^\mp(a)$ and $\cF^\mp(b)$ are fixed by a nontrivial element of $g \in G$ and form sides of a (finite) line of $g$-invariant wandering lozenges.  
This line of wandering lozenges is necessarily maximal, because $a$ and $b$ lie in $\Fixbar_G$ so cannot be contained in the side of an adjacent wandering lozenge.  
\end{proof}

\subsection{Smale Chains} 
We now reinterpret the result obtained in Theorem \ref{thm:complement_of_fixbar} to show the existence of special chains of wandering lozenges that ``separate'' distinct Smale classes. In the Anosov flow setup, the work we are going to do here corresponds to going from the existence of periodic boundary orbits in basics sets to showing that one of their leaves crosses one of the transverse tori that separate basic sets.

\begin{definition}\label{def_Smale_chain} 
A {\em Smale chain} is a bi-infinite chain of lozenges $\cW = \{L_i\}_{i\in \bZ}$, with the following properties:
\begin{enumerate}[label=(\roman*)]
\item\label{item_Smale_wandering} each lozenge is wandering,
\item\label{item_Smale_share_side_or_prong} for all $i$, $L_i$ and $L_{i+1}$ either share a side, or share a \emph{singular} corner,
\item\label{item_Smale_no_triple_shared_corners} no three lozenges in $\cW$ share the same corner.
\end{enumerate} 
\end{definition}
Condition \ref{item_Smale_no_triple_shared_corners} ensures that Smale chains are \emph{minimal} with respect to inclusion. For bi-infinite chains, this is in fact equivalent:
\begin{lemma}\label{lem_minimal_equivalent_no_3_shared_corners}
A bi-infinite chain of lozenges $\cC$ is minimal with respect to inclusion if and only if no three lozenges in $\cC$ share the same corner.
\end{lemma}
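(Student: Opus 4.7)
The plan is to reduce the statement to a graph-theoretic fact about the \emph{corner-sharing graph} $\Gamma$ of $\cC$: vertices are the lozenges in $\cC$ and edges join pairs of lozenges that share a corner. Since every lozenge has exactly two corners, the condition ``no three lozenges share a corner'' says that each point of $P$ is a corner of at most two lozenges of $\cC$, and so bounds the degree of every vertex of $\Gamma$ by $2$.

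For the direction ``no three share a corner $\Rightarrow$ minimality'', I would write $\cC=\{L_i\}_{i\in\bZ}$ and let $p_i$ denote the corner shared by $L_i$ and $L_{i+1}$. First observe that $p_{i-1}\neq p_i$ (else $L_{i-1},L_i,L_{i+1}$ would meet at that point), so $\{p_{i-1},p_i\}$ are the two corners of $L_i$. Next, a short case analysis shows that for $j>i+1$ the lozenges $L_i$ and $L_j$ cannot share a corner: any shared corner would equal one of $p_{i-1}$ or $p_i$, but each of these is already a corner of two lozenges in $\{L_{i-1},L_i,L_{i+1}\}$, so adding $L_j$ would produce three lozenges meeting at that point. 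Hence $\Gamma$ is exactly the bi-infinite path $\cdots L_{-1}\!-\!L_0\!-\!L_1\cdots$. Removing any lozenge $L_j$ disconnects $\Gamma$ into the two half-infinite pieces $\{L_i:i<j\}$ and $\{L_i:i>j\}$, so any proper sub-chain $\cC'\subsetneq \cC$ is contained in one of them and thus has an endpoint, preventing it from being indexable by $\bZ$ with corner-sharing consecutives.

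For the converse, I would exhibit an explicit smaller bi-infinite chain when $L_a,L_b,L_c$ (with $a<b<c$) share a common corner $p$. Take $\cC'=\{L_i:i\leq a\}\cup\{L_i:i\geq c\}$ and reindex by setting $M_k=L_k$ for $k\leq a$ and $M_k=L_{k+(c-a-1)}$ for $k>a$. Consecutive $M_k$ share a corner: for $k\neq a$ this is inherited from $\cC$, and for $k=a$ it holds because $L_a$ and $L_c$ share the corner $p$. Since $L_b\in\cC\setminus\cC'$, this shows $\cC$ is not minimal.

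The main technical step is the case analysis ruling out corner-sharing between non-consecutive lozenges; everything else is bookkeeping. The key geometric input is simply that a lozenge has only two corners, which together with the ``no three at a point'' hypothesis makes each corner appear in at most two of the $L_i$, forcing the rigid path structure of $\Gamma$.
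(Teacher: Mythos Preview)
Your argument is correct and follows the same graph-theoretic spirit as the paper's proof, but with a dual formulation: you take the lozenges as vertices of your graph $\Gamma$, whereas the paper takes the \emph{corners} as vertices and the lozenges as edges, obtaining a graph $T$ (which is in fact a tree, by a result cited from Barbot--Fenley). The paper then argues the chain of equivalences: minimality $\iff$ removing any lozenge disconnects $\cC$ $\iff$ $T\cong\bR$ $\iff$ no three lozenges share a corner. Your version unpacks the two implications separately, and your converse direction is more explicit---you actually exhibit the smaller bi-infinite chain $\cC'$ rather than appealing to connectivity of $T$.

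One small advantage of your route is that it does not rely on the cited tree-ness of $T$; you only use that a lozenge has exactly two corners and that the lozenges $L_i$ in a bi-infinite chain are distinct. A minor presentational point: in your minimality direction, the sentence ``any proper sub-chain $\cC'\subsetneq\cC$ is contained in one of them'' should be stated a bit more carefully---you mean that since $\cC'$ must be connected in the corner-sharing graph and misses some $L_j$, it lies in one of the two components of $\Gamma\setminus\{L_j\}$; then the lozenge of $\cC'$ with index closest to $j$ has at most one neighbour in $\cC'$, contradicting bi-infiniteness. This is what you intend, but making it explicit avoids any ambiguity.
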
 

\begin{proof}
For each lozenge in bi-infinite chain of lozenges $\cC$, choose a path from one corner to the other. Then $\cC$ is homotopic to the concatenation, call it $T$, of these paths. Moreover, it is easy to see that $T$ is a tree, as in \cite[Prop. 2.12]{BarbFen_pA_toroidal}. Minimality of $\cC$ is equivalent to the property that removing any lozenge of $\cC$ disconnects $\cC$, which is equivalent to the fact that $T$ must be homeomorphic to $\bR$, which in turn is equivalent to the fact that no three lozenges in $\cC$ share a corner.
\end{proof}

\begin{proposition}\label{prop:Smale_chain}
Any finite chain of lozenges satisfying conditions \ref{item_Smale_wandering}--\ref{item_Smale_no_triple_shared_corners} above can be extended to a Smale chain.
\end{proposition}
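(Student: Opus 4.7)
The plan is to extend the chain one wandering lozenge at a time at each of its two endpoints and then iterate, interleaving the extensions on the two sides to produce a bi-infinite Smale chain containing the original.

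First I would focus on one endpoint, say the right endpoint $L_n$. Condition \ref{item_Smale_no_triple_shared_corners} ensures that each corner of $L_n$ is shared with at most one other lozenge of the chain, so $L_n$ has a \emph{free corner} $c$ not shared with $L_{n-1}$. The lozenge $L_n$ itself occupies a wandering TL-subquadrant of $c$. If $c$ is nonsingular, Lemma \ref{lem:wand_subquadrant_share_sides} supplies a wandering TL-subquadrant adjacent to $L_n$ at $c$, i.e., sharing a side; if $c$ is singular, Lemma \ref{lem:singular_wandering_quadrants}, applied to each alternating family of TL-subquadrants, supplies some wandering TL-subquadrant at $c$ distinct from $L_n$. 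Corollary \ref{cor:wandering_then_lozenge} then promotes this wandering TL-subquadrant to a wandering lozenge $L_{n+1}$, which by construction shares a side with $L_n$ (nonsingular case) or shares the singular corner $c$ with $L_n$ (singular case); condition \ref{item_Smale_share_side_or_prong} is therefore preserved.

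I would next verify condition \ref{item_Smale_no_triple_shared_corners} for the extended chain $L_0,\dots,L_n,L_{n+1}$. At $c$ only $L_n$ and $L_{n+1}$ share this corner (by freeness of $c$), so no triple is created there. The only possible issue is that the opposite corner $c'$ of $L_{n+1}$ could coincide with a corner already appearing in the chain, which would create either a triple at $c'$ or a closed cyclic configuration of wandering lozenges. Assuming this can be avoided at each step, iterating the construction at the left endpoint as well, and interleaving the two extensions, produces a nested sequence of finite chains whose union is a bi-infinite chain $\{L_i\}_{i\in\bZ}$ satisfying all three Smale chain conditions.

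The main obstacle is thus to ensure that the new corner $c'$ never collides with a corner already in the chain. I expect this to follow from two observations: first, whenever there are several wandering TL-subquadrants at $c$ to choose from, one may pick one whose opposite corner avoids existing corners; and second, a closed cyclic configuration of wandering lozenges would enclose a bounded region of $P$, which by Axiom \ref{Axiom_dense} must meet a leaf fixed by a nontrivial element of $G$, and then the local product structure of $\Fixbar_G$ from Corollary \ref{cor_product_means_product} would force a fixed point into one of the surrounding lozenges, contradicting wanderingness. The tree structure of minimal chains recorded in Lemma \ref{lem_minimal_equivalent_no_3_shared_corners} is the organizing principle behind this non-collision argument.
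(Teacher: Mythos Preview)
Your approach is the same as the paper's: extend the chain one lozenge at a time at each free corner using Lemmas \ref{lem:wand_subquadrant_share_sides} and \ref{lem:singular_wandering_quadrants} together with Corollary \ref{cor:wandering_then_lozenge}, then iterate on both sides.

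Where you diverge is in handling condition \ref{item_Smale_no_triple_shared_corners}. The paper simply records that the successive corners $c_0, c_1, c_2, \ldots$ produced by the construction are all distinct, and this is all that is needed. You correctly flag this distinctness as the only nontrivial point, but your two proposed mechanisms do not work as stated. First, at a nonsingular corner Lemma \ref{lem:wand_subquadrant_share_sides} typically produces only one adjacent wandering subquadrant, so there is in general no freedom to ``choose one whose opposite corner avoids existing corners''. Second, the enclosed-region argument is not valid: a closed curve through a hypothetical cycle of lozenges does bound a disk, and Axiom \ref{Axiom_dense} does give a fixed leaf meeting it, but there is no reason the fixed point on that leaf lies anywhere near the lozenges, and Corollary \ref{cor_product_means_product} gives no leverage to push one there. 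Wanderingness of the lozenges plays no role in excluding cycles.

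The reason no collision ever occurs is precisely the tree structure you mention in your last sentence, and that sentence should replace the rest of the paragraph. All the lozenges constructed lie in a single maximal chain, whose corner--lozenge incidence graph is a tree (this is invoked in the proof of Lemma \ref{lem_minimal_equivalent_no_3_shared_corners}). Since $L_{i+1}\neq L_i$ at each step, the sequence of corners is a non-backtracking walk in this tree and hence visits distinct vertices. That one-line observation is exactly what justifies the paper's assertion that ``the corners $c_i$ are all distinct''.
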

We will mostly use the proposition above to finite \emph{lines} of wandering lozenges. The proposition applies, since lines of lozenges trivially satisfy the additional conditions \ref{item_Smale_share_side_or_prong} and \ref{item_Smale_no_triple_shared_corners}.

\begin{proof}
We show first how to extend a single wandering lozenge to an infinite (but not yet bi-infinite) chain of wandering lozenges  also satisfying conditions \ref{item_Smale_share_side_or_prong} and \ref{item_Smale_no_triple_shared_corners}.  
Let $L_0$ be a wandering lozenge with corners $c_0$ and $c_1$.  
Suppose first that $c_1$ is nonsingular.  Lemma \ref{lem:wand_subquadrant_share_sides} says that $L_0$ has an adjacent TL subquadrant that is wandering and Corollary \ref{cor:wandering_then_lozenge} says this is a wandering lozenge.  We then  denote by $L_1$ this wandering lozenge sharing a side with $L_0$ and sharing corner $c_1$.  
If instead $c_1$ is singular then Lemma \ref{lem:singular_wandering_quadrants} with Corollary \ref{cor:wandering_then_lozenge} implies that there is another wandering lozenge, different from $L_0$, sharing the corner $c_1$. In this case we call $L_1$ that wandering lozenge.
  
In either case, let $c_2$ denote the opposite corner of $L_1$.   Repeating this process inductively, we can produce an infinite sequence of corners $c_1, c_2, c_3, \ldots$ such that $c_i$ and $c_{i+1}$ are the two corners of a wandering lozenge $L_i$. 
 Note particularly that, with our construction, the corners $c_i$ are all distinct, hence we will never have three of the lozenges in our chain sharing a common corner.

Now suppose that $L_0$ is a lozenge in a chain of lozenges $L_{-k}, L_{-(k-1)}, \ldots L_0$ (possibly with $k=0$), that satisfy conditions  \ref{item_Smale_wandering}--\ref{item_Smale_no_triple_shared_corners}.
Up to relabelling we assume that $c_1$ is the corner of $L_0$ that is not shared with $L_{-1}$.  
Let $c_i$ denote the corner shared by $L_{i}$ and $L_{i+1}$, and let $c_{-k}$ denote the remaining corner (of $L_{-k}$) at the end of the line.  Apply the argument above to the corner $c_{-k}$, to find  a lozenge $L_{-k-1}$ sharing corner $c_{-k}$, and if $c_{-k}$ is not singular it will share also a side with $L_{-k}$.  Let $c_{-k-1}$ denote the opposite corner of $L_{-k-1}$.  Again, proceeding inductively, we obtain a bi-infinite chain $\cW = \{L_i\}_{i\in \bZ}$ of lozenges with corners $c_i$ and $c_{i+1}$; all distinct. In particular, $\cW$ satisfies all conditions of \ref{def_Smale_chain}, thus $\cW$ is a Smale chain. 
\end{proof}

Combining this with the work done in the previous section, we can prove that Smale chains exist and ``separate" distinct Smale classes, in the following sense:
\begin{theorem}\label{thm:Smale_chains_separate} 
Suppose $x$ and $y$ are in distinct Smale classes and $\cF^+(x) \cap \cF^-(y) \neq \emptyset$.  Then there exists a Smale chain $\cW$ such that $\cF^+(x) \cap \cF^-(y)$ is contained in $\cW$ and the closure of the Smale class of $x$ contains a corner of $\cW$.
\end{theorem}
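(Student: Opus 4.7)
The plan is to use Theorem \ref{thm:complement_of_fixbar} to locate $z := \cF^+(x)\cap \cF^-(y)$ within a maximal line of wandering lozenges, extend this line to a Smale chain via Proposition \ref{prop:Smale_chain}, and then check that a corner of the resulting chain lies in $\overline{\Lambda_x}$, where $\Lambda_x$ denotes the Smale class of $x$.

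First I would show that $z \notin \Fixbar_G$. Since $\cF^+(z) = \cF^+(x)$ and $\cF^-(z) = \cF^-(y)$ are nonsingular (as $x, y \in \cR_G$) and $z$ itself cannot be a prong singularity (otherwise $\cF^+(x)$ would be singular), if $z$ were in $\Fixbar_G$ then $z \in \cR_G$. But $z$ would then be totally linked with both $x$ (via $x \in \cF^+(z) \cap \cF^-(x)$ and $z \in \cF^-(z) \cap \cF^+(x)$) and similarly with $y$, forcing $x \sim_G z \sim_G y$, contradicting $x \nsim_G y$.

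Next, let $I$ denote the connected component of $\cF^+(x) \setminus \Fixbar_G$ containing $z$. By Theorem \ref{thm:complement_of_fixbar}, either $I$ is the shared side of two adjacent wandering lozenges meeting at a common corner $p \in \Fix_G \cap \cF^+(x)$ (the infinite case, using Lemma \ref{lem:get_wandering_substrip}), or $I$ is contained in a maximal finite line $\cC = \{L_1, \dots, L_k\}$ of wandering lozenges straddling $\cF^+(x)$ (the bounded case). In either situation, Proposition \ref{prop:Smale_chain} extends this finite chain to a bi-infinite Smale chain $\cW$, and since $z$ lies in a lozenge of $\cC \subset \cW$, the first conclusion $z \in \cW$ holds automatically.

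The remaining task is to locate a corner of $\cW$ in $\overline{\Lambda_x}$. The key observation is that any point $p \in \cR_G \cap \cF^+(x)$ is trivially totally linked with $x$ (both lie on $\cF^+(x)$), so $p \in \Lambda_x$; combined with the discreteness of singular leaves on $\cF^+(x)$ (Lemma \ref{lem:adjacent_corners_discrete}), this gives $\Fixbar_G \cap \cF^+(x) \subset \overline{\Lambda_x}$. In the infinite case, the common corner $p$ then lies directly in $\overline{\Lambda_x}$, completing the proof. In the bounded case, let $a$ be the endpoint of $I$ on the $x$-side; then $a \in \Fixbar_G \cap \cF^+(x) \subset \overline{\Lambda_x}$, and by Lemmas \ref{lem:boundary_fixed} and \ref{lem:a_fixed}(2), the extremal lozenge $L_1 \in \cC$ has a corner $a_1 \in \cF^-(a) \cap \Fix_G$. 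Applying the symmetric ``trivial total linkage'' observation on $\cF^-(a)$ places $a_1 \in \overline{\Lambda_x}$, and since $L_1 \in \cW$, this $a_1$ is a corner of $\cW$ with the desired property.

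The main obstacle will be rigorously justifying the inclusion $\Fixbar_G \cap \cF^\pm(q) \subset \overline{\Lambda_x}$ in the exceptional situations where the relevant leaf passes through prong singularities or the candidate endpoint lies on a singular leaf. For these cases one appeals to the local product structure of $\Fixbar_G$ (Corollary \ref{cor_product_means_product}) together with Lemma \ref{lem:adjacent_corners_discrete} to approximate the singular candidate corner by nonsingular fixed points in $\Lambda_x$.
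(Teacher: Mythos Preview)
The proposal is correct and takes essentially the same approach as the paper: show $z \notin \Fixbar_G$, invoke Theorem~\ref{thm:complement_of_fixbar} to place $z$ in a finite line of wandering lozenges, extend via Proposition~\ref{prop:Smale_chain}, and then use the product structure to locate a corner in $\overline{\Lambda_x}$. Your treatment of the last step is in fact more explicit than the paper's (which simply cites Proposition~\ref{prop:product_structure_general}); the singular-leaf obstacle you flag is genuine but resolvable along the lines you indicate, by approximating $a$ by nearby points of $\Lambda_x$ on nonsingular $\cF^-$-leaves and then running your total-linkage argument along those leaves.
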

Note that, if the lozenges of $\cW$ have nonsingular corners, then we may of course find a corner in the Smale class of $x$, the statement about the {\em closure} of the Smale class is simply to include singular points if needed.  

\begin{proof} 
Suppose first that $x$ is an isolated prong singularity. Then by Proposition \ref{prop:isolated_fixed_points}, $x$ has a wandering lozenge in each of its quadrants. 
Thus, $\cF^+(x) \cap \cF^-(y)$ is on a side shared by two wandering lozenges with corner $x$.  Proposition \ref{prop:Smale_chain} says this pair of lozenges may be extended to a Smale chain, and the conclusion follows.  

The same argument applies if $y$ is an isolated prong singularity. So we now assume that both $x$ and $y$ are in $\cR_G$.

Since $x \nsim_G y$, we have that  $p:= \cF^-(y) \cap \cF^+(x) \notin \Fixbar_G$. 
Consider the connected component $I$ of $\cF^+(x) \setminus \Fixbar_G$ containing $p$.  If this is infinite, then by Lemma \ref{lem:get_wandering_substrip}, the endpoint of $I$ is the corner of a pair of wandering lozenges with $p$ on its shared side.  By Proposition \ref{prop:Smale_chain}, this pair can be included into a Smale chain, and we are done.  

If instead, the connected component is finite, then Theorem \ref{thm:complement_of_fixbar} says that $I$ is equal to the intersection of a (finite) line $\cL$ of wandering lozenges with $\cF^+(x)$.  Proposition \ref{prop:Smale_chain} again says that this can be included into a Smale chain.  
Finally, the product structure on Smale classes (Proposition \ref{prop:product_structure_general}) implies that the corner of $\cL$ closest to $x$ along $\cF^+(x)$ lies in the closure of the Smale class of $x$. 
\end{proof}

\section{Virtually cyclic point stabilizers and $\mathbb{Z}^2$ invariant Smale chains} \label{sec:Z2invariant}

In this section we prove discreteness of point stabilizers, then use this to find $\bZ^2$-invariant Smale chains.  First we recall the statement: 

\poinstabilizers*

\begin{rem}
Theorem \ref{thm:discrete_stabilizer} does not hold in the $\R$-covered setting.  Example 1 of \cite{BFM} gives Anosov-like affine actions on trivial planes with indiscrete stabilizers.  On skew planes, one may create such examples by taking subgroups of $\mathrm{PSL}(2,\R)$ with non-discrete point stabilizers (but no parabolic elements), and lifting them to $\wt{\mathrm{PSL}}(2,\R)$.  The action of $\wt{\mathrm{PSL}}(2,\R)$ induces an action on a skew plane, and such a subgroup without parabolics will induce an Anosov-like action on the plane.  
\end{rem}

For the proof we use the following result. 

\begin{lemma} \label{lem:cyclic_or_accumulation}
Let $H_x$ denote the finite index subgroup of the stabilizer of $x$ fixing each ray of $\cF^\pm(x)$.  If $H_x$ is not cyclic, then for any point $p$ in any TL subquadrant of $x$, the orbit $H_x(p)$ has an accumulation point. 

Moreover, one can find such an accumulation point $q$ such that $x$ lies in one quadrant $Q$ of $q$ and $H_x(p)$ accumulates onto $q$ in {\em both} quadrants adjacent to $Q$.  
\end{lemma}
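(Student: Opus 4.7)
The plan is to apply H\"older's theorem to the action of $H_x$ on each of the rays bounding the relevant TL subquadrant, and convert the non-cyclic hypothesis into a density statement for orbit coordinates. Let $r^+\subset\cF^+(x)$ and $r^-\subset\cF^-(x)$ be the rays bounding the TL subquadrant $Q_p$ of $x$ that contains $p$. By Axiom \ref{Axiom_A1} and the definition of $H_x$, the group $H_x$ acts freely on $r^\pm\setminus\{x\}$ by orientation-preserving homeomorphisms, so H\"older's theorem produces injective homomorphisms $\tau_\pm:H_x\to(\R,+)$ and continuous monotone semi-conjugacies $\pi_\pm:r^\pm\setminus\{x\}\to\R$ satisfying $\pi_\pm(h(t))=\pi_\pm(t)+\tau_\pm(h)$; in particular $H_x$ is abelian and the induced orders on $H_x$ are Archimedean.

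The key linkage between the two sides is Axiom \ref{Axiom_A1}: every nontrivial $h\in H_x$ expands one of $\cF^\pm(x)$ and contracts the other, so $\tau_+$ and $\tau_-$ induce opposite orders on $H_x$. By the uniqueness up to positive scaling of the H\"older embedding of an Archimedean ordered abelian group, this forces $\tau_-=-c\,\tau_+$ for some constant $c>0$. Since $H_x$ is not cyclic, $\tau_+(H_x)$ is a non-cyclic, hence dense, subgroup of $\R$. Writing $p^\pm:=\cF^\mp(p)\cap\cF^\pm(x)\in r^\pm$, each orbit point $h(p)=\cF^-(h(p^-))\cap\cF^+(h(p^+))$ is encoded by the pair $(h(p^-),h(p^+))$, and satisfies $\pi_+(h(p^-))=\pi_+(p^-)+\tau_+(h)$ and $\pi_-(h(p^+))=\pi_-(p^+)-c\,\tau_+(h)$.

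To produce the accumulation point $q$, I would pick $\alpha\in\R$ generically, avoiding the countable set of values where $\pi_+^{-1}(\pi_+(p^-)+\alpha)$ or $\pi_-^{-1}(\pi_-(p^+)-c\alpha)$ is non-degenerate, and where the resulting candidate lies on a singular leaf (singular points are discrete by Lemma \ref{lem:adjacent_corners_discrete}, so this exclusion is countable). Density of $\tau_+(H_x)$ yields sequences $h_n^\pm\in H_x$ with $\tau_+(h_n^+)\downarrow\alpha$ and $\tau_+(h_n^-)\uparrow\alpha$. For this generic $\alpha$, both sequences satisfy $h_n^\pm(p^-)\to q^-:=\pi_+^{-1}(\pi_+(p^-)+\alpha)$ and $h_n^\pm(p^+)\to q^+:=\pi_-^{-1}(\pi_-(p^+)-c\alpha)$, so $h_n^\pm(p)\to q:=\cF^-(q^-)\cap\cF^+(q^+)$, a nonsingular point of $Q_p$. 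The quadrant $Q$ of $q$ containing $x$ is bounded by the rays of $\cF^\pm(q)$ pointing toward $x$; the strict inequalities $\tau_+(h_n^+)>\alpha$ and $\tau_-(h_n^+)<-c\alpha$ force $h_n^+(p)$ to lie in one of the two quadrants of $q$ adjacent to $Q$, and the symmetric inequalities place $h_n^-(p)$ in the other, completing the moreover statement.

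The main obstacle I anticipate is establishing the rigid proportionality $\tau_-=-c\,\tau_+$; this combines the hyperbolicity input of Axiom \ref{Axiom_A1}, which forces opposite orders on the two sides, with the uniqueness-up-to-scaling part of H\"older's theorem. Without this proportionality, $\tau_+$ and $\tau_-$ could have incommensurable images and one could not synchronize the two coordinate limits along a single sequence of orbit points; only separate accumulation on $\cF^+(x)$ and on $\cF^-(x)$ would follow, which is insufficient to realize a common point $q$ approached from both adjacent quadrants of the quadrant containing $x$.
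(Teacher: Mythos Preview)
Your argument is correct and follows essentially the same route as the paper's: both apply H\"older's theorem on the two rays, use Axiom~\ref{Axiom_A1} to show the resulting translation-number homomorphisms are proportional (the paper normalizes so that $\Phi^+=\Phi^-$, you keep a constant $c$), and then exploit density of the non-cyclic image to locate a two-sided accumulation point of the orbit. Your explicit choice of a generic $\alpha$ plays the same role as the paper's appeal to a two-sided point of the minimal Cantor set in the orbit closure; the only slip is a systematic index swap in your coordinate formulas (e.g.\ $\pi_+(h(p^-))$ should be $\pi_+(h(p^+))$, since $p^-\in r^-$ by your definition).
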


\begin{proof}
Fix $x \in P$ and assume $H_x$ is nontrivial.  Consider two half leaves $r^+$ and $r^-$ bounding a quadrant of $x$.  
Choose some nontrivial $h_0 \in H_x$, acting by expansion on $r^+$ and contraction on $r^-$.  
Fix an identification of each half leaf $r^\pm$ with $\R$, oriented and scaled so that $h_0$ acts by $t \mapsto t+1$ on each.
With this choice, H\"older's theorem (see e.g., \cite[Section 2.2.4]{Navas_book})  shows that there are canonical, well-defined, monomorphisms $\Phi^\pm\colon H_x \to \R$ given by the {\em translation number} (with respect to $h_0$) of elements on each factor; so that $\Phi^+(h_0) = \Phi^-(h_0)=1$.  Furthermore, the actions of $H_x$ on $r^+$ and on $r^-$ are semiconjugate to the translation actions of $\Phi^+(H_x)$ and $\Phi^-(H_x)$, respectively.  

Note that because of our choice of normalization, an element $h\in H_x$ is a topological expansion on $r^+$ if and only if $\Phi^+(h) >0$, and a contraction on $r^-$ if and only if $\Phi^-(h) > 0$.  
Consider the map $\Phi^- \circ (\Phi^+)^{-1}$.  This is a homomorphism between subgroups of $\R$, and the hyperbolicity condition (Axiom \ref{Axiom_A1}) implies that $\Phi^- \circ (\Phi^+)^{-1}(r) > 0$ iff $r>0$. 
Thinking of $\R$ as a vector space over $\mathbb{Q}$, it is an easy exercise to show that this can only hold if $\Phi^- \circ (\Phi^+)^{-1}(r)$ is given by multiplication by some positive constant $c$; since $\Phi^+(h_0) = \Phi^-(h_0) = 1$, we also have $c = 1$.  
Thus, $\Phi^+ = \Phi^-$.  

Consider any point $p$ in the $TL$ subquadrant of $x$ bounded by $r^\pm$.  We can use the identifications of $r^\pm$ with $\R$ above to give coordinates on the TL subquadrant containing $p$; in these coordinates the action of $H_x$ is (factor-wise) semi-conjugate to a diagonal action by translations.  If $H_x$ is not cyclic, this translation subgroup is an indiscrete subgroup of $\R$ and so the closure of the orbit of $p$ under the diagonal action in these coordinates will have an invariant minimal cantor set.  Any two-sided accumulation point of the cantor set will give an accumulation point $q$ as desired.  
\end{proof}

\begin{proof}[Proof of Theorem \ref{thm:discrete_stabilizer}] 
Fix $x \in P$, and let $H_x$ be the subgroup of the stabilizer of $x$ fixing each ray as above.  First, we reduce to the case where $x$ has no lozenge in some quadrant.  

If $x$ has a lozenge in each quadrant, consider the maximal chain of lozenges $\cC$ invariant under $H_x$.  If $y$ is another corner of this chain, then $H_y = H_x$.  Thus, either we can replace $x$ with some other corner $y$ which has no lozenge in some quadrant, or every corner has a lozenge in each quadrant.  In particular, this implies that $\cC$ contains an infinite line of lozenges, so by Lemma \ref{lem_infinite_line_lozenges}) the stabilizer of the corners of $\cC$ is cyclic. 

So now we assume some $TL$ subquadrant $Q$ of $x$ is not a lozenge.  
Let $D=\mathrm{Sing}\cup \mathrm{Pivot}$ denote the set of prongs and pivots.  Define also 
\[ \mathrm{Opp} := \{p \in P :  p \text{ is a corner of a lozenge whose other corner is a prong or a pivot}\} 
\]
By Lemma \ref{lem:adjacent_corners_discrete}, $D$ is closed and discrete. 

We will first show that we can always find a point of $D \cup \mathrm{Opp}$ that is totally linked with $x$. Then, we will use Lemma \ref{lem:cyclic_or_accumulation} to deduce that $H$ is cyclic.  

The following claim reduces this goal to finding a point of $\mathrm{Sing}\cup \mathrm{Pivot}$ that is partially linked with $x$ along one of the rays bounding the non-lozenge quadrant.  

\begin{claim} \label{claim:TLx}
Suppose $x$ is not the corner of a lozenge in the quadrant $Q$ bounded by rays $r^+$ and $r^-$, and suppose some $y \in \mathrm{Sing}\cup \mathrm{Pivot}$ satisfies $\cF^+(y) \cap r^- \neq \emptyset$ or $\cF^-(y) \cap r^+ \neq \emptyset$. Then either $x$ is totally linked with a point of $\mathrm{Sing}\cup \mathrm{Pivot}$, or totally linked with a point of $\mathrm{Opp}\cap Q$.
\end{claim}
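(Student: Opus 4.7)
The plan is to exploit the symmetry in the disjunctive hypothesis, reduce to a substantive case, and then construct a lozenge via a perfect-fit argument.

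By symmetry I may assume $\cF^+(y) \cap r^- \neq \emptyset$; let $p$ denote this intersection point. If additionally $\cF^-(y) \cap r^+ \neq \emptyset$, then $y$ itself is totally linked with $x$, and since $y \in \mathrm{Sing} \cup \mathrm{Pivot}$ the first alternative of the claim is satisfied. Thus the substantive case is $\cF^-(y) \cap r^+ = \emptyset$, in which I aim to produce a lozenge $L$ having $y$ as one corner and a second corner $y'$ that lies in $\mathrm{Opp} \cap Q$ and is totally linked with $x$.

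To locate the perfect fit that will build $L$, let $r_y$ denote the ray of $\cF^+(y)$ based at $y$ and passing through $p$, and observe that by the local product structure near $x$ one has $\cF^-(z) \cap r_y \neq \emptyset$ for every $z \in r^+$ sufficiently close to $x$. I set
\[
  z^* := \sup \bigl\{ z \in r^+ \,:\, \cF^-(z) \cap r_y \neq \emptyset \bigr\}.
\]
I would first argue $z^*$ is an interior point of $r^+$: otherwise the region swept by the arcs joining $z \in r^+$ to $\cF^-(z) \cap r_y$ would contain an infinite product region bounded by $r^+$, $r_y$ and a segment of $\cF^-(x)$, contradicting Proposition \ref{prop:no_product}. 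Since the set defining the supremum is open, $\cF^-(z^*) \cap r_y = \emptyset$ and the limiting behavior forces $\cF^-(z^*)$ to make a perfect fit with $r_y$.

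Next I would upgrade this perfect fit to a lozenge. The point $y$ is fixed by a nontrivial $g \in G$ (by axiom \ref{Axiom_prongs_are_fixed} for prongs and by the definition of pivot combined with Lemma \ref{lem:two_fix_points} for pivots), and a power of $g$ fixes every ray of $\cF^\pm(y)$, including $r_y$. Uniqueness of the perfect-fit partner of $r_y$ on the given side then forces that power of $g$ to stabilize $\cF^-(z^*)$, so some nontrivial $h \in G$ fixes $\cF^-(z^*)$, and there is a unique fixed point $y' \in \cF^-(z^*)$. By Lemma \ref{lem:two_fix_points}(1), $y$ and $y'$ are the two corners of a single lozenge $L$, and in particular $y' \in \mathrm{Opp}$. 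Since $\cF^-(y') = \cF^-(z^*)$ meets $r^+$ at $z^*$, we already have $\cF^-(y') \cap r^+ \neq \emptyset$.

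The main obstacle of the argument will be the final verification that $\cF^+(y') \cap r^- \neq \emptyset$, i.e., that $y'$ actually lies in $Q$ and is totally linked with $x$. This requires a careful geometric analysis in the non-trivial setting: one must identify which ray of $\cF^-(y)$ appears in the second perfect fit of $L$, determine the side of $\cF^\pm(y)$ on which $L$ is situated, and track how $\cF^+(y')$ crosses $\cF^-(x)$. The hypothesis that $x$ is not the corner of a lozenge in $Q$ enters at this stage to exclude pathological configurations in which $\cF^+(y')$ would be forced to cross $\cF^-(x)$ on the ray opposite to $r^-$. Should the constructed $y'$ happen to lie outside $Q$, I would either iterate the construction starting at $y'$ or invoke the symmetric construction based on the other partial-linking direction, producing a descent that must terminate in a totally linked point either in $\mathrm{Sing} \cup \mathrm{Pivot}$ or in $\mathrm{Opp} \cap Q$, using the no-corner hypothesis on $x$ at the terminal step.
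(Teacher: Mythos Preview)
Your construction of $z^*$ and the perfect fit is where the argument first goes wrong. The assertion that ``the limiting behavior forces $\cF^-(z^*)$ to make a perfect fit with $r_y$'' is unjustified: as $z \nearrow z^*$, the leaves $\cF^-(z)$ may converge to a nontrivial union of nonseparated leaves, only one of which passes through $z^*$, and the intersection points $\cF^-(z)\cap r_y$ may converge to a finite point on a \emph{different} leaf in that union. There is also a prong case, where the limiting leaf is singular with the singularity sitting between $r^+$ and $r_y$. The paper's proof handles exactly these possibilities by a case analysis on the boundary of the interval of $\cF^-$-leaves meeting both $\cF^+(y)$ and $r^+$: a prong between the two (then that prong is totally linked with $x$ and one is done), a single leaf making a perfect fit with $\cF^+(y)$ (your case), or a nontrivial family of nonseparated leaves (handled via Corollary~\ref{cor:nonseparated_leaves}, which produces adjacent lozenges $L_0,L_1$ and a pivot corner). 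You have collapsed these to one case.

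Even in the perfect-fit case you do treat, the proof is not finished: you correctly note that locating $y'$ relative to $Q$ is the crux, but ``iterate the construction'' and ``a descent that must terminate'' is not an argument. The paper instead does a direct three-way case split on the position of the lozenge $L$ relative to $x$ (either $x\in L$, or $L$ meets $r^+$, or $L$ meets $r^-$); in each case one of the two corners of $L$ is immediately seen to be totally linked with $x$, and that corner lies in $\mathrm{Sing}\cup\mathrm{Pivot}$ or in $\mathrm{Opp}\cap Q$ as required. The hypothesis that $x$ is not a corner of a lozenge in $Q$ is used earlier, to rule out the possibility that the bounding leaf makes a perfect fit with $r^+$ itself, not as a terminal step of an iteration.
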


\begin{proof}
For concreteness, assume $\cF^+(y) \cap r^- \neq \emptyset$.  As in the proof of Lemma \ref{lem:bound_strip}, we start by considering the set of leaves of $\cF^-$ that intersect both $\cF^+(y)$ and $r^+$.  Since the plane is nontrivial, by Proposition \ref{prop:no_product} it contains no infinite product regions, so $I$ is a proper interval in the leaf space, bounded on one side by $\cF^+(y)$ and on the other side either by leaves of a prong (with the singularity between $\cF^+(y)$ and $r^+$) or by a unique leaf making a perfect fit with $r^+$ or with $\cF^+(y)$, or by a nontrivial union of nonseparated leaves.   We treat these cases separately. 

First, in the prong case, such a prong singularity is necessarily totally linked with $x$, and we are done. Next, no leaf making a perfect fit with $r^+$ can be in the quadrant under consideration, since $x \in \Fix(G)$ is by assumption not a corner of a lozenge in that quadrant. So no boundary leaf of $I$ can make a perfect fit with $r^+$.  If there is a bounding leaf $l$ that makes a perfect fit with $\cF^+(y)$ and intersects $r^+$, then because $y \in \mathrm{Sing}\cup \mathrm{Pivot}\subset \Fix(G)$, this perfect fit is an ideal corner of a lozenge $L$ (Lemma \ref{lem:two_fix_points}).   Then, we have three possible configurations for $L$: either $L$ contains $x$, or it intersects $r^+$ or $r^-$. If $x$ is in $L$, it is TL with both corners, one of which is in $\mathrm{Sing}\cup \mathrm{Pivot}$, so we are done.
Otherwise, $x$ is is TL with the corner of $L$ that sits in the quadrant $Q$, which is a point in $\mathrm{Pivot}\cup\mathrm{Sing}\cup (\mathrm{Opp}\cap Q)$, as claimed.

Thus it remains to consider the case where $I$ is bounded by a nontrivial union $l_0^-, l_1^- \ldots l_k^-$ of nonseparated leaves (none of which makes a perfect fit with $r^+$).   Enumerate these in order so that $l_0^-$ intersects $r^+$ and $l_k^-$ intersects or makes a perfect fit with $\cF^+(y)$.  See Figure \ref{fig:discrete_stabilizer} for an illustration. If $k=0$, then we are in the case already treated above, so we may assume $k>0$.
By Proposition \ref{lem:two_fix_points}, some nontrivial $g \in G$ fixes all leaves $l_i^-$ and so two rays in $l_0^-$ and $l_1^-$ are sides of adjacent lozenges.  Call these $L_0$ and $L_1$, respectively.  

\begin{figure}
   \labellist 
  \small\hair 2pt
 \pinlabel $x$ at 50 45
 \pinlabel $\cF^+(y)$ at 135 170
 \pinlabel $l_k^-$ at 230 180
 \pinlabel $l_0^-$ at 230 35
  \pinlabel $r^+$ at 140 45
 \pinlabel $c$ at 215 65
 \pinlabel $r^-$ at 51 130
 \pinlabel $L_0$ at 140 78
  \pinlabel $L_1$ at 140 100
  \endlabellist
     \centerline{ \mbox{
\includegraphics[width=7cm]{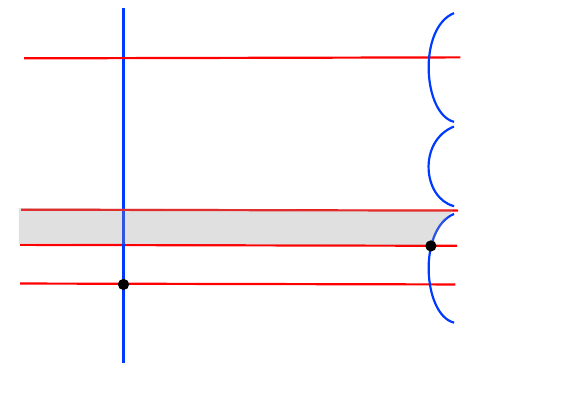} }}
\caption{Set up for the last case of Claim \ref{claim:TLx}. A corner of $L_0$ is always totally linked with $x$, either configured as above, or if $\cF^-(c)$ lies below $x$.}
\label{fig:discrete_stabilizer}
\end{figure}
  
Let $c$ be the corner of $L_0$ on $l_0^-$.   If $x \in L_0$, then $x$ is totally linked with both corners of $L_0$, one of which is in $\mathrm{Pivot}$ since $c$ is on a nonseparated leaf, so we are done.

Otherwise $x\notin L_0$, which implies that $c$ is in $Q$, and since $\cF^+(c) \cap r^-\neq \emptyset$, $x$ is totally linked with $c \in  \mathrm{Pivot}\cup (\mathrm{Opp} \cap Q)$.  
Thus, we have in all cases found that $x$ is TL with a point in $\mathrm{Pivot}\cup \mathrm{Sing}$ or in $\mathrm{Opp} \cap Q$, completing the proof of the claim. 
\end{proof} 

Next, we show that the condition of Claim \ref{claim:TLx} is always satisfied, so that we can always find a point in $D\cup \mathrm{Opp}$ that is totally linked with $x$.  This improves Claim \ref{claim:TLx}  to the following statement.  
\begin{claim}\label{claim_TL_with_pivot_or_Opp}
Suppose $x$ is not the corner of a lozenge in the quadrant $Q$ bounded by rays $r^+$ and $r^-$. Then $x$ is TL with a point in $\mathrm{Pivot}\cup \mathrm{Sing}$ or in $\mathrm{Opp} \cap Q$.
\end{claim}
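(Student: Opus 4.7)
The plan is to produce $y\in\mathrm{Sing}\cup\mathrm{Pivot}$ with $\cF^+(y)\cap r^-\neq\emptyset$ or $\cF^-(y)\cap r^+\neq\emptyset$, and then invoke Claim \ref{claim:TLx}. I first observe that neither $r^+$ nor $r^-$ can make a perfect fit inside $Q$: if $r^+$ did, then since $r^+$ is a ray of the fixed leaf $\cF^+(x)$, the perfect-fit leaf would also be fixed, and Lemma \ref{lem:two_fix_points}(1) would produce a lozenge with $x$ as a corner in $Q$, contradicting the hypothesis; the argument for $r^-$ is symmetric.

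By Axiom \ref{Axiom_dense} I choose a fixed $\cF^+$-leaf $l_1$ meeting $r^-$ at a point $z_1$, and let $r_1$ be the ray of $l_1$ based at $z_1$ going into $Q$. Applying Lemma \ref{lem:bound_strip} to the base leaf $\cF^-(x)$ with $r_0=r^+$ and initial ray $r_1$ yields a final ray $r_1'$ (on a fixed leaf) and a description of the boundary of the set $T$ of $\cF^-$-leaves meeting both $r^+$ and $r_1'$. Following the three possibilities produced by the proof of Lemma \ref{lem:bound_strip}: in the \emph{nonseparated} case, the lemma replaces $r_1$ by the shared $\cF^+$-side of two adjacent lozenges, and the pivot $p$ at the base of this ray lies on $\cF^-(x)$ between $x$ and $z_1$, hence on $r^-$, so $p\in\mathrm{Pivot}$ satisfies $\cF^+(p)\cap r^-\ni p$. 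In the \emph{prong} case, the bounding leaf $l_0$ is singular with a prong $y$ between $r^+$ and $r_1'$, and $l_0$ crosses $r^+$ by construction, so $y\in\mathrm{Sing}$ satisfies $\cF^-(y)\cap r^+\neq\emptyset$. Finally, in the \emph{perfect fit} case, $l_0$ makes a perfect fit with $r_1'$ and crosses $r^+$ (a perfect fit with $r^+$ being ruled out by the opening observation); then $l_0$ is fixed and Lemma \ref{lem:two_fix_points}(1) yields a lozenge $L\subset Q$ with corners $y_1$ (on the fixed leaf carrying $r_1'$) and $y_2\in l_0$, and if either of $y_1,y_2$ lies in $\mathrm{Sing}\cup\mathrm{Pivot}$ we are done.

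The sole remaining subcase is when both $y_1,y_2$ are generic (neither a prong nor a pivot), with $L$ an isolated lozenge. I would treat this by iteration: $y_1\in Q$ is a fixed point totally linked with $x$, and since $y_1\notin\mathrm{Pivot}$, Lemma \ref{lem_infinite_line_lozenges} prevents $y_1$ from being a corner of a lozenge in every quadrant (otherwise the maximal chain of lozenges at $y_1$ would contain an infinite line, forcing $y_1$ to be a pivot), so some quadrant $Q'$ of $y_1$ is not a lozenge. One then reruns the argument with $(y_1,Q')$ in place of $(x,Q)$ and transfers the resulting totally-linked point back to $x$ using the product structure of $\Fixbar_G$ (Corollary \ref{cor_product_means_product}). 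The main technical obstacle is to verify that this recursion terminates in finitely many steps; I expect this to rest on the discreteness of $\mathrm{Sing}\cup\mathrm{Pivot}\cup\mathrm{Opp}$ (Lemma \ref{lem:adjacent_corners_discrete}) combined with the standing hypothesis that $P$ contains at least one prong or nonseparated leaf, which prevents an infinite nested tower of isolated lozenges with generic corners.
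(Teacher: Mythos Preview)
Your argument has a genuine gap in the final subcase, where both corners $y_1,y_2$ of the lozenge $L$ are generic. The proposed recursion is incomplete in two ways. First, even granting that $y_1$ has a non-lozenge quadrant $Q'$, a successful application of the claim to $(y_1,Q')$ would only yield a point $p\in\mathrm{Pivot}\cup\mathrm{Sing}\cup(\mathrm{Opp}\cap Q')$ totally linked with $y_1$; you then assert one can ``transfer back to $x$ using the product structure,'' but total linking is not transitive and Corollary~\ref{cor_product_means_product} does not produce a $\mathrm{Pivot}\cup\mathrm{Sing}\cup\mathrm{Opp}$ point totally linked with $x$ from one totally linked with $y_1$. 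Second, your termination heuristic invokes discreteness of $\mathrm{Sing}\cup\mathrm{Pivot}\cup\mathrm{Opp}$, but Lemma~\ref{lem:adjacent_corners_discrete} only gives discreteness of $\mathrm{Sing}\cup\mathrm{Pivot}$; the set $\mathrm{Opp}$ is not shown to be discrete anywhere in the paper (and indeed need not be), so the suggested mechanism for ruling out an infinite tower of isolated lozenges with generic corners does not work as stated.

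The paper's proof avoids this difficulty entirely by a global dichotomy rather than a local recursion. If the action is topologically transitive, then since $P$ is non-$\bR$-covered there exists a branching or prong leaf, and density of the $G$-orbit of every leaf (from \cite[Lemma~2.21]{BFM}) forces such a leaf to cross $r^+$, feeding directly into Claim~\ref{claim:TLx}. If the action is not transitive, Theorem~\ref{thm:Smale_chains_separate} produces a Smale chain meeting $\cF^+(x)$ or $\cF^-(x)$, and every corner of a Smale chain lies in $\mathrm{Sing}\cup\mathrm{Pivot}$ by construction, again verifying the hypothesis of Claim~\ref{claim:TLx}. This route uses the structure theory of Smale classes developed in Section~\ref{sec:smale_chains} as a black box, which is why the claim appears only after that machinery is in place.
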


\begin{proof}
We treat separately the case where $G$ acts topologically transitively or not.

Suppose first that $G$ acts topologically transitively on $P$.  Since  $P$ is not $\bR$-covered, there is a branching leaf or prong.  By \cite[Lemma 2.21]{BFM}, every leaf of $\cF^{\pm}$ has a dense orbit under $G$.  Thus, some branching leaf or prong leaf must intersect $r^+$, and Claim \ref{claim:TLx} applies to give the conclusion.

Now suppose instead that $G$ has a nontransitive Anosov like action.   We first make a short argument that will allow us to treat only the case where $x$ is nonsingular.   Recall that $x$ is fixed by some element, and $Q$, the $TL$-subquadrant bounded by $r^{\pm}$ was assumed not to be a lozenge. Hence it is a regular subquadrant, and therefore we can find $x'$ a nonsingular point in $Q$ totally linked with $x$ and such that $x'$ is fixed by some nontrivial element $g'$ of $G$. Hence if we can find $y\in \mathrm{Pivot}\cup\mathrm{Sing}$ such that $\cF^+(y)\cap \cF^-(x')\neq\emptyset $ (resp.~$\cF^-(y)\cap \cF^+(x')\neq\emptyset $) then for some large enough power $n$, we have $\cF^+((g')^{n}y)\cap r^-\neq\emptyset $ (resp.~$\cF^-((g')^{n}y)\cap r^+\neq\emptyset $). 

Thus we can now assume that $x$ is nonsingular and just find an element of $y\in \mathrm{Pivot}\cup\mathrm{Sing}$ such that at least one of $\cF^{\pm}(y)$ intersects $\cF^{\mp}(x)$.

Since we assumed the action of $G$ was nontransitive, by Theorem \ref{thm:transitive}, there exists an element $y'$ in a distinct Smale class from $x$.  
By Theorem \ref{thm:Smale_chains_separate}, there exists a Smale chain $\cW$ intersecting either $\cF^+(x)$ of $\cF^-(x)$ (or, in general, if $x$ is neither a maximal nor minimal Smale class, then we will find examples of both). In particular, there exists a corner $y$ on $\cW$ with $\cF^\pm(y)\cap \cF^\mp(x)\neq\emptyset$.
Since all corners of a Smale chain are either pivots or singular points, the hypothesis for Claim \ref{claim:TLx} holds, and implies our claim.
\end{proof}

We are now able to finish the proof of Theorem \ref{thm:discrete_stabilizer}.
By Claim \ref{claim_TL_with_pivot_or_Opp},  $x$ is TL with an element of $\mathrm{Pivot}\cup \mathrm{Sing} \cup (\mathrm{Opp} \cap Q)$, where $Q$ is a quadrant of $x$ that does not contain a lozenge with corner $x$. We treat two cases separately.

First assume that $x$ is TL with an element of $\mathrm{Pivot}\cup \mathrm{Sing}$. Since $\mathrm{Pivot}\cup \mathrm{Sing}$ is closed and discrete (by Lemma \ref{lem:adjacent_corners_discrete}), Lemma \ref{lem:cyclic_or_accumulation} directly implies that the stabilizer of $x$ is discrete.

Thus, we only have left to show the theorem when $x$ is TL with an element $y$ of $\mathrm{Opp} \cap Q$. If the stabilizer $H_x$ of $x$ is not cyclic, applying Lemma \ref{lem:cyclic_or_accumulation}, we obtain that there exists a sequence $g_n\in H_x$ such that  $y_n:= g_n y$ converges to a point $q \in Q$ and TL with $x$. Moreover, we can assume that $q$ is accumulated on two of its opposite quadrants by the $y_n$ (the two quadrants of $q$ that are adjacent to the one containing $x$).
Recall that $y_n\in \mathrm{Opp}$, so $y_n$ is the corner of a lozenge $L_n$ with opposite corner $z_n\in \mathrm{Pivot}\cap \mathrm{Sing}$. Notice that if $z_n$ is TL with $x$, then we directly get a contradiction from the discreteness of $\mathrm{Pivot}\cap \mathrm{Sing}$.

If instead $z_n$ is not TL with $x$, there are three possibilities: 
\begin{enumerate}
\item Either $L_n\subset Q$,
\item or $L_n$ intersects the ray $r^-$ of $\cF^-(x)$ bounding $Q$, but not $\cF^-(x)$,
\item or $L_n$ intersects the ray $r^+$ of $\cF^+(x)$ bounding $Q$, but not $\cF^-(x)$,
\end{enumerate}

Since $L_n = g_n L$ and $g_n\in H_x$ (which preserves local orientations), one of the three cases above occurs for one $n$ if and only if it occurs for all $n$.
Now we will find a contradiction in each of these situations.  For concreteness, we fix local orientations so that $Q$ is the top right quadrant of $x$ with $r^+$ the positive horizontal axis and $r^-$ the vertical one.

First consider case 1: $L_n$ is contained in $Q$. For our choice of orientations, it means that $L_n$ is always in the top right quadrant of the corner $y_n$.
Since $y_n$ accumulates onto $q$ from two opposite quadrants, we can assume, up to renaming, that $y_1 = gy_0$ is to the bottom right of $y_0$, and inside a small trivially foliated neighborhood of $q$, and $g= g_1g_0^{-1}\in H_x$. Then we obtain that $L_1 = g L_0$ must be contained in the $\cF^-$-saturation of $L_0$ as shown in Figure \ref{fig:opp_lozenges}. 
It follows that $g$ contracts the interval of $\cF^-$ leaves passing through $L_0$.  
Thus, $g$ must have a fixed point in $L_0$. But $L_0\subset Q$, and since $Q$ is not a lozenge, no point fixing $x$ can fix another point in $Q$ by Lemma \ref{lem:two_fix_points}. So case 1 cannot happen. 

\begin{figure}
   \labellist 
  \small\hair 2pt
 \pinlabel $y_0$ at 25 45
 \pinlabel $y_1$ at 53 25
 \pinlabel $x$ at 10 14
  \endlabellist
     \centerline{ \mbox{
\includegraphics[width=5cm]{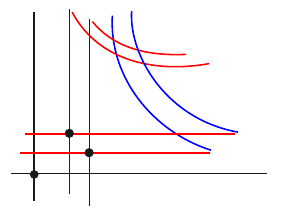} }}
\caption{Configuration of lozenges $L_0, L_1$ in case 1.}
\label{fig:opp_lozenges}
\end{figure}

We now assume that we are in case 2: $L_n$ intersects the ray $r^-$, but not $r^+$. That is, $L_n$ is in the top left quadrant of $y_n$. As above, up to renaming the $y_n$, we may assume that $y_1 = gy_0$ is to the bottom right of $y_0$ and totally linked with it. But this forces $z_1$, which is a pivot or a prong to be inside $L_0$, which is impossible (by the ``non-corner criterion'' \cite[Lemma 2.29]{BFM}, as in the argument of the proof of Lemma \ref{lem:adjacent_corners_discrete}).

Finally case 3 follows as in case 2 by symmetry.\qedhere 
\end{proof}

\subsection{$\mathbb{Z}^2$ invariant Smale chains}
Our motivating example of Smale chains are the (images in the orbit space of) lifts of transverse tori separating basic sets of an Anosov flow on a compact 3-manifold.  Such tori project to chains of lozenges; and coming from tori, these chains are invariant under a  $\bZ^2$ subgroup of $\pi_1(M)$.  Moreover, this $\bZ^2$ subgroup can be taken to be generated by one element fixing every corner, and another element acting freely.

We next show that a similar result holds for arbitrary Smale chains of \emph{cocompact} Anosov-like actions, giving (a more general version of) Theorem \ref{thm:transitive_or_torus}.  

\begin{theorem}[$\bZ^2$-invariant wandering chains] \label{thm_cocompact_Z2}
Suppose $G$ has a cocompact Anosov-like action on $(P,\cF^+,\cF^-)$ and suppose $\cW$ is a Smale chain in $P$.  Then
\begin{enumerate}
\item There exists a lozenge $L$ of $\cW$ contained in a Smale chain $\cW'$ that is invariant under a $\bZ^2$ subgroup, and 
\item For any finite subchain $F$ of $\cW$, there is a (minimal) chain $\cW''$ of wandering lozenges invariant under a $\bZ^2$ subgroup and containing $F$.
\end{enumerate} 
\end{theorem}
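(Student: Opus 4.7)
The strategy is to combine a pigeonhole argument based on cocompactness with the virtually cyclic corner stabilizers from Theorem \ref{thm:discrete_stabilizer} to produce a $\bZ^2$ subgroup preserving a suitable Smale chain. By Lemma \ref{lem:adjacent_corners_discrete}, $\mathrm{Sing}\cup\mathrm{Pivot}$ is closed and discrete, and cocompactness implies it meets only finitely many $G$-orbits. Since every corner of every lozenge of $\cW$ lies in $\mathrm{Sing}\cup\mathrm{Pivot}$, the ordered consecutive pairs $(L_i, L_{i+1})$ also fall into only finitely many $G$-orbits. Pigeonhole then yields indices $m < n$ and $g \in G$ with $g(L_m, L_{m+1}) = (L_n, L_{n+1})$; after replacing $g$ by $g^2$ if necessary we may assume $g$ preserves both the ordering of the pair and the local orientations of $\cF^\pm$. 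Taking $n - m$ minimal ensures that $g$ acts with infinite order, and $gc_m=c_n$, where $c_i$ denotes the corner shared by $L_i$ and $L_{i+1}$. For part~(1) the initial indices can be chosen anywhere in $\cW$; for part~(2) I apply pigeonhole to tails of $\cW$ on each side of $F$, so that $F\subset \cC_0:=\{L_m,\dots,L_n\}$.

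Set $\cW':=\bigcup_{k\in\bZ}g^k\cC_0$. Successive translates $g^k\cC_0$ and $g^{k+1}\cC_0$ share only the single lozenge $g^kL_n=g^{k+1}L_m$, and minimality of $n-m$ rules out exotic overlaps between $g^k\cC_0$ and $g^{k'}\cC_0$ for $|k-k'|\geq 2$. Hence $\cW'$ is a bi-infinite chain of wandering lozenges, invariant under $g$, in which adjacency condition (ii) and the no-three-sharing condition (iii) of Definition \ref{def_Smale_chain} are inherited from $\cC_0\subset\cW$ and preserved by the consistent gluing at each seam. Thus $\cW'$ is a Smale chain containing the lozenge $L:=L_m$ (or the prescribed $F$ for part~(2)).

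To produce the second generator, I use that $c_m\in \mathrm{Sing}\cup\mathrm{Pivot}\subseteq\mathrm{Fix}_G$ and $P$ has prongs or nonseparated leaves (else $\cW$ could not exist), so Theorem \ref{thm:discrete_stabilizer} gives a finite-index infinite cyclic subgroup of $\mathrm{Stab}(c_m)$ generated by some $h_0$ fixing every ray of $\cF^\pm(c_m)$. Then $h_0$ stabilizes each lozenge with corner $c_m$ --- in particular $L_m$ and $L_{m+1}$ --- via the perfect-fit rigidity of lozenge sides, and hence fixes their opposite corners $c_{m-1}$ and $c_{m+1}$. Iterating across the finite subchain $\cC_0$ corner by corner (replacing $h_0$ with a power at each step to fix all rays of each new corner, accounting for prong degree at singular corners) yields a single element $h:=h_0^N$ fixing every corner of $\cC_0$.

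Finally, $h$ fixes both $c_m$ and $c_n=gc_m$, so $g^{-1}hg$ lies in $\mathrm{Stab}(c_m)$ alongside $h$; since $\mathrm{Stab}(c_m)$ is virtually cyclic we have $g^{-1}hg=h^k$ for some integer $k$ after a finite-index refinement. Orientation preservation of $g$ together with the translation-number rigidity developed in the proof of Lemma \ref{lem:cyclic_or_accumulation} forces $k=1$: conjugation by $g$ preserves the expansion-contraction sign and, with a consistent choice of generator for $\mathrm{Stab}(c_m)$, pins down the translation number. No nontrivial power of $g$ fixes any corner of $\cW'$ while $h$ fixes $c_m$, so $g$ and $h$ generate a copy of $\bZ^2$ in $G$ preserving $\cW'$. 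The principal obstacle is exactly this last commutation step: ruling out a Baumslag-Solitar-like relation $g^{-1}hg=h^k$ with $k\neq 1$ requires careful normalization of translation numbers across a $g$-orbit of corners, in tandem with orientation preservation obtained by passing to a finite-index subgroup. The inductive construction of $h$ along $\cC_0$ is also delicate at singular corners, where the correct power depends on the prong degree and must be accumulated into a single element.
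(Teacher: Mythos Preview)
Your overall strategy for part~(1) matches the paper's: pigeonhole to find a shift $g$, merge translates into a $g$-invariant Smale chain, then pair $g$ with a generator $h$ of the corner stabilizer. But the commutation step, which you correctly flag as the crux, is mishandled. The translation-number machinery of Lemma~\ref{lem:cyclic_or_accumulation} is internal to a single point stabilizer and gives no grip on how conjugation by $g$ acts on $\langle h\rangle$; there is no ``normalization across a $g$-orbit of corners'' available from that lemma. The clean argument is much simpler and you have already done the work for it: your inductive propagation of $h$ along $\cC_0$ shows that the ray-fixing subgroup of $\mathrm{Stab}(c_m)$ equals that of $\mathrm{Stab}(c_n)$ (indeed of every corner in the maximal chain). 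Hence $g$ \emph{normalizes} the cyclic group $\langle h\rangle$, so $ghg^{-1}=h^{\pm 1}$ automatically---no Baumslag--Solitar exponent $k\neq\pm 1$ can arise. The sign is governed by the parity of $n-m$ (expansion and contraction alternate along opposite corners of successive lozenges), so replacing $g$ by $g^2$ forces $ghg^{-1}=h$. This is exactly how the paper proceeds.

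For part~(2) your sketch has a genuine gap. Applying pigeonhole separately to the two tails of $\cW$ produces two \emph{different} elements $g_1,g_2$, each shifting within its own tail; it does not give a single $g$ with $F\subset\{L_m,\dots,L_n\}$ and $gL_m=L_n$. To obtain one element whose fundamental domain contains $F$ you must combine $g_1$ and $g_2$, and this is nontrivial because $\cW$ sits inside a \emph{tree} of wandering lozenges (the maximal wandering chain $\cC_w$), so translates of $F$ can branch off. The paper handles this by working in that tree: one takes $g_1,g_2$ whose axes meet $\cW$ on opposite sides of $F$, and then for large $N$ the element $g_2^N g_1^N$ has an axis (a bi-infinite minimal chain) containing $F$. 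This tree/axis step is what your argument is missing; note also that the resulting $\cW''$ need not satisfy condition~\ref{item_Smale_share_side_or_prong} of Definition~\ref{def_Smale_chain}, which is why the statement only claims a minimal chain of wandering lozenges rather than a Smale chain.
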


This immediately implies Corollary \ref{cor_atoroidal_transitive} stated in the introduction, showing that any pseudo-Anosov flow on an algebraically atoroidal 3-manifold is transitive. 

Note that, in many cases, the chain $\cW''$ given by the second item of the theorem will be a Smale chain, but we do not know if this is true all the time. In the proof we give, the chain $\cW''$ may fail to satisfy condition \ref{item_Smale_share_side_or_prong} of Definition \ref{def_Smale_chain}.

\begin{rem}\label{rem_transverse_tori_and_pivot_only_chains}
Interestingly, in the case of an Anosov flow on a $3$-manifold $M$, Barbot and Fenley \cite{Barbot_MPOT,BarbFen_pA_toroidal} showed that an embedded torus $T$ can be homotoped to be transverse to the flow if and only if there is an associated minimal chain of lozenges $\cC$, invariant under $\pi_1(T)$ such that each corner in $\cC$ is a pivot.\footnote{This particular result is not stated as such in \cite{Barbot_MPOT,BarbFen_pA_toroidal}, but follows from the arguments developed in Section 7 of \cite{Barbot_MPOT}.}
Thus, our theorem gives a sort of converse of that result: For Anosov flows on compact $3$-manifolds, if $\cC$ is a minimal bi-infinite chain of lozenges such that every corner is a pivot, then there exists a lozenge $L$ in $\cC$ contained in a minimal ``pivot-only'' chain of lozenges $\cC'$ (which is not necessarily equal to $\cC$) that is preserved by a $\bZ^2$-subgroup of $\pi_1(M)$ and hence is the projection of a transverse torus. 
\end{rem}

For the proof, we need the following lemma

\begin{lemma} \label{lem:finitely_many_orbits}
If an Anosov-like action is co-compact, and $\cW$ is a Smale chain, then the lozenges of $\cW$ lie in finitely many $G$-orbits.  
\end{lemma}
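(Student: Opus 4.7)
The plan is to reduce from counting lozenges up to the $G$-action to counting their corners, and then to exploit the strong constraint that corners of a Smale chain are singular points or pivots, together with the fact that these special points form a closed discrete subset of $P$ by Lemma \ref{lem:adjacent_corners_discrete}.

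First, I would observe that every corner of every lozenge in $\cW$ lies in $\mathrm{Sing} \cup \mathrm{Pivot}$. This uses condition \ref{item_Smale_share_side_or_prong} of Definition \ref{def_Smale_chain}: any two consecutive lozenges $L_i$ and $L_{i+1}$ in $\cW$ either share a side (so their common corner is a pivot by definition) or share a singular corner. Since $\cW$ is bi-infinite, every corner of every $L_i$ is shared with an adjacent lozenge and hence belongs to $\mathrm{Sing} \cup \mathrm{Pivot}$.

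Next, I would invoke Lemma \ref{lem:adjacent_corners_discrete} to conclude that $\mathrm{Sing} \cup \mathrm{Pivot}$ is a closed and discrete $G$-invariant subset of $P$ ($G$-invariance is immediate, as $G$ preserves both foliations and hence permutes both singular points and pivots). Using cocompactness, any such subset has only finitely many $G$-orbits: if $K \subset P$ is a compact set with $G \cdot K = P$, then $(\mathrm{Sing} \cup \mathrm{Pivot}) \cap K$ is finite by closed-discreteness in a compact set, and every $G$-orbit meets $K$. In particular, the set of corners of lozenges of $\cW$ has finitely many $G$-orbits.

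To upgrade this from corners to lozenges, I would observe that any single point $c \in P$ is the corner of only finitely many lozenges: in each quadrant of $c$ there is at most one lozenge with $c$ as a corner, because the two sides of such a lozenge not emanating from $c$ are forced to lie on the (unique, if they exist) leaves making perfect fits with the rays of $\cF^\pm(c)$ bounding that quadrant; the number of quadrants of $c$ is finite (four if $c$ is nonsingular, $2p$ at a $p$-prong). Choosing orbit representatives $c_1, \ldots, c_n$ for the corners of $\cW$, every lozenge of $\cW$ is $G$-equivalent to one having some $c_i$ as a corner, and there are only finitely many such lozenges in total, which gives the conclusion. I do not foresee any real obstacle here; the argument is essentially an exercise combining cocompactness with Lemma \ref{lem:adjacent_corners_discrete}, the only mildly delicate point being the uniqueness of a lozenge at a given corner in a given quadrant, which rests on the uniqueness of perfect fits.
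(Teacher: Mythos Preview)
Your proof is correct and follows essentially the same approach as the paper: both arguments reduce to the observation that corners of a Smale chain lie in $\mathrm{Sing}\cup\mathrm{Pivot}$, invoke Lemma~\ref{lem:adjacent_corners_discrete} for closed-discreteness, and use cocompactness to deduce finitely many $G$-orbits of corners. The paper phrases this as a proof by contradiction and is terser; your direct version is in fact a bit more careful, since you make explicit the passage from finitely many orbits of corners to finitely many orbits of lozenges via the bound on lozenges at a given corner, a step the paper leaves implicit.
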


\begin{proof} 
Let $\cW$ be a Smale chain.  Suppose for contradiction that there are infinitely many orbits of lozenges.  Then we may find a sequence of corners $c_1, c_2, \ldots $ all in distinct orbits.  By cocompactness, for each $i$ there exists $g_i \in G$ such that $g_i(c_i)$ lies in a fixed compact set $K \subset P$.  Thus, there is some accumulation point of the $g_i(c_i)$.  Each $g_i(c_i)$ is a corner of $g_i(\cW)$.   But by Lemma \ref{lem:adjacent_corners_discrete}, the set of corners of Smale chains is closed and discrete.
\end{proof}

Our next lemma shows that Smale chains behave somewhat like train tracks, in the sense that segments can be merged. 

\begin{lemma}\label{lem_merging_Smale_chains}
Let $\cW:=\{L_i, i \in I\}$ and $\cW':=\{L'_i,i\in I'\}$ be two chains of lozenges (finite or infinite, with index sets $I$ and $I'$ some consecutive subsets of integers containing $0$), satisfying conditions \ref{item_Smale_wandering}--\ref{item_Smale_no_triple_shared_corners} given in Definition \ref{def_Smale_chain}.  
Assume that there exists $k$ such that, for $0\leq i\leq k$, $L_i = L'_i$. If $k=0$, suppose moreover that the shared corner of $L_{-1}$ and $L_0$ is the same as the shared corner of $L'_{-1}$ and $L'_0= L_0$.
Then  $\cW'' := \{L_i,i\leq k\}\cup \{L'_i,i\geq k+1\}$
also satisfies conditions \ref{item_Smale_wandering}--\ref{item_Smale_no_triple_shared_corners}. \end{lemma}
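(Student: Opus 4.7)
The plan is to verify the three conditions of Definition~\ref{def_Smale_chain} for $\cW''$ in turn.

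Condition~\ref{item_Smale_wandering} is immediate, since every lozenge of $\cW''$ is either a lozenge of $\cW$ or of $\cW'$, both of which consist of wandering lozenges by hypothesis. For condition~\ref{item_Smale_share_side_or_prong}, I would note that any pair of consecutive lozenges $(M_j, M_{j+1})$ in $\cW''$ with $j+1\leq k$ lies inside $\cW$, and any such pair with $j\geq k+1$ lies inside $\cW'$; in both cases the condition is inherited. The only junction is the pair $(M_k, M_{k+1}) = (L_k, L'_{k+1}) = (L'_k, L'_{k+1})$, for which condition~\ref{item_Smale_share_side_or_prong} follows from its validity for $\cW'$ at indices $k, k+1$.

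The bulk of the work is condition~\ref{item_Smale_no_triple_shared_corners}. I would first reformulate it: for a chain of distinct lozenges satisfying~\ref{item_Smale_share_side_or_prong}, condition~\ref{item_Smale_no_triple_shared_corners} is equivalent to injectivity of the \emph{corner sequence} $(c_i)$, where $c_i$ is the common corner of $L_{i-1}$ and $L_i$. Indeed, a repetition $c_a=c_b$ with $b>a+1$ yields four distinct lozenges $L_{a-1}, L_a, L_{b-1}, L_b$ sharing that corner, while $b=a+1$ would force a single lozenge to have two equal corners. With this reformulation, the hypothesis of the lemma (and, in the case $k=0$, the explicit assumption on $L_{-1}, L_0, L'_{-1}, L'_0$) forces $c_l^{\cW} = c_l^{\cW'}$ for $0\leq l\leq k+1$, so the corner sequence of $\cW''$ with the natural indexing is the concatenation of the left tail of $\cW$, the aligned overlap, and the right tail of $\cW'$. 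Injectivity within each piece follows from injectivity in $\cW$ and $\cW'$, and cross-collisions between the overlap and either tail are ruled out because the overlap sits inside both $\cW$ and $\cW'$.

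The one remaining case, which I expect to be the main obstacle, is to rule out a collision $c_i^{\cW} = c_j^{\cW'} = p$ with $i<0$ and $j>k+1$. If $p$ is non-singular, the ``non-corner criterion'' (\cite[Lemma~2.29]{BFM}) caps the number of lozenges of $P$ admitting $p$ as a corner at two; since $\cW$ contributes $\{L_{i-1}, L_i\}$ and $\cW'$ contributes $\{L'_{j-1}, L'_j\}$, both sets are forced to coincide with the unique pair at $p$, and so $\cW''$ (as a set of distinct lozenges) contains only these two at $p$ and~\ref{item_Smale_no_triple_shared_corners} is preserved. If $p$ is singular, the corresponding finite sub-chain $L_i, L_{i+1}, \dots, L_k = L'_k, L'_{k+1}, \dots, L'_{j-1}$ of $\cW''$ would start and end at the singular corner $p$ while having all intermediate corners distinct; I would rule this closed sub-chain out using planarity of $P$ together with the constraint that each of $\cW$ and $\cW'$ passes through $p$ using exactly two lozenges in prescribed sectors, which is incompatible with such a loop reconnecting at $p$ through the common overlap.
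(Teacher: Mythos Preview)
Your handling of conditions~\ref{item_Smale_wandering} and~\ref{item_Smale_share_side_or_prong} is correct and matches the paper's argument. Your reformulation of~\ref{item_Smale_no_triple_shared_corners} as injectivity of the corner sequence is also correct and is morally what is going on.

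However, your resolution of the cross-collision case $c_i^{\cW}=c_j^{\cW'}=p$ with $i<0$ and $j>k+1$ has a genuine gap. In the nonsingular subcase, your appeal to the ``non-corner criterion'' to bound by two the number of lozenges having $p$ as a corner is simply false: a nonsingular point can be the corner of up to four lozenges, one in each quadrant, and this does occur (e.g.\ at the crossing of two lines of lozenges). Lemma~2.29 of \cite{BFM} only rules out lozenges in certain quadrants under additional hypotheses on the position of the point relative to another lozenge; it gives no global cap of two. In the singular subcase, the argument you sketch via planarity is not a proof.

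The paper avoids this difficulty entirely by checking only the two \emph{consecutive} triplets at the junction, $L_{k-1},L_k,L'_{k+1}$ and $L_k,L'_{k+1},L'_{k+2}$, since every other consecutive triplet lies wholly in $\cW$ or in $\cW'$. The reason this suffices---left implicit in the paper but available from Lemma~\ref{lem_minimal_equivalent_no_3_shared_corners} and its proof---is the tree structure: the graph whose vertices are corners and whose edges are lozenges of the ambient maximal chain is a tree (\cite[Prop.~2.12]{BarbFen_pA_toroidal}), so a path in it with no immediate backtracking is automatically a geodesic and hence simple. Thus ``no three \emph{consecutive} lozenges share a corner'' already forces the entire corner sequence to be injective, disposing of all cross-collisions at once. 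I would recommend you invoke this tree argument directly rather than attempt a case-by-case analysis at~$p$.
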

\begin{proof}
Obviously, all lozenges of $\cW''$ are wandering, so we only need to verify that conditions \ref{item_Smale_share_side_or_prong} and \ref{item_Smale_no_triple_shared_corners} are satisfied.

We start with condition \ref{item_Smale_share_side_or_prong}, i.e., we need to show that consecutive lozenges in $\cW''$ share a side or a prong corner. By assumption on $\cW$ and $\cW'$, this is true for any pair of consecutive lozenges in $\{L_i,i\leq k\}$ or in $\{L'_i,i\geq k+1\}$. So we only need to check that it also holds for $L_k$ and $L_{k+1}'$. But this directly follows from the fact that $L_k= L'_k$.

Similarly, to prove condition \ref{item_Smale_no_triple_shared_corners}, i.e., that no triplet of lozenges in $\cW''$ share a common corner, it is enough to prove this at the joining of the two chains, that is, for the triplets $L_{k-1}, L_k, L'_{k+1}$ and $L_{k}, L'_{k+1}, L'_{k+2}$.

Since $L_k=L'_k$, the three lozenges $L_{k}, L'_{k+1}, L'_{k+2}$ cannot share a common corner.
For the triplet $L_{k-1}, L_k, L'_{k+1}$, we separate the case $k=0$ from $k>0$: If $k>0$, then $L_{k-1}=L'_{k-1}$ and $L_k=L'_k$, so the three do not share corners because $\cW'$ is a Smale chain. If $k=0$, call $c_i$, resp.~$c'_i$, the shared corner of $L_{i-1}$ and $L_i$, resp.~$L'_{i-1}$ and $L'_i$. The  triplet $L_{-1}, L_0, L'_{1}$ share a common corner if and only the corner shared by $L_{-1}, L_0$, which is $c_0$, is the same as the corner shared by $L_0=L'_0$ and $L'_1$, which is $c_1'$. But by assumption, we have that $c_0=c'_0$ and $c_1=c'_1$, so $c_0\neq c_1'$ and we deduce that $\cW''$ satisfies \ref{item_Smale_no_triple_shared_corners}. This finishes the proof.
\end{proof}

Using this we can now prove Theorem \ref{thm_cocompact_Z2}.  

\begin{proof}[Proof of Theorem \ref{thm_cocompact_Z2}]
Let $\cW$ be a Smale chain.  We begin by proving the first statement.  Enumerate the lozenges in $\cW$ by $\{L_i\}$, in order, and let $c_i^-, c_i^+$ denote the corners of $L_i$, indexed so that $c_i^-$ is the shared corner with $L_{i-1}$.  
By Lemma \ref{lem:finitely_many_orbits}, there are only finitely many orbits of lozenges in $\cW$, and thus we can find some $g \in G$ and some $L_i$ in $\cW$ such that $g L_i = L_{i+k}$ for some $k>0$, {\em and} such that $g(c_i^-) = c_{i+k}^-$. (Since there are only finitely many orbits, some orbit has more than two lozenges in it, and so some element maps a lozenge to another, preserving the signs of corners).  
Without loss of generality, re-index so that $i=0$.  
Thus, by Lemma \ref{lem_merging_Smale_chains}, the chain $L_0, L_{1}, \ldots, L_{k} = g(L_0), g(L_{1}), \ldots, g(L_k)$ satisfies 
conditions \ref{item_Smale_wandering}--\ref{item_Smale_no_triple_shared_corners}.  Now define 
$\cW'$ to be the chain consisting of lozenges $g^n(L_i)$ for $n\in \bZ$ and $n \leq i \leq k$.   Iterated applications of Lemma \ref{lem_merging_Smale_chains} shows that this is a Smale chain, and by construction $g$ preserves $\cW'$ and acts on it freely.  Furthermore, $\cW'$ is contained in the maximal chain containing $\cW$.  Let $h$ be the generator of the stabilizer of the corners of $\cW$ (which is isomorphic to $\bZ$ by Theorem \ref{thm:discrete_stabilizer}).  Then $h$ and $g$ commute, giving the desired $\bZ^2$ subgroup. This proves the first statement of the theorem.

For the second statement, let $F$ be a finite subchain of $\cW$, and let $\cC_w$ denote the maximal chain of wandering lozenges containing $\cW$.  
Any $h \in G$ sending a lozenge of $\cC_w$ to another must globally preserve $\cC_w$.   As in the proof of Lemma \ref{lem_minimal_equivalent_no_3_shared_corners}, if we choose for each lozenge of $\cC_w$, a segment connecting its two corners, the result is a tree (with edges formed by segments and vertices the corners), in which $\cW$ appears as an embedded copy of $\bR$.  Fix an orientation on this tree.  We will find some $h \in G$, preserving orientation on the tree such that $h(F) \cap F = \emptyset$ and $h(F) \cup F$ lie in an oriented, embedded $\bR$.  Having done so, we can define $\cW'$ to be the chain of lozenges forming the {\em axis} of $h$ in the tree, which is a chain of (necessarily wandering) lozenges containing $F$ (and $h(F)$), on which $h$ acts by translations.  As in the proof of the first statement, $h$ together with a generator for the corner-wise stabilizer of $\cC$ generate a $\bZ^2$ subgroup leaving invariant the axis. 

Thus, it remains to find such an $h$.  By Lemma \ref{lem:finitely_many_orbits},
there exists $g_1 \in G$ preserving the orientation and lozenges $L_1, g(L_1) \in \cW$ both on the negative side of $F$ with respect to the chosen orientation of the tree.  Up to replacing $g$ with $g^{-1}$, suppose $L_1$ is closer to $F$ than $g_1(L_1)$.  Similarly, there is $g_2$ preserving orientation of the tree and $L_2$ on the positive side of $F$ with $g_2(L_2)$ further from $F$ than $L_2$.  That is, the axes of $g_1$ and $g_2$ contain lozenges in $\cW$ to the negative (resp.~positive) sides of $F$ in the tree. 
If the axis of $g_1$ or $g_2$ contains $F$, we are already done, otherwise (for sufficiently large $N$) both $g_1^N(F)$ and $g_2^Ng_1^N(F)$ will be disjoint from $F$, and $F \cup g_2^Ng_1^N(F)$ will lie on an embedded, oriented $\bR$, as desired.  
\end{proof}

In the special case where $P$ is the orbit space of a pseudo-Anosov flow on a compact 3-manifold, we can use the following result of 
Barbot and Fenley to show that wandering lozenges lie in weakly embedded Birkhoff surfaces.  
 
\begin{theorem}[See Proposition 6.7 of \cite{BarbFen_pA_toroidal}]
Let $P$ be the orbit space of a pseudo-Anosov flow.  
Suppose $\cC$ is a chain of lozenges in $P$ invariant under a subgroup $G$ of $\pi_1(M)$ isomorphic to $\bZ^2$ or the fundamental group of the Klein bottle.  If $\cC$ is {\em simple} and if any element of $\pi_1(M)$ mapping a lozenge of $\cC$ to itself is in $G$, then $\cC$ is the image of a weakly embedded Birkhoff torus with fundamental group $G$. 
\end{theorem}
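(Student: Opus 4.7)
The plan is to construct a $G$-equivariant immersed surface $\wt\Sigma \subset \wt M$ whose image under the orbit projection $\wt M \to P$ is exactly the closed chain $\cC$, and then descend to the quotient. For each lozenge $L \in \cC$, with corners $x_-, x_+$ (which are lifts of periodic orbits in $M$), I would first build an ``ideal rectangle'' $R_L \subset \wt M$: start with a transverse arc $\tau$ crossing every orbit in the interior of $L$, and flow it forward and backward so that the resulting flow-saturation of $\tau$ meets each orbit of the interior of $L$ in a single arc and limits, along its transverse boundary, onto flow segments in the lifts of the periodic orbits above $x_\pm$, and along the other two sides onto the two perfect-fit leaf-corners of $L$. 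Shrinking $\tau$ if necessary, $R_L$ is an embedded disc whose interior is transverse to the flow and whose boundary consists of two flow segments and two transverse arcs.

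Next I would glue the rectangles along the chain structure. Whenever $L, L'$ are adjacent lozenges in $\cC$ sharing a corner $c$, after extending the flow segments on $R_L$ and $R_{L'}$ over a common lift of the periodic orbit at $c$, I can arrange the two rectangles to meet exactly along that common flow segment. Because $\cC$ is simple, so that no three lozenges share a corner and the combinatorial spine of $\cC$ is an embedded line (as in the proof of Lemma \ref{lem_minimal_equivalent_no_3_shared_corners}), iterating these gluings across the bi-infinite chain produces a topological surface $\wt\Sigma$ that is transverse to the flow in its interior, with boundary lying in a discrete union of flow-orbits. Singular corners need some additional care: at a prong, each $R_L$ attaches to the common flow segment as a ``page of a book,'' but since by Definition \ref{def_Smale_chain}\ref{item_Smale_no_triple_shared_corners} exactly one page enters and one page leaves at any corner, $\wt\Sigma$ remains a $2$-manifold.

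Finally, since $G$ preserves $\cC$ and by Lemma \ref{lem:finitely_many_orbits}-style reasoning $G$ acts on $\cC$ with finitely many lozenge orbits, I can choose the rectangles $R_L$ equivariantly by fixing them on a finite set of orbit representatives and translating. The quotient $\Sigma := \wt\Sigma / G$ is then a closed surface with $\pi_1(\Sigma) = G$, hence a torus or Klein bottle. It remains to verify that the induced map $\Sigma \to M$ is a weak embedding -- i.e., injective away from its boundary periodic orbits. This is where the main obstacle lies, and it is precisely here that the stabilizer hypothesis is used: if two interior points $p, p' \in \wt\Sigma$ satisfied $h\cdot p = p'$ for some $h \in \pi_1(M)$, then $h$ would send the lozenge of $\cC$ containing $p$ to the lozenge containing $p'$, both in $\cC$. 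By hypothesis this forces $h \in G$, and the freeness of the $G$-action on the interior of $\wt\Sigma$ then forces $h = \id$. The self-identifications one cannot avoid are those along the lifted periodic orbits themselves, which is exactly the source of ``weakness'' in the weak embedding: different lozenges of $\cC$ sharing a corner give distinct pages of $\Sigma$ hinged along the same periodic orbit in $M$.
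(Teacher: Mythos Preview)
The paper does not give its own proof of this theorem: it is quoted from Barbot--Fenley (Proposition~6.7 of \cite{BarbFen_pA_toroidal}) and used as a black box to deduce Corollary~\ref{cor:Z2_Smale_chains_in_pA_flows}. So there is nothing to compare against. That said, your sketch has the right outline (build transverse rectangles over each lozenge, glue along shared corner orbits, quotient by $G$), but it misreads the hypotheses in a way that leaves a genuine gap.

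You treat ``simple'' as a combinatorial condition on the chain --- no three lozenges sharing a corner, spine an embedded line --- and invoke Definition~\ref{def_Smale_chain}\ref{item_Smale_no_triple_shared_corners} and Lemma~\ref{lem_minimal_equivalent_no_3_shared_corners}. But the paper defines ``simple'' differently (see the sentence immediately following the theorem): it means that no element of $\pi_1(M)$ sends a \emph{corner} of a lozenge of $\cC$ into the \emph{interior} of a lozenge of $\cC$. This is a dynamical, not combinatorial, condition, and it is precisely what prevents a boundary periodic orbit of your Birkhoff surface from piercing the transverse interior of some rectangle $R_{L'}$ after projecting to $M$ --- an obstruction you never address. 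Conversely, nothing in the theorem forbids three lozenges of $\cC$ from sharing a corner, so your surface need not be a $2$-manifold for the reason you give.

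Your weak-embedding argument also has a gap. From $h\cdot p = p'$ with $p$ in the interior of $R_L$ and $p'$ in the interior of $R_{L'}$ you conclude that $h(L) = L'$; but in the orbit space you only get that $h(L)$ and $L'$ share an interior point, and distinct lozenges can overlap (e.g.\ the two transverse lines in a scalloped region). So you cannot directly invoke the stabilizer hypothesis, which concerns elements sending a lozenge of $\cC$ to a lozenge of $\cC$. Finally, your appeal to ``Lemma~\ref{lem:finitely_many_orbits}-style reasoning'' is misplaced: that lemma is about Smale chains under a cocompact action of the full group, whereas here finiteness of lozenge orbits under $G$ comes simply from the $\bZ^2$ (or Klein bottle) structure acting with one factor fixing corners and the other translating.
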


``Simple" means that no element of $\pi_1(M)$ maps the corner of a lozenge of $\cC$ into the interior of a lozenge of $\cC$.  This is immediate for chains of wandering lozenges.   
Combined with Theorem \ref{thm_cocompact_Z2}, we have the following immediate corollary.  

\begin{corollary}\label{cor:Z2_Smale_chains_in_pA_flows}
Let $P$ be the orbit space of a pseudo-Anosov flow on a compact 3-manifold. Any $\bZ^2$-invariant Smale chain corresponds to the projection to the orbit space of a weakly embedded Birkhoff torus or Klein bottle. 

Also, if $L$ is a wandering lozenge in $P$, then $L$ lies in the projection of a weakly embedded Birkhoff torus or Klein bottle.
\end{corollary}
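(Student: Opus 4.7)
The plan is to deduce both statements from the cited Barbot--Fenley theorem by verifying its two hypotheses: simplicity of the chain, and that every element of $\pi_1(M)$ setwise stabilizing a lozenge of the chain lies in the chosen invariance subgroup.

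For the first statement, let $\cC$ be a $\bZ^2$-invariant Smale chain with invariance group $G_0 \cong \bZ^2$. Simplicity is immediate: by Corollary \ref{cor_boundary_leaves_and_boundary_points} every corner of $\cC$ is fixed by some nontrivial element of $\pi_1(M)$, while the interior of a wandering lozenge is trivially foliated and contains no singular point and no point of $\cR_G$, hence no fixed point of any nontrivial element of $\pi_1(M)$. Thus no element of $\pi_1(M)$ can send a corner of $\cC$ into the interior of any lozenge of $\cC$.

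To arrange the stabilizer condition, I would enlarge $G_0$ to a group $G$ containing every element of $\pi_1(M)$ that stabilizes some lozenge of $\cC$. By Theorem \ref{thm:discrete_stabilizer}, each corner stabilizer is virtually cyclic, of index at most four over the $\bZ$-factor of $G_0$ that fixes every corner; each lozenge stabilizer is therefore a finite extension of this $\bZ$-factor, possibly together with an order-two element swapping the two corners. Using the no-triple-corner condition built into the definition of a Smale chain, one verifies that such enlarging elements necessarily preserve $\cC$ setwise, and that the enlarged group $G$ is abstractly isomorphic either to $\bZ^2$ or to the fundamental group of the Klein bottle, depending on whether the added elements preserve or reverse the combinatorial orientation of $\cC$. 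The Barbot--Fenley theorem then yields the desired weakly embedded Birkhoff surface.

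For the second statement, given a wandering lozenge $L$, Proposition \ref{prop:Smale_chain} extends $L$ to a Smale chain, and the second conclusion of Theorem \ref{thm_cocompact_Z2} applied to $F=\{L\}$ produces a minimal $\bZ^2$-invariant chain $\cW''$ of wandering lozenges containing $L$. The simplicity argument above applies verbatim to $\cW''$, the stabilizer condition is verified by the same enlargement procedure, and Barbot--Fenley then produces a weakly embedded Birkhoff torus or Klein bottle whose image in the orbit space contains $L$. The main technical point, in both parts, is the enlargement of $G_0$: one needs both that adjoining lozenge stabilizers preserves the full chain and that the resulting group has the exact abstract form required by Barbot--Fenley. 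The rigidity provided by Theorem \ref{thm:discrete_stabilizer}, together with the combinatorial structure of Smale chains (in particular the no-three-lozenges-at-a-corner condition), are the key tools making this work.
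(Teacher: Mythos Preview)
Your approach matches the paper's: it treats the corollary as immediate from the cited Barbot--Fenley theorem combined with Theorem~\ref{thm_cocompact_Z2}, after noting that simplicity is automatic for chains of wandering lozenges (since corners lie in $\Fix_G$ while interiors of wandering lozenges contain no points of $\Fixbar_G$). You are in fact more careful than the paper---which is quite terse and does not explicitly discuss the stabilizer hypothesis---in flagging that one may need to enlarge the given $\bZ^2$ to the full lozenge-wise stabilizer; your outline for doing so via Theorem~\ref{thm:discrete_stabilizer} and the minimality (no-three-at-a-corner) condition is the correct idea.
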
 
Here, weakly embedded means that the complement of the union of periodic orbits on the Birkhoff surface is an embedded surface, but the union of periodic orbits is in general only immersed.  

\begin{rem}\label{rem:weakly_embedded_to_transverse}
As explained in Remark \ref{rem_transverse_tori_and_pivot_only_chains}, in the case of an Anosov flow, using the work of Barbot and Fenley in \cite{Barbot_MPOT,BarbFen_pA_toroidal}, one can show that, for an orientable manifold, the weakly embedded Birkhoff torus associated to a $\bZ^2$-invariant Smale chain can be isotoped to be made transverse to the flow.  (In the case of pseudo-Anosov flows, such weakly embedded tori can instead be isotoped to a torus that is transverse to the flow except possibly along some singular periodic orbits.)
\end{rem}

\section{Smale bounded actions and density of non-corner leaves} \label{sec:density} 

A {\em non-corner fixed leaf} is a leaf of $\cF^\pm$ that does not contain a corner of a lozenge but is fixed by some nontrivial element of $g$.  Equivalently, it is a leaf of a non-corner fixed point.  
Axiom \ref{Axiom_dense} says that the union of leaves (of either foliation) containing fixed points is dense.  The point of this section is to show density of non-corner fixed leaves under a mild assumption on the actions. This will be proved in two steps, first we will show density on extremal Smale classes, and then use one extra assumption, called Smale-bounded, introduced below, which says that the saturations of extremal Smale classes are dense to deduce the density of non-corner fixed points on the whole plane.

\subsection{Smale-bounded actions}

Recall (see Definition \ref{def_extremal_class}) that a Smale class is called extremal if it is regular and either minimal or maximal for the Smale order.
 \begin{definition}[Smale-bounded actions]\label{def_smale_bounded}
  We say that an Anosov-like action $G$ is \emph{Smale-bounded} if the following two conditions hold:
 \begin{enumerate}[label=(\roman*)]
  \item For any regular Smale class $\Lambda$, there exist extremal Smale classes $\Lambda_m, \Lambda_M$ such that $\Lambda_m \lG \Lambda\lG \Lambda_M$,
  \item For any singular Smale class $\Lambda$, $p\in \Lambda$, and any face $f^+$ of $\cF^+(p)$, resp.~$f^-$ of $\cF^-(p)$, there exists a maximal class $\Lambda_M$, resp.~minimal class $\Lambda_m$, such that $\cF^-(\Lambda_M)\cap f^+ \neq \emptyset$, resp.~$\cF^+(\Lambda_m)\cap f^- \neq \emptyset$.
\end{enumerate}  
 \end{definition}

The point of this definition is just to say that one cannot have an infinite increasing or decreasing sequence of Smale classes, but since the order is not well define on singular Smale class, we have to treat them separately.

We will show that if an action is cocompact, or more generally has only finitely many Smale classes, then it is Smale-bounded.  Before doing this, we give the following equivalent definition, which is the property needed to prove density of non-corner fixed points.

\begin{lemma}\label{lem_charac_Smale_bounded}
An Anosov-like action $G$ is \emph{Smale-bounded} if and only if for any open set $U$, there exists a maximal Smale class $\Lambda_M$ and a minimal Smale class $\Lambda_m$ such that $\cF^-(\Lambda_M)\cap U\neq \emptyset$ and $\cF^+(\Lambda_m)\cap U\neq \emptyset$.
\end{lemma}

\begin{proof}
Suppose that $G$ is Smale bounded and consider $U$ an open set. By Axiom \ref{Axiom_dense}, there exists $x$ fixed by some $g\in G$ with $\cF^+(x)\cap U \neq \emptyset$. 

Up to switching $g$ with $g^{-1}$, we have that $g^n(U)$ will contain a face $f^-$ of $\cF^-(x)$ in its limit as $n\to +\infty$.
By assumption, there exists a minimal Smale class $\Lambda$ such that $\cF^+(\Lambda)\cap f^- \neq\emptyset$. Thus, for some high enough power $n$, $g^n(U)$ will intersects $\cF^+(\Lambda)$ and $G$-invariance of Smale classes yields the result.

The converse direction is obvious: if $x$ is any point in a (regular or singular) Smale class and $f^\pm$ is a face of $\cF^{\pm}(x)$, then we can take a trivially foliated open set $U$ containing a segment of $f^\pm$ in its boundary and the definition of Smale bounded follows immediately.
\end{proof}
 
 Next, we want to show that many natural Anosov-like actions (such as those coming from pseudo-Anosov flows on compact 3-manifolds) are Smale-bounded.
\begin{proposition}\label{prop_finite_or_cocmpact_implies_Smale_bounded}
If an Anosov-like action has only finitely many Smale classes, then it is Smale-bounded.
In particular, if the action of $G$ on $P$ is co-compact, then it is Smale-bounded.
\end{proposition}

\begin{proof} 
Suppose that $G$ has finitely many Smale classes $\Lambda_1, \dots, \Lambda_n$. 
By Lemma \ref{lem_only_prongs}, at least one of these classes is regular.  
By definition, each regular Smale class has minimal Smale class below it and maximal one above it for the Smale order.

Let $U$ be a trivially foliated open set in $P$.  By Axiom \ref{Axiom_dense}, there exists $x$ fixed by some $g\in G$ with $\cF^+(x)\cap U \neq \emptyset$. If $x$ is in a regular Smale class, then a minimal class below the class of $x$ will have a point $p$ with $\cF^+(p)\cap U\neq \emptyset$. 
Similarly, we can find a fixed point $y$ with $\cF^-(y)\cap U \neq \emptyset$ and if $y$ is in a regular class then we have some maximal $\Lambda$ with $\cF^-(\Lambda) \cap U \neq \emptyset$.  Thus, to prove the first statement of the proposition, we need only eliminate the case that the only possibilities for such $x$ (or $y$) are isolated prong singularities.  We treat the case of $x$, the other being symmetric. 

Suppose for a contradiction that every point $x$ in $\Fix_G$ such that $\cF^+(x)\cap U\neq \emptyset$ is an isolated prong singularity.
Let $x_1$ be such a point.  Then all the quadrants of $x_1$ are wandering lozenges (by Proposition \ref{prop:isolated_fixed_points}), so there exists a lozenge $L_1$ with corner $x_1$ such that $L_1 \cap U \neq \emptyset$. Now pick $x_2$ an isolated prong singularity such that $\cF^+(x_2)$ intersects $L_1\cap U$ and consider, similarly, a lozenge $L_2$ that intersects $L_1\cap U$. We can keep iterating this process and get a sequence $x_i$ of isolated prong singularities and lozenges $L_i$ which all intersect. Because there are only finitely many Smale classes, we can extract a subsequence such that all the $x_i$ are in the same $G$-orbit. In particular, (as in the proof of Lemma \ref{lem_only_prongs}) we can find $g\in G$ such that $x_k = gx_i$ for some $i$ and $k$ and such that $L_k = gL_i$ getting a (nonsingular) fixed point inside $L_i$ which is a contradiction.

Finally, we will show that if $G$ acts co-compactly, then the number of Smale classes must be finite:
Let $\Lambda_i$ be a Smale class. If $\Lambda_i$ is regular, then each $x \in \Lambda_i$ has a neighborhood $U_x$ that is product foliated, so any $y \in U_x$ satisfies $y \sim_G x$.    Suppose $G$ acts cocompactly on $P$. Since Smale classes are $G$-invariant, each is represented in a compact fundamental domain.  By the previous remark, there can be no accumulation points of distinct regular Smale classes, and this is also the case by definition for singular Smale classes, so there are only finitely many Smale classes.   
\end{proof} 

We note that there exist example of Anosov-like actions that are not cocompact but {\em are} Smale bounded, see Example \ref{ex:bounded_noncocompact}. 
It is also easy to build examples of actions that are not bounded, even in the flows setting, if one allows for non-compactness, see Example \ref{ex:non-bounded}.

\subsection{Density of non-corner fixed points}

Before we start with the proof, we will prove one more preliminary result that will help us simplify the argument. This is a refinement of the fact that being a corner point is a closed property among points fixed by nontrivial elements in $G$.\footnote{Notice that it may not be a closed property in general, see Proposition 3.5 of \cite{Fenley22}.}

\begin{lemma}\label{lem_corner_closed_property}
Let $x_n$ be a sequence of corners of lozenges $L_n$ that converge to some point $x$ fixed by a nontrivial element $g\in G$. Then $x$ is a corner of a lozenge $L$.
Moreover, up to taking a subsequence, the sides of $L_n$ abutting $x_n$ converge to the sides of $L$ abutting $x$.
\end{lemma}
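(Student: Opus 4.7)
The plan is to pass to a subsequence along which the two sides of $L_n$ at $x_n$ converge to rays of $x$ bounding a common quadrant, then exhibit a $g$-invariant lozenge whose sides at $x$ are exactly these limit rays.

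First I would reduce to the case $x_n \neq x$. If $x_n = x$ infinitely often, the result is immediate: there are only finitely many lozenges with $x$ as a corner (at most one per quadrant of $x$), so a subsequence of $L_n$ is constant and we take $L = L_n$. Assume henceforth that $x_n \neq x$ eventually; since singular points are isolated in $P$ by Lemma \ref{lem:adjacent_corners_discrete}, the points $x_n$ are nonsingular for large $n$. The two sides of $L_n$ abutting $x_n$ are rays $r_n^+ \subset \cF^+(x_n)$ and $r_n^- \subset \cF^-(x_n)$ emanating into one quadrant of $x_n$. Since $x$ has only finitely many quadrants, I pass to a further subsequence along which $r_n^{\pm}$ converges to rays $r^{\pm}$ of $\cF^{\pm}(x)$ bounding a single quadrant $Q$ of $x$. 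Replacing $g$ by an appropriate power, we may assume $g$ fixes every ray of $\cF^{\pm}(x)$; in particular $g$ fixes $r^+$ and $r^-$.

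The technical heart of the argument is to show that $r^+$ makes a perfect fit with some $\cF^-$-leaf $s^-$ on the $Q$ side. The set $I \subset \cL^-$ of $\cF^-$-leaves that intersect $r^+$ is an open connected interval containing a neighborhood of $\cF^-(x)$. By Proposition \ref{prop:no_product}, $P$ has no infinite product regions, hence $I$ is bounded on the far side; its topological boundary is either a single leaf---which automatically makes a perfect fit with $r^+$---or a family of pairwise nonseparated leaves, in which case Proposition \ref{prop_axiom4weak_implies_4strong} and Corollary \ref{cor:nonseparated_leaves} show these leaves are sides of a line of lozenges, and the unique one ``facing'' $r^+$ makes a perfect fit with it. Either way, we obtain such an $s^-$ on the $Q$ side, and similarly a $\cF^+$-leaf making a perfect fit with $r^-$ on the $Q$ side.

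Since $g$ fixes $r^+$, preserves the foliations, and preserves the side $Q$, and since the perfect fit leaf on that side is uniquely determined by $r^+$, we get $g \cdot s^- = s^-$. Thus $g$ fixes the two leaves $\cF^+(x)$ and $s^-$ which make a perfect fit, so Lemma \ref{lem:two_fix_points}(1) provides a $g$-invariant lozenge $L$ whose corners are the unique $g$-fixed points on $\cF^+(x)$ and on $s^-$: these are $x$ itself and some $y \in s^-$ (using Remark \ref{rem_single_fixed_point}). The sides of $L$ at $x$ are the rays of $\cF^{\pm}(x)$ bounding the quadrant of $x$ that contains $L$; the $\cF^+$-side must be the ray making a perfect fit with $s^-$, which is $r^+$, and the $\cF^-$-side is then forced to be $r^-$, the other ray bounding $Q$. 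Hence $L$ has $x$ as a corner with sides $r^{\pm}$, which are the limits of the sides of $L_n$ at $x_n$, yielding both the existence of $L$ and the ``moreover'' clause.

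The main obstacle I anticipate is the perfect-fit step: controlling the boundary of the interval $I$ in $\cL^-$. Boundedness comes from the no-product-region principle, but the degenerate possibility that the boundary is a nontrivial family of nonseparated leaves (rather than a single leaf) requires the structural input from Corollary \ref{cor:nonseparated_leaves} to identify which specific boundary leaf witnesses the perfect fit.
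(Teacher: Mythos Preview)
Your proof has a genuine gap in the ``technical heart'': the claim that $r^+$ necessarily makes a perfect fit with some $\cF^-$-leaf on the $Q$ side is false in general, and your argument for it does not use the hypothesis that $r_n^+$ are sides of lozenges. Proposition~\ref{prop:no_product} forbids infinite product regions, but it does \emph{not} say that the interval $I$ of $\cF^-$-leaves meeting a single ray $r^+$ is bounded in $\cL^-$; that conclusion requires trapping leaves between two transversals (as in Lemma~\ref{lem:bound_strip}), not one. In fact, if your argument worked as written it would apply to any ray of any $g$-fixed point, forcing every such point to be a corner; but non-corner fixed points exist (indeed are dense, cf.\ Proposition~\ref{prop:simple_dense}), and for such $x$ no ray of $\cF^\pm(x)$ can make a perfect fit, since by Lemma~\ref{lem:two_fix_points} that would make $x$ a corner. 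So the step ``hence $I$ is bounded on the far side'' is where the argument breaks.

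The paper's proof supplies exactly the missing ingredient by tracking the \emph{other} corners $y_n$ of $L_n$. After arranging all $x_n$ in one quadrant of $x$ and all $y_n$ in a common quadrant, the leaves $\cF^+(y_n)$ are trapped: each crosses either two sides of the fixed lozenge $L_1$ or one side of $L_1$ together with $\cF^-(x)$. That trapping is what lets one invoke no-infinite-product-regions to force convergence of $\cF^+(y_n)$. Since $\cF^+(y_n)$ makes a perfect fit with $\cF^-(x_n)$, the limit leaf makes a perfect fit with $\cF^-(x)$, and then Lemma~\ref{lem:two_fix_points} yields the lozenge $L$ at $x$. The point is that the perfect fit at $x$ is produced as a limit of the perfect fits at $x_n$, not conjured from $r^+$ alone.
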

What will be particularly useful to us is the contrapositive of the above result
\begin{corollary}\label{cor_non_corner_is_open}
Being a non-corner point is an open property among points fixed by nontrivial elements of $G$.
More precisely, assume $x$ is a point fixed by some nontrivial element $g\in G$. Let $Q$ be a quadrant of $x$. If $Q$ does not contain a lozenge with corner $x$, then there exists an open neighborhood $U$ of $x$ such that no lozenge contained in $U$ has a corner in $U\cap Q$.
\end{corollary}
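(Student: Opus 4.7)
The plan is to pass the perfect fit structure at each $x_n$ to a perfect fit structure at $x$, and then invoke Lemma \ref{lem:two_fix_points}(1) to produce a $g$-invariant lozenge with $x$ as a corner. First observe that $P$ must be non-trivial (otherwise no lozenges exist), so by Proposition \ref{prop:no_product} there are no infinite product regions in $P$. Since $x$ has only finitely many quadrants, after passing to a subsequence I may assume all $L_n$ lie in a consistent quadrant relative to a local orientation at $x$, giving a (possibly prong) quadrant $Q$ of $x$. The two sides $r_n^{\pm}\subset\cF^{\pm}(x_n)$ of $L_n$ abutting $x_n$ then converge, after a further subsequence, to rays $r^{\pm}\subset\cF^{\pm}(x)$ bounding $Q$.

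The main step is to produce a perfect fit at $x$ on the $Q$-side. Let $S^-\subset\cL^-$ be the $\cF^-$-saturation of $r^+$ on the $Q$-side (the set of $\cF^-$-leaves crossing $r^+$ on the $Q$-side) and define $S^+\subset\cL^+$ analogously for $r^-$. If both $\partial S^-$ and $\partial S^+$ were empty, every $\cF^-$-leaf in the $Q$-direction from $\cF^-(x)$ would cross $r^+$ and every $\cF^+$-leaf in the $Q$-direction from $\cF^+(x)$ would cross $r^-$, so $Q$ would be a region parameterized bijectively by $r^+\times r^-$, producing an infinite product region and contradicting Proposition \ref{prop:no_product}. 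Hence, without loss of generality, $\partial S^-\neq\emptyset$, and every leaf in $\partial S^-$ makes a perfect fit with $r^+$ by construction.

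After replacing $g$ by a power fixing every ray at $x$ (using that $x$ has finitely many rays), $g$ preserves $r^+$ and therefore $S^-$ and $\partial S^-$. The set $\partial S^-$ consists of pairwise nonseparated leaves, whose structure is controlled by Corollary \ref{cor:nonseparated_leaves} and Lemma \ref{lem_infinite_line_lozenges}: in either the finite or the scalloped-region case, a further power of $g$ fixes some $l^-\in\partial S^-$ individually. Applying Lemma \ref{lem:two_fix_points}(1) to this power of $g$ and the perfect fit between $\cF^+(x)$ and $l^-$ yields a $g$-invariant lozenge $L$ with corners on $\cF^+(x)$ and $l^-$; by Remark \ref{rem_single_fixed_point}, the corner on $\cF^+(x)$ is $x$. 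The sides of $L$ at $x$ bound $Q$, so they are precisely $r^+$ and $r^-$, which simultaneously establishes that $x$ is a corner of the lozenge $L$ and that the sides $r_n^{\pm}$ converge to the sides of $L$ at $x$.

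The main technical obstacle will be the continuity argument tying the perfect-fit rays $\tilde r_n^-\subset\cF^-(y_n)$ at the opposite corners of the $L_n$ to the boundary leaves of $S^-$, particularly in the case where the $y_n$ escape every compact subset of $P$; the elegance of the argument above is that this degenerate ``escape'' case is ruled out by the no-infinite-product-regions criterion without needing to track $y_n$ directly. The prong case for $x$ requires only reinterpreting ``quadrant'' as a prong-quadrant, but otherwise runs analogously.
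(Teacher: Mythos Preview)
Your argument has a genuine gap. After passing to the subsequence so that all $L_n$ lie in the quadrant $Q$, your ``main step'' never uses the lozenges $L_n$ again: you argue only from the fact that $x$ is a fixed point with quadrant $Q$, and you conclude that $r^+$ makes a perfect fit with a boundary leaf of $S^-$. If that argument were correct, it would show that \emph{every} quadrant of \emph{every} fixed point contains a lozenge with that corner---contradicting the very existence of non-corner fixed points (which the paper goes on to establish in Proposition~\ref{prop:simple_dense}).

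The specific false step is ``every leaf in $\partial S^-$ makes a perfect fit with $r^+$ by construction.'' A boundary leaf $l^-\in\partial S^-$ does satisfy one half of the perfect-fit condition: $\cF^-$-leaves near $l^-$ (on the $\cF^-(x)$-side) cross $r^+$. But the other half---that $\cF^+$-leaves near $r^+$ on the $Q$-side cross $l^-$---is \emph{not} automatic. The set of $\cF^+$-leaves crossing $l^-$ can be bounded away from $r^+$; this is exactly what happens at a non-corner fixed point. Your no-product-region argument only shows that $S^-$ or $S^+$ has some far boundary, not that this boundary is a perfect fit with the relevant ray.

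The paper's proof supplies precisely the missing ingredient by tracking the opposite corners $y_n$ of $L_n$: the leaves $\cF^+(y_n)$ already make perfect fits with $\cF^-(x_n)$, and after arranging $L_n\cap L_1\neq\emptyset$ one can trap $\cF^+(y_n)$ between two fixed transversals, so they must converge, and a limit leaf inherits a perfect fit with $\cF^-(x)$. In other words, the perfect fit at $x$ is genuinely obtained as a limit of the perfect fits coming from the $L_n$; the step you describe as one you can ``rule out without needing to track $y_n$ directly'' is in fact essential.
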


\begin{proof}[Proof of Lemma \ref{lem_corner_closed_property}]
Up to taking a subsequence, we may assume that all $x_n$ are pairwise totally linked, totally linked with $x$, and nonsingular. We may further assume that all $x_n$ are in the same quadrant $Q$ of $x$ and that their leaves converge monotonically towards those of $x$ and hence all the $x_n$ lie in the product foliated rectangle defined by $\cF^{\pm}(x_1)$, $\cF^{\pm}(x)$ and their intersections.  
Calling $y_n$ the other corner of $L_n$, we can also assume that the $y_n$ are all in the same quadrant $Q'$ of $x$. (The quadrant $Q'$ could be the same as $Q$, or one of the two adjacent ones, or possibly, if $x$ is nonsingular, the opposite quadrant).  This implies that, for any $n$ we have $L_n \cap L_1 \neq \emptyset$. 

The possible configurations, depending on whether $y_n$ lies in $Q$, in an adjacent quadrant $Q'$, or in an opposite quadrant $Q'$ are shown in Figure \ref{fig:cases_configurations}. 

\begin{figure}
   \labellist 
  \small\hair 2pt
 \pinlabel $x_1$ at 52 53
  \pinlabel $x$ at 20 26
\pinlabel $y_1$ at 90 85
  \pinlabel $y_n$ at 71 75
   \pinlabel $x_1$ at 217 88
  \pinlabel $x$ at 181 62
\pinlabel $y_1$ at 230 25
  \pinlabel $y_n$ at 260 45
   \pinlabel $x_1$ at 430 88
  \pinlabel $x$ at 396 62
\pinlabel $y_n$ at 355 25
  \pinlabel $y_1$ at 388 45
    \endlabellist
     \centerline{ \mbox{
\includegraphics[width=14cm]{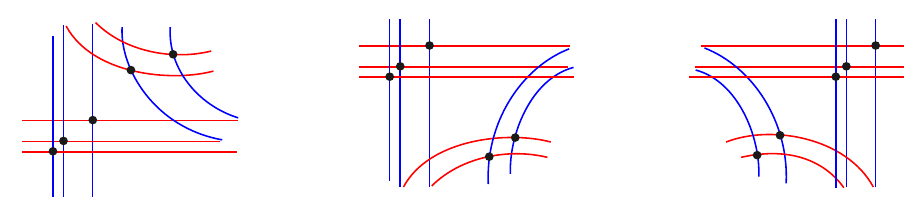} }}
\caption{Possible configurations of sequences of corners}
\label{fig:cases_configurations}
\end{figure}

In each case, we will have (up to switching labels of $\cF^-$ and $\cF^+$ and passing to a further subsequence) one of the following properties: 
\begin{itemize} 
\item $\cF^+(y_n)$ intersects two sides of $L_1$, or
\item $\cF^+(y_n)$ intersects one side of $L_1$ and $\cF^-(x)$
\end{itemize} 
for all $n$.  

Since there are no infinite product regions, $\cF^+(y_n)$ must converge to a single leaf, or to a union of nonseparated leaves.  Since $\cF^+(y_n)$ makes a perfect fit with $\cF^-(x_n)$, we have that one leaf in the limit makes a perfect fit with $\cF^-(x)$.  
Since $x$ was assumed to be fixed by $g$, and one of its leaves makes a perfect fit, Lemma \ref{lem:two_fix_points} implies that $x$ is the corner of a lozenge $L$. By construction, one side of $L$ through $x$ is in the limit of the side of $L_n$, one easily deduce that the other side of $L$ through $x$ is also in the limit of a side of $L_n$.
\end{proof}

We also make the following observation, which generalizes the well-know fact that hyperbolic attractors (resp.~repellers) of an Anosov flow are saturated by unstable (resp.~stable) leaves.
\begin{observation}\label{obs:maximal_are_F+_saturated}
If $\Lambda$ is a maximal Smale class in $\cR_G$, then $\cF^+(\Lambda)\cap \cR_G = \Lambda$. Similarly if  $\Lambda$ is a minimal Smale class in $\cR_G$, then $\cF^-(\Lambda) \cap \cR_G = \Lambda$.
\end{observation}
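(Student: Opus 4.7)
The approach is to unfold the definitions of the Smale order and Smale classes and directly apply the maximality hypothesis. I will prove only the maximal statement; the minimal one follows by swapping the roles of $\cF^+$ and $\cF^-$ (and the direction of the Smale order) throughout.

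The inclusion $\Lambda \subseteq \cF^+(\Lambda) \cap \cR_G$ is immediate: by definition $\Lambda \subseteq \cR_G$, and every $x \in \Lambda$ lies on its own leaf $\cF^+(x) \subseteq \cF^+(\Lambda)$. For the reverse inclusion, I take an arbitrary $y \in \cF^+(\Lambda) \cap \cR_G$ and aim to show that $y$ belongs to the same Smale class as some element of $\Lambda$. By hypothesis there exists $x \in \Lambda$ with $y \in \cF^+(x)$, i.e.\ $\cF^+(x) = \cF^+(y)$. The key observation is then that $y$ itself is a common point of $\cF^+(x)$ and $\cF^-(y)$, so $\cF^+(x) \cap \cF^-(y) \neq \emptyset$. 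Taking $g = \mathrm{id}$ in Definition~\ref{def:smale_class}, this means precisely that $x \lG y$.

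By Lemma~\ref{lem_lG_reflexive_transitive}, the relation $\lG$ descends to a partial order on the collection of Smale classes in $\cR_G$. Hence $\Lambda \lG [y]$, where $[y]$ denotes the Smale class containing $y$. Maximality of $\Lambda$ forces $[y] = \Lambda$, i.e.\ $y \in \Lambda$, which completes the proof.

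The statement is essentially a formal consequence of the definitions, so there is no real obstacle; the only point worth being careful about is the direction of the Smale order. The convention $x \lG y \iff \cF^+(x) \cap \cF^-(gy) \neq \emptyset$ (``unstable of $x$ meets stable of $y$'') makes maximal classes play the role of attractors, which is consistent with the conclusion that such classes are saturated by $\cF^+$-leaves (weak unstable leaves in the flow setting).
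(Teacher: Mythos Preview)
Your proof is correct and considerably more direct than the paper's.  The paper does not apply maximality directly to the point $z$ in question: instead it approximates $z$ by nearby points $y_n \in \cR_G$ (using Axiom~\ref{Axiom_dense}), applies maximality to conclude $y_n \in \Lambda$ (this step is exactly your argument, applied to $y_n$ rather than $z$), and then invokes the product structure on Smale classes (Proposition~\ref{prop_basic_product_structure}) together with Corollary~\ref{cor:char_smale_class} to transfer membership back to $z$.  Your observation that $z$ itself already witnesses $\cF^+(x)\cap\cF^-(z)\neq\emptyset$ short-circuits all of this.

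What the paper's longer route buys is a slightly stronger conclusion along the way: it shows that \emph{every} point of $\cF^+(x)$ (not just those in $\cR_G$) is accumulated by points of $\Lambda$, i.e.\ $\cF^+(\Lambda)\subset\bar\Lambda$.  This stronger fact is later used informally when the paper speaks of $\Lambda$ being ``$\cF^+$ saturated'' in the proof of Proposition~\ref{prop:simple_dense}.  For the Observation as stated, however, your argument suffices.
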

\begin{rem}
The above statement does not hold for the closure $\bar\Lambda$ in $\Fixbar_G$: It is possible to have an example where a prong $p$ contained in a maximal Smale class $\bar\Lambda$ but such that one ray of $\cF^+(p)$ is not in $\bar\Lambda$. See the example built in Proposition \ref{prop_chain-recurrent_non_transitive}.
\end{rem}

\begin{proof}
The two statements are equivalent up to switching $\cF^+$ and $\cF^-$, so we only prove the first one.

Assume that $\Lambda \subset \cR_G$ is a maximal Smale class.  Let $x\in \Lambda$ and let $z\in \cF^+(x)$. By density of fixed leaves (Axiom \ref{Axiom_dense}), together with the fact that $\mathrm{Sing}$ is discrete by Lemma \ref{lem:adjacent_corners_discrete}, there exists a sequence $y_n \in \cR_G$ such that the intersections $p_n = \cF^+(x) \cap \cF^-(y_n)$ converge to $z$. By maximality of $\Lambda$, we must have $y_n \in \Lambda$. In particular $y_n \sim_G x$ and thus Proposition \ref{prop_basic_product_structure} implies that $z$ is accumulated by points $z_n \in \Lambda$, which are totally linked with $z$.  Thus, by Corollary~\ref{cor:char_smale_class} we have $z\in \Lambda$.
\end{proof}

\begin{proposition}[Density of non-corner fixed points] \label{prop:simple_dense} 
Let $\Lambda$ be an extremal Smale class.  Then the set of non-corner fixed points is dense in $\Lambda$. 
\end{proposition}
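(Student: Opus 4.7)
The plan is three-step: reduce to the maximal case, establish density of fixed points in $\Lambda$, and upgrade to density of non-corner fixed points.

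By swapping $\cF^+$ and $\cF^-$ if necessary, I may assume $\Lambda$ is maximal, so Observation~\ref{obs:maximal_are_F+_saturated} gives $\cF^+(\Lambda)\cap \cR_G=\Lambda$. For density of fixed points in $\Lambda$, applying Proposition~\ref{prop:product_structure_general} with $a=b=x\in\Lambda$ and $g=\mathrm{id}$ produces, in any neighborhood of $x$, a fixed point totally linked with $x$, hence in $\Lambda$ by Corollary~\ref{cor:char_smale_class}. (If $x\notin\Fix_G$, one first approximates $x$ by a nearby fixed point of $\Lambda$, as in the proof of Proposition~\ref{prop:product_structure_general}.)

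Assume $\Lambda$ contains more than one $G$-orbit (otherwise the statement is degenerate). Since fixed points are now dense in $\Lambda$ and non-corner is an open condition on $\Fix_G$ by Corollary~\ref{cor_non_corner_is_open}, it suffices to show that every fixed corner $p\in\Lambda$ is a limit of non-corner fixed points of $\Lambda$. Given such a $p$, I find a nearby fixed point $r\in\Lambda$ with a TL-subquadrant $Q$ containing no lozenge at $r$: if some quadrant of $p$ is lozenge-free, take $r=p$ and let $Q$ be that quadrant; otherwise $p$ has lozenges in all four quadrants, at least one of which is regular (else $p$ would be isolated by Proposition~\ref{prop:isolated_fixed_points}), and the non-corner criterion \cite[Lemma~2.29]{BFM} forces any interior fixed point $r$ of that regular lozenge to have three lozenge-free quadrants, any of which serves as $Q$.

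Corollary~\ref{cor_non_corner_is_open} at $r$ provides a neighborhood $V$ of $r$ in which no lozenge contained in $V$ has a corner in $V\cap Q$; Lemmas~\ref{lem:wand_subquadrant_share_sides}, \ref{lem:singular_wandering_quadrants} and Corollary~\ref{cor:wandering_then_lozenge} identify $Q$ as a \emph{regular} TL-subquadrant of $r$, so the first step yields a sequence $q_n\in Q\cap V\cap\Lambda\cap \Fix_G$ with $q_n\to r$. Arguing by contradiction, suppose each $q_n$ is a corner of a lozenge $L_{q_n}$; after extracting a subsequence, all the $L_{q_n}$ extend into a common quadrant of $q_n$, and by Lemma~\ref{lem_corner_closed_property} their sides at $q_n$ converge to the sides of a lozenge $L$ at $r$. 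A case analysis on the four quadrants of $q_n$ rules out every possibility: in three of them, either $L_{q_n}$ would contain $r$ in its interior (contradicting \cite[Lemma~2.29]{BFM} at $r$, since $r$'s existing lozenges lie in the wrong quadrants) or would eventually be contained in $V$ (contradicting the choice of $V$); the only remaining case forces $L$ to lie in $Q$, contradicting the choice of $Q$. The main obstacle is precisely this case analysis, particularly ruling out convergence of $L_{q_n}$ to an existing lozenge of $r$ in a quadrant different from $Q$, which will likely require a generic choice of the $q_n$ (e.g., avoiding the sides of $r$'s pre-existing lozenges) to guarantee that $L$ is genuinely new.
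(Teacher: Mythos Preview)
Your local strategy has a genuine gap, and it is precisely the one you flag at the end.  Suppose we are in your case 2: $p$ has lozenges in all four quadrants, $r$ is a fixed point interior to one of these (regular) lozenges $L_p$, and by \cite[Lemma~2.29]{BFM} the unique quadrant of $r$ that can contain a lozenge is $Q_3$, the one opposite $p$.  You pick the lozenge-free quadrant $Q=Q_1$ and $q_n\in Q_1\cap \Fix_G$ with $q_n\to r$.  But for $n$ large, $q_n$ also lies in $L_p$ (an open set containing $r$), and $p$ is a pivot; so the very same non-corner criterion forces every $q_n$ to have its lozenge, if any, in the $Q_3$-direction.  Lemma~\ref{lem_corner_closed_property} then only tells you $r$ has a lozenge in $Q_3$, which you already knew.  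There is no contradiction, and the suggested ``generic choice of $q_n$'' cannot help: the constraint is not on the position of $q_n$ but on the direction of its lozenge, and that direction is pinned down for \emph{all} $q_n\in L_p$.  Your claim that in three of the four cases $L_{q_n}$ is ``eventually contained in $V$'' is also not justified; lozenges extend to perfect fits and are never contained in small product neighborhoods.

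The paper's proof is genuinely different and sidesteps this obstruction.  In the non-transitive case it invokes the structure theory of Section~\ref{sec:smale_chains}: a Smale chain provides a corner $c\in\bar\Lambda$ with a wandering lozenge $L_1$ on one side and two regular TL-subquadrants $R_1,R_2$ on the other.  The wandering lozenge at $c$ (via \cite[Lemma~2.29]{BFM}) forces points of $R_1\cap\Fix_G$ to have lozenges only in the \emph{lower} half.  One then climbs through a finite line of wandering lozenges to find another corner $c'$ with a regular quadrant $Q$ \emph{above}, where the analogous argument forces lozenges only in the \emph{upper} half.  Finally, the $\cF^+$-saturation of $\Lambda$ and the dynamics of an element fixing a point in $R_1$ are used to produce a fixed point in $\Lambda$ lying simultaneously in (a translate of) $D_1$ and $D_2$, hence with no lozenge at all.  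The key idea you are missing is that one needs constraints coming from \emph{two} independent wandering configurations, combined via the group action, rather than a single local picture at $r$.
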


\begin{rem}\label{rem_density_Anosov_case}
In the case of an Anosov flow, we can give a quick idea of the argument that we will make below: An extremal Smale class contains a boundary periodic orbit $\alpha$ (as in Corollary \ref{cor_boundary_leaves_and_boundary_points}) such that, for an appropriate choice of orientations, its wandering lozenges will be in the half-space ``above'', and another boundary orbit $\beta$ for which they will be below. By transitivity of the flow inside each Smale class, a dense set of periodic orbits will visit the two lower quadrants of $\alpha$ as well as the upper quadrants of $\beta$. Looking at the configuration in the orbit space, any such orbit cannot be a corner orbit.
\end{rem}

\begin{proof}[Proof of Proposition \ref{prop:simple_dense}]
For concreteness, assume that $\Lambda$ is maximal rather than minimal.  
If $\Lambda$ is the unique Smale class, then, by Theorem \ref{thm:transitive}, $G$ is a transitive Anosov-like action in the sense of \cite{BFM}, so the non-corner fixed points are dense by Lemma 2.30 of \cite{BFM}.

We now assume that there are at least two distinct Smale classes, and we will find a non-corner point of $\Lambda$ fixed by some nontrivial element of $G$.  Since the non-corners form an open subset of $\Fix_G$ (Lemma \ref{lem_corner_closed_property}) and $G$ acts topologically transitively on $\Lambda$ by Theorem \ref{thm:transitive}, this is enough to prove the proposition. 

By Theorem \ref{thm:Smale_chains_separate}, there exists a Smale chain $\cW$ with a corner $c\in \bar\Lambda$. By Observation \ref{obs:maximal_are_F+_saturated}, $\Lambda$ is $\cF^+$ saturated.  

Let $L_1$ be a lozenge of $\cW$ with corner $c$.  Since $\Lambda$ is $\cF^+$ saturated (Observation \ref{obs:maximal_are_F+_saturated}), there exists 
a face of  $\cF^+(c)$ that is accumulated by points of $\Lambda$; thus $c$ has a pair of regular TL subquadrants, that we denote by $R_1$ and $R_2$ which are adjacent (and share a ray of $\cF^-(c)$).  Up to switching $L_1$ to another wandering subquadrant if needed, we assume $L_1$ is adjacent to $R_1$. 
If either $R_1$ or $R_2$ contains non-corner points of $\Fix_G$, these lie in $\Lambda$ and we are done, so we assume such points are all corners.   

For concreteness, we orient $R_1 \cup R_2$ so that the face in $\cF^+(c)$ bounding $R_1 \cup R_2$ is horizontal, and $L_1$ lies in the quadrant above $R_1$.  
We first establish the following claim.  

\begin{claim} \label{claim:lower_quadrant}
There exists a dense set $D_1$ in $R_1 \cap \Fix_G$, such that the intersection of $D_1$ with any open set has nonempty interior, and such that, if $x\in D$ is the corner of a lozenge, then this lozenge is in a lower quadrant of $x$.
\end{claim}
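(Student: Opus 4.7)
The plan is to prove Claim \ref{claim:lower_quadrant} by combining topological transitivity of $G$ on $\Lambda$ (Theorem \ref{thm:transitive}) with the construction of a ``dual'' boundary corner $\beta \in \bar\Lambda$ whose wandering lozenges lie in a lower quadrant (opposite to those of $c$), as foreshadowed in Remark \ref{rem_density_Anosov_case}, and to close the argument using the limit property of corners from Lemma \ref{lem_corner_closed_property}.

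First, by Corollary \ref{cor_non_corner_is_open}, the property ``$x$ is not the corner of a lozenge in any upper quadrant of $x$'' is open among points of $\Fix_G$, so it suffices to find, for each nonempty open $U \subset R_1$, at least one $x \in U \cap \Fix_G$ with no upper lozenges; the desired $D_1$ is then the union of the resulting open pieces inside $R_1 \cap \Fix_G$. Density of $R_1 \cap \Fix_G$ in $R_1 \cap \Lambda$ follows from Axiom \ref{Axiom_dense}, the $\cF^+$-saturation of $\Lambda$ (Observation \ref{obs:maximal_are_F+_saturated}), and the product structure of Smale classes (Proposition \ref{prop_basic_product_structure}).

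The main step is to exhibit a boundary corner $\beta \in \bar\Lambda$ whose wandering lozenges lie in a lower quadrant, in the orientation fixed near $c$. Since we reduced to the case of at least two distinct Smale classes, there is some $\Lambda' \lG \Lambda$ with $\Lambda' \neq \Lambda$, and Theorem \ref{thm:Smale_chains_separate} produces a Smale chain separating them whose corners on the $\Lambda$-side are boundary fixed points of $\Lambda$ with wandering pointing toward $\Lambda'$. Working in an index-two orientation-preserving subgroup of $G$ (which still acts topologically transitively on $\Lambda$), I would apply a group element to align the wandering direction at such a corner with a lower quadrant relative to our chosen orientation, and take $\beta$ to be the result. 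Suppose, for contradiction, that there is a nonempty open $U \subset R_1$ in which every point of $U \cap \Fix_G$ is the corner of an upper lozenge. Taking a shrinking sequence of neighborhoods $V_n$ of $\beta$ inside its upper TL-subquadrant (which is regular, by construction of $\beta$) and using transitivity on $\Lambda$, I find $g_n \in G$ (orientation-preserving) and $x_n \in U \cap \Fix_G$ with $g_n x_n \in V_n$, hence $g_n x_n \to \beta$. Each $g_n x_n$ is a corner of an upper lozenge (images under $g_n$ of upper lozenges at $x_n$, since $g_n$ preserves orientation). Lemma \ref{lem_corner_closed_property} then forces $\beta$ itself to be a corner of a lozenge in an upper quadrant, contradicting the fact that $\beta$'s wandering lozenges lie in a lower quadrant and that the upper TL-subquadrant at $\beta$ is regular.

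The main obstacle will be the careful construction of $\beta$ with wandering in the direction opposite to $c$'s while aligning with the chosen local orientation near $c$. One must handle the possibility that $G$-elements reverse orientation (requiring passage to an index-two subgroup) and ensure that the Smale chain produced by Theorem \ref{thm:Smale_chains_separate} really does contribute corners in $\bar\Lambda$ with wandering pointing into the region of the ``lower'' (as opposed to ``upper'') Smale class. A secondary technical point is arranging the sequence $g_n x_n$ to converge to $\beta$ itself, which uses topological transitivity together with the product structure of $\Lambda$ near $\beta$.
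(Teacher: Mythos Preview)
Your proposal has two genuine gaps.

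\textbf{Construction of $\beta$.} You assert that one can find a boundary corner $\beta \in \bar\Lambda$ whose wandering lozenges lie in a \emph{lower} quadrant with respect to the orientation fixed near $c$, by ``applying a group element to align the wandering direction.'' But you give no reason this is possible. Theorem \ref{thm:Smale_chains_separate} produces a corner with wandering lozenges, but a priori that corner could look exactly like $c$ itself (wandering above, regular below), and no orientation-preserving group element will flip that. In fact, producing such a $\beta$ is precisely what the paper does \emph{after} Claim \ref{claim:lower_quadrant}: it follows the line of wandering lozenges upward from $L_1$ to find a corner $c'$ with wandering below, and then re-applies the claim with upper and lower reversed. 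So your strategy, as written, presupposes what the overall argument is trying to establish.

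\textbf{The final contradiction.} Even granting $\beta$, your contradiction does not close. You say that Lemma \ref{lem_corner_closed_property} forces $\beta$ to have an upper lozenge, ``contradicting \ldots\ that the upper TL-subquadrant at $\beta$ is regular.'' But a regular TL-subquadrant can perfectly well be a (regular) lozenge; nothing prevents $\beta$ from having wandering lozenges below \emph{and} regular lozenges above. So there is no contradiction.

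By contrast, the paper's argument is entirely local to $c$ and avoids constructing a dual corner. For $x \in R_1 \cap \Fix_G$, the perfect fit at $c$ coming from the sides of $L_1$ immediately rules out a lozenge at $x$ in the upper quadrant toward $c$ via the non-corner criterion (\cite[Lemma 2.29]{BFM}). For the other upper quadrant, the paper picks a point $y \in R_2 \cap \Fix_G$ totally linked with $x$, uses Corollary \ref{cor:TL_product} to find $z \in \Fix_G$ near $x$ with an orientation-preserving $g$ sending $z$ near $y$, and applies the non-corner criterion at $g(z)$ using the structure of the remaining quadrant of $c$ (singular, or a wandering lozenge). This eliminates the second upper quadrant for $z$, producing the desired open subset $D_1$.
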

\begin{proof} 
Suppose $x$ is a corner point in $R_1 \cap \Fix_G$. Lemma 2.29 in \cite{BFM}  says the lozenge cannot contain $c$, due to the perfect fit formed by sides of $L_1$.  This eliminates one of the two upper quadrants (the top right).  To finish the proof, we will show that any small neighborhood of $x$ contains an open set of points of $R_1 \cap \Fix_G$ that are either non-corner, or only have lozenges in their lower quadrant.
  To do this, consider some nonsingular corner point $y$ in $R_2$ totally linked with $x$.  By Corollary \ref{cor:TL_product}, for any trivially foliated neighborhood of $x$, there exists $z\in R_1 \cap \Fix_G$ in that neighborhood and $g \in G$ such that $g(z)$ lies arbitrarily close to $y$, and we can take $g$ to preserve the local orientation. By the first part $z$ cannot have a lozenge in its upper left quadrant. 
Now, either $c$ is singular, or it is nonsingular and its remaining quadrant is a wandering lozenge.  In either case, applying Lemma 2.29 of \cite{BFM} to $g(z)$ shows that it cannot be the corner of a lozenge $L$ in the quadrant containing $c$.  Equivalently, $z$ cannot have a lozenge in its other upper quadrant, proving the claim. 
\end{proof}

Let $l^-$ be a leaf of $\cF^-$ passing through $R_1$ and $L_1$. We will first show that there exists a finite line of wandering lozenges $L_1, L_2, \ldots L_k$ (possibly with $k=1$) intersecting $l^-$ such that the TL-subquadrant $Q$ intersecting $l^-$ above $L_k$ (i.e. on the opposite side of $L_k$ as $R_1$) is a {\em regular} TL subquadrant.    The argument for this claim as well as the next step of the proof is illustrated in Figure \ref{fig:R1R2}, which shows the two possible configurations for $c'$ depending on the parity of $k$. 

\begin{figure}
   \labellist 
  \small\hair 2pt
   \pinlabel $R_1$ at 80 30
    \pinlabel $c$ at 108 38
  \pinlabel $R_2$ at 130 30
 \pinlabel $c'$ at 40 125
 \pinlabel $L_1$ at 80 60
  \pinlabel $L_3$ at 80 120
    \pinlabel $L_2$ at 80 90
 \pinlabel $y_j$ at 55 144
 \pinlabel $R_1$ at 250 30
 \pinlabel $R_2$ at 310 30  
 \pinlabel $c$ at 288 38
 \pinlabel $L_1$ at 250 60
  \pinlabel $L_2$ at 250 90
 \pinlabel $L_3$ at 250 120
  \pinlabel $L_4$ at 250 144
 \pinlabel $c'$ at 290 155
 \pinlabel $y_j$ at 270 170
 \endlabellist
     \centerline{ \mbox{
\includegraphics[width=10cm]{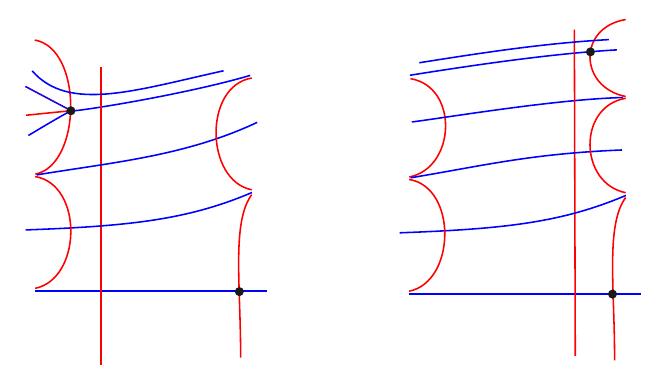} }}
\caption{Finite lines of wandering lozenges, and points $y_j$ that will be totally linked with any point sufficiently close to $c$ in $R_1$.}
\label{fig:R1R2}
\end{figure}

To see this, consider a maximal line $\cL$ of lozenges intersecting $l^-$.  If $\cL$ is an infinite line, then it is bi-infinite and forms a scalloped region. Thus, by Lemma \ref{lem_infinite_line_lozenges}, there exists $h \in G$ translating lozenges along the line, and so either $h(R_1)$ or $h^{-1}(R_1)$ lies above $L_1$ in the line and is regular.  We can then take $Q$ to be the first regular lozenge in $\cL$ above $L_1$.  

If instead $\cL$ is finite, consisting of $L_1, L_2, \ldots L_k$, (and possibly some lozenges below $L_1$), consider the TL subquadrant $Q$ above $L_k$.  By Corollary \ref{cor:wandering_then_lozenge}, if $Q$ were wandering, then it would be a lozenge, contradicting maximality of $\cL$.  This proves the claim.  

Let $c'$ be the corner of $Q$ shared with $L_k$.  We next argue that $c' \in \bar{\Lambda}$.  
To see this, consider a sequence of regular points $x_1, x_2, \ldots$ in $R_1$ approaching $c$, and a sequence $y_1, y_2, \ldots$ in $Q$ approaching $c'$.  Then for all $i, j$ we have that $\cF^-(x_i)$ and  $\cF^-(y_j)$ intersect each lozenge of the line, and so for sufficiently large $i, j$ we will have that $x_i$ and $y_j$ are totally linked, and thus in the same Smale class. 

Apply the argument of Claim \ref{claim:lower_quadrant} above with $c'$, $Q$ and $L_k$ playing the roles of $c$, $R_1$ and $L_1$ (and thus with the roles of ``upper" and ``lower" reversed). 
We conclude that there is a dense set $D_2$ in $Q \cap \Fix_G$, with nonempty interior in any open set, so that no point of $Q$ can be the corner of a lozenge that lies in one of its {\em lower} quadrants (since $Q$ lies above rather than below $L_k$).  

Finally, with this set-up, consider some non trivial element $f \in G$ with a fixed point in $R_1$ and preserving each ray through its fixed point.  Let $z$ be an interior point of $D_2 \subset Q$, which  is in $ \Fix_G \subset \Lambda$.  Then for some $n$ sufficiently large, we have $f^n(\cF^+(z)) \cap R_1 \neq \emptyset$.   Since $\Lambda$ is $\cF^+$ saturated, this means there is a point $z'$ of $\Fix_G \cap \Lambda \cap R_1$ with $f^{-n}(z') \in Q$, and thus by density of $D_1$, we can find such a point $z'$ that lies in $D_1$.  Since $f$ preserves local orientations, such a point $z'$ cannot have a lozenge in {\em any} of its quadrants, and thus is non-corner, which is what we needed to show. 
\end{proof}

\begin{corollary}\label{cor_simple_leaves_are_dense}
If $G$ is a Smale-bounded Anosov-like action, then the set of non-corner fixed leaves in $\cF^+$ and those in $\cF^-$ are dense.

\end{corollary} 

\begin{proof}
By Lemma \ref{lem_charac_Smale_bounded}, the $\cF^+$-saturation of the union of minimal Smale classes is dense in $P$, and so is the $\cF^-$-saturation of the union of maximal Smale classes. The result follows then trivially from Proposition \ref{prop:simple_dense}.
\end{proof}

 \section{Action at infinity determines the plane} \label{sec:boundary}
 
 As mentioned in the introduction, a plane $P$ with two transverse, possibly singular, foliations $\cF^{\pm}$ admits a natural compactification by a circle at infinity, denoted $\Pbound$, such that any foliation-preserving action of a group on the plane by homeomorphisms extends to an action on the circle.  
 
 A consequence of the main theorem of \cite{BFM} is that the {\em transitive} pseudo-Anosov flows on a compact 3-manifold $M$ are completely classified up to orbit equivalence by the induced actions of $\pi_1(M)$ on the circles at infinity of their orbit spaces (\cite[Theorem 1.5]{BFM}).   In this section we discuss the boundary action in the general (possibly nontransitive) case for Anosov-like actions and give a simpler, unified proof that the action at infinity determines the flow in all settings.  
 
 We will not recall all the details of the construction and properties of $\Pbound$ here, but refer the reader to Section 3 of \cite{BFM} and references therein. 
 In particular, we will use the following result.  Although proved for transitive Anosov-like actions in \cite{BFM}, the proof goes through with only the axioms we assume here.  
 
  \begin{proposition}[See \cite{BFM} Proposition 3.6] \label{prop:boundary_action_general}
Let $(P,\cF^+, \cF^-)$ be a nontrivial bifoliated plane with Anosov like action of $G$, and $g \neq 1 \in G$. 
\begin{enumerate} 
\item If $g$ fixes a non-corner point $x$, then the only points of $\Pbound$ fixed by $g$ are endpoints of $\cF^\pm(x)$. 

\item If $g$ fixes all corners in a maximal chain of lozenges $\cC$, then the set of fixed points of $g$ on $\Pbound$ is the closure of the set of endpoints of the sides of lozenges in $\cC$.  

\item If $g$ acts freely on $P$, then it either has at most two fixed points on $\Pbound$, or has exactly four fixed points and preserves a scalloped region.  
\end{enumerate} 
\end{proposition}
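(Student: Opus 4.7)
The plan is to split by cases on the structure of $\mathrm{Fix}(g)\subset P$, using the key general principle that fixed points of $g$ on $\partial P$ arise either as endpoints of $g$-invariant leaves (or faces of singular leaves) of $\cF^\pm$, or as limits of such endpoints, and that by Axiom \ref{Axiom_A1} any $g$-invariant leaf carries a fixed point of $g$ in $P$. Since the fixed set of $g$ on $\partial P$ is closed, in each part the containment ``endpoints of $g$-invariant leaves are fixed'' is automatic; the real work is the reverse inclusion, ruling out ``extra'' fixed boundary points.

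For part (1), Lemma \ref{lem:two_fix_points}(2) makes $x$ the unique fixed point of $g$ in $P$, so $\cF^\pm(x)$ are the only $g$-invariant leaves, producing at most four fixed endpoints at infinity. To rule out additional fixed boundary points, I would exploit Axiom \ref{Axiom_A1}: $g$ is a topological expansion on one of $\cF^\pm(x)$ and a contraction on the other, which extends to a North--South dynamics of $g$ on each of the arcs of $\partial P$ cut off by the endpoints of $\cF^\pm(x)$. Any putative extra fixed boundary point $\xi$ could be approximated (using Axiom \ref{Axiom_dense}) by endpoints of fixed leaves through points of $\mathrm{Fix}_G$ near $\xi$; such leaves cannot be $g$-invariant as they would produce another fixed point of $g$ in $P$, and the North--South dynamics on $\partial P$ would then move $\xi$, giving a contradiction.

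For part (2), every corner of $\cC$ is fixed, so the sides of all lozenges of $\cC$ are $g$-invariant leaves and their endpoints at infinity are $g$-fixed; the closure of this set is therefore contained in $\mathrm{Fix}(g)\cap\partial P$. For the converse, suppose $\xi\in\mathrm{Fix}(g)\cap\partial P$ is not in this closure. Approximating $\xi$ by endpoints of $g$-invariant leaves, each such approximating leaf contains a fixed point of $g$ by Axiom \ref{Axiom_A1}. By Lemma \ref{lem:two_fix_points}, each of these fixed points is a corner of a lozenge, and using Proposition \ref{prop_axiom4weak_implies_4strong} together with Corollary \ref{cor:nonseparated_leaves}, any such corner can be connected to $\cC$ by a chain of $g$-invariant corners, contradicting maximality of $\cC$ unless $\xi$ already lies in the closure of endpoints of sides of $\cC$.

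For part (3), freeness of $g$ on $P$ combined with Axiom \ref{Axiom_A1} forbids $g$ from fixing any leaf of $\cF^\pm$. Hence every $g$-fixed point on $\partial P$ must correspond to a $g$-fixed end of one of the leaf spaces $\cL^\pm$. Each $\cL^\pm$ is a simply connected (possibly non-Hausdorff) 1-manifold, and the cyclic group $\langle g\rangle$ acts freely on it, so by a standard translation-number argument $g$ translates along a unique axis with at most two fixed ends, yielding at most two fixed boundary points per foliation and at most four in total. When exactly four occur, both leaf spaces have both ends fixed; tracing leaves accumulating on each fixed end produces bi-infinite families of pairwise non-separated leaves in both foliations, which by Corollary \ref{cor:nonseparated_leaves} and Lemma \ref{lem_infinite_line_lozenges} assemble into a $g$-invariant scalloped region. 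The main obstacle in the argument is uniformly controlling the reverse containment via the boundary topology: concretely, showing that any fixed boundary point can be approximated by endpoints of $g$-fixed leaves, which requires a careful interplay of Axioms \ref{Axiom_A1}, \ref{Axiom_dense}, and \ref{Axiom_non-separated} together with Lemma \ref{lem:two_fix_points}.
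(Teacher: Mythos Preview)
The paper does not supply its own proof of this proposition: it simply remarks that the proof of \cite[Proposition~3.6]{BFM}, written there for transitive Anosov-like actions, goes through verbatim under the weaker axioms used here. So there is nothing to compare your argument against beyond the referenced source.

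That said, your sketch has some genuine gaps, particularly in part~(3). You assert that ``every $g$-fixed point on $\partial P$ must correspond to a $g$-fixed end of one of the leaf spaces $\cL^\pm$,'' but this is not how $\partial P$ is built: boundary points are equivalence classes of ends of rays of leaves, not ends of the leaf spaces themselves, and the two notions do not coincide. Your subsequent ``translation-number / unique axis'' argument for free $\bZ$-actions on simply connected non-Hausdorff 1-manifolds is also not standard and would itself require substantial justification; free actions on such trees need not have a well-defined axis with two ends in the naive sense. In part~(1), your elimination of extra fixed boundary points by ``approximating $\xi$ by endpoints of fixed leaves through points of $\Fix_G$ near $\xi$'' is circular as stated: $\xi$ lies on $\partial P$, not in $P$, so ``points near $\xi$'' in $\Fix_G$ does not parse without first invoking the compactification's structure, which is exactly what you are trying to establish. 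The actual argument in \cite{BFM} works more directly with the definition of $\partial P$ and the ordering of ideal points induced by the foliations; if you want to reconstruct it, you should start from that construction rather than from heuristic approximation.
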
 

As a consequence of this proposition, any element which fixes exactly four points on $\Pbound$ either fixes a unique nonsingular point in $P$, or fixes the four corners of a scalloped region.  These can be distinguished dynamically, as follows.  
\begin{proposition}\label{prop:four_fixed_points} 
Suppose $g \in G$ fixes exactly four points of $\Pbound$.  Either these points are alternating attractor/repellers and $g$ fixes a unique point of $P$, or there is exactly one attractor and one repeller, and $g$ acts freely on $P$ preserving a scalloped region. 
\end{proposition}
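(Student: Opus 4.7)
The plan is to apply Proposition \ref{prop:boundary_action_general} to split into three cases based on the fixed-point structure of $g$ on $P$: $g$ fixes a non-corner point, $g$ has only corner fixed points, or $g$ acts freely. I expect exactly two of these cases to be realizable, corresponding to the two alternatives in the conclusion, and I will rule out the corner case by a counting argument.

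In the non-corner case, I will invoke Proposition \ref{prop:boundary_action_general}(1) to identify the four ideal fixed points as the endpoints of $\cF^{\pm}(x)$, which forces $x$ to be nonsingular (a $p$-prong would contribute $2p$ endpoints). Uniqueness of $x$ in $P$ then follows from Lemma \ref{lem:two_fix_points}(2), and Axiom \ref{Axiom_A1} provides the expansion/contraction dichotomy on the two leaves through $x$, which translates on $\partial P$ into the alternating attractor/repeller pattern since the endpoints of two transverse leaves must interleave. This case is the least technical and should be straightforward once set up.

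For the corner case, my plan is to derive a contradiction: if $g$ fixes some corner $x$, then because any lozenge $L$ at $x$ has rays of $\cF^{\pm}(x)$ among its sides that $g$ must preserve (otherwise $g$ would send $L$ to the opposite quadrant of $x$, violating the non-corner criterion \cite[Lemma~2.29]{BFM}), the element $g$ preserves $L$ entirely and must also fix the other corner $y$ of $L$. Then $g$ preserves all four distinct leaves $\cF^{\pm}(x), \cF^{\pm}(y)$, whose ideal endpoints are all fixed. In the nonsingular case this already yields eight such endpoints with only two identifications arising from the two perfect fits along the sides of $L$, leaving at least six distinct fixed points on $\partial P$; singular corners would only contribute more. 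This contradicts the hypothesis of exactly four fixed points and so rules out this case.

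Finally, in the free-action case, Proposition \ref{prop:boundary_action_general}(3) immediately gives that $g$ preserves a scalloped region $R$ whose four ideal corners on $\partial P$ are the four fixed points. Recall that $R$ contains two bi-infinite lines of lozenges with shared sides in $\cF^{+}$ and $\cF^{-}$ respectively, whose ideal endpoints at infinity are pairs of opposite ideal corners of $R$. By Lemma \ref{lem_infinite_line_lozenges}, the stabilizer of each of these lines in $G$ is virtually $\bZ^2$, with a ``corner-fixing'' $\bZ$-factor (whose fixed corners are points of $P$) and a ``translating'' $\bZ$-factor acting freely. Since $g$ acts freely on all of $P$, its action on the corners of each line must be a nontrivial translation. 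I plan to deduce from this that on the arc of $\partial R$ cut out by the two endpoints of either line, $g$ acts with one attractor and one repeller at those endpoints, and then to combine the two dynamics at the four corners of $R$, each being a meeting point of two adjacent arcs, to obtain exactly one full attractor, one full repeller, and two semi-stable fixed points. I expect the main obstacle to be this last combinatorial combination: checking that the translation directions on the two transverse lines combine coherently to yield the claimed ``attractor / semi-stable / repeller / semi-stable'' pattern, rather than, say, two attractors and two repellers. This compatibility should follow from the product structure on the scalloped region together with the $\bZ^2$-action description in Lemma \ref{lem_infinite_line_lozenges}.
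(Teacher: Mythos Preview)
Your overall strategy---splitting into the three cases of Proposition~\ref{prop:boundary_action_general} and ruling out the middle one by a boundary fixed-point count---is exactly what the paper does (the paper is terser and defers the details to \cite{BFM}). Your handling of the non-corner case and the free case is correct; in particular, the combinatorial worry you raise at the end (whether the two translation directions combine to give one attractor and one repeller rather than two of each) resolves exactly as the paper indicates: a single nontrivial translation along a bi-infinite line of lozenges already moves two opposite families of boundary leaves in opposite senses, so one of the four ideal corners is globally attracting, the opposite one globally repelling, and the remaining two are attracting from one side and repelling from the other.

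There is, however, a genuine gap in your corner-case argument. The non-corner criterion of \cite[Lemma~2.29]{BFM} constrains which quadrants of a point lying \emph{inside} a lozenge can contain a lozenge with that point as corner; it says nothing about a corner point $x$ having lozenges in two opposite quadrants, and indeed this does occur. So if $g$ fixes $x$ but swaps the two rays of $\cF^{+}(x)$ (and/or of $\cF^{-}(x)$), then $g$ may legitimately send $L$ to a different lozenge in another quadrant of $x$, and you cannot conclude that $g$ fixes the opposite corner $y$ of $L$. Your $8-2=6$ count is correct once you know $g$ fixes both corners of a single lozenge, but you need a different reason to get there. The cleanest route is simply to invoke Proposition~\ref{prop:boundary_action_general}(2) directly: that case already asserts that $g$ fixes all corners of a maximal chain, hence both corners of some lozenge, and then your count immediately gives the contradiction. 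Alternatively, handle the ray-swapping subcase separately by noting that if $g$ swaps the rays of $\cF^{\pm}(x)$ then the four endpoints of $\cF^{\pm}(x)$ are exchanged in pairs and none is fixed by $g$; since $\Fix_{\partial P}(g)\subset \Fix_{\partial P}(g^{2})$ and $g^{2}$ does fix both corners of $L$, a short count then shows $g$ cannot have as many as four boundary fixed points.
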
 

The argument for the above proposition appears within the proof of Theorem 6.1 of \cite{BFM}, we outline it here for completeness and refer the reader to \cite{BFM} for details.  

\begin{proof} 
By Proposition \ref{prop:boundary_action_general}, $g$ can fix at most one interior point.  If it fixes one, then we are in the first setting.  
If $g$ instead acts freely, then Proposition \ref{prop:boundary_action_general} says that $g$ preserves a scalloped region $U$.  The fixed points in this setting correspond to the four unique accumulation points on $\Pbound$ of the sets of endpoints of boundary leaves of $U$.  Since $g$ acts freely, it must translate lozenges along each of the lines of lozenges that make up $U$.  This means that the two opposite families of boundary leaves are each translated, resulting in one attracting accumulation point, one (opposite) repelling accumulation point, and two accumulation points which are neither attractor nor repeller. 
\end{proof}

 Our next result is a simple consequence of the density of non-corner fixed leaves, and extends the result of \cite{Bonatti_boundary} that the induced action of $\pi_1(M)$ on the boundary of the orbit space of an Anosov flow is always minimal; showing this applies generally to Smale-bounded actions (and hence to every pseudo-Anosov flow).   
 
\begin{theorem}\label{thm_minimal_action_circle}
Suppose that $G$ is a Smale-bounded Anosov like action on $(P,\cF^{\pm})$. Then the action of $G$ on $\Pbound$ is minimal.
\end{theorem}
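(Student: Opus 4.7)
The plan is to follow the strategy of \cite{Bonatti_boundary}, adapted to the general Smale-bounded setting using Corollary \ref{cor_simple_leaves_are_dense}. We may assume that $P$ is nontrivial, since in the trivial case $G$ acts affinely and the four cone points of $\partial P$ form a $G$-invariant set (a case that would need to be excluded from, or separately addressed in, the theorem statement). Let $K\subset\partial P$ be a nonempty closed $G$-invariant set; the goal is to show $K=\partial P$.

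The first step is to upgrade Corollary \ref{cor_simple_leaves_are_dense} to a density statement at infinity. For each non-corner fixed leaf $l$ with stabilizing element $g\in G$, Proposition \ref{prop:boundary_action_general}(1) tells us that $\mathrm{Fix}(g)\cap\partial P$ consists exactly of the four endpoints of $\cF^+(x)$ and $\cF^-(x)$, where $x$ is the non-corner fixed point on $l$; by Proposition \ref{prop:four_fixed_points}, these alternate between two attractors and two repellers, and $g$ acts on $\partial P$ with north-south type dynamics: $\partial P\setminus\mathrm{Fix}(g)$ splits as the disjoint union of two open basins, each converging under forward iteration to one of the two attracting fixed points. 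I would then show that for every nonempty open $V\subset\partial P$, there exists such a non-corner fixed element whose attracting fixed point lies in $V$. Using the wedge-shaped neighborhood basis at a boundary point in the circle compactification (see \cite{BFM, Fen_ideal_boundaries, prelaminations}), and applying Corollary \ref{cor_simple_leaves_are_dense} inside such a wedge, one finds a non-corner fixed leaf with an endpoint in $V$; replacing $g$ by $g^{-1}$ if needed makes this endpoint attracting.

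The second step is the standard basin-contraction argument. Suppose for contradiction that $V:=\partial P\setminus K$ is nonempty and open. Pick $g\in G$ as above with attracting fixed point $p^+\in V$, and let $B(p^+)\subset\partial P$ denote its basin. If some $\xi\in K\cap B(p^+)$, then $g^n\xi\to p^+$, so by closedness and $G$-invariance $p^+\in K$, contradicting $p^+\in V$. Therefore $K\cap B(p^+)=\emptyset$, so $K$ is confined to the closed complementary arc. Repeating with a second non-corner fixed element $g'$ whose attracting fixed point $p'^+\in V$ is chosen so that the two basins $B(p^+)$ and $B(p'^+)$ together cover $\partial P$ up to the (at most eight) fixed points of $g$ and $g'$---this uses the density established in step one together with the freedom to pick non-corner leaves whose endpoints are in ``transverse'' positions---we conclude that $K$ is finite. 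A finite nonempty $G$-invariant subset of $\partial P$ would however have to be fixed by every non-corner fixed element, and density of such elements produces pairs with disjoint fixed point sets, forcing $K=\emptyset$, a contradiction.

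The main obstacle is the density statement in the first step: while Corollary \ref{cor_simple_leaves_are_dense} gives density of non-corner fixed leaves in $P$, translating this to density of attracting endpoints in $\partial P$ requires a careful analysis of the circle compactification in each of the nontrivial cases of Proposition \ref{prop_trichotomy}. The secondary technical point is verifying the existence of the auxiliary element $g'$ whose basin complements that of $g$ modulo finitely many points; this in turn relies on the fact that non-corner leaves come in both transverse foliations and that their endpoints, by the density argument, accumulate on all of $\partial P$ from both ``sides''.
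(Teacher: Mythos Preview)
Your overall strategy (attractor--basin contraction) is a reasonable route to minimality, but the paper's argument is considerably shorter and sidesteps precisely the technical point you flag. Instead of basins, the paper argues: if $\overline{G\xi}\neq\partial P$, take $I$ a connected component of $\partial P\setminus\overline{G\xi}$. Since $G$ permutes these components, any element with a fixed point in $I$ must send $I$ to itself and hence fix both endpoints of $I$. Now your Step~1 (which you and the paper share) provides two non-corner elements $g,h$ with distinct non-corner fixed points $x,y$ such that $\cF^+(x),\cF^+(y)$ each have an endpoint in $I$. By Proposition~\ref{prop:boundary_action_general} each of $g,h$ has exactly four boundary fixed points, namely the endpoints of $\cF^\pm(x)$ and of $\cF^\pm(y)$; since non-corner leaves make no perfect fits these eight points are pairwise distinct, so $g$ and $h$ cannot both fix the two endpoints of $I$. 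That is the entire proof.

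Your Step~2, by contrast, has a real gap at the covering claim. After the first contraction you have $K\subset[r,r']$, the closed arc complementary to $B(p^+)$. You then want $g'$ with attractor $p'^+\in V$ and $B(p'^+)\supset(r,r')$. But $B(p'^+)$ is the open arc between the two repellers of $g'$ that contains $p'^+$; for this arc to contain $(r,r')$ both repellers of $g'$ must lie in $B(p^+)$, and then by the attractor/repeller alternation the attractor sitting in that large arc is forced into the arc through $[r,r']$ --- so nothing you have said prevents $p'^+\in K$. Density of non-corner leaf endpoints lets you place one endpoint of $\cF^+(z)$ wherever you like, but it does not let you simultaneously control the endpoints of $\cF^-(z)$ (the repellers) to the required precision. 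The argument can be repaired (for instance by iterating with translates $h([r,r'])$ for varying $h\in G$ and intersecting), but this is substantially more work than your sketch indicates; the gap-endpoint trick above avoids the issue entirely.
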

\begin{rem}
The same result with the same proof holds if we only assume that every Smale class admits an upper bound, or that every Smale class admits a lower bound, since we only need density of simple leaves for one of the foliations. The result however should presumably hold without any restrictions, but we do not attempt to prove that here. 
\end{rem}
 
\begin{proof}
By contradiction, assume that the orbit under $G$ of $\xi$ is not dense in $\Pbound$. Pick $I$ an open interval in the complement of $ \overline{G\xi}$. By density of non-corner fixed leaves $\cF^+$-leaves (Corollary \ref{cor_simple_leaves_are_dense}), we can find two distinct non corner points $x, y$ fixed by respectively $g,h$ and such that $\cF^+(x)$ and $\cF^+(y)$ have an ideal point in $I$. By definition of $I$, its two endpoints must be fixed by both $g$ and $h$. But, since $x,y$ are nonsingular, non-corners, the only points on $\Pbound$ fixed by $g$ and $h$ are the four endpoints of $\cF^{\pm}(x)$ and $\cF^{\pm}(y)$ respectively (Proposition \ref{prop:boundary_action_general}), and these endpoints must be distinct (otherwise, one of $\cF^{\pm}(x)$ would have to make a perfect fit with another leaf, contradicting the fact that $x$ is non-corner).
\end{proof}

 Another consequence of the density of simple leaves, will be that Anosov-like actions which induces conjugated actions on the circle at infinity are actually conjugated. In order to get this result, we need the following theorem obtained in \cite{prelaminations}.
   
 \begin{theorem}[Corollary E of \cite{prelaminations}]\label{thm_prelamination_to_foliation}
 Let $(P_i,\cF_i^{+},\cF_i^{-})$, $i=1,2$, be two topological planes equipped with a pair of transverse (possibly singular with simple prong singularities\footnote{We say that a singular foliation has {\em simple prong singularities} if all singularities are prong and any leaf contain at most one singularity. It is an immediate consequence of Axiom \ref{Axiom_prongs_are_fixed} that if a bifoliated plane admits an Anosov-like action, then the foliations have simple prongs singularities.}) foliations. For $i=1,2$, let $\ell_i^{\pm}$ be subsets of $\cF_i^{\pm}$ such that $\ell_i^+$, resp.~$\ell_i^-$, is dense in $P_i$. Call $\partial\ell_{i}^{\pm}$ the subsets of $\Pbound_i\times \Pbound_i$ consisting of the pairs of ideal points of leaves of $\ell_i^{\pm}$.
 
 Suppose that there exists a homeomorphism $h\colon \Pbound_1 \to \Pbound_2$ sending elements of $\partial\ell_{1}^{+}\cup \partial\ell_{1}^{-}$ to $\partial\ell_{2}^{+}\cup \partial\ell_{2}^{-}$. Then there exists a unique bifoliation-preserving homeomorphism $H\colon (P_1,\cF^{\pm}_1) \to (P_1,\cF^{\pm}_1)$ extending $h$ such that $H(\cF^+_1) = \cF^{\pm}_2$ and $H(\cF^-_1) = \cF^{\mp}_2$.
 \end{theorem}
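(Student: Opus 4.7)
The plan is to treat this as a reconstruction statement: the dense prelamination data $\partial\ell_i^+ \cup \partial\ell_i^-$ on the circle carries enough combinatorial information to rebuild $(P_i,\cF_i^{\pm})$ canonically, and then to invoke naturality to produce $H$ from $h$. Since the result is black-boxed from \cite{prelaminations}, I would propose a proof along the following lines (which reflects the likely architecture of the argument in that paper).

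First I would set up the combinatorial structure. For each $i$, the set $\partial\ell_i^+$ sits inside the space of unordered pairs of $\Pbound_i$, and any two such pairs are \emph{unlinked} in the circle (because the corresponding leaves of $\cF_i^+$ do not cross), so $\partial\ell_i^+$ is a \emph{prelamination} in the sense of \cite{prelaminations}. The same holds for $\partial\ell_i^-$, and the two prelaminations have a prescribed linking relation: a pair $\alpha \in \partial\ell_i^+$ and $\beta \in \partial\ell_i^-$ are linked in $\Pbound_i$ if and only if the corresponding leaves of $\cF_i^\pm$ meet in $P_i$ (similarly, perfect fits and singular configurations correspond to specific degenerate linking patterns). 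Since $h$ is a homeomorphism sending $\partial\ell_1^+\cup \partial\ell_1^-$ onto $\partial\ell_2^+\cup \partial\ell_2^-$, it preserves all of this combinatorial data (the ``$+$'' and ``$-$'' labels are distinguished intrinsically by how the two families link each other, up to the overall swap $\cF^+ \leftrightarrow \cF^-$).

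Next I would reconstruct the bifoliated plane from the prelamination. The closure of $\partial\ell_i^+$ in the space of unordered pairs is a genuine lamination $\Lambda_i^+$ of $\Pbound_i$, and similarly $\Lambda_i^-$; density of $\ell_i^\pm$ in $P_i$ ensures these laminations ``fill'' the disk. The leaves of $\cF_i^\pm$ are in bijection with the leaves of $\Lambda_i^\pm$ together with the boundary leaves of complementary regions (prong singularities appear as ideal polygons with a prescribed symmetry). Points of $P_i$ can then be recovered as intersections of transverse pairs of such leaves, with the topology reconstructed from the cyclic order on $\Pbound_i$. This is the content of the ``prelamination to foliation'' construction of \cite{prelaminations}. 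Because the reconstruction uses only the prelamination, and $h$ conjugates the prelamination on $\Pbound_1$ to the one on $\Pbound_2$, it lifts functorially to a bifoliation-preserving homeomorphism $H\colon P_1 \to P_2$ extending $h$.

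For uniqueness, suppose $H'$ is another such extension. Then $H'$ must send each leaf of $\ell_1^+$ to a leaf of $\ell_2^+$ (or $\ell_2^-$, but the transverse linking with $\ell_1^-$ fixes the type) with prescribed ideal endpoints, so $H'$ agrees with $H$ on the $\ell_1^+$-leaves and on the $\ell_1^-$-leaves. Since both families are dense in $P_1$, any two leaves bounding a region of $P_1$ can be squeezed between leaves in $\ell_1^\pm$, forcing $H' = H$.

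I expect the main obstacle to be in the second step: turning a prelamination into a bona fide pair of transverse foliations on a plane, especially around prong singularities and across non-Hausdorff branching of the leaf spaces. Here one must carefully identify ideal complementary polygons of $\Lambda_i^+ \cup \Lambda_i^-$ with either regular points, prong points, or (via the Axiom \ref{Axiom_totallyideal}-like hypotheses implicit in the setup of \cite{prelaminations}) forbidden configurations, and verify that the resulting space is Hausdorff, simply connected, and planar. The rest of the argument -- transferring the construction across $h$ and uniqueness from density -- is then formal.
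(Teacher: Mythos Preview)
The paper does not prove this statement: it is quoted verbatim as Corollary~E of \cite{prelaminations} and used as a black box in the proof of Theorem~\ref{thm:boundary_determines_action}. There is therefore no proof in the present paper to compare your proposal against.

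Your sketch is a plausible high-level outline of how such a reconstruction theorem is typically proved (close up the prelamination to a lamination, identify complementary regions, rebuild the plane as a space of transverse intersection points), and you correctly flag the delicate step as being the ``prelamination to foliation'' construction itself. But since the actual argument lives entirely in \cite{prelaminations}, any assessment of correctness would require consulting that paper rather than this one.
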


 Given that theorem, it is now easy to prove Theorem \ref{thm:boundary_determines_action} from the introduction
  which we restate here for convenience.  

\boundarydeterminesaction*

\begin{rem}
Note that in the special case of transitive Anosov-like actions, the proof we obtain here is much easier than the one given in \cite{BFM}, since we do not use the main result of \cite{BFM}. Moreover, density of non-corner fixed leaves is much simpler to prove for transitive actions (compare \cite[Lemma 2.30]{BFM} with Proposition \ref{prop:simple_dense} here).
\end{rem}

\begin{proof}
Let $\rho_i$ be the two Smale-bounded Anosov-like actions on bifoliated planes on $(P_i, \cF_i^{\pm})$, and $h\colon \partial P_1 \to \partial P_2$ a homeomorphism conjugating the induced boundary actions.

Let $\ell_i^{\pm}$ be the subsets of $\cF_i^{\pm}$ made up of the non-corner fixed leaves. These leaves are dense in $P_i$ by Corollary \ref{cor_simple_leaves_are_dense}. Our goal is to show that the map $h$ sends $\partial\ell_{1}^{+}\cap \partial\ell_{1}^{-}$ to $\partial\ell_{2}^{+}\cap \partial\ell_{2}^{-}$.  Then we can use Theorem \ref{thm_prelamination_to_foliation} to obtain, up to switching $\cF_2^+$ and $\cF^-_2$, a bifoliation preserving homeomorphism $H$ between $(P_1, \cF^+_1,\cF^-_1)$ and $(P_2, \cF^+_2,\cF^-_2)$. The uniqueness of $H$, given by Theorem \ref{thm_prelamination_to_foliation}, implies that it conjugates the actions $\rho_i$ on $P_i$. 

Consider $g\in G$ such that $\rho_1(g)$ fixes a non-corner point $x\in P_1$ as well as all the rays of $\cF_1^{\pm}(x)$. Then, by Proposition \ref{prop:boundary_action_general}, $\rho_1(g)$ fixes exactly four points in $\Pbound_1$ (the endpoints of $\cF_1^{\pm}(x)$), two being attractors and two being repellers. Up to changing $g$ to $g^{-1}$, we can assume that $\partial\cF_1^{+}(x)\in \Pbound_1$ are the two attractors for $\rho_1(g)$. 

Since $h$ conjugates the actions on the boundary, we obtain that $\rho_2(g)$ fixes the four points $h(\partial\cF_1^{\pm}(x))$, which are alternating attractors/repellers and those are the only four fixed points of $\rho_2(g)$ on $\Pbound_2$. Proposition \ref{prop:boundary_action_general} now implies that either $\rho_2(g)$ fixes a unique non-corner point in $x$, or $\rho_2(g)$ acts freely on $P_2$ and fixes the four corners of a scalloped region. However, the latter case is excluded by Proposition \ref{prop:four_fixed_points}. 

Thus $h(\partial\cF_1^{\pm}(x))$ are the endpoints of the leaves of the unique non-corner fixed point fixed by $\rho_2(g)$.
Therefore, we deduce that $h(\partial\ell_1^{\pm}) \subset  \partial\ell_2^{\pm}$, and using $h^{-1}$ instead of $h$ in the argument above gives us $h(\partial\ell_1^{\pm}) =  \partial\ell_2^{\pm}$. This ends the proof.
\end{proof}

\section{Examples}\label{sec:examples}

In this section, we gather some examples of unusual behavior that can occur for nontransitive pseudo-Anosov flows and Anosov-like actions.  
\subsection{Smale chains, Smale-boundedness and lack of cocompactness}

Our first example shows that Smale-boundedness is strictly more general than having finitely many Smale classes.
\begin{lemma}
There exist Anosov-like actions that are Smale-bounded with infinitely many distinct Smale classes.
\end{lemma}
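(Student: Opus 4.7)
The basic strategy is to start with a non-transitive pseudo-Anosov flow on a compact $3$-manifold whose orbit space has exactly three Smale classes (a maximal one, a minimal one, and a single intermediate one), then pass to a suitable infinite normal cover so that the intermediate class splits into infinitely many Smale classes while the extremal classes remain single. Concretely, I would start with a Franks--Williams style non-transitive Anosov flow $\phi$ on a closed $3$-manifold $M$ having a single transitive attracting basic set $\Lambda^{+}$ and a single transitive repelling basic set $\Lambda^{-}$, then perform a DA-type surgery inside the wandering region to introduce one isolated hyperbolic saddle periodic orbit $\gamma$. The resulting (topological) pseudo-Anosov flow has exactly three basic sets $\Lambda^{-}\lG \gamma \lG \Lambda^{+}$, so by Proposition \ref{prop:Orbit_space_anosov_like} the orbit space $P$ with the $G=\pi_1(M)$ action is Anosov-like and has exactly three Smale classes.

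Next, I would choose a homomorphism $\varphi\colon G\to \bZ$ such that $\varphi([g_\gamma])$ generates $\bZ$, while the set of periodic orbits in $\Lambda^{\pm}$ whose conjugacy classes lie in $H:=\ker\varphi$ remains dense in each basic set. Such a $\varphi$ can be arranged using the abundance of homologous periodic orbits guaranteed by transitivity of $\phi\vert_{\Lambda^{\pm}}$ together with Livsic-type arguments; one can also take the cover dual to a transverse surface meeting $\gamma$ in a single point and disjoint from $\Lambda^{\pm}$ (which exists after a suitable isotopy of the Birkhoff sections provided by Section \ref{sec:Z2invariant}). Under the restricted action of $H$ on the same plane $P$, the unique $G$-orbit of lifts of $\gamma$ splits into infinitely many $H$-orbits $\{\tilde\gamma_i\}_{i\in\bZ}$, giving infinitely many distinct intermediate Smale classes, while the transitive structure on each of $\Lambda^{\pm}$ ensures that $\Lambda^{+}$ (resp.\ $\Lambda^{-}$) remains a single $H$-Smale class (maximal, resp.\ minimal). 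Since every intermediate class $[\tilde\gamma_i]$ satisfies $[\Lambda^{-}]\lG [\tilde\gamma_i]\lG [\Lambda^{+}]$, the $H$-action is Smale-bounded.

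\textbf{Verification and main obstacle:} Axioms \ref{Axiom_A1}, \ref{Axiom_prongs_are_fixed}, \ref{Axiom_non-separated} and \ref{Axiom_totallyideal} are preserved when restricting to a subgroup since they concern only the bifoliated plane and the existence of fixing elements (and the elements fixing singular leaves or non-separated leaves can be chosen, by taking powers, to lie in the finite-index subgroup $H$ if needed; here $H$ has infinite index so one must argue more carefully, using Theorem \ref{thm:discrete_stabilizer} to see that appropriate powers of any $G$-element fix the same leaves). The principal obstacle is verifying Axiom \ref{Axiom_dense} for the $H$-action: leaves of $\cF^{\pm}$ containing lifts of periodic orbits of $\phi$ correspond to leaves fixed by elements of $H$ \emph{only} when the conjugacy class of that periodic orbit lies in $H$. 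Thus the construction of $\varphi$ must be arranged so that $H$-class periodic orbits are dense in both $\Lambda^{+}$ and $\Lambda^{-}$; this is the delicate technical step, and it is where the transitivity of $\phi\vert_{\Lambda^{\pm}}$ is essential. Once this density is established, Axiom \ref{Axiom_dense} follows because every point of $P$ either lies in the closure of the lifts of $\Lambda^{\pm}$ (handled by the density statement) or in the wandering region, where density of leaves $\cF^{\pm}(\tilde\gamma_i)$ follows from the $H$-translates of the single saddle-class leaves together with Axiom \ref{Axiom_non-separated} applied at the Smale chains separating the classes (Theorem \ref{thm:Smale_chains_separate}).
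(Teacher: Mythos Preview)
There is a genuine gap at Axiom~\ref{Axiom_non-separated}. You arrange $\varphi(g_\gamma)=\pm 1$, where $g_\gamma$ generates the stabilizer of a lift $\tilde\gamma$ of the isolated saddle. By Proposition~\ref{prop:isolated_fixed_points}, $\tilde\gamma$ is the corner of four wandering lozenges; adjacent pairs share a side along a half-leaf of $\tilde\gamma$, and their opposite corners $c,c'$ lie on nonseparated leaves (this is precisely the pivot configuration of Lemma~\ref{lem:adjacent_corners_discrete}). Now $g_\gamma$ (or $g_\gamma^2$, if it permutes half-leaves) fixes each such lozenge and hence fixes $c$. If $\langle h\rangle$ is the full $G$-stabilizer of $c$, which is cyclic by Theorem~\ref{thm:discrete_stabilizer} applied to the $G$-action, then $g_\gamma^{\pm 1}=h^k$ or $g_\gamma^{\pm 2}=h^k$, and applying $\varphi$ forces $\varphi(h)\neq 0$. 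Thus $\langle h\rangle\cap H=\{e\}$: no nontrivial element of $H=\ker\varphi$ fixes the nonseparated leaf through $c$, and Axiom~\ref{Axiom_non-separated} fails for the $H$-action. Your appeal to ``taking appropriate powers'' cannot help, since $H$ has infinite index and no power of $g_\gamma$ lies in $H$. There is also a second inconsistency: if the dual surface $S$ is disjoint from $\Lambda^\pm$, then every periodic orbit of $\Lambda^+$ has $\varphi$-value zero, so in the $\bZ$-cover the preimage of $\Lambda^+$ is a disjoint union of copies and the $G$-Smale class $\tilde\Lambda^+$ splits into infinitely many $H$-Smale classes, contrary to your claim that it remains a single class.

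The paper avoids both issues by building a genuine Anosov flow on a non-compact manifold rather than restricting an existing orbit-space action to a subgroup: one lifts a contact Anosov flow to its maximal abelian cover (where transitivity persists because contact flows are homologically full), performs DA surgery on infinitely many non-accumulating periodic orbits of the lifted flow, and glues a hyperbolic attractor at each resulting boundary torus. This produces one repeller and infinitely many attractors, and the axioms hold for the orbit-space action by the same verification as in Proposition~\ref{prop:Orbit_space_anosov_like}.
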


The proof of the above result is given in the following example.
\begin{example} \label{ex:bounded_noncocompact}
Let $\phi$ be a contact Anosov flow on a closed manifold $M$ (i.e., $\phi$ preserves a contact $1$-form.\footnote{The contactness assumption is not essential for this example, any skewed $\bR$-covered Anosov flow would do.}). We may furthermore take such a manifold $M$ that is not a rational homology sphere -- many examples exist; in fact one could start with any hyperbolic 3-manifold $M$ carrying a contact Anosov flow and use Agol's virtual fibering to pass to a fibered cover which will not be a rational homology sphere.  

Then, consider its lift $\hat\phi$ to the maximal Abelian cover $A$ of $M$. Since $\phi$ is contact, it is homologically full, and hence the lift $\hat\phi$ is also transitive on $A$, see \cite[Theorem 2.5]{GRH20}. Now pick infinitely many periodic orbits $\alpha_n$ of $\hat \phi$ in $A$, chosen so that they have pairwise distinct tubular neighborhoods of uniform size (i.e., we choose $\alpha_n$ so that they do not accumulate, for instance, one can choose them to be the distinct lifts of a single periodic orbit in $M$). Then, one can realize an attractive DA construction on each $\alpha_n$ (see, e.g., \cite{BBY} for the DA construction), and remove tubular neighborhoods that are transverse to the flow to obtain a flow $\hat\phi$ on a non-compact manifold with boundary $\hat A$. The flow $\hat\phi$ has one hyperbolic repeller, and $\hat A$ has infinitely many boundary tori, all of which are exiting for the flow. 
Then one can glue a hyperbolic attractor to each boundary tori of $\hat A$, as in the Franks-Williams example (see \cite{FW}, or \cite{BBY}). This gives us an Anosov flow on a non-compact manifold with infinitely many attractors and one repeller. One can then lift the flow to the universal cover, define the orbit space as usual, and obtain a Smale-bounded Anosov-like action on a bifoliated plane that has infinitely many distinct Smale classes.
\end{example}

\begin{lemma}
There exists Anosov-like actions that are not Smale bounded.
\end{lemma}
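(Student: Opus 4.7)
The plan is to construct a topological Anosov flow on a non-compact $3$-manifold whose basic sets form a bi-infinite chain $\cdots \lG \Lambda_{-1} \lG \Lambda_{0} \lG \Lambda_{1} \lG \cdots$ for the Smale order, with no extremal classes. The induced action of the fundamental group on the orbit space is then an Anosov-like action which is not Smale-bounded.

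First, I would build an intermediate building block. Take a transitive Anosov flow $\phi_{0}$ on a closed $3$-manifold $M_{0}$ and choose two disjoint periodic orbits $\gamma_{+},\gamma_{-}$. Perform a DA construction at $\gamma_{+}$ turning it into a hyperbolic attractor, and another at $\gamma_{-}$ turning it into a hyperbolic repeller. Removing open tubular neighborhoods of these two invariant sets yields a compact $3$-manifold $B$ with two toral boundary components $T^{\mathrm{out}}$ and $T^{\mathrm{in}}$, both transverse to the flow (the former incoming-inverted, the latter outgoing-inverted after removal), and the restricted flow on $B$ is a topological Anosov flow whose non-wandering set is a single saddle-type basic set $\Lambda_{B}$. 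This is the standard ``half'' of a Franks--Williams building block, cf.~\cite{FW,BBY}.

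Next, take copies $\{B_{n}\}_{n\in\bZ}$ of $B$ and glue $T^{\mathrm{out}}_{B_{n}}$ to $T^{\mathrm{in}}_{B_{n+1}}$ by flow-compatible diffeomorphisms, just as in the classical Franks--Williams construction. The result is a non-compact $3$-manifold $N$ carrying a topological Anosov flow $\phi$ whose non-wandering set is $\bigsqcup_{n\in\bZ}\Lambda_{n}$, where $\Lambda_{n}$ is the copy of $\Lambda_{B}$ in $B_{n}$. Every orbit exiting $B_{n}$ through $T^{\mathrm{out}}_{B_{n}}$ enters $B_{n+1}$ and accumulates on $\Lambda_{n+1}$, so in the orbit space $\orb_{\phi}$ the Smale relation yields $\Lambda_{n}\lG\Lambda_{n+1}$ for every $n$. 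No $\Lambda_{n}$ can be extremal, since $\Lambda_{n+k}$ lies strictly above it for every $k>0$ and $\Lambda_{n-k}$ strictly below for every $k>0$.

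Proposition~\ref{prop:Orbit_space_anosov_like}, whose proof goes through verbatim for a topological Anosov flow on a (possibly non-compact) $3$-manifold whose non-wandering set is a disjoint union of compact hyperbolic basic sets and such that $N=\bigcup_{n}\bigl(W^{s}(\Lambda_{n})\cup W^{u}(\Lambda_{n})\bigr)$, gives that the induced action of $\pi_{1}(N)$ on $\orb_{\phi}$ satisfies Axioms \ref{Axiom_A1}--\ref{Axiom_totallyideal}. Since no Smale class admits an extremal upper or lower bound, the action is not Smale-bounded.

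The main obstacle is executing the gluings so that the resulting object is genuinely a topological Anosov flow, i.e. so that the stable and unstable laminations extend continuously across each gluing torus. This is classical for a single Franks--Williams gluing and adapts without difficulty to an iterated construction, since each gluing is local and depends only on the hyperbolic transverse structure of the two boundary tori. A secondary point is checking that Axiom \ref{Axiom_dense} and Axiom \ref{Axiom_non-separated} continue to hold in the non-compact setting: the first follows from density of periodic orbits inside each compact basic set $\Lambda_{n}$ together with the fact that $N$ is the union of the stable/unstable sets of the $\Lambda_{n}$'s, and the second is a local statement about the orbit space which is unaffected by non-compactness of $N$.
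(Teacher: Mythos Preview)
Your proposal is correct and follows essentially the same strategy as the paper: build a non-compact flow by gluing a bi-infinite string of identical hyperbolic plugs, each with one entering and one exiting transverse torus and a single basic set, so that the Smale classes form a chain with no extremal elements. The only difference is the choice of building block---the paper uses the Bonatti--Langevin piece (whose maximal invariant set is a single isolated periodic orbit), while you use a DA-modified transitive Anosov flow with a saddle-type basic set; both choices work equally well, and the paper in fact remarks that one can use plugs with non-isolated basic sets as you do.
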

Such examples can be obtained by taking infinite cyclic covers of compact Anosov flows.
For instance:
\begin{example} \label{ex:non-bounded}
Consider the piece of the Bonatti-Langevin example before gluing (see \cite{BL}). It is a flow with one hyperbolic periodic orbit and two boundary tori, one entering one exiting.
Instead of gluing the entering to exiting tori as in the Bonatti-Langevin example, one can instead take a $\bZ$ worth $(N_i, \phi_i)$, $i\in \bZ$, of this model piece and glue the exiting torus of $(N_i, \phi_i)$ to the entering one of $(N_{i+1}, \phi_{i+1})$. (The gluing needs to be chosen so that the closed leaf of the stable lamination on the entering torus of $(N_{i+1}, \phi_{i+1})$ is glued to something not freely homotopic to the closed leaf of the unstable lamination on the exiting torus of $(N_{i}, \phi_{i})$.)

We then get an Anosov flow on a non-compact manifold (where the metric we chose for the Anosov property is the one coming from the metric on the model piece) with infinitely many isolated basic sets.

It is easy to verify that the associated action on the orbit space is Anosov-like (for Axiom \ref{Axiom_dense}, notice that the stable leaf of the periodic orbit in the piece $N_0$ get more and more dense in the piece $N_i$ as $|i|\to \infty$).
\end{example}

Note that one can easily build examples with infinitely many non-isolated basic sets by taking infinitely many copies of a hyperbolic plug (in the sense of \cite{BBY}) with one entering and one exiting torus, and gluing them in a row as in the example above.

\subsection{Toolkit: building pseudo-Anosov flows from fatgraphs}
For the next class of examples, we will use a technique developed in \cite{BarbFen_pA_toroidal} to produce examples of pseudo-Anosov flows with  \emph{periodic Seifert pieces} by gluing building blocks together using the combinatorial data of a fatgraph.  See also \cite{BF_totally_per,BFeM}.  

\begin{definition}[\cite{BarbFen_pA_toroidal}]
An \emph{admissible fatgraph} $(X,\Sigma)$ is a graph $X$ embedded in a surface $\Sigma$ that deformation retracts to $X$ that satisfies to the following properties:
\begin{enumerate} 
\item the valence of every vertex is even, and 
\item the set of boundary components of $\Sigma$ can be partitioned in two subsets (the outgoing and the incoming) so that for every edge $e$ of $X$, the two sides of $e$ lie in different subset of this partition.  
\item each loop in $X$ corresponding to a boundary component contains an even number of edges.
\end{enumerate}
\end{definition}

In \cite{BarbFen_pA_toroidal}, Barbot and Fenley describe a method to build a flow $\phi$ on a manifold $M$ which is a circle bundle over $\Sigma$ based on the data contained in an admissible fatgraph $(X,\Sigma)$. In this construction, each vertex corresponds to a hyperbolic orbit (possibly a $p$-prong, if the valence of the vertex in $X$ is strictly greater than $4$), and each edge corresponds to (a neighborhood of) an annulus bounded by the periodic orbits corresponding to the edges and transverse to the flow in its interior (from the incoming side to the outgoing side of the edge).

The dynamics of the flow on a Seifert manifold $M$ obtained from a fatgraph $X$ is particularly simple: The maximal invariant set in $M$ corresponds exactly to the periodic orbits associated to the vertices, other orbits are either in the stable or unstable leaves of the periodic orbits, or they enter through an incoming boundary, cross the Birkhoff annulus corresponding to an edge and exits through the corresponding outgoing boundary.
In particular, we point out that the stable, resp.~unstable, foliations of each periodic orbits traces a lamination with only closed leaves of the incoming, resp.~outgoing, boundary surfaces. These are called the \emph{stable and unstable boundary laminations}.

For our purpose of building examples, all we need is the following result which summarize the way one can use such constructions to build pseudo-Anosov flows on closed manifolds.
\begin{proposition}[Barbot--Fenley \cite{BarbFen_pA_toroidal}, see also \cite{BFeM}]\label{prop_gluing_fatgraphs}
Let $X_1, \dots, X_n$ be admissible fatgraphs. Let $(M_i,\phi_i)$, $i=1, \dots, n$, be the associated flows and call $S_{in}$ and $S_{out}$ the union of the boundary surfaces that are respectively entering and outgoing. Suppose that $h$ is a diffeomorphism from a subset $\hat S_{in}$ of $S_{in}$ to a subset $\hat S_{out}$ of $S_{out}$, such that the image by $h$ of the stable boundary lamination is not isotopic to the unstable boundary lamination. 

Then, we can find a (possibly empty) union $(A, \psi_A)$, $(E,\psi_E)$ of hyperbolic flows such that $\partial A$ (resp.~$\partial E$) is a union of incoming (resp.~outgoing) tori or Klein bottles and diffeomorphisms $f_A\colon \partial A \to S_{out}\smallsetminus\hat S_{out}$ and $f_E\colon \partial E \to S_{in}\smallsetminus\hat S_{in}$ such that the flow obtained by gluing $(M_i,\phi_i)$, $(A,\psi_A)$ and $(E,\psi_E)$ using the diffeomorphisms $f_A$, $f_E$, and (one isotopic to) $h$ is a pseudo-Anosov flow on a closed manifold.
\end{proposition}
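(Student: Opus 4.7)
The plan is to use the admissible fatgraph construction of Barbot--Fenley as a ``black box'' producing hyperbolic plugs $(M_i,\phi_i)$ whose incoming and outgoing boundaries $S_{in}, S_{out}$ are tori and Klein bottles carrying stable and unstable boundary laminations (unions of closed curves corresponding to the vertex orbits). The goal is to show that the data of an admissible partial identification $h$ together with suitable ``caps'' assembles into a genuine pseudo-Anosov flow on a closed $3$-manifold.

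The first step will be to analyze the gluing performed by $h$. Whenever two boundary components, one outgoing and one incoming, are identified, the resulting flow across the gluing surface will be hyperbolic provided no Reeb annulus is created, i.e., provided the image under $h$ of the stable boundary lamination is not isotopic to the unstable boundary lamination on the outgoing side. This is exactly the non-isotopy hypothesis in the statement; it is the standard gluing criterion (see \cite{BBY}) that produces a hyperbolic plug from two hyperbolic plugs identified along a torus or Klein bottle.

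The second step is to fill in the remaining boundaries $S_{out}\smallsetminus \hat S_{out}$ and $S_{in}\smallsetminus \hat S_{in}$. For each remaining outgoing component, one needs to construct a hyperbolic attractor plug $(A,\psi_A)$ whose unique incoming boundary is isotopic (via $f_A$) to the given outgoing surface with its unstable boundary lamination; dually, for each remaining incoming component, one needs a hyperbolic repeller plug $(E,\psi_E)$ with matching stable boundary lamination. Such plugs can be built by a DA--surgery on a single periodic orbit of a suspension Anosov flow on a torus (following Franks--Williams), with the essential freedom that the resulting boundary lamination on the cap may be prescribed up to isotopy among nontrivial essential laminations by closed curves; in particular one can always match the given laminations coming from the $M_i$.

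The third step will be to check that the resulting closed $3$-manifold carries a pseudo-Anosov flow: all periodic orbits are hyperbolic (coming from vertices, attractors, or repellers), the weak stable and unstable foliations defined piecewise extend continuously across the gluing surfaces thanks to the non-isotopy condition (this is precisely why Reeb components are avoided), and every orbit not in the invariant set flows from a repeller, through finitely many Birkhoff-annulus crossings, into an attractor. The main obstacle I anticipate is the explicit construction of the caps $(A,\psi_A)$ and $(E,\psi_E)$ with \emph{prescribed} boundary lamination types, and verifying that the gluings along Klein bottle components can be chosen compatibly with the orientation double-cover of the flow; both are handled in the fatgraph literature (e.g.\ \cite{BarbFen_pA_toroidal, BFeM}) by choosing the models sufficiently flexibly (varying the Dehn twist parameter), and the proof proper would consist in assembling these model-theoretic ingredients and checking the resulting flow is pseudo-Anosov by the standard local criteria.
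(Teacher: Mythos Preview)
Your proposal is correct and follows essentially the same approach as the paper: first glue along $h$ using the transversality/non-isotopy hypothesis to preserve hyperbolicity (the paper cites \cite[Lemma 5.2]{BFeM} for this step), then cap off the remaining boundary components with hyperbolic attractors and repellers via \cite[Proposition 1.1]{BBY}, with the existence of caps having the prescribed boundary laminations guaranteed by \cite[Theorem 1.10]{BBY} rather than an explicit DA construction. The paper's proof is more terse and pins down the precise external references, but your three-step outline matches it.
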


\begin{proof}
Lemma 5.2 of \cite{BFeM} shows that (up to an isotopy of $h$) the flow $(M,\phi)$ obtained from $(M_i,\phi_i)$ by gluing the boundaries according to $h$ is hyperbolic on its invariant set aside from the finitely many prong orbits corresponding to the prong orbits of the $\phi_i$.

If $M$ is closed then $\phi$ is therefore a pseudo-Anosov flow. If $M$ is not closed, we can glue attractors $(A,\psi_A)$ to each outgoing boundaries of $M$ and repellers $(E,\psi_E)$ to each incoming boundaries, as in Proposition 1.1 of \cite{BBY}. (The fact that $M,\phi$ has possibly some singular orbits does not change the arguments in the proof of \cite[Proposition 1.1]{BBY}, which only rely on the usual cone criterion, which is still valid outside the singular orbits). The fact that we can find an attractor $(A,\psi_A)$ (resp.~a repeller) with the appropriate boundary lamination to apply \cite[Proposition 1.1]{BBY} follows from instance from \cite[Theorem 1.10]{BBY}.
\end{proof}

\subsection{Pseudo-Anosov flows, Smale classes and Smale order}
Using related constructions, we show that the presence of prong singularities can lead to points in the closure of distinct Smale classes, that there is no extension of the Smale order to a well defined order on the closure of Smale classes, and that one can have chain-recurrent pseudo-Anosov flows that are not transitive.  

\begin{proposition}\label{ex_prongs_in_distinct_classes}
There exists examples of pseudo-Anosov flows with distinct Smale classes $\Lambda,\Lambda'$ such that their closures intersect, and $\Lambda$, $\Lambda'$ are not comparable with respect to the Smale order. 
\end{proposition}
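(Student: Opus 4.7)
My plan is to construct the example explicitly using the fatgraph technique of Proposition \ref{prop_gluing_fatgraphs}.

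As a guiding preliminary observation, I first note that the intersection $\bar\Lambda\cap\bar{\Lambda'}$ of two distinct Smale classes must occur at a prong singularity. Indeed, if $x\in\bar\Lambda\cap\bar{\Lambda'}$ were nonsingular, then $x\in \cR_G$ would lie in some Smale class $\Lambda''$, and the local product structure of Proposition \ref{prop:product_structure_general}, applied to sequences in $\Lambda$ (respectively $\Lambda'$) converging to $x$, would force nearby points to be totally linked with $x$ and hence in $\Lambda''$; so $\Lambda=\Lambda''=\Lambda'$. Thus the construction must produce a prong point $p$ lying in both $\bar\Lambda$ and $\bar{\Lambda'}$.

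To build such a flow, I would take an admissible fatgraph $X$ with a central vertex $v$ of valence $6$, arranged so that removing $v$ disconnects $X$ into two subgraphs $X_L$ and $X_R$, each containing at least one closed loop, with three of the six edges at $v$ attached to each side. The associated flow piece $(M_X,\phi_X)$ then has a $3$-prong periodic orbit $\gamma$ at $v$, and by the cut-vertex property the only orbits of $\phi_X$ passing from the $X_L$-region to the $X_R$-region lie on the singular stable and unstable leaves of $\gamma$. Capping off the remaining boundary components of $M_X$ on each side independently with hyperbolic attractors and repellers as allowed by Proposition \ref{prop_gluing_fatgraphs}, I obtain a closed pseudo-Anosov flow $(M,\phi)$, together with hyperbolic basic sets $\Lambda$ (on the left) and $\Lambda'$ (on the right) that both admit $\gamma$ as a shared boundary orbit.

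I would then verify the two desired properties for the Smale classes associated with $\Lambda$ and $\Lambda'$ (also denoted by the same symbols) in the orbit space. For the intersection of closures, a chosen lift $p$ of $\gamma$ will lie in $\bar\Lambda\cap\bar{\Lambda'}$: periodic orbits of $\Lambda$ accumulate on $\gamma$ from the $X_L$-side in $M$, producing lifts in $\Lambda$ that converge to $p$ inside the three quadrants of $p$ attached to $X_L$, and symmetrically for $\Lambda'$. For Smale-incomparability, any relation $x\lG y$ with $x\in\Lambda$ and $y\in\Lambda'$ would produce $g\in G$ with $\cF^+(x)\cap\cF^-(gy)\neq\emptyset$; by construction, the corresponding intersection point in $M$ must lie on a singular leaf of $\gamma$, which is excluded from $\cR_G$, contradicting $x,y\in\cR_G$.

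The main obstacle will be the fatgraph realization itself: arranging admissibility (even valences, a valid in/out partition of edges, and even-edge boundary loops) jointly with the cut-vertex condition, and choosing boundary laminations compatible with Proposition \ref{prop_gluing_fatgraphs} so that $\gamma$ genuinely arises as a shared boundary orbit of both capped hyperbolic pieces (rather than as an isolated singular orbit disconnected from them). Analogous constructions in \cite{BarbFen_pA_toroidal,BFeM} provide templates, and the flexibility built into the capping step of Proposition \ref{prop_gluing_fatgraphs} should allow choices realizing the design.
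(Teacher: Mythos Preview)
Your opening observation that the intersection must occur at a prong is correct and useful. The overall strategy of using the fatgraph construction is also the right one. However, the specific design you propose has a genuine gap.

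The core issue is your mechanism for placing the prong $\gamma$ in the closure of two distinct nontrivial Smale classes. A closed loop in the graph $X_L$ is a purely combinatorial feature; in the fatgraph flow piece the maximal invariant set consists only of the isolated periodic orbits at the vertices, with no recurrence connecting them. Capping boundary tori by hyperbolic attractors and repellers adds new basic sets, but these live away from $\gamma$: there is no reason their closures should contain $\gamma$. What is actually needed is \emph{recurrence through a Birkhoff annulus whose boundary is $\gamma$}, and this is produced not by graph cycles but by gluing an outgoing boundary torus to an adjacent incoming one. The paper's construction does exactly this: it takes two valence-$8$ vertices on an $8$-holed sphere, and glues $T_1$ to $T_2$ and $T_5$ to $T_6$. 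Each such self-gluing forces orbits to recur through the corresponding Birkhoff annulus $Q_{12}$ (resp.\ $Q_{56}$), creating a Smale class $\Lambda_{12}$ (resp.\ $\Lambda_{56}$) with both prongs in its closure. The remaining tori $T_3,T_7$ are capped by attractors and $T_4,T_8$ by repellers, which traps orbits through $Q_{12}$ so they can never reach $Q_{56}$; this is what gives incomparability, argued directly at the level of the flow rather than via your orbit-space intersection argument.

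A secondary point: your incomparability argument, while logically sound once its premise is granted, rests on the claim that any orbit passing from the $X_R$-region to the $X_L$-region must lie on a singular leaf of $\gamma$. This depends on the cyclic arrangement of edges at $v$ and on how the boundary tori are identified, neither of which you specify; the graph-theoretic cut-vertex property alone does not immediately imply it.
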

The proof is contained in the construction of the following example. Note that in this example, the closure of $\Lambda$ and $\Lambda'$ are in the same chain-recurrent component, but the chain-recurrent set is still equal to the non-wandering set. The next propositions will provide examples where the chain-recurrent set is strictly larger than the non-wandering set.

\begin{proof}
Consider the admissible fat graph $(X,\Sigma)$ where $\Sigma$ is the $2$-sphere with 8 disks removed, and $X$ is a graph with two vertices of valence 8 each and no loop-edges.  This is illustrated in Figure \ref{fig:prong_fatgraph}, after identifying the
outer circle of the figure to a single point. 

Call $T_1,\dots, T_8$ the associated transverse tori, numbered in circular order and such that $T_1$ is an outgoing torus for the associated flow.

Build a pseudo-Anosov flow $\phi$ by gluing $T_1$ to $T_2$, $T_5$ to $T_6$, and gluing hyperbolic attractors to $T_3$ and $T_7$, and hyperbolic repellers to $T_4$ and $T_8$.  This is possible by Proposition \ref{prop_gluing_fatgraphs}.  

Let $p_N, p_S$ denote the two prongs.  The edge of the fatgraph between $p_S$ and $p_N$ that lies between $T_1$ and $T_2$ corresponds to a Birkhoff annulus $Q_{12}$ for $\phi$ (i.e., an annulus transverse to the flow and bounded by the two periodic orbits $p_S$ and $p_N$).  Since $T_{1}$ is glued to $T_{2}$ there, the interior of $Q_{12}$ intersects recurrent orbits. In particular, the interior of $Q_{12}$ nontrivially intersects a Smale class $\Lambda_{12}$, and $p_S, p_N\in \overline{\Lambda_{12}}$. Similarly, the Birkhoff annulus $Q_{56}$ between $p_S$ and $p_N$ that lies between $T_5$ and $T_6$ intersects a Smale class $\Lambda_{56}$ which contains $p_S$ and $p_N$ in its closure.

By construction, no orbit passing through $Q_{12}$ can intersect $Q_{56}$.  Thus, the two Smale classes $\Lambda_{12}$ and $\Lambda_{56}$ must be distinct. One also easily sees that these Smale classes are not related by the Smale order
\end{proof}

\begin{figure}[h!]
\labellist 
 \small\hair 2pt
 \pinlabel $T_1$ at 100 80
 \pinlabel $T_2$ at 81 100
 \pinlabel $T_5$ at 32 51
 \pinlabel $T_6$ at 54 33
 \endlabellist
     \centerline{ \mbox{
\includegraphics[width=6cm]{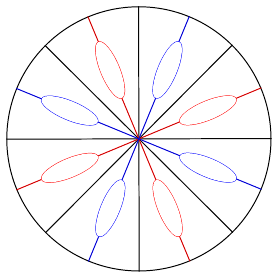} }} %
\caption{Identifying the outer (black) boundary to a single point gives a fatgraph (in black) on a 8-holed sphere.  The local stable and unstable foliations of the periodic orbits corresponding to each vertex in blue and red respectively. The blue circles corresponds to entering tori and those in red corresponds to exiting tori.}
\label{fig:prong_fatgraph}
\end{figure}

Building on this, we give an example showing that the Smale order \emph{cannot} be extended to the closure of Smale classes. In particular, 
\begin{proposition}\label{prop:weird_loops}
There exists examples of pseudo-Anosov flows with four distinct Smale classes $\Lambda_a, \Lambda_b, \Lambda_c, \Lambda_d$ such that $\Lambda_a <_G \Lambda_b$, $\Lambda_c <_G \Lambda_d$ and two prong singularities $p, q$ such that $p\in \bar\Lambda_a\cap \bar\Lambda_d$ and $b\in  \bar\Lambda_b\cap \bar\Lambda_c$.

Such pseudo-Anosov flows have chain-recurrent sets that are strictly larger than their non-wandering sets. Specifically, every point in $\cF^+(\Lambda_a)\cap\cF^-(\Lambda_b)$ is in the wandering set, but contains a segment of orbit that is part of a chain-recurrent trajectory, and $\Lambda_a, \Lambda_b,\Lambda_c,\Lambda_d$ are all in the same chain-recurrence class.
\end{proposition}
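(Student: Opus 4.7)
The approach is to construct such a pseudo-Anosov flow explicitly using the fatgraph technique of Proposition \ref{prop_gluing_fatgraphs}, generalizing the construction of Example \ref{ex_prongs_in_distinct_classes}. The key idea is to design an admissible fatgraph $(X,\Sigma)$ with two prong vertices $p$ and $q$, but a combinatorially richer structure than in the two-class example: each prong will have two ``local'' Smale classes attached to it via loop edges, and the two prong regions will be connected by non-loop edges producing the Smale order relations.

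Concretely, I would take a fatgraph with two vertices $p,q$ of suitable even valence, carrying two loop edges at each vertex (their gluings will produce the four Smale classes) together with at least two non-loop edges from $p$ to $q$ (producing the wandering orbits realizing the Smale order). The gluings of the outgoing torus to an incoming torus along each loop-edge's Birkhoff annulus are chosen as in the proof of Proposition \ref{prop_gluing_fatgraphs}; the remaining outgoing/incoming tori are capped off with hyperbolic attractors and repellers. Because the Birkhoff annulus of a loop at $p$ has both boundary components on the prong orbit $p$ and its interior lies in a sector adjacent only to $p$, the resulting Smale class $\Lambda_a$ (respectively $\Lambda_d$) contains $p$ in its closure but not $q$; symmetrically the loops at $q$ yield $\Lambda_b, \Lambda_c$ with $q\in\bar\Lambda_b\cap\bar\Lambda_c$ and $p\notin\bar\Lambda_b\cup\bar\Lambda_c$. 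The non-loop connecting edges provide wandering orbits running from a neighborhood of $\Lambda_a$ to one of $\Lambda_b$ (and from $\Lambda_c$ to $\Lambda_d$), yielding $\Lambda_a<_G \Lambda_b$ and $\Lambda_c<_G \Lambda_d$. Distinctness of the four Smale classes follows from the prong-closure distinctions.

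For the chain-recurrence assertion, any point $x\in \cF^+(\Lambda_a)\cap \cF^-(\Lambda_b)$ has $\alpha$-limit in $\Lambda_a$ and $\omega$-limit in $\Lambda_b$, so it is wandering (its orbit is not in the non-wandering set). Nonetheless, given $\varepsilon>0$, one can build an $\varepsilon$-pseudo-orbit from $x$ back to $x$ as follows: flow forward so as to approach $\Lambda_b$ and, using transitivity of the action on $\Lambda_b$ (Theorem \ref{thm:transitive}), arrive $\varepsilon$-close to $q\in \bar\Lambda_b$; jump within $\varepsilon$ to a point of $\Lambda_c$ since $q\in\bar\Lambda_c$; using transitivity of $\Lambda_c$ and the order $\Lambda_c<_G\Lambda_d$, flow forward to a point of $\Lambda_d$ that is $\varepsilon$-close to $p\in\bar\Lambda_d$; jump to a point of $\Lambda_a$ since $p\in\bar\Lambda_a$; then flow within $\Lambda_a$ and its unstable manifold back to $x$. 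This exhibits $x$ on a chain-recurrent trajectory and shows all four Smale classes lie in a single chain-recurrence class, while the non-wandering set is strictly smaller (being the disjoint union of the four classes).

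The main obstacle is the combinatorial design: the fatgraph, the cyclic ordering of its edges at $p$ and $q$, and the gluing diffeomorphisms must be chosen so that \emph{exactly} the prescribed relations occur and no more; in particular one must prevent spurious identifications such as $\Lambda_a\sim_G\Lambda_d$ or extra order relations like $\Lambda_a<_G\Lambda_d$ arising from unintended wandering orbit connections. One must also check the admissibility hypothesis of Proposition \ref{prop_gluing_fatgraphs} on the gluing (non-isotopy between image of stable and unstable boundary laminations), which is standard given enough flexibility in choosing the gluing maps. The remaining verifications --- density within each $\Lambda_i$, transitivity, and the Hausdorff orbit space condition --- are all part of the general fatgraph machinery.
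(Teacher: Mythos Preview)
Your approach differs substantially from the paper's and, as written, has genuine gaps. The paper does not design a new fatgraph at all: it simply takes \emph{two copies} $(X,\Sigma)$ and $(X',\Sigma')$ of the very same 8-holed sphere fatgraph used in Proposition~\ref{ex_prongs_in_distinct_classes} (two valence-8 vertices, \emph{no} loop edges), performs in each copy the same internal gluings $T_1\!\leftrightarrow\! T_2$ and $T_5\!\leftrightarrow\! T_6$ as before, and then connects the two copies by gluing $T_3$ to $T_8'$ and $T_7'$ to $T_4$ (capping the remaining four tori with attractors/repellers). All the local analysis---distinctness of $\Lambda_{12},\Lambda_{56}$ and the fact that $p_N,p_S$ lie in both closures---is already done in Proposition~\ref{ex_prongs_in_distinct_classes}; the inter-copy gluings then give $\Lambda_{12}<_G\Lambda'_{12}$ and $\Lambda'_{56}<_G\Lambda_{56}$ directly. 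This modular strategy sidesteps exactly the combinatorial design problem you flag as your ``main obstacle.''

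Your single-fatgraph-with-loop-edges plan is not obviously wrong, but it is not a proof either. First, the paper's fatgraph machinery is only exercised on examples without loop edges, so you would need to check separately that a Birkhoff annulus with both boundaries on the same prong orbit behaves as you claim. Second, and more seriously, the non-loop edges from $p$ to $q$ do not by themselves produce the relation $\Lambda_a<_G\Lambda_b$: that relation requires an orbit in $W^u(\Lambda_a)\cap W^s(\Lambda_b)$, which depends on how the outgoing torus adjacent to $\Lambda_a$'s loop is glued relative to the incoming torus adjacent to $\Lambda_b$'s loop, not merely on the presence of a $p$--$q$ annulus. You never specify the cyclic orders at $p,q$, the valences, or the torus gluings, and you acknowledge that preventing spurious relations like $\Lambda_a\sim_G\Lambda_d$ is unresolved; without this the argument is incomplete. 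Your explicit $\varepsilon$-pseudo-orbit construction for the chain-recurrence claim is a nice touch (the paper leaves this essentially implicit), but it only works once the underlying flow has actually been built with the stated properties.
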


The proof consists of the following construction.  
\begin{proof}
Consider two copies $(X,\Sigma)$, $(X',\Sigma')$ of the same fat graph used in the example of Proposition \ref{ex_prongs_in_distinct_classes}. We use the same notations as in that example, and denote with a $'$ the corresponding objects in $(X',\Sigma')$.

As in Example \ref{ex_prongs_in_distinct_classes}, we build a flow by first gluing $T_1$ to $T_2$, $T_5$ to $T_6$ in the first piece, 
 and respectively $T_1'$ to $T_2'$, $T_5'$ to $T'_6$ in the second piece.  
 We glue hyperbolic attractors to $T_3'$ and to $T_7$, and repellers to $T_8$ and $T_4'$.  
Finally, attach the two copies together by gluing $T_3$ to $T'_8$ and $T'_7$ to $T_4$.

We now obtain a pseudo-Anosov flow with four singular orbits, corresponding to $p_S, p_N, p'_S$ and $p_N'$, We have four Smale classes $\Lambda_{12}$, $\Lambda_{56}$ (which both contain $p_S, p_N$ in their closure), and $\Lambda_{12}'$ and $\Lambda_{56}'$ (which both contain $p_S'$ and $p_N'$ in their closure). 
Since $T_3$ is glued to $T'_8$, there exists orbits that start on $Q_{12}$ and hit $Q'_{12}$: Indeed, there is an open set of orbits that start on $Q_{12}$, go out through $T_1$, so that they come in through $T_2$, and (thanks to the choice of gluing) cross $T_3$. Since $T_3$ is glued to $T_8'$, some of these orbits will later intersect $T'_1$, and thus $Q'_{12}$.
Hence, we deduce that $\Lambda_{12}$ is, say, less then $\Lambda_{12}'$ for the Smale order.

Similarly orbits pass from $Q'_{56}$ to $Q_{56}$, implying that $\Lambda'_{56}$ is less then $\Lambda_{56}$. Moreover, as in the previous example, we have $p_N,p_S \in \bar\Lambda_{12}\cap \bar\Lambda_{56}$ and $p_N',p'_S \in \bar\Lambda'_{12}\cap \bar\Lambda'_{56}$, proving the proposition.
\end{proof}

Finally, we also give an example which is chain-recurrent but nontransitive.
\begin{proposition}\label{prop_chain-recurrent_non_transitive}
There exists pseudo-Anosov flows with exactly two distinct Smale classes $\Lambda_1$, $\Lambda_2$, with $\bar\Lambda_1 \cap \bar \Lambda_2 \neq\emptyset$ and every orbit is chain-recurrent.
\end{proposition}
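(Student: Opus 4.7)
The plan is to construct such an example by a variant of the fatgraph gluing procedure used in Propositions \ref{ex_prongs_in_distinct_classes} and \ref{prop:weird_loops}, where the gluings are chosen so that exactly two Smale classes $\Lambda_1 <_G \Lambda_2$ arise and the prong singularities provide the extra chain connections needed for total chain recurrence. The overall idea is to reproduce the ``prong bridge'' used in Proposition \ref{prop:weird_loops} but within a single copy of the building block, so that the two Smale classes obtained are forced to be connected both ways by the chain-recurrence relation while remaining distinct for the Smale order.

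I would start from the building block of Proposition \ref{ex_prongs_in_distinct_classes}: the fatgraph $(X,\Sigma)$ with two $8$-valent vertices in the $8$-holed sphere, producing a Seifert piece with two $4$-prong periodic orbits $p_N, p_S$ and eight transverse boundary tori $T_1,\dots,T_8$, alternating outgoing and incoming. I would glue $T_1 \leftrightarrow T_2$ and $T_5 \leftrightarrow T_6$ exactly as in the proof of Proposition \ref{ex_prongs_in_distinct_classes}; each such gluing immediately produces a Smale class $\Lambda_1$, resp.\ $\Lambda_2$, whose closure contains both $p_N$ and $p_S$, so $\bar\Lambda_1 \cap \bar\Lambda_2 \neq \emptyset$ is automatic. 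The remaining outgoing tori $T_3, T_7$ and incoming tori $T_4, T_8$ I would pair by a permutation, e.g.\ $T_3 \to T_8$ and $T_7 \to T_4$, with gluing diffeomorphisms chosen so that (i) the lamination nonisotopy hypothesis of Proposition \ref{prop_gluing_fatgraphs} holds and (ii) every orbit that crosses any of these four ``transit'' tori is wandering, having $\alpha$-limit in one of $\Lambda_1, \Lambda_2$ and $\omega$-limit in the other. Proposition \ref{prop_gluing_fatgraphs} then provides a pseudo-Anosov flow on a closed $3$-manifold; because no attractor or repeller caps are attached, $\Lambda_1$ and $\Lambda_2$ are the only Smale classes, and the existence of a transit orbit forces $\Lambda_1 <_G \Lambda_2$ (up to swapping labels).

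To finish, I would verify chain recurrence of every orbit. Orbits in $\Lambda_1 \cup \Lambda_2$ are chain-recurrent by topological transitivity on each Smale class (Theorem \ref{thm:transitive}). For a transit orbit through a point $x$, with $\alpha$-limit in a point $y \in \Lambda_1$ and $\omega$-limit in $\Lambda_2$, an $(\varepsilon,T)$-chain from $x$ back to itself is constructed in four steps: first flow $x$ forward until close to $\Lambda_2$; then use topological transitivity within $\Lambda_2$ to reach a point close to the prong $p_N$ (which lies in $\bar\Lambda_2$); perform an $\varepsilon$-jump near $p_N$ onto a nearby orbit in $\Lambda_1$, possible because $p_N \in \bar\Lambda_1$ and so any neighborhood of $p_N$ meets $\Lambda_1$; use topological transitivity on $\Lambda_1$ to flow within $\Lambda_1$ close to $y$; finally perform an $\varepsilon$-jump in the unstable direction at $y$, onto the transit leaf through $x$, and flow forward a bounded amount to land within distance $\varepsilon$ of $x$ itself.

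The hardest part of the construction is arranging point (ii) above, namely choosing the permutation gluings on $T_3, T_4, T_7, T_8$ so that no additional Smale class is produced. One must ensure that the interplay between the flow inside the Seifert piece and the chosen gluing diffeomorphisms does not create a closed return cycle of the form ``exit through $T_3$, reenter at $T_8$, exit through $T_7$, reenter at $T_4$, exit through $T_3$, \dots'' that would yield an additional recurrent basic set. Since Proposition \ref{prop_gluing_fatgraphs} affords considerable freedom in the gluings, constrained only by the lamination non-isotopy condition, such a choice should be generically available; but carrying out this combinatorial verification is the delicate step of the argument, and may require slightly adjusting the base fatgraph (e.g.\ increasing the valence of the vertices) to have enough flexibility in the transit gluings to kill any unintended recurrence.
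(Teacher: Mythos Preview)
Your chain-recurrence verification is fine once the setup is in place, but the construction itself has a genuine gap. With the eight-holed-sphere block and the pairing $T_3 \to T_8$, $T_7 \to T_4$, the induced return graph on the incoming tori $\{T_2,T_4,T_6,T_8\}$ is
\[
T_2 \to \{T_2, T_8\},\quad T_4 \to \{T_8, T_6\},\quad T_6 \to \{T_6, T_4\},\quad T_8 \to \{T_4, T_2\},
\]
which is strongly connected (e.g.\ $T_2 \to T_8 \to T_4 \to T_6$ and back). Since the gluing condition of Proposition~\ref{prop_gluing_fatgraphs} forces the stable and unstable boundary laminations to have different slopes on each glued torus, every edge of this graph is realised by an open set of orbits and these concatenate: there are orbits with $\alpha$-limit in $\Lambda_{12}$ and $\omega$-limit in $\Lambda_{56}$ \emph{and} vice versa, so $\Lambda_{12}\sim_G\Lambda_{56}$ and the two classes merge. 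The only other pairing of the four free tori, $T_3 \to T_4$, $T_7 \to T_8$, produces the cycle $T_2 \to T_4 \to T_6 \to T_8 \to T_2$ with the same conclusion. Thus the obstruction you flag at the end is not a matter of generic choice of gluing map: on this particular fatgraph it is combinatorially impossible to obtain two comparable Smale classes by self-gluing alone.

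The paper sidesteps this by choosing a different admissible fatgraph: four vertices on an $8$-punctured \emph{torus}, organised into two horizontal bands with all eight tori glued in adjacent pairs. The edge combinatorics are such that orbits in the bottom band can never leave it, while orbits from the top band can drop into the bottom one; this one-way property is built into the fatgraph itself rather than imposed through the gluing maps. One then reads off exactly two Smale classes related by a strict inequality in the Smale order, the four prongs lie in the closures of both, and chain recurrence follows by the argument you outline.
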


The proof consists of the following construction.  
\begin{proof}
Consider the fat graph on the $8$-punctured torus drawn in Figure \ref{fig:CBexample_fatgraph}, and build a pseudo-Anosov flow by gluing $T_1$ to $T_2$, $T_3$ to $T_4$, $T'_1$ to $T'_2$, and $T'_3$ to $T'_4$.
Then, one easily verifies that the band above the line containing $x$ and $y$ has recurrent orbits and contains a unique Smale class $\Lambda_{\mathrm{top}}$, and similarly the band below $x$ and $y$ contains a Smale class $\Lambda_{\mathrm{bot}}$. Orbits of the flow can go from the top band to the bottom one, but never leave the bottom one.
Thus, as announced, the flow obtained has exactly two Smale classes, $x,y,z,w$ are all in the closure of both $\Lambda_{\mathrm{top}}$ and $\Lambda_{\mathrm{bot}}$, and the flow is chain recurrent.
\begin{figure}[h!]
\labellist 
 \small\hair 2pt
 \pinlabel $T_1$ at 56 180
 \pinlabel $T_2$ at 156 180
  \pinlabel $T_3$ at 256 180
   \pinlabel $T_4$ at 356 180
    \pinlabel $T_1'$ at 56 70
 \pinlabel $T_2'$ at 156 70
 \pinlabel $T_3'$ at 256 70
  \pinlabel $T_4'$ at 356 70
 \pinlabel $x$ at 63 133
  \pinlabel $y$ at 263 133
   \pinlabel $z$ at 63 20
    \pinlabel $w$ at 263 20
 \endlabellist
     \centerline{ \mbox{
\includegraphics[width=10cm]{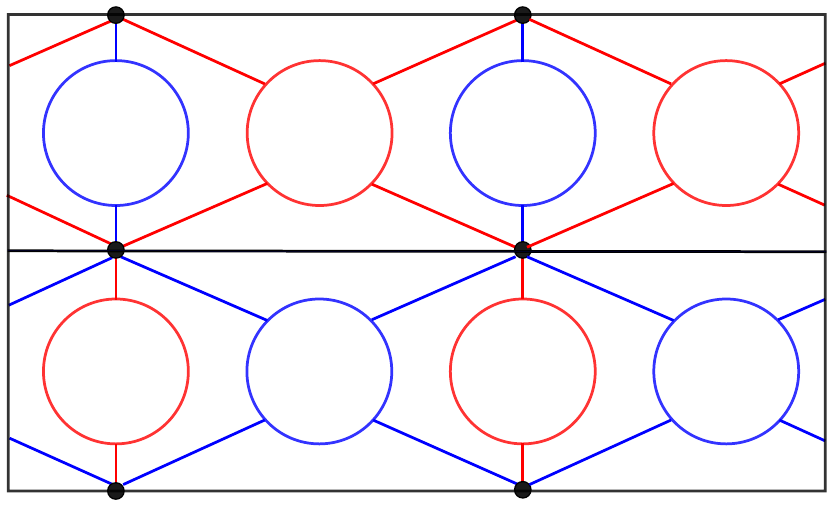} }}
\caption{Fatgraph with $4$ vertices on a $8$-punctured torus (not all the edges are drawn)}
\label{fig:CBexample_fatgraph}
\end{figure}
\end{proof}

Our last example involving Smale classes is the following, illustrating the importance of considering singular Smale classes.  
\begin{proposition}\label{ex_only_singular_Smale}
There exists pseudo-Anosov flows on non-compact $3$-manifolds whose non-wandering set consists entirely of isolated prong singularities.
\end{proposition}

\begin{proof}
The construction is similar to that of Example \ref{ex:non-bounded}.
Consider an admissible fat graph on a surface $S$ such that all vertices have valence $>4$. The associated flow hence has only prong singularities, and we build a pseudo-Anosov flow $\phi$ on a compact manifold $M$ by taking two copies of that associated flow and gluing them along their incoming/outgoing boundaries. 

Call $k$ the number of boundary components of $S$, so that $M$ has $k$ torus in its JSJ decomposition, that we denote by $T_1, \dots, T_k$. Let $G<\pi_1(M)$ be the kernel of the morphism $\pi_1(M)\to \bZ^k$ whose $i$-th coordinate is given by the intersection number of $g\in \pi_1(M)$ with the torus $T_i$.

Then $G$ corresponds to the fundamental group of an infinite (regular) cover $\widehat M$ of $M$, and any element $g\in G$ that represents a periodic orbit must by construction corresponds to an element of the fundamental group of one of the pieces of the JSJ decomposition of $\widehat M$, so must represent a prong singularity associated to one of the vertices of the original fat graph on $S$. Hence, the lift $\widehat\phi$ has only isolated prong singularities.

As in Example \ref{ex:non-bounded} one can easily verify that the action of $G$ on the orbit space is Anosov-like.
\end{proof}

\subsection{An action with a totally ideal quadrilateral}

We end this article by showing that Axiom \ref{Axiom_totallyideal} is necessary for Theorem \ref{thm:complement_of_fixbar} to hold; as well as its independence from the other axioms.

\begin{theorem} \label{thm:wandering_quadrilateral}
There exist actions of finitely generated groups on bifoliated planes that satisfy Axioms \ref{Axiom_A1}--\ref{Axiom_non-separated} but have wandering totally ideal quadrilaterals.  

In fact, if $M_1, M_2$ are compact 3-manifolds with non-$\R$ covered Anosov flows, and $G_i = \pi_1(M_i)$, then there exists a bifoliated plane $P$ and an action of $G_1 \ast G_2$ on $P$ satisfying \ref{Axiom_A1}--\ref{Axiom_non-separated} with wandering totally ideal quadrilaterals. 
\end{theorem}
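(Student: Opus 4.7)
The plan is to build $P$ by a ``splicing surgery'' joining $P_1$ and $P_2$ along a totally ideal quadrilateral, then iterate this equivariantly over the Bass--Serre tree of $G_1*G_2$.

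First I would fix, in each $P_i$, a pair of leaves $a_i^+ \in \cF_i^+$ and $a_i^- \in \cF_i^-$ making a perfect fit. Such pairs exist since each $P_i$ contains lozenges (the flow is not $\bR$-covered, so by the trichotomy $P_i$ has prongs or non-separated leaves, and either forces the existence of perfect fits via Lemma \ref{lem:two_fix_points}). Let $Q_i \subset P_i$ denote the trivially foliated closed region bounded by $a_i^+\cup a_i^-$ lying on the side of the perfect-fit ideal corner; topologically $Q_i$ is a closed half-plane whose two boundary rays meet only at an ideal point at infinity.

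The basic surgery produces a bifoliated plane $\tilde P$ by identifying $P_1 \setminus Q_1^{\circ}$ and $P_2 \setminus Q_2^{\circ}$ along the four sides of an abstract closed totally ideal quadrilateral $\overline{\Delta}$: two adjacent sides of $\overline{\Delta}$ are identified with $a_1^+$ and $a_1^-$, and the opposite two with $a_2^+$ and $a_2^-$. Inside $\overline{\Delta}$ the foliations are the standard product foliations, and across each boundary leaf they continue into the corresponding $P_i$. The existing perfect fits $(a_1^+,a_1^-)$ and $(a_2^+,a_2^-)$ become two opposite ideal corners of the new quadrilateral $\Delta^{\circ}$, while the gluing introduces two new perfect fits at the remaining corners. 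By construction, no nontrivial stabilizer element has a fixed point inside $\Delta^{\circ}$, so it is wandering.

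To promote this to a faithful $G_1*G_2$-action I would iterate the surgery equivariantly along the Bass--Serre tree $T$ of $G_1*G_2$: attach a copy $P_i^v$ of $P_i$ at each type-$i$ vertex $v$, and at each edge $e=(v_1,v_2)$ perform the splicing at chosen perfect-fit quadrants in $P_1^{v_1}$ and $P_2^{v_2}$. Axioms \ref{Axiom_A1} and \ref{Axiom_prongs_are_fixed} then hold on each $P_i^v$ because they hold for the original $G_i$-action on $P_i$, and no new fixed points or singular points appear inside the inserted quadrilaterals. Axiom \ref{Axiom_non-separated} holds because any maximal family of pairwise non-separated leaves in $P$ must lie in a single copy $P_i^v$ (trivially foliated regions cannot house non-separated leaves). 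Axiom \ref{Axiom_dense} follows from density of fixed leaves in each $P_i^v$, together with the fact that the interiors of the inserted quadrilaterals are trivially foliated and get swept out by leaves of the surrounding copies. Axiom \ref{Axiom_totallyideal} fails by construction.

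The main obstacle is that edge stabilizers in the Bass--Serre tree for a free product are trivial, whereas every perfect-fit quadrant in $P_i$ has cyclic stabilizer in $G_i$ (generated by the element corresponding to the associated periodic orbit). To reconcile this, for each vertex $v$ of type $i$ I would select a $G_i^v$-equivariant collection of perfect-fit quadrants drawn from countably many distinct $G_i$-orbits, indexed bijectively by the edges of $T$ emanating from $v$; there are enough orbits because compact non-$\bR$-covered Anosov flows have infinitely many conjugacy classes of periodic orbits realized as lozenge corners. With such a choice, each edge-gluing uses its own perfect-fit quadrant with no stabilizer clash, the iterative gluing is equivariant, and the resulting action of $G_1*G_2$ is faithful. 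The verification that the limiting space $P$ is a topological plane with continuous transverse foliations and isolated prong singularities then proceeds by a standard tree-of-spaces argument, since each elementary gluing is topologically a connected sum of two half-planes through a closed disk.
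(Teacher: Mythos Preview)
Your equivariance fix does not work. In the Bass--Serre tree for $G_1*G_2$ the edges incident to a type-$i$ vertex $v$ form a \emph{single} orbit on which the stabilizer $G_i^v\cong G_i$ acts freely and transitively (they are in natural bijection with $G_i$ by left multiplication). An equivariant bijection from this set to a collection of perfect-fit quadrants in $P_i$ would therefore force that collection to be a single free $G_i$-orbit; but, as you yourself note, every perfect-fit quadrant has nontrivial (cyclic) stabilizer, so no such orbit exists. Passing to ``countably many distinct $G_i$-orbits'' cannot repair this: an equivariant map out of a single transitive orbit has image contained in a single orbit. So there is no way to label your surgeries at $v$ equivariantly by the edges at $v$, and the iterated construction does not carry a $G_1*G_2$-action.

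This is exactly the obstruction the paper's proof is designed to avoid. Rather than surgering at perfect fits, it chooses in each $P_i$ a leaf $l_i$ with \emph{trivial} stabilizer (and, via a separate lemma, making no perfect fit with anything), and then \emph{blows up} the free orbit $G_i\cdot l_i$: each leaf is replaced by a strip $[0,1]\times\bR$, compactified at both ends to become a totally ideal quadrilateral. Now the quadrilaterals in the blown-up $P_i$ are indexed by the free orbit $G_i\cdot l_i\cong G_i$, which matches the edges at a type-$i$ vertex, and the equivariant gluing over the tree goes through. The ``no perfect fit'' condition on $l_i$ is what makes Axiom~\ref{Axiom_non-separated} checkable: the added boundary leaves of the quadrilaterals are not non-separated from anything in the ambient $P_i$. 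In your scheme, by contrast, the boundary leaves $a_i^\pm$ are deliberately chosen to make perfect fits, and you also excise the interiors $Q_i^\circ$ from $P_i$; both create further unaddressed difficulties for Axioms~\ref{Axiom_dense} and~\ref{Axiom_non-separated}.
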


The construction of $P$ involves blowing up copies of the orbit spaces of the flows and assembling them together so that the free product $G_1 \ast G_2$ acts on the resulting space with ping-pong dynamics. 
To make this precise, we introduce the following definition.  

\begin{definition} 
Let $(P, \cF^+, \cF^-)$ be a bifoliated plane and $S$ some collection of leaves of $\cF^+$.  We say that another plane $(\hat{P}, \hat{\cF}^+, \hat{\cF}^-)$ is a {\em blowup of $P$ along $S$ } if there exists a continuous map $h\colon \hat{P} \to P$ with the following properties 
\begin{itemize} 
\item $h$ is injective on the complement of $h^{-1}(\bigcup \{l \in S\})$
\item For each $l \in S$, $h^{-1}(l)$ is homeomorphic to $[0,1] \times \R$ via a homeomorphism sending leaves of $\hat{\cF}^+$ to the vertical foliation and leaves of $\hat{\cF}^-$ to the horizontal.
\item using the coordinates above, $h$ is constant on each $\hat{\cF}^-$ (horizontal) leaf $[0,1] \times \{x\}$ in each $h^{-1}(l)$.  
\end{itemize} 
\end{definition} 

We call a set $h^{-1}(l)$ the {\em blow-up of $l$}.  

\begin{lemma}[Equivariantly blowing up a leaf] \label{lem:blowup_exists}
Let $(P, \cF^+, \cF^-)$ be the orbit space of a pseudo-Anosov flow on a manifold $M$, with induced action of $G = \pi_1(M)$.  Let $l_0$ be a nonsingular leaf of $\cF^+$ that is not fixed by an element of $G$.  
Then there exists a blow-up $\hat{P}$ of $P$ along $S := G \cdot l_0$ and an action of $G$ on $\hat{P}$ by homeomorphisms that extends the action on $P$.  That is if $h\colon \hat{P} \to P$ is the projection from the definition of blow-up, we have 
$h \circ g = g \circ h$ for all $g \in G$.  
\end{lemma}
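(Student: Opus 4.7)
The plan is to construct $\hat P$ by cutting $P$ open along each leaf of the orbit $S=G\cdot l_0$ and inserting a product strip, and then defining the $G$-action on each strip using a $G$-equivariant choice of transverse orientation of leaves in $S$. The crucial initial observation is that, since $l_0$ is not fixed by any element of $G$, every $l\in S$ has trivial stabilizer (because $\operatorname{Stab}(g\cdot l_0)=g\operatorname{Stab}(l_0)g^{-1}$); consequently Axiom \ref{Axiom_non-separated} rules out any leaf of $S$ being non-separated with another leaf, and the map $g\mapsto g\cdot l_0$ is a bijection from $G$ to $S$. Moreover, each $l\in S$ is nonsingular (as a $G$-translate of $l_0$) and thus admits a product-foliated neighborhood in $P$. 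Taken together, these facts ensure that the blowups at distinct leaves of $S$ take place in disjoint local models and do not interact.

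First I would fix a transverse orientation on $S$ as follows. Pick arbitrarily one of the two components of $P\setminus l_0$ and call it the positive side of $l_0$; then for any other $l=g\cdot l_0\in S$, declare its positive side to be the image under $g$ of the positive side of $l_0$. This is well-defined precisely because the element $g$ with $g\cdot l_0=l$ is unique, and by construction $g$ carries the positive side of $l$ onto the positive side of $g\cdot l$ for every $l\in S$ and every $g\in G$. Next, set
\[
\hat P \;:=\; \Bigl(P\setminus\textstyle\bigcup_{l\in S} l\Bigr) \;\sqcup\; \bigsqcup_{l\in S} l\times[0,1],
\]
with $h\colon\hat P\to P$ defined to be the identity on the first piece and $(x,t)\mapsto x$ on each strip. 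Declare the topology on $\hat P$ to be generated by the subspace topology on the first piece, standard product neighborhoods at points $(x,t)\in l\times[0,1]$ with $t\in(0,1)$, and at each point $(x,0)$ neighborhoods of the form $\{(x',s):x'\in l \text{ near }x,\ s\in[0,\delta)\}$ glued to a small product-foliated neighborhood of $x$ restricted to the positive side of $l$ in $P$; symmetrically at $(x,1)$ using the negative side.

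The extension of the foliations is then forced: the leaves of $\hat\cF^+$ are the original leaves of $\cF^+\setminus S$, together with the family $l\times\{t\}$ for $t\in[0,1]$ in each strip; and for each $\cF^-$-leaf $m$, the corresponding $\hat\cF^-$-leaf is obtained by inserting the arc $\{x\}\times[0,1]$ into $m$ at every intersection point $x\in m\cap l$, $l\in S$. The $G$-action on $\hat P$ is defined by $g\cdot(x,t):=(g\cdot x,t)$ on strips and by the original action elsewhere. The crucial continuity check is at a boundary point $(x,0)$ of a strip: by the defining property of our equivariant orientation, $g$ maps the positive side of $l$ onto the positive side of $g\cdot l$, so the basic neighborhoods used to define the topology at $(x,0)$ are carried precisely to basic neighborhoods at $(g\cdot x,0)$; the argument at $(x,1)$ is symmetric. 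The relation $h\circ g=g\circ h$ follows immediately from the formula.

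The main technical step I expect to work through is the verification that $\hat P$ is actually homeomorphic to $\mathbb R^2$. Because leaves of $S$ are pairwise separated in the leaf space of $\cF^+$, each blowup can be performed inside a fixed product-foliated neighborhood of $l$ disjoint from analogous neighborhoods of the other leaves in $S$. The local model is thus simply: given a nonsingular leaf $l$ in a bifoliated plane, replace $l$ inside a product-foliated neighborhood by the strip $l\times[0,1]$; this local operation visibly yields a topological plane again. Second countability is inherited from $P$ together with countability of $G=\pi_1(M)$, and Hausdorffness is checked leaf by leaf. Once this plane structure is confirmed, the remaining clauses of the definition of a blowup follow directly from the construction.
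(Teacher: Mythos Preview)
Your construction has a genuine gap in the definition of the topology on $\hat P$, and it appears precisely where you flag ``the main technical step''.  The problem is that in the intended applications (and already for a generic non-periodic leaf of a transitive pseudo-Anosov flow) the orbit $S=G\cdot l_0$ is \emph{dense} in the leaf space of $\cF^+$.  Your basic open sets around a point $p\in P\setminus\bigcup S$ are taken from ``the subspace topology on the first piece'', i.e., sets of the form $U\cap\bigl(P\setminus\bigcup S\bigr)$ with $U$ open in $P$.  When $S$ is dense, such a set is a product rectangle with a dense family of $\cF^+$-segments removed; it is not homeomorphic to a disk, so $\hat P$ fails to be a $2$-manifold at $p$.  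Relatedly, your later claim that the blowups can be carried out ``inside a fixed product-foliated neighborhood of $l$ disjoint from analogous neighborhoods of the other leaves in $S$'' is simply false once $S$ is dense: no leaf of $S$ has a product neighborhood missing all other leaves of $S$.  The correct basic neighborhood of such a $p$ must be the full preimage $h^{-1}(U)$, which contains \emph{every} inserted strip over leaves of $S$ meeting $U$; verifying that this yields a plane is exactly a Denjoy-type argument (countably many disjoint intervals inserted into a line still give a line), and is where the content of the lemma lies.

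The paper sidesteps this difficulty entirely by working upstairs in the $3$-manifold: it performs the classical Denjoy blowup of the (non-periodic) weak-stable leaf of the flow on $M$, lifts to $\wt M$, extends the flow and the weak-unstable foliation trivially across each product region, and then passes to the orbit space.  The point is that the delicate ``insert countably many strips and still get a plane'' issue is absorbed into a standard construction on foliated $3$-manifolds, and the $\pi_1(M)$-equivariance comes for free since everything is done downstairs on $M$ before lifting.  If you want to salvage your purely $2$-dimensional approach, you would need to redo the topology so that neighborhoods of non-blown-up points are full $h$-preimages, and then prove directly (e.g., by exhibiting a homeomorphism of each $\hat\cF^-$-leaf with $\mathbb R$ via a Denjoy-type argument and using these as transversals) that $\hat P$ is a plane.
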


\begin{proof} 
First, blow up a nonsingular, non-periodic leaf $l_0$ of the stable foliation of an Anosov flow $\phi$ (see, e.g., Example 4.1 in \cite{Calegari_book}).  Lifting this to the universal cover $\wt{M}$, each lift $l$ of $l_0$ is replaced by a topological product $l \times [0,1]$.  The leaf $l$ is foliated by orbits of the lifted flow.  Extending this trivially over the product $l \times [0,1]$, one obtains a $1$-dimensional foliation on $\wt{M}$.  The 2-dimensional unstable foliation of $\phi$ can also be extended trivially over these product regions.   The fundamental group $G = \pi_1(M)$ acts naturally, permuting the blown-up leaves and preserving all of these foliations, and the action descends to an action by homeomorphisms on the quotient by leaves of the $1$-dimensional foliation.  This quotient space is a topological plane $\hat{P}$ with foliations $\hat{\cF}^\pm$ induced by the stable and unstable foliation, which (taking labels so that $\cF^+$ corresponds to stable) is a blow-up of the orbit space of $\phi$ along the leaf of the orbit space corresponding to $l$.  The fact that the action of $G$ on $\hat{P}$ is an extension of its action on the orbit space is immediate from the construction.  
\end{proof} 

We can modify this slightly so that the blown-up regions are compactified with leaves of $\cF^-$.  
\begin{construction}[Blow-up with boundary] \label{const:blow_up_boundary}
Let $\hat{P}$ be a blow-up along $G \cdot l_0$ as above.  The blow-up with boundary is a bifoliated space obtained as follows. For each $l \in G \cdot l_0$, we have that $h^{-1}(l) \cong \R \times [0,1]$. We would like to add a leaf of $\cF^-$ at $+\infty$ and at $-\infty$ in the $\R$-coordinate to each such region so the result is homeomorphic to a totally ideal quadrilateral.  
To do this, one can put a metric on $\hat{P}$ so that the preimages of leaves $l \in G l_0$ under $h$ (each homeomorphic to $[0,1] \times \R$) have the length of leaf $[0,1] \times \{x\}$ approaching $\infty$ as $x \to \pm \infty$, and then compactify this strip by adding two leaves homeomorphic to $\R \times \{+\infty\}$ and $\R \times \{-\infty\}$, one at either end. 
We call the result a {\em Blow-up of $P$ with boundary}.  This is no longer a topological plane, but is homeomorphic to a simply connected subset of the plane.  

Note also that the action of $G$ on $\hat{P}$ extends to the blow-up with boundary, permuting the ideal quadrilateral regions.  
\end{construction} 

Using this, we can construct examples of actions with (wandering) ideal quadrilaterals that satisfy Axioms \ref{Axiom_A1}, \ref{Axiom_prongs_are_fixed}, and \ref{Axiom_non-separated}.  We describe this construction as motivation and a warm-up for the construction in the proof of Theorem \ref{thm:wandering_quadrilateral}. 

Start with an Anosov-like action of a group $G$ on a bifoliated plane $(P, \cF^+, \cF^-)$ coming from the orbit space of a non $\bR$-covered Anosov flow, and fix a leaf $l_0$ with trivial stabilizer.  Let $\bar{P}$ be the blow-up with boundary along $G\cdot l_0$ as in  Construction \ref{const:blow_up_boundary}.  
Take another bifoliated plane $(P', \cF'^{+}, \cF'^{-})$, and cut $P'$ along a leaf $l^-$ of $\cF'^-$, obtaining two half-planes $H_+, H_-$.  For each $g \in G$, take copies $H_\pm(g)$ of $H_+$ and $H_-$
and glue them to the compactified ends of $h^{-1}(g l)$, equivariantly with respect to the action of $G$. 
The result is a bifoliated plane to which the action of $G$ extends and each compactified $h^{-1}(g l)$ is a totally ideal quadrilateral.  However, this action unfortunately will not satisfy all of the required axioms -- in particular, it will never satisfy Axiom \ref{Axiom_dense} since the set of $\cF^+$ leaves passing through one of these ideal quadrilaterals gives an open set of leaves which are fixed by no nontrivial element.  

Axiom \ref{Axiom_A1} does hold, however, thanks to the choice of $l$ with trivial stabilizer.  If $P'$ contains no singular points, axiom \ref{Axiom_prongs_are_fixed} will also be satisfied.  But \ref{Axiom_non-separated} is again problematic: if $l$ makes a perfect fit with another leaf $f^-$ in $P$, then $l^-$ and (the preimage under $h$ of) $f^-$ will be non-separated, but not fixed by any element, failing Axiom \ref{Axiom_non-separated}. (Note that, since there are no infinite product regions, as we started from a non-$\bR$-covered flow, a copy of $l^-$ can \emph{only} be non-separated with a leaf making a perfect fit with $l$.)
To avoid this, we use the following lemma, which follows from an argument of Fenley in \cite{Fenley16}.  

\begin{lemma} \label{lem:no_perfect_fit}
Let $(P,\cF^+,\cF^-)$ be the orbit space of a transitive pseudo-Anosov flow, equipped with the natural action of $G = \pi_1(M)$, and assume $P$ is nontrivial and non-skewed.  Then there exists leaves $l^\pm$ of $\cF^\pm$ with trivial stabilizer in $G$ and which do not make any perfect fits.  
\end{lemma}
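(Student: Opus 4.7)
The plan is to use a Baire category argument in the leaf space $\cL^+$ of $\cF^+$; the construction of $l^-$ is entirely symmetric.

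First, I would show that the set $\cL^+_{\mathrm{fix}} \subset \cL^+$ of leaves stabilized by some nontrivial element of $G$ is countable. By Axiom \ref{Axiom_A1} and Remark \ref{rem_single_fixed_point}, each such leaf contains a unique point fixed by any of its nontrivial stabilizers; in the orbit-space setting these points are lifts of periodic orbits of $\phi$. Since $G = \pi_1(M)$ is countable and every $g \neq 1$ has discrete fixed-point set in $P$, the set $\Fix_G \subset P$ is countable, and hence so is $\cL^+_{\mathrm{fix}}$.

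Next, I would show that the set $\cL^+_{\mathrm{pf}} \subset \cL^+$ of leaves making at least one perfect fit has empty interior in $\cL^+$. Here I invoke Fenley's analysis in \cite{Fenley16}. The non-$\R$-covered hypothesis (equivalent to nontrivial and non-skewed, by Proposition \ref{prop_trichotomy}) combined with the absence of infinite product regions (Proposition \ref{prop:no_product}) implies that an open family of leaves in $\cL^+$ all admitting a perfect fit, together with their partners in $\cL^-$, would sweep out a product-foliated strip that one can extend to an infinite product region; this contradicts Proposition \ref{prop:no_product}. Density of periodic orbits (inherited from transitivity of $\phi$) then ensures that $\cL^+_{\mathrm{pf}}$ is nowhere dense in $\cL^+$.

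Since $\cL^+$ is a simply connected (possibly non-Hausdorff) $1$-manifold, every nonempty open subset is uncountable. Combined with the two previous steps, $\cL^+_{\mathrm{fix}} \cup \cL^+_{\mathrm{pf}}$ has empty interior in $\cL^+$, so its complement is dense; any leaf $l^+$ in this complement has trivial stabilizer and admits no perfect fit. The same argument applied in $\cL^-$ produces $l^-$.

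The main obstacle is the second step, namely converting an open family of perfect-fit leaves into an infinite product region. The subtlety is that the perfect-fit partner map $\pi$ from a putative open interval $J \subset \cL^+_{\mathrm{pf}}$ to $\cL^-$ is only well-defined after fixing side/ray data, and one must analyze how $\pi$ varies as the base leaf moves in $J$, checking that the strip of $P$ bounded by $J$ and $\pi(J)$ is genuinely an infinite product region rather than being truncated by branching or singularities. This case analysis is the essence of Fenley's argument in \cite{Fenley16}, which leverages transitivity to close up perfect fits near periodic orbits and extract the needed rigidity.
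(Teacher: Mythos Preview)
Your Step~2 is where the argument breaks down. The claim that an open interval $J \subset \cL^+$ of leaves, each making a perfect fit, would ``sweep out'' an infinite product region is not justified and, as stated, is not correct. Once you fix a leaf $l_0 \in J$ with perfect-fit partner $f_0 \in \cF^-$, every nearby leaf $l \in J$ on one side \emph{intersects} $f_0$ (that is the definition of perfect fit), so if $l$ has a perfect-fit partner it is some \emph{other} leaf $f_l$, possibly far from $f_0$, on a different ray, or on the other side of $l$. There is no mechanism forcing the partners $f_l$ to vary continuously or to bound a common product strip, and one can easily imagine the partners scattering across $P$. Your acknowledgement of this difficulty in the last paragraph is accurate, but your resolution --- that this ``is the essence of Fenley's argument in \cite{Fenley16}'' --- mischaracterizes what Fenley actually proves. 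Fenley's Proposition~5.5 does \emph{not} show that perfect-fit leaves form a nowhere-dense set in $\cL^+$; rather, it shows that if a point $p$ with dense $G$-orbit lies on a leaf making a perfect fit, then $p$ is accumulated by corner points of lozenges.

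This is exactly the route the paper takes, and it is genuinely different from yours. The paper fixes a single point $p$ projecting to a dense orbit of the flow (so $G \cdot p$ is dense in $P$, and in particular $\cF^\pm(p)$ have trivial stabilizer since a dense orbit cannot lie in the stable or unstable leaf of a periodic one). If $\cF^\pm(p)$ made a perfect fit, Fenley's closing-lemma argument produces periodic points arbitrarily close to $p$ whose leaves also make perfect fits, hence are corners of lozenges (Lemma~\ref{lem:two_fix_points}); thus $p$ is accumulated by corners. But corners are nowhere dense in $P$ (this is \cite[Lemma~2.30]{BFM}), contradicting the density of $G \cdot p$. So the paper never needs the global statement that $\cL^+_{\mathrm{pf}}$ is nowhere dense; it only needs that one particular dense-orbit leaf avoids perfect fits, and this follows from the interaction between recurrence and the scarcity of corners. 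Your Baire-category framework is a natural instinct, but the input it requires (nowhere-denseness of $\cL^+_{\mathrm{pf}}$) is both stronger than necessary and not what the cited literature provides.
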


\begin{proof} 
Let $o$ be a dense orbit in $M$, and let $p \in P$ be the image of a lift of $o$ to $\wt M$ projected to the orbit space. Since $o$ is dense, $G \cdot p$ is dense in $P$.  We claim that $\cF^\pm(p)$ cannot make any perfect fit.  

Suppose for contradiction that one of $\cF^\pm(p)$ makes a perfect fit with another leaf.  The proof of Proposition 5.5 in \cite{Fenley16} shows that $p$ is then accumulated by corners of lozenges.  We sketch this for completeness:  using the closing lemma one can find a periodic orbit $\delta$ which admits a lift to $\wt{M}$ that projects to a point $\wt \delta$ in the orbit space (notation here is borrowed from \cite{Fenley16}), close to $p$.  Recurrence (here given by density of $o$) in fact allows us to find such points arbitrarily close to  $p$.  This periodic orbit is associated to some $g \in G$ which fixes $\wt \delta$ and can be taken (again) to move $p$ an arbitrarily small amount.  Moreover, one can specify on which side of $\cF^s(p)$ the image $g \cF^s(p)$ lies. 
Considering the arrangement of leaves in the orbit space, as in the left side of Figure 9 of \cite{Fenley16}, (here $p_1$ is labeling the projection of $p$, and $g(p_2)$ its image under $g$) one concludes that $\wt \delta$ makes a perfect fit, and is the corner of a lozenge.   Thus, $p$ is approximated by corners of lozenges.  

Now by Lemma 2.30 of \cite{BFM}, the set of points in the orbit space of a transitive, non-$\bR$-covered Anosov flow that are corners of lozenges is nowhere dense.  This contradicts the density of $G \cdot p$ in $P$.  
\end{proof}

With this in hand, we can now prove the main result of this section.   The general idea is as follows: first, blow up an orbit $G_1 \cdot l_1$ in $P_1$ using the blow-up with boundary construction, and $G_2 \cdot l_2$ in $P_2$.  We then glue copies of these together along the blown-up quadrilaterals in a way so that the actions of $G_1$ and $G_2$ extend equivariantly, and play ``ping-pong" on the resulting space.  The proof below consists in making this idea precise.

\begin{proof}[Proof of Theorem \ref{thm:wandering_quadrilateral}] 
Let $M_1, M_2$ be 3-manifolds with transitive pseudo-Anosov flows, and let $G_i = \pi_1(M_i)$. 
Let $(P_i, \cF^+_i,\cF^-_i)$, for $i=1,2$ be the orbit spaces of these flows, equipped with the actions of $G_i$.  

Let $l_1 \in \cF^+(P_1)$ be a leaf with trivial stabilizer for the action of $G_1$, and 
$l_2 \in \cF^-(P_2)$ be a leaf with trivial stabilizer for the action of $G_2$.  In particular, these leaves are nonsingular and have no nonseparated leaves in their respective planes. Using Lemma \ref{lem:no_perfect_fit}, we can further assume that neither $l_1$ nor $l_2$ make a perfect fit with any other leaf.

\noindent \textbf{Step 1: taking many copies of blow-ups with boundary.}
Let $\bar{P}_1, \bar\cF^\pm_1$ and $\bar{P}_2, \bar\cF^\pm_2$ denote the blow-ups with boundary of $P_i$ along $G_i(l_i)$, respectively, as described in Construction \ref{const:blow_up_boundary}.  
Of particular importance to us is the fact that the $\bar{\cF}_1^-$ leaf space of $\bar{P}_1$ is identical to that of the original $\cF_1^-$ leaf space in $P_1$ itself.  Similarly, each $\bar{\cF}_2^+$ leaf of $\bar{P}_2$ is naturally identified with a leaf of $\cF_2^+$ in $P_2$.  

Consider the generating set $G_1 \cup G_2$ for the free product $G_1 \ast G_2$.  Reduced words of $G_1 \ast G_2$ with this generating set are written as strings of elements alternatively in $G_1$ and $G_2$.  
For each word $w$ whose initial element is in $G_2$, we take a copy $P_1(w)$ of $\bar{P}_1$.  For each word $v$ whose initial element is in $G_1$, we take a copy $P_2(v)$ of $\bar{P}_2$.  
We next assemble these copies together, along with two additional copies $P_1(e)$
and $P_2(e)$ for the identity element, to produce a bifoliated plane.  

\noindent \textbf{Step 2: gluing along ideal quadrilaterals.}
Let $Q_i \subset \bar{P}_i$, $i = 1,2$, be the ideal quadrilaterals corresponding to the compactified blow-ups of $l_i$.   Let $\Phi\colon Q_1 \to Q_2$ be a homeomorphism sending leaves of $\bar\cF^\pm_1$ to $\bar\cF^\pm_2$ (preserving $+$ and $-$).  
For an image $a Q_1$ of $Q_1$ under some element $a \in G_1$, let $\Phi_a$ be the gluing map $a Q_1 \to Q_2$ defined by $\Phi_a(x) = a \Phi a^{-1}(x)$.  Similarly for $b \in G_2$ let $\Phi_b \colon Q_1 \to b Q_2$ denote the homeomorphism whose inverse $\Phi_b^{-1} \colon bQ_2 \to Q_1$ is given by $\Phi_b^{-1}(x) = b \Phi^{-1} b^{-1}(x)$.  

Glue $P_1(e)$ to $P_2(e)$ using $\Phi$.  For each $a \in G_1$, glue $P_2(a)$ to $P_1(e)$ via $\Phi_a$.  And for each $b \in G_2$, glue $P_1(b)$ to $P_2(e)$ via $\Phi_b$.    

Generally, for each reduced word $w$ where $w = av$ with $a$ a nontrivial element of $G_1$ and $v$ starting with a nontrivial element of $G_2$, we glue $P_2(w)$ to $P_1(a^{-1} w)$ using $\Phi_a$.  Similarly, for each $v$ ending in some nontrivial $b \in G_2$, we glue $P_1(v)$ to $P_2(b^{-1}v)$ by $\Phi_b$.  In this way, every ideal quadrilateral is glued to exactly one other, so the plane extends on both sides of each.   Since $\bar\cF^+$ leaves are glued to $\bar\cF^+$-leaves (and $\bar\cF^-$ to $\bar\cF^-$), the result is a bifoliated space.  It is easy to see that this is simply connected and second countable, hence a plane.  Denote this bifoliated plane by $(P, \cF^+,\cF^-)$, making the obvious choice that the $\cF^+$ foliation is obtained from glued copies of $\bar\cF^+$ leaves.  

Furthermore, $P$ has a natural induced action of $G_1 \ast G_2$ which we now describe.
First, the embedded copies $P_1(e)$ and $P_2(e)$ in $P$ are stabilized by $G_1$ and $G_2$ respectively, and the restriction of the action of $G_i$ to $P_i(e)$ is the standard action.  
The equivariance of the gluing maps $\Phi_a$ and $\Phi_b$ extends this action naturally to the copies $P_1(b)$ and $P_2(a)$, for $a, b \in G_1$ and $G_2$ respectively, that are glued to $P_2$ and $P_1$; which are permuted by the actions of $G_1$ and $G_2$.  
Iteratively, a word 
$a_1 b_1 \ldots  a_k b_k$ takes $P_i(w)$ to $P_i( a_1 b_1 \ldots  a_k b_k w)$ (wherever defined), and 
$P_i(w)$ is stabilized by the conjugate of $G_i$ by $w$; with the action of $w G_i w^{-1}$ on $P_i(w)$ simply the conjugate action of $G_i$ on $P_i(e)$.  

\noindent \textbf{Step 3: the axioms are satisfied.}
It remains to check that the action satisfies Axioms \ref{Axiom_A1} -- \ref{Axiom_non-separated}.  
Let $l$ be a leaf of $\cF^\pm$ in $P$.  Then $l$ is contained in some $P_i(w)$.  
For concreteness, assume $w$ has initial letter $a \in G_1$ and so we are dealing with $P_2(w)$; the other case is analogous. 
Then the stabilizer of $P_2(a)$ is $w G_2 w^{-1}$.  If $l$ is fixed by $g \in G$, then $g \in w G_2 w^{-1}$ and so $w^{-1}gw$ fixes a leaf of $P_2(e)$.  Since the action of $G_2$ on $P_2$ is Anosov-like, and we blew-up a non-fixed leaf, the dynamics of $w^{-1}gw$ on $w^{-1}l$ (and thus of $g$ on $l$) are topologically expanding or contracting, with a unique fixed point. 

The argument for Axiom \ref{Axiom_prongs_are_fixed} (that prongs are fixed) is the same, using the fact that copies of the $\hat{P}_i$ in our glued construction are stabilized by conjugates of $G_i$ in the free product, and we did not introduce any new prongs in the gluing.  

To show Axiom \ref{Axiom_non-separated}, we will argue that any nonseparated leaves $l, l'$ in $P$ in fact lie in a single building-block plane $P_i(a)$ and are fixed by a nontrivial element of $G$.  

Consider first the case where $l$ is in $P_1(e)$ and is one of the ``boundary" leaves added to the blow-up of $P_1$.   Suppose that some sequence of leaves $l_n$ limits onto $l$ and another leaf $l'$.  (We will show that this leads to a contradiction).  Up to passing to a subsequence, we assume the $l_n$ limit from one side.  Thus, we may assume either all $l_n$ meet a single copy $P_2(a)$ glued to $P_1(e)$, or all lie in $P_1(e)$ and meet the ideal quadrilateral $a Q_1$.   Consider the latter case.  Here the projections of $l_n$ to $P_1$ each intersect $a l_1$.  Since $l_1$, and hence $a l_1$ makes no perfect fit (and $P_1$ contains no infinite product regions) it follows that the projections of the $l_n$ leave every compact set of $P_1$.  Thus, they limit exactly onto $l$ in $P_1$.  It follows that there are no leaves non-separated with $l$.   The case where $l_n$ limit from the other side is similar.  

By symmetry, and equivariance, the only case left to consider is where $l$ and $l'$ are both non-boundary leaves.  If they lie in different copies $P_i(a)$ and $P_i(b)$, then they are separated by a boundary of an ideal quadrilateral.  If they lie in the same plane $P_i(a)$, then they must be non-separated in $P_i$; since \ref{Axiom_non-separated} holds for the action of $G_i$ on the orbit space, they are both fixed by a conjugate of some $g \in G_i$.  

Finally, we show that the density axiom holds.  
Consider any open set $U$ in the leaf space of, say the $\cF^+$ foliation of $P$.  
Each leaf of $\cF^\pm$ meets at least one embedded $P_i(w)$, so $U$ contains an open set in the $\cF^+$ leaf space of some $P_i(w)$.  If this contains leaves that are not blown-up (i.e., the open set is not a subset of an ideal quadrilateral) then there is some element of $G$ fixing a leaf of $U$, since the stabilizer of $P_i(w)$ is a conjugate of $G_i$ with the induced blown-up action.   Otherwise, if the union of leaves in this open set is completely contained inside an ideal quadrilateral $Q$, then we must have that $i = 1$ since $\cF^+$ leaves that cross ideal quadrilaterals in the blow-up of $P_2$ always exit them.  Considering 
instead the copy of $P_2(v)$  glued to $P_i(w)$ along $Q$, we find as before an element of $G$ fixing a leaf. 

Thus, all of \ref{Axiom_A1} -- \ref{Axiom_non-separated} are satisfied by this action, as desired. 
\end{proof} 

 Note that we still do not know whether we can build examples with \emph{non-wandering} totally ideal quadrilateral. It would be interesting to be able to answer any of the following questions:
\begin{question}
Does there exists an action of a group $G$ satisfying Axioms \ref{Axiom_A1}--\ref{Axiom_non-separated} that admits a \emph{non-wandering} totally ideal quadrilateral?
If yes, can such an action be \emph{topologically transitive} (or equivalently with a unique Smale class, or satisfying also Axiom \ref{Axiom_fixed_points_dense})?
\end{question}

\bibliographystyle{amsalpha}
\bibliography{refs}
 
\end{document}